%% file: MAIN.tex
\documentclass[10pt,reqno]{amsart}

\newtheorem{theo}{Theorem}[part]

\input{PREAMBULE}

\begin{document}

\title{Bourgain's method for $K$-closedness in the semicommmutative setting}

\author{Hugues Moyart}
\address{UNICAEN, CNRS, LMNO, 14000 Caen, France}
\email{hugues.moyart@unicaen.fr}

\subjclass{Primary: 46L51, 46L52, 46B70.  Secondary: 46E30, 60G42, 60G48.}
\keywords{Non commutative Lp spaces, Interpolation theory, Noncommutative Hp spaces, Noncommutative martingales}

\begin{abstract}
In the early 1990s, J.Bourgain proved a general result $K$-closedness result in the context of classical harmonic analysis. In this paper, we extend Bourgain's method to the semicommutative setting, making use of the recent semicommutative Calderón-Zygmund decomposition introduced by L.Cadilhac, JM.Conde-Alonso and J.Parcet. As an application, we recover Pisier's result about $K$-closedness of noncommutative Hardy spaces on the torus, and we also establish new interpolation results for noncommutative Sobolev spaces on the torus.
\end{abstract} 

\maketitle

\setcounter{tocdepth}{3}
\tableofcontents

\clearpage

\part*{Introduction}

The paper is motivated by Bourgain's approach to Jones' theorem in the context of interpolation of Hardy spaces on the torus. Let us review the results obtained in this context. Let $\TT$ denote the unit circle. For $1\pp p\pp\infty$, let $H_p(\T)$ denote the associated analytic Hardy space. Then, Jones established in \cite{JonesHardy2} that there is a universal constant $C$ such that for every $f\in H_1(\TT)+H_\infty(\TT)$, and $t>0$, we have 
\begin{equation}\label{strongJonesTheorem}
    K_t(f,H_1(\TT),H_\infty(\TT))\pp CK_t(f,L_1(\TT),L_\infty(\TT))
\end{equation}
where $K$ refers to Peetre's $K$-functional that is used to define real interpolation spaces. According to the terminology introduced by G.Pisier in \cite{PisierHardy}, one can reformulate Jones' theorem by saying that the subcouple $(H_1(\TT),H_\infty(\TT))$ is \textit{$K$-closed} (see Part 1) in the compatible couple $(L_1(\TT),L_\infty(\TT))$, with a universal constant. This $K$-closedness result completely characterizes the properties of analytic Hardy spaces under the real interpolation method. In particular, it implies that, if $0<\theta<1$ then
\begin{equation}\label{weakJonesTheorem}
    (H_1(\TT),H_\infty(\TT))_{\theta,p_\theta,K}=H_{p_\theta}(\TT)
\end{equation}
with equivalent norms, where $1/p_\theta=1-\theta$, and where the notation on the right-hand side refers to the real interpolation method. In \cite{Bourgain}, using the Calderón–Zygmund decomposition—the fundamental tool of harmonic analysis—and starting from a projection defined by a singular kernel satisfying the standard conditions, Bourgain proved a $K$-closedness result for the compatible couple $(H_1,H_2)$. By applying this result to the Riesz projection and its complement projection, he was then able to recover \eqref{weakJonesTheorem}. As highlighted by SV.Kislyakov and Q.Xu in \cite{KislyakovXu}, from Bourgain's approach one can actually deduce \eqref{strongJonesTheorem}. Bourgain's approach in this commutative setting later received further development in a number of works on interpolation of abstract Hardy-type spaces, for instance in the recent survey \cite{Rutsky}.

\vspace{5pt}

This paper is devoted to the noncommutative extension of Bourgain’s method. It essentially relies on the recent noncommutative generalization of the Calderón–Zygmund decomposition introduced by J. Parcet, J.M. Conde Alonso and L. Cadilhac in \cite{CadilhacCZ1,CadilhacCZ2} in the context of Euclidean spaces. In view of applying this method in other contexts (such as the torus), our results are stated in the framework of spaces of homogeneous type (and more precisely spaces of homogeneous type that satisfy an Ahlfors-type regularity condition). The details about the construction of the noncommutative Calderón–Zygmund decomposition are given in Part 4. The mathematical formalism of this article is that of abstract Hardy spaces, introduced in \cite{Moyart2026}. This framework is briefly described below. See Part 1 for details on the mathematical background.

\vspace{5pt}

If $M$ is a semifinite von Neumann algebra, $P$ is any projection on $L_2(M)$ and $1\pp p\pp\infty$, we define $H_p(P)$ to be the closure of $L_p(M)\cap P(L_2(M))$ in $L_p(M)$ (for the norm topology if $p<\infty$, and the w*-topology if $p=\infty$). They are the \textit{abstract Hardy spaces} associated with $P$. A natural question is whether abstract Hardy spaces form an interpolation scale for the real method. The main tool for addressing this problem is the notion of \textit{$K$-closedness}, introduced by Pisier in \cite{PisierHardy}, also referred to as \textit{quasi-complementation} in the terminology adopted in this thesis (and likewise called $K$-closedness in \cite{KislyakovXu}). The problem of proving a noncommutative analogue of Jones’ result for the compatible couple $(H_1,H_\infty)$—that is, showing that the subcouple $(H_1,H_\infty)$ is $K$-closed (or quasi-complemented, according to our terminology) in the compatible couple $(L_1,L_\infty)$—has triggered substantial developments in abstract interpolation theory. Relevant references include \cite{Bourgain,JansonRealInterpolation,KislyakovXu}. 

\vspace{5pt}

The main result of the paper is stated as follows. See Part 2 for the unexplained notions.

\begin{mdframed}[skipabove=10pt]
\textbf{\upshape Theorem} \text{\upshape (Bourgain's method).} \textit{Let $\X$ be a Ahlfors regular locally compact space, let $M$ be a semifinite von Neumann algebra, and let $N:=L_\infty(\X)\bar{\otimes}M$ be the tensor product von Neumann algebra equipped with the tensor product trace. Let $P$ be a projection on $L_2(N)$ which satisfies the technical assumptions, and which is given by a singular kernel on $\X$. Then the subcouple $(H_1(P),H_\infty(P))$ is quasi-complemented in $(L_1(M),L_\infty(M))$.}
\end{mdframed}
\vspace{15pt}

In Part 3, we build on the results of the previous section to develop noncommutative harmonic analysis on the torus. Our approach differs from classical methods, which typically rely on transference principles from Euclidean spaces. Instead, we work directly on the torus, making essential use of Bourgain's method.

\vspace{5pt}

Let $d\pg1$ be a positive integer, let $\T^d$ denote the $d$-torus, let $M$ be a semifinite von Neumann algebra, and let $N:=L_\infty(\T^d)\bar{\otimes}M$ denote the tensor product von Neumann algebra equipped with the tensor product trace.

\vspace{5pt}

For $1\pp p\pp\infty$, we define the \textit{analytic Hardy space} $H_p(N)$ as the space of $f\in L_p(N)$ such that
\[\int_{\T^d}f(z)z^{n}dz=0,\ \ \text{for every}\ \ n\in\Z_+^d.\]
Then, in the case $d=1$, by applying our noncommutative version of Bourgain's method, we provide a new proof of the following well-known theorem, independent of any complex-variable techniques. 

\begin{mdframed}[skipabove=10pt]
\textbf{\upshape Theorem} \text{\upshape (Pisier, Xu \cite{PisierXuLp}).} \textit{Assume $d=1$. The subcouple $(H_1(N),H_\infty(N))$ is quasi\-/complemented in the compatible couple $(L_1(N),L_\infty(N))$ with a universal constant, and for every $1<p<\infty$ we have
\[(H_1(N),H_\infty(N))_{\theta,p,K}=H_p(N)\]
with equivalent norms, with constants depending on $p$ only, where $\theta:=1-1/p$.}
\end{mdframed}
\vspace{15pt}

If $1\pp p\pp\infty$, we define the \textit{(homogeneous) Sobolev space} $W_{1,p}(N)$ as the closure in $L_p(N)$ (for the norm topology if $p<\infty$ and the w*-topology if $p=\infty)$ of trigonometric polynomials $g\in L_p(N)$ under the (semi)norm 
\[\|g\|_{W_{1,p}(N)}=\left\{\begin{array}{cl}
     \Big[\sum_{j=1}^{d}\|\partial_jg\|_{L_p(N)}^p\Big]^{1/p} & \text{if}\ p<\infty \\
     \sup_{j\in\{1,\ldots,d\}}\|\partial_jg\|_{L_\infty(N)} & \text{if}\ p=\infty
\end{array}\right..\]
As already noticed in \cite{Bourgain}, Bourgain's method can be applied in this context. With our noncommutative version of Bourgain's method, we obtain the following result, which provides an operator-valued extension of a classical result originally due to DeVore and Scherer \cite{DevoreInterpolationSobolev} in the context of inhomogeneous Sobolev spaces on Euclidean spaces.  

\begin{mdframed}[skipabove=10pt]
\textbf{\upshape Theorem.} \textit{
Let $f\in W_{1,1}(N)+W_{1,\infty}(N)$. Then, for every $t>0$, up to a constant depending only on $d$, we have
\[K_t(f,W_{1,1}(N),W_{1,\infty}(N))\ \simeq\ \sum_{j=1}^{d}K_t(\partial_jf,L_1(N),L_{\infty}(N)).\]}
\end{mdframed}

\vspace{5pt}

\begin{mdframed}[skipabove=10pt]
\textbf{\upshape Corollary.} \textit{
If $1\pp p\pp\infty$ then
\[W_{1,p}(N)=(W_{1,1}(N),W_{1,\infty}(N))_{\theta,p,K}\]
with equivalent norms, with constants depending on $p,d$ only, where $\theta:=1-1/p$.}
\end{mdframed}
\vspace{15pt}

For the complex method, we get the following result.

\begin{mdframed}[skipabove=10pt]
\textbf{\upshape Theorem.} \textit{
For every $1\pp p_0,p_1<\infty$ and $0<\theta<1$, we have
\[[W_{1,p_0}(N),W_{1,p_1}(N)]_{\theta}=W_{1,p_\theta}(N)\]
with equivalent norms, with constants depending only on $p_0,p_1,\theta,d$, where 
\[\frac{1}{p_\theta}=\frac{1-\theta}{p_0}+\frac{\theta}{p_1}.\]}
\end{mdframed}
\vspace{15pt}

The determination of complex interpolation spaces for the compatible couple $(W_{1,1}(N),W_{1,\infty}(N))$ remains open, even in the commutative case.

\clearpage

\part{Preliminaries}

\section{Abstract interpolation theory}

The material in this section is taken from \cite{JansonRealInterpolation} and \cite{BerghInterpolation}.

\subsection{Compatible couples}

A \textit{compatible couple} is a couple $(E_0,E_1)$ of subspaces of a common Hausdorff topological vector space $E$, such that $E_j$ is equipped with a complete norm that makes the inclusion $E_j\to E$ continuous, for $j\in\{0,1\}$. Then the intersection space $E_0\cap E_1$ and the sum space $E_0+E_1$ are canonically equipped with the complete norms $\|\cdot\|_{E_0\cap E_1}$ and $\|\cdot\|_{E_0+E_1}$ given by the expressions
\[\|u\|_{E_0\cap E_1}:=\max\big\{\|u\|_{E_0},\ \|u\|_{E_1}\big\},\]
\[\|u\|_{E_0+E_1}:=\inf\big\{\|u_0\|_{E_0}+\|u_1\|_{E_1}\ :\  u=u_0+u_1,\ u_0\in E_0,u_1\in E_1\big\}.\]
An \textit{intermediate space} for a compatible couple $(E_0,E_1)$ is a subspace $E_\theta$ of $E_0+E_1$ that contains $E_0\cap E_1$, and that is equipped with a complete norm that makes the inclusions $E_0\cap E_1\to E_\theta$ and $E_\theta\to E_0+E_1$ both continuous. If $E_{\theta_0}$, $E_{\theta_1}$ are intermediate spaces for a compatible couple $(E_0,E_1)$, then their sum $E_{\theta_0}+E_{\theta_1}$ and their intersection $E_{\theta_0}\cap E_{\theta_1}$ are also intermediate spaces for $(E_0,E_1)$ when equipped with the corresponding sum norm $\|\cdot\|_{E_{\theta_0}+E_{\theta_1}}$ and intersection norm $\|\cdot\|_{E_{\theta_0}\cap E_{\theta_1}}$ as defined above.

\subsection{Compatible bounded operators}

Let $(E_0,E_1)$ and $(F_0,F_1)$ be two compatible couples. A \textit{compatible bounded operator} $(E_0,E_1)\to(F_0,F_1)$ is an operator $T:E_0+E_1\to F_0+F_1$ such that, if $j\in\{0,1\}$, then $T$ maps $E_j$ into $F_j$, and $T:E_j\to F_j$ is bounded. In this situation, we set
\[\|T\|_{(E_0,E_1)\to(F_0,F_1)}:=\max\big\{\|T\|_{E_0\to F_0},\|T\|_{E_1\to F_1}\big\}.\]
Let $T:(E_0,E_1)\to(F_0,F_1)$ be a compatible bounded operator. Note that $T$ is injective (resp. surjective, bijective) if and only if $T:E_j\to F_j$ is, for $j\in\{0,1\}$. 

We say that $T$ is an \textit{embedding/quotient} of compatible couples if $T:E_j\to F_j$ is an embedding/quotient of normed spaces for $j\in\{0,1\}$ (recall that a bounded operator $T:E\to F$ between normed spaces is an embedding/quotient if it is injective/surjective and the induced bounded operator $E/\ker T\to\ima T$ is an isomorphism of normed spaces). We say that $T$ is an \textit{isomorphism} of compatible couples if $T:E_j\to F_j$ is an isomorphism of normed spaces, for $j\in\{0,1\}$. 

We say that $T$ is \textit{contractive} if $\|T\|_{(E_0,E_1)\to(E_0,E_1)}\pp1$. We say that $T$ is an \textit{isometric embedding/coisometric quotient} of compatible couples if $T:E_j\to F_j$ is an isometric embedding/coisometric quotient of normed spaces for $j\in\{0,1\}$ (recall that a quotient of normed spaces $T:E\to F$ is coisometric if the induced isomorphism of normed spaces $E/\ker T\to F$ is isometric). We say that $T$ is an \textit{isometric isomorphism} of compatible couples if $T:E_j\to F_j$ is an isometric isomorphism of normed spaces, for $j\in\{0,1\}$. 

\begin{rem}
There is an obvious way to define the category of compatible couples and compatible (contractive) bounded operators. The isomorphisms in this category correspond to the (isometric) isomorphisms of compatible couples.
\end{rem}

An \textit{interpolation space} with constant $C$ for a compatible couple $(E_0,E_1)$ is an intermediate space $E_\theta$ for $(E_0,E_1)$, such that, if $T:(E_0,E_1)\to(E_0,E_1)$ is a compatible bounded operator, then $T$ maps $E_\theta$ into $E_\theta$ and $\|T\|_{E_\theta\to E_\theta}\pp C\|T\|_{(E_0,E_1)\to(E_0,E_1)}$. An \textit{exact interpolation space} is an interpolation space with constant $C\pp1$. The sum/intersection of (exact) interpolation spaces is again an (exact) interpolation space. More generally, an \textit{interpolation pair} with constant $C$ for a pair of compatible couples $(E_0,E_1)$ and $(F_0,F_1)$ is a pair of intermediate spaces $E_\theta$ and $F_\theta$ for $(E_0,E_1)$ and $(F_0,F_1)$ respectively, such that, if $T:(E_0,E_1)\to(F_0,F_1)$ is a compatible bounded operator, then $T$ maps $E_\theta$ into $F_\theta$ and $\|T\|_{E_\theta\to F_\theta}\pp C\|T\|_{(E_0,E_1)\to(F_0,F_1)}$.

An \textit{exact interpolation pair} is an interpolation pair with constant $C\pp1$. 

\subsection{Interpolation functors}

An \textit{interpolation functor} with constant $C$ is a map $\mathcal{F}$ that assigns to each compatible couple $(E_0,E_1)$ an intermediate space $\mathcal{F}(E_0,E_1)$, such that, if $(E_0,E_1)$ and $(F_0,F_1)$ are a pair of compatible couples, then $\mathcal{F}(E_0,E_1)$ and $\mathcal{F}(F_0,F_1)$ is an interpolation pair with constant $C$ for $(E_0,E_1)$ and $(F_0,F_1)$ (in this situation, if $(E_0,E_1)$ is a compatible couple, then $\mathcal{F}(E_0,E_1)$ is necessarily an interpolation space with constant $C$ for $(E_0,E_1)$).

An \textit{exact interpolation functor} is an interpolation functor with constant $C\pp1$. 

\begin{rem}
For instance, the map $\Sigma$ (resp. $\Delta$) that assigns to each compatible couple $(E_0,E_1)$ the sum space $E_0+E_1$ (resp. the intersection space $E_0\cap E_1$) is an exact interpolation functor.
\end{rem}

If $\mathcal{F}$ is an (exact) interpolation functor, then $\mathcal{F}$ defines in an obvious way a functor from the category of compatible couples and compatible (contractive) bounded operators to the category of complete normed spaces and (contractive) bounded operators. 

The following result is the so-called Aronszajn-Gagliardo theorem.

\begin{theo}[\cite{BerghInterpolation}(Theorem 2.5.1)]
Let $E_\theta$ be an exact interpolation space for a compatible couple $(E_0,E_1)$. Then there is an exact interpolation functor $\mathcal{F}$ such that $E_\theta=\mathcal{F}(E_0,E_1)$.
\end{theo}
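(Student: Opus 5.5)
The plan is the classical orbit construction. Fix the compatible couple $(E_0,E_1)$ and the exact interpolation space $E_\theta$. For an arbitrary compatible couple $(F_0,F_1)$, define $\mathcal{F}(F_0,F_1)$ to be the orbit of $E_\theta$ in $F_0+F_1$. That is, $\mathcal{F}(F_0,F_1)$ is the space of all $y\in F_0+F_1$ admitting a representation
\[y=\sum_{k\geq1}T_kx_k,\]
with compatible bounded operators $T_k:(E_0,E_1)\to(F_0,F_1)$ and $x_k\in E_\theta$, such that $\sum_k\|T_k\|_{(E_0,E_1)\to(F_0,F_1)}\|x_k\|_{E_\theta}<\infty$, the series converging in $F_0+F_1$. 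It is normed by the infimum of $\sum_k\|T_k\|\,\|x_k\|_{E_\theta}$ over all such representations.

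Three points need checking. First, this is a normed space, continuously included in $F_0+F_1$. Convergence in $F_0+F_1$ holds because $\|T_kx_k\|_{F_0+F_1}\leq\|T_k\|\,\|x_k\|_{E_0+E_1}\leq C\|T_k\|\,\|x_k\|_{E_\theta}$. The triangle inequality follows by concatenating representations. Completeness follows from the usual criterion that absolutely summable series converge: each term of the series has its own representation, and one merges them into a single countable representation.

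Second, it is an intermediate space. The inclusion into $F_0+F_1$ was just shown. For $F_0\cap F_1\subset\mathcal{F}(F_0,F_1)$, pick a nonzero $x\in E_0\cap E_1$ and a functional $\phi$ on $E_0+E_1$ with $\phi(x)\neq0$, using Hahn–Banach. For $y\in F_0\cap F_1$ the rank-one operator $T=\phi(\cdot)y/\phi(x)$ is compatible bounded, with norm $\lesssim\|y\|_{F_0\cap F_1}$, and $y=Tx$. The degenerate case $E_0\cap E_1=\{0\}$ is a trivial side remark.

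Third, and most important, exactness. If $S:(F_0,F_1)\to(G_0,G_1)$ is compatible bounded, then $Sy=\sum_k(ST_k)x_k$ is again a representation, since composing with $S$ commutes with the series by continuity of $S$ on the sums. Because $\|ST_k\|\leq\|S\|\,\|T_k\|$, we get $\|Sy\|_{\mathcal{F}(G)}\leq\|S\|\,\|y\|_{\mathcal{F}(F)}$. So $\mathcal{F}$ is an exact interpolation functor.

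The main obstacle is the identification $\mathcal{F}(E_0,E_1)=E_\theta$ with equal norms.
\begin{itemize}
\item For $E_\theta\subset\mathcal{F}(E_0,E_1)$, take $T_1=\mathrm{Id}$, whose couple norm is $1$; this gives $\|x\|_{\mathcal{F}}\leq\|x\|_{E_\theta}$.
\item For the reverse, take $y=\sum_kT_kx_k$ with $T_k:(E_0,E_1)\to(E_0,E_1)$. Since $E_\theta$ is an exact interpolation space, $\|T_kx_k\|_{E_\theta}\leq\|T_k\|\,\|x_k\|_{E_\theta}$, so the series converges absolutely in the Banach space $E_\theta$.
\item Its $E_\theta$-limit coincides with the $(E_0+E_1)$-limit $y$, because $E_\theta\hookrightarrow E_0+E_1$ continuously. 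Hence $\|y\|_{E_\theta}\leq\sum_k\|T_k\|\,\|x_k\|_{E_\theta}$, and taking the infimum gives the opposite inequality.
\end{itemize}
This step is exactly where exactness (constant $C\leq1$) is used; with a general constant $C$ one would only get equivalence of norms. Some care is needed with set-theoretic size (a class of all couples), but as in \cite{BerghInterpolation} this is harmless.
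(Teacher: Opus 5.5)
Your proposal is correct and is precisely the orbit construction used in the proof of Theorem 2.5.1 of \cite{BerghInterpolation}; the paper gives no argument of its own and simply cites that result. One small precision on the edge case. The rank-one step works with any nonzero $x\in E_\theta$, not only $x\in E_0\cap E_1$, so the only genuinely degenerate situation is $E_\theta=\{0\}$: there the orbit functor vanishes identically and fails to be intermediate, and you should instead take $\mathcal{F}=\Delta$, which works because $E_0\cap E_1\subset E_\theta=\{0\}$.
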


\subsection{Subcouples} 

A \textit{subcouple} of a compatible couple $(E_0,E_1)$ is a couple $(A_0,A_1)$ where $A_j$ is a closed subspace of $E_j$ for $j\in\{0,1\}$. In this situation, the couple $(A_0,A_1)$ inherits a unique structure of compatible couple making the inclusion $A_0+A_1\to E_0+E_1$ an isometric embedding of compatible couples $(A_0,A_1)\to(E_0,E_1)$. Thus, if $\mathcal{F}$ is an (exact) interpolation functor, then the inclusion
\[\fonctbis{\mathcal{F}(A_0,A_1)}{\mathcal{F}(E_0,E_1)}{u}{u}\]
is a well-defined bounded (contractive) injective operator, but it may fail to be an embedding of normed spaces (this is in particular the case for $\mathcal{F}=\Sigma$, but note that, however, in the case $\mathcal{F}=\Delta$, the inclusion $A_0\cap A_1\to E_0\cap E_1$ is an isometric embedding of normed spaces).

\subsection{Complementation} 

A subcouple $(A_0,A_1)$ of a compatible couple $(E_0,E_1)$ is \textit{complemented} with constant $C$ if there is a bounded (contractive) compatible operator $P:(E_0,E_1)\to(E_0,E_1)$ such that $P:E_j\to E_j$ is idempotent with range $A_j$, and $\|P\|_{E_j\to E_j}\pp C$, for $j\in\{0,1\}$.

A subcouple $(A_0,A_1)$ of a compatible couple $(E_0,E_1)$ is \textit{contractively complemented} if it is complemented with constant $C\pp1$.

If $(A_0,A_1)$ is a (contractively) complemented subcouple of a compatible couple $(E_0,E_1)$, then for every (exact) interpolation functor $\mathcal{F}$, the inclusion $\mathcal{F}(A_0,A_1)\to\mathcal{F}(E_0,E_1)$ is an (isometric) embedding of normed spaces, with range $(A_0+A_1)\cap\mathcal{F}(E_0,E_1)$, and the projection
$\mathcal{F}(E_0,E_1)\to\mathcal{F}(E_0/A_0,E_1/A_1)$ is a (coisometric) quotient of normed spaces, with kernel $\mathcal{F}(A_0,A_1)$.

\subsection{\texorpdfstring{$K$}{}-functionals} 

Let $(E_0,E_1)$ be a compatible couple. The \textit{$K$-functional} of $u\in E_0+E_1$ is defined for $t>0$ as
\[K_t(u)=K_t(u,E_0,E_1):=\inf\big\{\|u_0\|_{E_0}+t\|u_1\|_{E_1}\ :\ u_0\in E_0,\ u_1\in E_1,\ u=u_0+u_1\big\}.\]
For fixed $t>0$, $K_t$ is an equivalent norm on $E_0+E_1$. If $(E_0,E_1)$ and $(F_0,F_1)$ are two compatible couples and $T:(E_0,E_1)\to(F_0,F_1)$ a compatible bounded operator, then \[K_t(Tu,F_0,F_1)\pp\|T\|_{(E_0,E_1)\to(F_0,F_1)}K_t(u,E_0,E_1)\]
for every $u\in E_0+E_1$ and $t>0$. 

A \textit{$K$\-/method parameter} is a complete normed space $\Phi$ of (class of) Lebesgue measurable functions with variable $t>0$ such that,
\begin{enumerate}[nosep]
    \item[$\triangleright$] if $f,g\in\Phi$ with $|g|\pp|f|$ then $\|g\|_{\Phi}\pp\|f\|_{\Phi}$, 
    \item[$\triangleright$] the function $t\mapsto1\wedge t$ belongs to $\Phi$.
\end{enumerate}
If $\Phi$ is a $K$\-/method parameter and $(E_0,E_1)$ is a compatible couple, then the \textit{$K$-method interpolation space}
\[K_\Phi(E_0,E_1):=\big\{u\in E_0+E_1\ :\ t\mapsto K_t(u,E_0,E_1)\in\Phi\big\}\]
is a subspace of $E_0+E_1$ and is equipped with the complete norm $\|\cdot\|_{K_\Phi(E_0,E_1)}$ given by the expression
\[\|u\|_{K_\Phi(E_0,E_1)}:=\|t\mapsto K_t(u,E_0,E_1)\|_{\Phi}.\]
This construction defines an exact interpolation functor $K_\Phi$ called the \textit{$K$\-/method} with parameter $\Phi$. 

If $(A_0,A_1)$ is a subcouple of a compatible couple $(E_0,E_1)$, then we have
\[K_t(u,E_0,E_1)\pp K_t(u,A_0,A_1),\ \ \ \ \text{for}\ u\in A_0+A_1,\ t>0\]
A subcouple $(A_0,A_1)$ of a compatible couple $(E_0,E_1)$ is \textit{$K$\-/closed} with constant $C$ if 
\[K_t(u,A_0,A_1)\pp CK_t(u,E_0,E_1)\]
for every $u\in A_0+A_1$ and $t>0$. 

\begin{rem}
If $(A_0,A_1)$ is a $K$-closed subcouple of a compatible couple $(E_0,E_1)$, then $A_0+A_1$ is closed in $E_0+E_1$ (in other words, the inclusion $A_0+A_1\to E_0+E_1$ is an embedding of normed spaces).
\end{rem}

The following proposition follows directly from the definitions.

\begin{prop}\label{preliminaries_Kclosedness}
If $(A_0,A_1)$ is a $K$-closed subcouple of a compatible couple $(E_0,E_1)$, then for every $K$\-/method parameter $\Phi$, the inclusion $K_\Phi(A_0,A_1)\to K_\Phi(E_0,E_1)$ is an embedding of normed spaces, with range $(A_0+A_1)\cap K_\Phi(E_0,E_1)$.
\end{prop}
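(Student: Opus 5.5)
The argument only needs the two-sided comparison of the $K$-functionals, pushed through the lattice property of the parameter $\Phi$. Fix $u\in A_0+A_1$. Since $(A_0,A_1)$ is a subcouple, the trivial inequality recalled above gives $K_t(u,E_0,E_1)\pp K_t(u,A_0,A_1)$ for all $t>0$. $K$-closedness with constant $C$ gives the reverse bound $K_t(u,A_0,A_1)\pp CK_t(u,E_0,E_1)$. The two functions $t\mapsto K_t(u,A_0,A_1)$ and $t\mapsto K_t(u,E_0,E_1)$ are therefore pointwise comparable, up to the factor $C$.

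\emph{Step 1 (range).} Both inclusions follow from the first axiom of a $K$-method parameter (if $|g|\pp|f|$ and $f\in\Phi$, then $g\in\Phi$ and $\|g\|_\Phi\pp\|f\|_\Phi$). If $u\in K_\Phi(A_0,A_1)$, then $u\in A_0+A_1$ and
\[K_t(u,E_0,E_1)\pp K_t(u,A_0,A_1),\]
so $u\in K_\Phi(E_0,E_1)$. Hence $K_\Phi(A_0,A_1)\subset (A_0+A_1)\cap K_\Phi(E_0,E_1)$. Conversely, if $u\in A_0+A_1$ and $t\mapsto K_t(u,E_0,E_1)$ lies in $\Phi$, then
\[K_t(u,A_0,A_1)\pp CK_t(u,E_0,E_1),\]
so $t\mapsto K_t(u,A_0,A_1)$ also lies in $\Phi$, and $u\in K_\Phi(A_0,A_1)$. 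This gives the stated range.

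\emph{Step 2 (embedding).} The same pointwise bounds give
\[\|u\|_{K_\Phi(E_0,E_1)}\pp\|u\|_{K_\Phi(A_0,A_1)}\pp C\|u\|_{K_\Phi(E_0,E_1)}.\]
The inclusion is clearly injective. By these inequalities it is an isomorphism onto its range, so it is an embedding of normed spaces in the sense of the preliminaries. As a consequence, the range $(A_0+A_1)\cap K_\Phi(E_0,E_1)$ is closed in $K_\Phi(E_0,E_1)$.

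\emph{Main obstacle.} There is essentially none. The only point to watch is measurability of $t\mapsto K_t(u,\cdot,\cdot)$, which is needed for it to be a legitimate element of $\Phi$. This holds because $K_t$ is concave and increasing in $t$, hence continuous.
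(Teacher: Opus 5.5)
Your proof is correct and is exactly the direct verification from the definitions that the paper has in mind; the paper gives no further argument beyond remarking that the proposition follows from the definitions, and your pointwise two-sided comparison of the $K$-functionals, pushed through the lattice property of $\Phi$, is that verification. You rightly use the lattice axiom in its solid form ($|g|\leq|f|$ with $f\in\Phi$ forces $g\in\Phi$). The proposition needs that form, even though the paper's wording of the axiom only asserts norm monotonicity for $f,g$ already in $\Phi$.
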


The notion of $K$-closedness used in \cite{KislyakovXu} is slightly different from the one we use here. In order to avoid any confusion we introduce a new terminology.

\begin{defi}
A subcouple $(A_0,A_1)$ of a compatible couple $(E_0,E_1)$ is \textit{quasi\-/complemented} with constant $C$ if, for every $a\in A_0+A_1$, if $a=u_0+u_1$ with $u_j\in E_j$ for $j\in\{0,1\}$, then $a=a_0+a_1$ with $a_j\in A_j$ and $\|a_j\|_{E_j}\pp C\|u_j\|_{E_j}$ for $j\in\{0,1\}$.
\end{defi}

Let $(A_0,A_1)$ be a subcouple of a compatible couple $(E_0,E_1)$. If $(A_0,A_1)$ is complemented in $(E_0,E_1)$ with constant $C$, then $(A_0,A_1)$ is clearly quasi-complemented in $(E_0,E_1)$ with constant $C$. Moreover, it is clear that if $(A_0,A_1)$ is quasi-complemented in $(E_0,E_1)$ with constant $C$, then it is $K$-closed in $(E_0,E_1)$ with constant $C$, and in addition we have $A_j=E_j\cap(E_0+E_1)$ for $j\in\{0,1\}$, so that it is also normal (and thus $J$-closed) in $(E_0,E_1)$. 

\begin{prop}
Let $(A_0,A_1)$ be a subcouple of a compatible couple $(E_0,E_1)$ such that $A_j=E_j\cap(E_0+E_1)$ for $j\in\{0,1\}$ and which is $K$-closed in $(E_0,E_1)$ with constant $C$. Then $(A_0,A_1)$ is quasi-complemented in $(E_0,E_1)$ with constant $4C$.
\end{prop}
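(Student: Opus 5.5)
The plan is a direct argument using the $K$-functional at one well-chosen value of $t$, which balances the two pieces of the given decomposition. Let $a\in A_0+A_1$ and write $a=u_0+u_1$ with $u_j\in E_j$. Recall that the norm of $A_j$ is the restriction of the norm of $E_j$, so it suffices to produce $a_j\in A_j$ with $a=a_0+a_1$ and $\|a_j\|_{E_j}\pp 4C\|u_j\|_{E_j}$.

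First I treat the degenerate cases, which is the only place the hypothesis $A_j=E_j\cap(E_0+E_1)$ is needed. If $u_0=u_1=0$, take $a_0=a_1=0$. If $u_1=0$ but $u_0\neq 0$, then $a=u_0\in E_0$ and also $a\in A_0+A_1\subset E_0+E_1$. Hence $a\in E_0\cap(E_0+E_1)=A_0$, and I take $a_0=a$, $a_1=0$. The case $u_0=0$, $u_1\neq0$ is symmetric.

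In the main case $u_0\neq0\neq u_1$, I set
\[t:=\|u_0\|_{E_0}/\|u_1\|_{E_1}>0 .\]
The decomposition $a=u_0+u_1$ then gives
\[K_t(a,E_0,E_1)\pp\|u_0\|_{E_0}+t\|u_1\|_{E_1}=2\|u_0\|_{E_0}.\]
By $K$-closedness with constant $C$, we get $K_t(a,A_0,A_1)\pp 2C\|u_0\|_{E_0}$. Note that $C\pg1$ necessarily, since $K_t(\cdot,E_0,E_1)\pp K_t(\cdot,A_0,A_1)$, unless the statement is vacuous. The quantity $4C\|u_0\|_{E_0}$ is strictly positive and is at least twice $K_t(a,A_0,A_1)$. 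So, by the definition of the infimum, there is a decomposition $a=a_0+a_1$ with $a_j\in A_j$ and
\[\|a_0\|_{E_0}+t\|a_1\|_{E_1}\pp 4C\|u_0\|_{E_0}.\]
When $K_t(a,A_0,A_1)>0$ the factor-$2$ slack allows a strict approximation of the infimum. When $K_t(a,A_0,A_1)=0$, any decomposition with value below the positive bound works.

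It remains to read off both estimates from this single inequality. First, $\|a_0\|_{E_0}\pp 4C\|u_0\|_{E_0}$ is immediate. Second, $t\|a_1\|_{E_1}\pp 4C\|u_0\|_{E_0}=4Ct\|u_1\|_{E_1}$, and dividing by $t>0$ gives $\|a_1\|_{E_1}\pp4C\|u_1\|_{E_1}$.

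The only genuinely delicate point is the bookkeeping. One must choose $t$ so that the single bound on $\|a_0\|+t\|a_1\|$ controls both pieces. One must also keep enough slack (the factor $2$, giving $4C$ rather than $2C$) so that a near-optimal decomposition exists without assuming the infimum is attained. Finally, the degenerate cases cannot be handled by this choice of $t$, and they are exactly where the hypothesis on $A_j$ enters.
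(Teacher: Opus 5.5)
Your proof is correct and is essentially the paper's argument: the same choice $t=\|u_0\|_{E_0}/\|u_1\|_{E_1}$, the same factor-$2$ near-optimal decomposition giving $4C$, and the same separate treatment of the cases $u_j=0$; your handling of the case $K_t(a,A_0,A_1)=0$ is slightly more careful than the paper's. One remark: the hypothesis as printed is a typo, since literally $E_j\cap(E_0+E_1)=E_j$, and the intended condition (the one the paper's proof actually uses) is $A_j=E_j\cap(A_0+A_1)$, so in the degenerate case you should conclude $a\in E_0\cap(A_0+A_1)=A_0$ directly from your observation $a\in A_0+A_1$, rather than passing through $E_0+E_1$.
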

\begin{proof}
Let $a\in A_0+A_1$, and write $a=u_0+u_1$ with $u_j\in E_j$ for $j\in\{0,1\}$. If $u_j=0$, then $a\in(A_0+A_1)\cap E_j=A_j$ and there is nothing to prove. Thus, we can assume $u_0,u_1\neq0$. Set $t:=\|u_0\|_{E_0}/\|u_1\|_{E_1}$. Then, we can write $a=a_0+a_1$ with $a_j\in A_j$ and 
\[\|a_0\|_{E_0}+t\|a_1\|_{E_1}\pp 2K_t(a,A_0,A_1)\pp 2C K_t(a,E_0,E_1)\pp 2C(\|u_0\|_{E_0}+t\|u_1\|_{E_1})=4C\|u_0\|_{E_0}.\]
This gives the estimates
\[\|a_0\|_{E_0}\pp 4C\|u_0\|_{E_0},\ \ \ \ \|a_1\|_{E_1}\pp 4C\|u_1\|_{E_1}.\]
\end{proof}

\section{Real and Complex interpolation spaces}

\subsection{The real method}

Let $0<\theta<1$ and $1\pp p\pp\infty$. Let $\Phi_{\theta,p}$ denote the space of Lebesgue\-/measurable functions $f$ with variable $t>0$ such that 
\[\|f\|_{\Phi_{\theta,p}}:=\|t\mapsto t^{-\theta}f(t)\|_{L_p(dt/t)}<\infty\]
Then $\Phi_{\theta,p}$ is a $J$\-/method parameter (and thus also a $K$\-/method parameter). If $(E_0,E_1)$ is a compatible couple, let us denote \[(E_0,E_1)_{\theta,p,K}:=K_{\Phi_{\theta,p}}.\]
By convention, we set $(E_0,E_1)_{0,p,K}:=E_0$ and $(E_0,E_1)_{1,p,K}:=E_1$ for every $1\pp p\pp\infty$. They are the so called \textit{real interpolation spaces}.

\subsection{The complex method}

Let $\S:=\{z\in\C\ :\ 0<\re z<1\}$ denote the open unit strip in the complex plane, with boundary $\partial\S=\{it\ :\ t\in\R\}\cup\{1+it\ :\ t\in\R\}$ and closure $\overline{\S}:=\{z\in\C\ :\ 0\pp\re z\pp1\}$. 

Let $(E_0,E_1)$ be a compatible couple. Let $\mathcal{F}(E_0,E_1)$ denote the space of functions $f:\overline{\S}\to E_0+E_1$ continuous on $\overline{\S}$, holomorphic on $\S$, such that $f(j+it)\in E_j$ for $t\in\R$, $j\in\{0,1\}$, and such that the functions $\R\to E_j$, $t\mapsto f(j+it)$ are continuous and vanishes at infinity, for $j\in\{0,1\}$. If $f\in\mathcal{F}(E_0,F_1)$, we set
\[\|f\|_{\mathcal{F}(E_0,E_1)}:=\max_{j\in\{0,1\}}\sup_{t\in\R}\|f(j+it)\|_{E_j}.\]
If $0<\theta<1$, the \textit{complex interpolation space} $[E_0,E_1]_\theta$ is the subspace of $E_0+E_1$ of elements of the form $f(\theta)$ with $f\in\mathcal{F}(E_0,E_1)$. It is equipped with the complete norm $\|\cdot\|_{[E_0,E_1]_\theta}$ given by the expression
\[\|u\|_{[E_0,E_1]_\theta}:=\inf\big\{\|f\|_{\mathcal{F}(E_0,E_1)}\ :\ f\in\mathcal{F}(E_0,E_1),\ u=f(\theta)\big\}.\]
This construction yields, for fixed $0<\theta<1$, an exact interpolation functor. By convention, we set $(E_0,E_1)_{0}:=E_0$ and $(E_0,E_1)_{1}:=E_1$.

\section{Noncommutative integration theory}

The material in this section is taken from \cite{DoddsSukochevIntegration}. An other relevant reference is \cite{DoddsInterpolationLp}.

\subsection{Semifinite von Neumann algebras}

Let $M$ be a \textit{(semi)finite von Neumann algebra}, i.e. a von Neumann algebra equipped with a normal (semi)finite faithful (n.s.f.) trace $\tau$. Let $H$ denote the Hilbert space on which $M$ acts. A closed and densely defined operator $x$ on $H$ with polar decomposition $x=u|x|$ and spectral decomposition $|x|=\int_{0}^{\infty}sde_s$ is \textit{affiliated} with $M$ if $u\in M$ and $e_s\in M$ for all $s>0$. The \textit{distribution function} of $x$ is the right-continuous nonnegative decreasing function of the variable $s>0$ denoted $\lambda_x$, such that
\[\lambda_x(s)=\tau(1-e_s),\ \ \ \text{for}\ s>0.\]
The \textit{singular function} of $x$ is the right-continuous nonnegative decreasing function of the variable $s>0$ denoted $\mu_x$ such that
\[\mu_x(s)=\inf\big\{t>0\ :\ \lambda_x(t)\pp s\big\},\ \ \text{for}\ s>0.\]
A closed and densely defined operator $x$ on $H$ is \textit{$\tau$\-/measurable} if it is affiliated with $M$ and if its distribution function (or its singular function) takes at least one finite value. Any element of $M$ is $\tau$\-/measurable. The set $L_0(M)$ of $\tau$\-/measurable operators then admits a canonical structure of $\ast$\-/algebra, so that the inclusion $M\to L_0(M)$ is a $\ast$-morphism and $\tau$ is canonically extended to the positive part $L_0(M)_+$ of $L_0(M)$ so that
\[\tau(x)=\int_{0}^{\infty}\mu_x(s)ds=\int_{0}^{\infty}\lambda_x(s)ds,\ \ \ \text{for}\ x\in L_0(M)_+.\]
For every $x\in L_0(M)$ and $1\pp p\pp\infty$, we set 
\[\|x\|_{L_p(M)}=\|x\|_p:=\left\{\begin{array}{cl}
    \Big[\displaystyle\int_{0}^{\infty}\mu_x(s)^pds\Big]^{1/p}=\Big[\int_{0}^{\infty}\lambda_x(s)ps^{p-1}ds\Big]^{1/p} & \text{if}\ p<\infty \\
    \inf\{s>0\ \mid\ \tau(1_{(s,\infty)}(|x|))=0\} & \text{if}\ p=\infty
\end{array}\right..\]
Then, for every $1\pp p\pp\infty$, the associated \textit{$L_p$-space},
\[L_p(M):=\big\{x\in L_0(M)\ :\ \|x\|_{L_p(M)}<\infty\big\}\]
is a subspace of $L_0(M)$ and becomes a complete normed space under $\|\cdot\|_{L_p(M)}$. Note that we have $L_\infty(M)=M$ with equal norms.

We will need the following version of Marcinkiewicz interpolation theorem.

\begin{theo}[Marcinkiewicz]
Let $T:L_2(M)\to L_2(M)$ be a self-adjoint bounded operator which is \textit{of weak-type} $(1,1)$ with constant $C$, i.e. for every $x\in (L_1\cap L_2)(M)$, we have
\[\sup_{s>0}s\lambda_{T(x)}(s)\pp C\|x\|_{L_1(M)}.\]
Then, for every $1<p<\infty$, the operator $T$ is $L_p(M)$-bounded with a constant depending only on $p,\|T\|_{L_2(M)\to L_2(M)}$ and $C$.
\end{theo}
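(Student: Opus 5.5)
We prove the noncommutative Marcinkiewicz theorem by the classical cut-and-split argument, run through the distribution function $\lambda$ and the singular function $\mu$ instead of pointwise values. Since $T$ is bounded on $L_2(M)$ and of weak type $(1,1)$, we first handle $1<p<2$ by real interpolation between the weak-type $(1,1)$ estimate and the strong $(2,2)$ estimate. For fixed $x\in(L_1\cap L_2)(M)$ and $s>0$, we split $x$ by spectral truncation of $|x|$. Write $x=u|x|$, let $e:=1_{(s,\infty)}(|x|)$, and set $x_0:=u|x|e$ and $x_1:=u|x|(1-e)$. Then $x=x_0+x_1$, with $\|x_0\|_1=\int_{\mu_x>s}\mu_x$ and $\|x_1\|_2^2=\int \min(\mu_x,s)^2$; these follow from functional calculus and the identity $\mu_{|x|f(|x|)}=(\mu_x f(\mu_x))$ for monotone $f$.

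Next we need the noncommutative subadditivity of the distribution function, $\lambda_{y+z}(2s)\le\lambda_y(s)+\lambda_z(s)$ for $y,z\in L_0(M)$. This is standard; we would cite \cite{DoddsSukochevIntegration}, or derive it from $\mu_{y+z}(a+b)\le\mu_y(a)+\mu_z(b)$. Combining it with the weak-type bound for $x_0$ and the Chebyshev bound $\lambda_{Tx_1}(s)\le s^{-2}\|T\|^2\|x_1\|_2^2$ gives
\[\lambda_{Tx}(2s)\le \frac{C}{s}\int_{\{\mu_x>s\}}\mu_x(r)\,dr+\frac{\|T\|^2}{s^2}\int_0^\infty\min(\mu_x(r),s)^2dr .\]
We then integrate against $ps^{p-1}ds$ using the formula $\|y\|_p^p=\int\lambda_y(s)ps^{p-1}ds$ from the preliminaries, and apply Fubini in the variables $(r,s)$. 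The first term gives $\frac{pC}{p-1}\|x\|_p^p$. For the second term, the bound $\int_{\mu_x(r)}^\infty s^{p-3}\mu_x(r)^2\,ds\lesssim\mu_x(r)^p/(2-p)$ and $\int_0^{\mu_x(r)} s^{p-1}ds=\mu_x(r)^p/p$ give a contribution $\lesssim_p\|T\|^2\|x\|_p^p$. Hence $\|Tx\|_p\le C_p\|x\|_p$ on the dense subspace $L_1\cap L_2\cap L_p$, and $T$ extends by density. This density step needs the subspace to be dense in $L_p$; that holds because elements of $L_p$ can be approximated by $xe$ with $e$ a finite-trace spectral projection. For $p=2$ there is nothing to prove.

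For $2<p<\infty$ we use duality together with self-adjointness. For $x,y\in L_2\cap L_p$ and $y\in L_{p'}$ with $1<p'<2$, we have $|\tau(T(x)y^*)|=|\tau(x\,T(y)^*)|\le\|x\|_p\|Ty\|_{p'}\le C_{p'}\|x\|_p\|y\|_{p'}$. Since $L_{p'}\cap L_2$ is norming for $L_p$, it follows that $\|Tx\|_p\le C_{p'}\|x\|_p$.

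We expect the main obstacle to be the first step: justifying the splitting estimate in the noncommutative setting. Concretely, we have to verify the two norm formulas for the truncations $x_0$ and $x_1$ and the subadditivity of $\lambda$, since pointwise arguments are not available. We would handle both through the singular-value inequalities in \cite{DoddsSukochevIntegration}; the remaining computations are the same as in the commutative proof.
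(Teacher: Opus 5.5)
The paper gives no proof of this statement. It is recalled in the preliminaries as the classical Marcinkiewicz theorem and used as a black box, to pass from the weak type $(1,1)$ estimate for operators of Calder\'on--Zygmund type to $L_p$-boundedness of singular kernel projections. Your argument is the standard one, run through $\lambda$ and $\mu$, and it is correct. A few details need adjusting.

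\emph{The $L_2$ norm of $x_1$.} With your choice $x_1=u|x|(1-e)$ you get $\|x_1\|_2^2=\tau(|x|^2 1_{[0,s]}(|x|))=\int_{\{\mu_x\le s\}}\mu_x(r)^2\,dr$. This is only $\le\int_0^\infty\min(\mu_x(r),s)^2\,dr$, not equal to it. The identity $\mu_{|x|f(|x|)}=\mu_x f(\mu_x)$ cannot be invoked, because $t\mapsto t1_{[0,s]}(t)$ is not monotone. The inequality is all you use, and the right tool is the trace formula $\tau(g(|x|))=\int_0^\infty g(\mu_x(r))\,dr$, valid for every Borel $g\ge 0$ with $g(0)=0$. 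It holds because the measures $B\mapsto\tau(1_B(|x|))$ and $B\mapsto|\{r:\mu_x(r)\in B\}|$ on $(0,\infty)$ agree on the intervals $(s,\infty)$, where both equal $\lambda_x(s)<\infty$. Alternatively, cut with $x_0=u(|x|-s)_+$ and $x_1=u\min(|x|,s)$.

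\emph{The density step.} You can drop it. If $x\in(L_2\cap L_p)(M)$ with $1<p<2$, then $\|x_0\|_1=\int_{\{\mu_x>s\}}\mu_x\le s^{1-p}\|x\|_p^p$, so $x_0\in(L_1\cap L_2)(M)$. Your computation then gives $\|Tx\|_p\le C_p\|x\|_p$ directly on $(L_2\cap L_p)(M)$. This is what $L_p$-boundedness of an operator on $L_2(M)$ means, and it guarantees that the extension agrees with $T$. If you keep the approximation, the approximants $xe$ must be taken with, say, $e=1_{(\epsilon,N]}(|x|)$; a finite-trace $e$ alone does not put $xe$ in $L_2$.

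\emph{The duality step.} You apply the norming property to $z=Tx$, which is a priori only in $L_2(M)$. So you need $\|z\|_p=\sup\{|\tau(zy^*)|:y\in(L_1\cap L_\infty)(M),\ \|y\|_{p'}\le1\}$ for every $z\in L_2(M)$, including the conclusion $z\in L_p(M)$ when the supremum is finite.
To prove it, write $z=v|z|$ and test with $y=v|z|^{p-1}1_{(\epsilon,N]}(|z|)$.
\begin{itemize}
\item This $y$ lies in $(L_1\cap L_\infty)(M)$ because $\tau(1_{(\epsilon,\infty)}(|z|))\le\epsilon^{-2}\|z\|_2^2$.
\item It gives the ratio $\tau(|z|^p1_{(\epsilon,N]}(|z|))^{1/p}$; then let $\epsilon\to0$ and $N\to\infty$.
\end{itemize}
Testing only against $(L_1\cap L_\infty)(M)$ also lets you use the case $p'<2$ exactly as you proved it. The factor $2^p$ coming from $\lambda_{Tx}(2s)$ is harmless.

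\emph{Comparison with the interpolation proof.} The other usual proof interpolates by the real method between $T:L_1\to L_{1,\infty}$ (weak $L_1$) and $T:L_2\to L_2$. Your route has a real advantage in this paper. It only ever applies the weak-type hypothesis to elements of $(L_1\cap L_2)(M)$, which is exactly how the paper states the hypothesis and verifies it for operators of Calder\'on--Zygmund type. The interpolation route needs two extra inputs:
\begin{itemize}
\item an extension of $T$ to a map $L_1(M)\to L_{1,\infty}(M)$ compatible with its action on $L_2(M)$;
\item the identification of $(L_{1,\infty}(M),L_2(M))_{\theta,p,K}$ with $L_p(M)$, for a quasi-Banach couple that lies outside the normed framework of the preliminaries.
\end{itemize}
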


\subsection{The compatible couple \texorpdfstring{$(L_1,L_\infty)$}{}}

The $\ast$-algebra $L_0(M)$ admits a canonical Hausdorff topology so that the inclusions $L_1(M)\to L_0(M)$ and $L_\infty(M)\to L_0(M)$ become continuous, making the couple $(L_1(M),L_\infty(M))$ compatible. 

\begin{prop}
Let $E(M)$ be an exact interpolation space for $(L_1(M),L_\infty(M))$. 
\begin{enumerate}[nosep]
    \item If $x\in E(M)$, then $x^*\in E(M)$, with $\|x^*\|_{E(M)}=\|x\|_{E(M)}$.
    \item If $x\in E(M)$, $a,b\in M$, then $axb\in E(M)$ with $\|axb\|_{E(M)}\pp\|a\|_M\|x\|_{E(M)}\|b\|_M$.
\end{enumerate}
\end{prop}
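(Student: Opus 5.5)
The plan is to treat the two statements separately. Both rest on the fact that the maps $x\mapsto x^*$ and $x\mapsto axb$ are compatible operators on $(L_1(M),L_\infty(M))$, whose norms are controlled exactly on each endpoint space. In each case we then invoke the definition of an exact interpolation space. That definition involves linear maps, so the adjoint needs care.

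For (2), fix $a,b\in M$ and consider $T_{a,b}:L_1(M)+L_\infty(M)\to L_1(M)+L_\infty(M)$, $x\mapsto axb$. This is linear. By the noncommutative Hölder inequality, or equivalently the singular value estimate $\mu_{axb}\pp\|a\|_M\|b\|_M\mu_x$, we get $\|axb\|_{L_p(M)}\pp\|a\|_M\|b\|_M\|x\|_{L_p(M)}$ for $p=1,\infty$. Hence $\|T_{a,b}\|_{(L_1,L_\infty)\to(L_1,L_\infty)}\pp\|a\|_M\|b\|_M$. Since $E(M)$ is an exact interpolation space, $T_{a,b}$ maps $E(M)$ into itself with norm at most $\|a\|_M\|b\|_M$, which is exactly (2).

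For (1), the adjoint map is conjugate-linear, so the definition does not apply directly. The first thing I would try is to pass to the opposite algebra. The map $x\mapsto x^*$ is a linear isometry from $L_p(M)$ onto $L_p(M^{\mathrm{op}})$, because $\mu_{x^*}=\mu_x$. I prefer instead to reduce to a linear operator as follows, using the Aronszajn--Gagliardo theorem stated above.

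Write $E(M)=\mathcal{F}(L_1(M),L_\infty(M))$ for an exact interpolation functor $\mathcal{F}$. Let $\overline{L_p(M)}$ denote the complex conjugate Banach space. The couple $(\overline{L_1(M)},\overline{L_\infty(M)})$ is compatible, and $x\mapsto x^*$ is a \emph{linear} isometric isomorphism of compatible couples $(\overline{L_1(M)},\overline{L_\infty(M)})\to(L_1(M),L_\infty(M))$. The same isometric isomorphism, $\mathrm{id}:(L_1,L_\infty)\to(\overline{L_1},\overline{L_\infty})$, relates the couple to its conjugate as a conjugate-linear identification.

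To avoid functoriality issues with conjugate spaces, I would rather argue directly. Define $\overline{E}$ as the space $\{x^*: x\in E(M)\}$ with norm $\|x^*\|_{\overline{E}}:=\|x\|_{E(M)}$. I would check that $\overline{E}$ is again an exact interpolation space for $(L_1(M),L_\infty(M))$. Given a linear compatible $T$, set $\tilde T(y):=T(y^*)^*$. This is linear, has the same endpoint norms as $T$ (since $\|y^*\|_p=\|y\|_p$), and so preserves $E(M)$ with norm at most $\|T\|$. It follows that $T=\widetilde{\tilde T}$ preserves $\overline{E}$ with the same bound.

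This alone does not yet show $\overline{E}=E(M)$. The main obstacle is therefore identifying the two spaces. Here I would use the Calderón-type characterization of exact interpolation spaces of $(L_1,L_\infty)$ over a semifinite algebra, namely the Dodds–Dodds–de Pagter results in \cite{DoddsInterpolationLp}. Membership in such a space, with its norm, is determined through the submajorization/$K$-functional structure, which depends only on $\mu_x$, and $\mu_{x^*}=\mu_x$.

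Concretely, the step to verify is this: if $K_t(y,L_1,L_\infty)\pp K_t(x,L_1,L_\infty)$ for all $t$ and $x\in E(M)$, then $y\in E(M)$ with $\|y\|_E\pp\|x\|_E$. This is the $K$-monotonicity with constant $1$ of exact interpolation spaces of $(L_1,L_\infty)$ established in \cite{DoddsInterpolationLp}. Applying it with $y=x^*$, where $K_t(x^*)=\int_0^t\mu_x=K_t(x)$, gives $x^*\in E(M)$ with $\|x^*\|_E\pp\|x\|_E$. Symmetry then yields equality of norms.
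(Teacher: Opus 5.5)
Your proof is correct. The paper gives no proof of this proposition; it is background quoted from \cite{DoddsSukochevIntegration}. So the comparison is with the natural argument.

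Your treatment of (2) is exactly that argument. The map $x\mapsto axb$ is a linear compatible operator with $\mu_{axb}\le\|a\|_M\|b\|_M\mu_x$, and exactness of $E(M)$ gives the bound.

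For (1), your final step is also valid: full symmetry of $E(M)$ (Theorem \ref{Integration_Calderon}) together with $\mu_{x^*}=\mu_x$, hence $K_t(x^*)=K_t(x)$. It is the same mechanism the paper uses in the remark after Theorem \ref{Integration_Calderon} to get $\||x|\|_{E(M)}=\|x\|_{E(M)}$. However, it spends the deep Calder\'on-couple property of $(L_1(M),L_\infty(M))$ on something that already follows from (2). The detour through $\overline{L_p(M)}$ and $\overline{E}$ can also be dropped. Incidentally, $x\mapsto x^*$ into $L_p(M^{\mathrm{op}})$ is conjugate-linear, not linear.

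Here is the shorter route. Let $x=u|x|$ be the polar decomposition; $u\in M$ is a partial isometry because $x$ is affiliated with $M$. In the $*$-algebra $L_0(M)$ one has $|x|=u^*x$ and $x^*=|x|u^*$. Two applications of (2) then give
\[
\|x^*\|_{E(M)}=\||x|u^*\|_{E(M)}\le\||x|\|_{E(M)}=\|u^*x\|_{E(M)}\le\|x\|_{E(M)},
\]
and replacing $x$ by $x^*$ gives the reverse inequality.

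Compared with your route:
\begin{itemize}
\item This argument uses only the linear operators of left and right multiplication, so it sidesteps the conjugate-linearity issue you rightly flagged.
\item Since also $x=u|x|$, it yields $\||x|\|_{E(M)}=\|x\|_{E(M)}$ as a by-product, without full symmetry.
\item Your route makes (1) an instance of the general principle that $\|\cdot\|_{E(M)}$ depends only on $\mu_x$, at the cost of relying on Theorem \ref{Integration_Calderon}.
\end{itemize}
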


The following results are classical.

\begin{theo}
Let $1<p<\infty$. Then 
\[L_p(M)=[L_1(M),L_\infty(M)]_{1/q}\]
with equal norms, where $1<q<\infty$ is such that $p^{-1}+q^{-1}=1$.
\end{theo}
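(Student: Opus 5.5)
Since $L_p(M)=[L_1(M),L_\infty(M)]_{1/q}$ with $\theta=1/q=1-1/p$, the goal is to show that the complex method with this parameter reproduces the $L_p$-norm exactly, in both directions, using the standard three lines argument. I will first work on the dense class of elements of finite support. These are $x\in L_1(M)\cap M$ whose support projection has finite trace. By the density of such elements in $L_p(M)$ for $p<\infty$, and the fact that both sides are complete normed spaces continuously included in $L_1(M)+L_\infty(M)$, the equality of norms on this class extends to the whole spaces. For the extension to the complex side I will use the standard density of $E_0\cap E_1$ in $[E_0,E_1]_\theta$, which holds because the functions vanish at infinity on the boundary lines.

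\textbf{Upper bound $\|x\|_{[L_1,L_\infty]_\theta}\pp\|x\|_p$.} Write $x=u|x|$ with $\|x\|_p=1$, and take $|x|$ to be a finite linear combination of spectral projections of finite trace; general $x$ is then handled by approximation. Set
\[f(z):=u|x|^{p(1-z)}\]
on the support of $|x|$, so that $f(\theta)=u|x|^{p(1-\theta)}=u|x|=x$, since $p(1-\theta)=1$. On the line $\re z=0$ the operator $|x|^{-ipt}$ is a partial isometry, so $\|f(it)\|_1=\||x|^p\|_1=1$. On the line $\re z=1$, $f(1+it)$ is a partial isometry, so its $L_\infty$-norm is at most $1$. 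Finiteness of the spectrum makes $f$ analytic, bounded and continuous. To meet the decay condition in the definition of $\mathcal{F}(E_0,E_1)$, I multiply by $e^{\varepsilon(z-\theta)^2}$ and let $\varepsilon\to0$.

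\textbf{Lower bound $\|x\|_p\pp\|x\|_{[L_1,L_\infty]_\theta}$.} I will use duality: $\|x\|_p=\sup|\tau(xy)|$ over finite-support $y$ with $\|y\|_{p'}\pp1$, where $p'=q$. Given $f\in\mathcal{F}$ with $f(\theta)=x$, construct the analytic family
\[g(z):=v|y|^{q z}\]
where $y=v|y|$. Then $g(\theta)=y$ since $q\theta=1$; moreover $\|g(it)\|_\infty\pp1$ and $\|g(1+it)\|_1\pp\|y\|_q^q=1$. The scalar function $F(z)=\tau(f(z)g(z))$ is bounded, continuous and analytic on the strip. On both boundary lines it satisfies $|F|\pp\|f\|_{\mathcal{F}}$, by Hölder's inequality $|\tau(ab)|\pp\|a\|_1\|b\|_\infty$. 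The three lines lemma then gives $|\tau(xy)|\pp\|f\|_{\mathcal{F}}$, and taking the infimum over $f$ yields the claim.

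\textbf{Main obstacle.} The expected difficulty is the analyticity and continuity of $F$, which requires $z\mapsto f(z)$ to take values in $L_1+L_\infty$ with $\tau(f(z)g(z))$ analytic. Because $g(z)$ is a finite combination $\sum_k e^{qz\log\lambda_k}\,v e_k$ with $\tau(e_k)<\infty$, $F$ is a finite sum of the functions $z\mapsto\tau(f(z)ve_k)e^{qz\log\lambda_k}$. Each of these is analytic because $a\mapsto\tau(ave_k)$ is a bounded functional on $L_1+L_\infty$. The same finite-spectrum reduction settles the approximation step in the upper bound. Finally, the duality $\|x\|_p=\sup\tau(xy)$ over finite-support $y$ follows by choosing $y=|x|^{p-1}u^*$ truncated to finite-trace spectral projections.
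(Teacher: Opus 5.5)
The paper gives no proof of this statement. It records it as classical, with Dodds--Sukochev as the source, so your proposal can only be compared with the route the paper's own preliminaries make available.

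Your proof is correct. It is the classical three-lines argument from the commutative case, transplanted to $M$ by functional calculus: the family $u|x|^{p(1-z)}$ gives $\|x\|_\theta\pp\|x\|_p$, and the dual family $v|y|^{qz}$ with the three lines lemma gives the reverse inequality.

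Within the paper, the natural alternative is functorial. Theorem 7.3.4 of Dodds--Sukochev, quoted later in the same subsection, gives $[L_1(M),L_\infty(M)]_{1/q}=([L_1,L_\infty]_{1/q})(M)$ with equal norms. Since $\|x\|_{L_p(M)}=\|\mu_x\|_{L_p}$ by definition, this reduces everything to the scalar identity $[L_1,L_\infty]_{1/q}=L_p$ on $\R_+$. That route takes one line but rests on much heavier machinery. Yours is elementary and self-contained, at the cost of some approximation bookkeeping.

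Three points in that bookkeeping deserve a sentence each.

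(i) The extension step needs a subspace dense in $[L_1(M),L_\infty(M)]_\theta$ on which both inequalities hold. Density of $(L_1\cap L_\infty)(M)$ is not by itself density of your finite-support class. You can fix this in either of two ways. One is to note that the class is dense in $(L_1\cap L_\infty)(M)$ for the intersection norm, by replacing $x$ with $x1_{(\varepsilon,\infty)}(|x|)$. The other is to observe that your lower bound already holds for every $x\in L_p(M)$, since finite-spectrum elements are dense in $L_q(M)$ and H\"older controls the error.

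(ii) Truncating $|x|^{p-1}u^*$ to finite-trace spectral projections gives finite support but not finite spectrum, and finite spectrum is what your analyticity argument for $g$ uses. Approximate further by $u^*\varphi(|x|)$ with $\varphi$ simple. In the upper bound, approximate $x$ by $u\varphi_n(|x|)$ with $\varphi_n$ simple. Differences of such elements are again of this form, so they are covered by the finite-spectrum case.

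(iii) Your claim that $F$ is bounded uses that $f$ is bounded on $\overline{\S}$. This is part of the standard definition of $\mathcal F(E_0,E_1)$ (Bergh--L\"ofstr\"om), but it is missing from the paper's wording, and it cannot be dropped. Multiply an admissible $f$ by $\exp\big(c\,(e^{-i\pi(z-1/2)}-e^{-i\pi(\theta-1/2)})\big)$, which equals $1$ at $\theta$ and has modulus $e^{-c\sin\pi\theta}$ on $\partial\S$. Letting $c\to\infty$ would then make every $[E_0,E_1]_\theta$-norm vanish.
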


\begin{theo}[\cite{DoddsSukochevIntegration}(Theorem 3.9.16)]\label{integration_Holmstedt}
Let $x\in L_0(M)$. Then $x\in L_1(M)+L_\infty(M)$ if and only if for every $t>0$, we have
\[\int_{0}^{t}\mu_x(s)ds<\infty\]
and in that case, we have
\[K_t(x,L_1(M),L_\infty(M))=\int_{0}^{t}\mu_x(s)ds,\ \ \ \ \text{for}\ t>0.\]
\end{theo}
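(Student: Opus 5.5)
The plan is to prove the two inequalities $K_t(x)\pp\int_0^t\mu_x(s)ds$ and $\int_0^t\mu_x(s)ds\pp K_t(x)$ separately. The characterisation of $L_1(M)+L_\infty(M)$ then comes for free. If $\int_0^t\mu_x<\infty$ for every $t$, the explicit decomposition in the first step exhibits $x$ as an element of the sum. Conversely, if $x=x_0+x_1\in L_1(M)+L_\infty(M)$, the second step bounds $\int_0^t\mu_x$ by $\|x_0\|_1+t\|x_1\|_\infty<\infty$.

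\emph{Upper bound (truncation at level $\mu_x(t)$).} Fix $t>0$ and set $\sigma:=\mu_x(t)$. Let $x=u|x|$ be the polar decomposition and define
\[x_0:=u\,(|x|-\sigma)_+,\qquad x_1:=u\,(|x|\wedge\sigma),\]
both by Borel functional calculus, so that $x=x_0+x_1$. Clearly $\|x_1\|_\infty\pp\sigma$. For $x_0$, the singular function of $(|x|-\sigma)_+$ is $(\mu_x(s)-\sigma)_+$. Since $\mu_x$ is decreasing and right-continuous, $\mu_x(s)>\sigma$ forces $s<t$. Hence
\[\|x_0\|_1=\int_0^\infty(\mu_x(s)-\sigma)_+ds=\int_0^t(\mu_x(s)-\sigma)ds=\int_0^t\mu_x(s)ds-t\sigma.\]
This gives $\|x_0\|_1+t\|x_1\|_\infty\pp\int_0^t\mu_x(s)ds$. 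The same construction shows $x\in L_1(M)+L_\infty(M)$ whenever the integral is finite. The only points needing care are the identity $\mu_{f(|x|)}=f\circ\mu_x$ for increasing continuous $f\pg0$ with $f(0)=0$, and the fact that $\lambda_x(\sigma)\pp t$; both follow directly from the definitions of $\lambda_x,\mu_x$ via spectral projections.

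\emph{Lower bound (subadditivity).} Write $x=x_0+x_1$ with $x_0\in L_1(M)$ and $x_1\in L_\infty(M)$. I will use the variational formula
\[\int_0^t\mu_y(s)ds=\sup\big\{|\tau(ya)|\ :\ a\in L_1(M)\cap M,\ \|a\|_\infty\pp1,\ \|a\|_1\pp t\big\},\]
valid for $y\in L_0(M)$ with either side taken in $[0,\infty]$. Granting it, the right-hand side is visibly subadditive in $y$. Moreover, for admissible $a$ we have $|\tau(x_0a)|\pp\|x_0\|_1$ and $|\tau(x_1a)|\pp t\|x_1\|_\infty$. Hence $\int_0^t\mu_x\pp\|x_0\|_1+t\|x_1\|_\infty$, and taking the infimum over decompositions gives $\int_0^t\mu_x(s)ds\pp K_t(x)$.

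\emph{Main obstacle.} The hardest step is the variational formula. The inequality $\pg$ for the supremum side (each $|\tau(ya)|$ is at most $\int_0^t\mu_y$) comes from the Hölder-type bound $|\tau(ya)|\pp\int_0^\infty\mu_y\mu_a$, together with the fact that $\mu_a\pp1$ and $\int\mu_a\pp t$ imply $\int\mu_y\mu_a\pp\int_0^t\mu_y$.

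For the reverse inequality I would first try test elements of the form $a=e\,u^*$, with $e$ a spectral projection of $|x|$ satisfying $\tau(e)=t$. Such an $e$ need not exist when $M$ has atoms. In that case the standard remedy is to pass to $M\bar\otimes L_\infty[0,1]$ with $x\otimes1$: this leaves $\mu$ unchanged and makes the trace diffuse, so a projection $e\pp1_{[\mu_x(t),\infty)}(|x|)$ with $\tau(e)=t$ can be chosen. One then checks that $\tau(|x|e)=\int_0^t\mu_x$, which gives equality in the formula. Alternatively, since only subadditivity is needed, one can bypass the formula and prove the Fan-type inequality $\mu_{y+z}(s_1+s_2)\pp\mu_y(s_1)+\mu_z(s_2)$ directly, then integrate it. Either route rests on the same spectral-projection bookkeeping, which is where the real work lies.
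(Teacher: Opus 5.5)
The paper gives no proof of this statement; it is quoted from Dodds--de Pagter--Sukochev. Your argument is the standard proof of that result and is correct: the truncation $x_0=u(|x|-\mu_x(t))_+$, $x_1=u(|x|\wedge\mu_x(t))$ gives the upper bound, and subadditivity gives the lower bound.

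Two small remarks on the variational route. First, only the inequality $\int_0^t\mu_x\le\sup_a|\tau(xa)|$ is used there, so the Hölder-type bound $|\tau(ya)|\le\int\mu_y\mu_a$ is unnecessary. Second, to get $\tau(|x|e)=\int_0^t\mu_x$ the projection $e$ must commute with $|x|$ and also satisfy $1_{(\mu_x(t),\infty)}(|x|)\le e$; the conditions $e\le 1_{[\mu_x(t),\infty)}(|x|)$ and $\tau(e)=t$ alone do not force this.

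Your alternative is lighter. Integrating $\mu_{x_0+x_1}(s)\le\mu_{x_0}(s)+\|x_1\|_\infty$ over $(0,t)$ gives the lower bound directly, with no diffuse amplification needed.
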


\begin{coro}
Let $1<p<\infty$. Then 
\[L_p(M)=(L_1(M),L_\infty(M))_{1/q,p,K}\]
with equivalent norms, with constants depending on $p$ only where $1<q<\infty$ is such that $p^{-1}+q^{-1}=1$.
\end{coro}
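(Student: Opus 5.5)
The corollary follows from the Holmstedt-type formula of Theorem~\ref{integration_Holmstedt} and Hardy's inequalities. Fix $1<p<\infty$ and put $\theta:=1/q=1-1/p$. By Theorem~\ref{integration_Holmstedt}, for $x\in L_1(M)+L_\infty(M)$ we have $K_t(x)=\int_0^t\mu_x(s)\,ds$. Hence
\[\|x\|_{(L_1,L_\infty)_{\theta,p,K}}^p=\int_0^\infty\Big(t^{-\theta}\int_0^t\mu_x(s)\,ds\Big)^p\frac{dt}{t}=\int_0^\infty\Big(\frac1t\int_0^t\mu_x(s)\,ds\Big)^p dt,\]
because $t^{-\theta p}\,t^{-1}=t^{-p}$ when $\theta p=p-1$. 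Writing $\mu_x^{**}(t):=\frac1t\int_0^t\mu_x(s)\,ds$ for the averaged singular function, the claim reduces to the equivalence $\|\mu_x^{**}\|_{L_p(0,\infty)}\simeq\|\mu_x\|_{L_p(0,\infty)}=\|x\|_{L_p(M)}$, with constants depending on $p$ only. This is a purely commutative statement about nonnegative decreasing functions on $(0,\infty)$.

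For the lower bound, $\mu_x$ is decreasing, so $\mu_x^{**}(t)\pg\mu_x(t)$ for every $t$. This gives $\|x\|_{L_p(M)}\pp\|x\|_{(L_1,L_\infty)_{\theta,p,K}}$.

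For the upper bound, Hardy's inequality $\|t\mapsto\frac1t\int_0^tf\|_{L_p(0,\infty)}\pp\frac{p}{p-1}\|f\|_{L_p(0,\infty)}$ for $f\pg0$ gives $\|x\|_{(L_1,L_\infty)_{\theta,p,K}}\pp q\,\|x\|_{L_p(M)}$. To prove Hardy's inequality, write $\frac1t\int_0^tf=\int_0^1f(tu)\,du$ and apply Minkowski's integral inequality, which gives the bound $\int_0^1u^{-1/p}\,du\,\|f\|_p=q\|f\|_p$.

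It remains to check that the two spaces coincide as sets. If $x\in L_p(M)$, then $\int_0^t\mu_x\pp t^{1-1/p}\|x\|_p<\infty$ by Hölder, so $x\in L_1(M)+L_\infty(M)$ by Theorem~\ref{integration_Holmstedt}. Elements of the real interpolation space lie in the sum by definition. So both sides are subspaces of $L_1(M)+L_\infty(M)$, and the displayed norm equivalence gives equality with equivalent norms.

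I expect no real obstacle here. The only points needing care are the exponent bookkeeping in $t^{-\theta p}\,dt/t$ and using the convention $\|\cdot\|_{\Phi_{\theta,p}}$ exactly as defined in the paper.
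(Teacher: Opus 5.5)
Your proof is correct and takes the same route the paper intends: the paper states this corollary without proof right after Theorem~\ref{integration_Holmstedt}, and the standard derivation is exactly yours, namely $K_t(x)=\int_0^t\mu_x(s)\,ds$ combined with $\frac1t\int_0^t\mu_x\geq\mu_x(t)$ for the lower bound and Hardy's inequality with constant $q$ for the upper bound. Your exponent bookkeeping ($\theta p=p-1$) and your check that both spaces coincide as subsets of $L_1(M)+L_\infty(M)$ are also correct.
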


The following deep result says that the compatible couple $(L_1(M),L_\infty(M))$ is a Calder\'on couple. We refer the reader to \cite{KaltonInterpolation} for more background on Calder\'on couples.

\begin{theo}[\cite{DoddsSukochevIntegration}(Corollary 7.2.3)]\label{Integration_Calderon}
Let $E(M)$ be an intermediate space for $(L_1(M),L_\infty(M))$. Then $E(M)$ is an exact interpolation space for $(L_1(M),L_\infty(M))$ if and only if it is fully symmetric, i.e. for every $x,y\in L_0(M)$ such that $K_t(x)\pp K_t(y)$ for all $t>0$, if we have $y\in E(M)$, then $x\in E(M)$ with $\|x\|_{E(M)}\pp\|y\|_{E(M)}$. 
\end{theo}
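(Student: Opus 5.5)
The \emph{if} direction follows from the elementary properties of $K$-functionals. Assume $E(M)$ is fully symmetric, and let $T:(L_1(M),L_\infty(M))\to(L_1(M),L_\infty(M))$ satisfy $\|T\|\pp1$. For $y\in E(M)$ and $t>0$ we have $K_t(Ty)\pp K_t(y)$. Applying full symmetry with $x:=Ty$ gives $Ty\in E(M)$ and $\|Ty\|_{E(M)}\pp\|y\|_{E(M)}$. So $E(M)$ is an exact interpolation space.

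For the \emph{only if} direction, let $x,y\in L_0(M)$ satisfy $K_t(x)\pp K_t(y)$ for all $t>0$, with $y\in E(M)$. Then $y\in L_1(M)+L_\infty(M)$, hence by Theorem~\ref{integration_Holmstedt} also $x\in L_1(M)+L_\infty(M)$, and the hypothesis reads
\[\int_0^t\mu_x(s)ds\pp\int_0^t\mu_y(s)ds\qquad(t>0),\]
i.e.\ $x$ is submajorized by $y$. The goal is to build a compatible operator $T$ with $\|T\|_{(L_1,L_\infty)\to(L_1,L_\infty)}\pp1$ and $Ty=x$; exactness of $E(M)$ then gives $\|x\|_{E(M)}\pp\|y\|_{E(M)}$. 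I will factor $T$ through the commutative couple $(L_1(0,\infty),L_\infty(0,\infty))$ in three steps.

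\begin{enumerate}
\item \emph{Commutative Calder\'on step.} I would apply the classical Calder\'on--Mityagin theorem on $(0,\infty)$. Since $\mu_x\prec\prec\mu_y$, there is a contraction $S$ of $(L_1(0,\infty),L_\infty(0,\infty))$ with $S\mu_y=\mu_x$. It can be built explicitly: on the intervals where $\int_0^t\mu_x<\int_0^t\mu_y$, average over suitable level intervals and multiply by a $[0,1]$-valued function; this is a positive operator, substochastic in both $L_1$ and $L_\infty$.
\item \emph{From $M$ to $(0,\infty)$.} Write $y=v|y|$ and use the spectral measure of $|y|$ to get a trace-preserving normal $\ast$-homomorphism $\pi$ from $L_\infty(0,\infty)$ into $M$ with $\pi(\mu_y)=|y|$. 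The map $z\mapsto\mathcal{E}(v^*z)$, with $\mathcal{E}$ the conditional expectation onto the range of $\pi$, pulled back by $\pi$, is a contraction on both $L_1$ and $L_\infty$ and sends $y$ to $\mu_y$.
\item \emph{From $(0,\infty)$ back to $M$.} In the same way, writing $x=u|x|$, a trace-preserving $\ast$-homomorphism $\rho$ with $\rho(\mu_x)=|x|$ followed by left multiplication by $u$ gives a contraction sending $\mu_x$ to $x$.
\end{enumerate}
The composite $T:=u\,\rho\circ S\circ\pi^{-1}\mathcal{E}(v^*\,\cdot\,)$ is then the required contraction.

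The main obstacle is steps 2 and 3. Such trace-preserving embeddings sending $\mu$ to the modulus exist only when $M$ is diffuse enough: $|x|$ may have eigenvalues with non-trivial multiplicity, or $M$ may be atomic. Plateaus of $\mu_x$ also cause trouble, as does the tail $\lim_{s\to\infty}\mu_x(s)$, which $\rho$ must absorb as a multiple of a projection. My first attempt would be to pass to $\widetilde M:=M\bar\otimes L_\infty(0,1)$, which is diffuse, with the embedding $z\mapsto z\otimes1$. This embedding is isometric on every $L_p$, and the conditional expectation $\mathrm{id}\otimes\int_0^1$ brings elements back to $M$ contractively. In $\widetilde M$ one can choose a maximal abelian subalgebra containing the spectral projections of $|y|$ on which the trace is diffuse, and then realise $\mu_y\mapsto|y|$ through a measure-preserving rearrangement map; the same holds for $x$. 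Conjugating by these inclusion and expectation maps gives a contraction on $(L_1(M),L_\infty(M))$ sending $y$ to $x$. If the plateau and tail bookkeeping proves too delicate, I would instead invoke the cited result of Dodds--Sukochev directly.
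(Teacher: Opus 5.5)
The paper does not prove this statement; it quotes it from Dodds--Sukochev. Your \emph{if} direction is correct (rescale $T$ when $\|T\|>1$). The \emph{only if} direction has a genuine gap, and it is not the one you anticipate. Passing to $M\bar{\otimes}L_\infty(0,1)$ does cure atoms and plateaus, but it does not handle a nonzero tail $c:=\mu_y(\infty)=\lim_{s\to\infty}\mu_y(s)$.

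Every map in your Steps 2--3 is normal: $z\mapsto z\otimes 1$, normal $\ast$-homomorphisms, normal conditional expectations, and multiplication by $u$ or $v^*$. Normal maps cannot handle the tail in general. Take $M=B(L_2(0,1))$ with its standard trace, $y$ the multiplication by $s\mapsto s$, and $x=1$. Then $\mu_x\equiv\mu_y\equiv 1$, so $K_t(x)=K_t(y)=t$.

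The $\pi$ of Step 2 would have to satisfy $\pi(1)=|y|$, or $|y|\otimes 1$ after tensoring, which has the same spectral projections; neither is a projection. More decisively, let $\Phi$ be any weak$^*$-continuous map with $\|\Phi\|_{L_\infty\to L_\infty}\pp 1$, valued in $L_\infty(M)$ or in $L_\infty(0,\infty)$. Its predual $\Phi_*$ is an $L_1$-contraction. Moreover $|\mathrm{tr}(y\sigma)|<\|\sigma\|_1$ for every nonzero trace-class $\sigma$, because $\|y\xi\|<\|\xi\|$ for $\xi\neq 0$. Pairing $\Phi(y)$ with a rank-one density, resp.\ with $1_{[0,1]}$, therefore shows $\Phi(y)\neq 1$. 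Hence no normal map sends $y$ to $\mu_y$, and no normal contraction of the couple sends $y$ to $x$. The same obstruction already occurs commutatively ($g(t)=1-e^{-t}$ on $(0,\infty)$, $g^*\equiv 1$), which is why Ryff-type rearrangements require $g^*(\infty)=0$.

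What is missing is a non-normal treatment of the tail. In the example, $T(z)=\phi(z)\cdot 1$ works, where $\phi$ is a state vanishing on the compact operators with $\phi(y)=1$.

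In general, put $e:=e_{(c,\infty)}(|y|)$. This is the supremum of the finite-trace projections $e_{(s,\infty)}(|y|)$, $s>c$, so the trace is semifinite on the spectral algebra of $|y|e$. Your construction then does give a compatible contraction $S_1: z\mapsto\iota(\mathcal{E}(ev^*ze))$ into $(L_1(0,\infty),L_\infty(0,\infty))$ with $S_1y=\mu_y\cdot 1_{[0,\tau(e))}$. If $\tau(e)=\infty$, this is already $\mu_y$.

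If $c>0$ and $\tau(e)<\infty$, the missing constant $c\cdot 1_{[\tau(e),\infty)}$ comes from
$S_2z:=\phi((1-e)v^*z(1-e))\cdot 1_{[\tau(e),\infty)}$,
where $\phi$ is a state on $(1-e)M(1-e)$ vanishing on $L_1\cap L_\infty$ with $\phi((1-e)|y|)=c$.
\begin{itemize}
\item Such a $\phi$ exists because $(1-e)|y|$ has norm $c$ modulo the $\tau$-compact operators.
\item It extends to $L_1+L_\infty$ by killing $L_1$.
\item $S_1+S_2$ is a contraction of the couples, since $S_2$ vanishes on $L_1$ and $S_1z$, $S_2z$ have disjoint supports.
\end{itemize}

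The $\rho$ of Step 3 likewise need not exist: exchange $x$ and $y$ in the example. That side is easier to repair, for instance with a Banach-limit state on $L_\infty(0,\infty)$.

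Without this ingredient your argument only covers the case $\mu_y(\infty)=0$. Falling back on the citation, as you suggest at the end, is exactly what the paper does.
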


\begin{rem}
As a consequence, if $E(M)$ is an exact interpolation space for $(L_1(M),L_\infty(M))$, and if $x\in E(M)$, then $|x|\in E(M)$ with $\||x|\|_{E(M)}=\|x\|_{E(M)}$.
\end{rem}

We consider the commutative semifinite von Neumann algebra $L_\infty(\R_+)$ of bounded Lebesgue\-/measurable functions on the half-line $\R_+$ whose trace is given by the Lebesgue measure. For $1\pp p\pp\infty$, we will denote $L_p:=L_p(L_\infty(\R_+))$. Note that, if $x\in(L_1+L_\infty)(M)$, then its singular function $\mu_x$ lives in $L_1+L_\infty$, as an immediate consequence of \ref{integration_Holmstedt}. If $E$ is an exact interpolation space for $(L_1,L_\infty)$, then
\[E(M):=\big\{x\in(L_1+L_\infty)(M)\ :\ \mu_x\in E \big\}\]
is a subspace of $(L_1+L_\infty)(M)$ and is equipped with the complete norm $\|\cdot\|_{E(M)}$ given by the expression
\[\|x\|_{E(M)}:=\|\mu_x\|_E,\ \ \ \text{for}\ x\in E(M).\]

\begin{theo}[\cite{DoddsSukochevIntegration}(Corollary 7.2.4)]
If $E$ is an exact interpolation space for $(L_1,L_\infty)$, then $E(M)$ is an exact interpolation space for $(L_1(M),L_\infty(M))$.
\end{theo}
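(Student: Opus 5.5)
The plan is to show directly that $E(M)$ is an intermediate space for $(L_1(M),L_\infty(M))$ which is fully symmetric, and then to invoke Theorem \ref{Integration_Calderon} to conclude that it is an exact interpolation space.

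\emph{Step 1: $E(M)$ is a complete normed subspace with continuous inclusions.} First we check that $\|x\|_{E(M)}=\|\mu_x\|_E$ is a norm on $E(M)$. Homogeneity is clear since $\mu_{\lambda x}=|\lambda|\mu_x$, and faithfulness holds because $\mu_x=0$ forces $x=0$. For the triangle inequality, the standard submajorization $\mu_{x+y}\prec\mu_x+\mu_y$ gives
\[\int_0^t\mu_{x+y}\pp\int_0^t(\mu_x+\mu_y),\qquad t>0.\]
By Theorem \ref{integration_Holmstedt} applied on $L_\infty(\R_+)$, this says $K_t(\mu_{x+y})\pp K_t(\mu_x+\mu_y)$. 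Since $E$ is exact for $(L_1,L_\infty)$, Theorem \ref{Integration_Calderon} in the commutative case $M=L_\infty(\R_+)$ shows that $E$ is fully symmetric. Hence $\|\mu_{x+y}\|_E\pp\|\mu_x+\mu_y\|_E\pp\|\mu_x\|_E+\|\mu_y\|_E$.

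Next, the inclusions are continuous. For $x\in L_1\cap L_\infty(M)$ we have $\mu_x\in L_1\cap L_\infty$ with the same norms, so $\|x\|_{E(M)}\pp C\|x\|_{L_1\cap L_\infty(M)}$, where $C$ is the norm of the embedding $L_1\cap L_\infty\to E$. Conversely, $\|x\|_{(L_1+L_\infty)(M)}=K_1(x)=\int_0^1\mu_x=K_1(\mu_x)\pp C'\|\mu_x\|_E$.

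For completeness, take a Cauchy sequence $(x_n)$ in $E(M)$. It is Cauchy in $(L_1+L_\infty)(M)$, so it converges there to some $x$. Using the inequality $\mu_{x_n-x}\prec\ldots$ together with the Fatou property, or rather a lower semicontinuity argument, we need to show $x\in E(M)$ and $x_n\to x$ in $E(M)$.

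\emph{Main obstacle.} This completeness step is the delicate point, because $E$ need not have the Fatou property. The route I would try first avoids this: realize $E(M)$ isometrically as a closed subspace of the complete space $\tilde E$, defined as the exact interpolation space for $(L_1(M),L_\infty(M))$ obtained by transporting $E$ through the Aronszajn–Gagliardo theorem (Theorem 2.5.1). Concretely, the $\mathcal{F}$ furnished by that theorem gives an exact interpolation space $\mathcal{F}(L_1(M),L_\infty(M))$. I would then show that it coincides isometrically with $E(M)$ using fully symmetric characterizations and $K_t(x)=K_t(\mu_x)$ on both sides; completeness then follows from completeness of $\mathcal{F}(L_1(M),L_\infty(M))$. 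If this identification is too heavy, I would instead use the orbit description $E(M)=\{x:\ K(x)\pp K(f)\text{ for some } f\in E\}$.

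\emph{Step 2: full symmetry.} Suppose $K_t(x)\pp K_t(y)$ for all $t>0$ and $y\in E(M)$. By Theorem \ref{integration_Holmstedt}, applied in both $M$ and $L_\infty(\R_+)$, this reads $K_t(\mu_x)\pp K_t(\mu_y)$ for all $t>0$. Full symmetry of $E$ then gives $\mu_x\in E$ with $\|\mu_x\|_E\pp\|\mu_y\|_E$. That is, $x\in E(M)$ and $\|x\|_{E(M)}\pp\|y\|_{E(M)}$.

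Applying Theorem \ref{Integration_Calderon} to $E(M)$ concludes the proof.
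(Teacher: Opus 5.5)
Your Step 2 is correct, and so is Step 1 apart from completeness. The overall architecture is the natural one: show that $E(M)$ is a fully symmetric intermediate space, then invoke Theorem \ref{Integration_Calderon}. The paper itself gives no argument here; it only cites Dodds--Sukochev. The genuine gap is the completeness of $\|\cdot\|_{E(M)}$. The paper's definition of an intermediate space requires a complete norm, and you leave completeness open.

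Your first idea, a Cauchy sequence plus a Fatou or lower-semicontinuity argument, fails for exactly the reason you state. The Aronszajn--Gagliardo route is not justified as written. Full symmetry of $\mathcal{F}(L_1(M),L_\infty(M))$ only compares elements of $L_0(M)$ with each other, and full symmetry of $E$ only compares functions on $\R_+$. So the identity $K_t(x)=K_t(\mu_x)$ alone does not relate $\|x\|_{\mathcal{F}(L_1(M),L_\infty(M))}$ to $\|\mu_x\|_E$; you need a bridge between the two couples. One bridge is to apply $\mathcal{F}$ to the couple over $M\oplus L_\infty(\R_+)$. There $x\oplus 0$ and $0\oplus\mu_x$ have the same $K$-functional, and the coordinate inclusions and projections are contractive. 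The ``orbit description'' is just a restatement of the definition and gives nothing about completeness.

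The missing idea is elementary: combine the absolutely-summable-series criterion with full symmetry of $E$. Let $x_k\in E(M)$ with $\sum_k\|x_k\|_{E(M)}<\infty$, and set $S_n:=\sum_{k\pp n}x_k$.
\begin{enumerate}
\item By the continuous embedding $E(M)\to(L_1+L_\infty)(M)$ that you proved, $S_n\to x$ in $(L_1+L_\infty)(M)$ for some $x$.
\item Since $K_t(u)\pp\max(1,t)\|u\|_{L_1+L_\infty}$, the functional $K_t$ is a continuous norm there, so $K_t(x-S_n)\pp\sum_{k>n}K_t(x_k)=\sum_{k>n}\int_0^t\mu_{x_k}$.
\item The series $g_n:=\sum_{k>n}\mu_{x_k}$ converges in the complete space $E$, with $\|g_n\|_E\pp\sum_{k>n}\|x_k\|_{E(M)}$. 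Its limit agrees a.e.\ with the pointwise sum, which is nonnegative and decreasing.
\item By Theorem \ref{integration_Holmstedt} and monotone convergence, $K_t(g_n)=\int_0^t g_n=\sum_{k>n}\int_0^t\mu_{x_k}$.
\end{enumerate}
Together these give
\[K_t(\mu_{x-S_n})=K_t(x-S_n)\pp K_t(g_n)\quad\text{for all } t>0.\]
Full symmetry of $E$ then yields $\mu_{x-S_n}\in E$ with $\|x-S_n\|_{E(M)}\pp\sum_{k>n}\|x_k\|_{E(M)}\to 0$. Hence $x\in E(M)$ and the series converges in $E(M)$, with no Fatou property needed.
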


\begin{theo}[\cite{DoddsSukochevIntegration}(Theorem 7.3.4)]
Let $E_0,E_1$ be exact interpolation spaces for $(L_1,L_\infty)$, let $\mathcal{F}$ be an exact interpolation functor, and set $E_\theta:=\mathcal{F}(E_0,E_1)$. Then
\[E_\theta(M)=\mathcal{F}(E_0(M),E_1(M))\]
with equal norms.
\end{theo}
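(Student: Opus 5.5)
The plan is to prove two norm inequalities separately, each by finding a contraction between the couples $(L_1,L_\infty)$ on $\R_+$ and $(L_1(M),L_\infty(M))$ that exchanges $x$ and its singular function $\mu_x$. We then use the fact that exact interpolation functors are functorial for contractions.

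First, a preliminary observation. Let $S:(L_1,L_\infty)\to(L_1(M),L_\infty(M))$ be any compatible contraction. Then $K_t(Sf)\pp K_t(f)$ for all $t>0$, and by Theorem \ref{integration_Holmstedt} this says $K_t(\mu_{Sf},L_1,L_\infty)\pp K_t(f,L_1,L_\infty)$. The spaces $E_j$ are exact interpolation spaces for $(L_1,L_\infty)$, so they are fully symmetric (Theorem \ref{Integration_Calderon} applied to $M=L_\infty(\R_+)$). Hence $\|Sf\|_{E_j(M)}=\|\mu_{Sf}\|_{E_j}\pp\|f\|_{E_j}$. Thus $S$ is automatically a contraction of couples $(E_0,E_1)\to(E_0(M),E_1(M))$. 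By exactness of $\mathcal{F}$, it is also a contraction $E_\theta\to\mathcal{F}(E_0(M),E_1(M))$. The symmetric statement holds for contractions $T:(L_1(M),L_\infty(M))\to(L_1,L_\infty)$, using Corollary 7.2.4 and Theorem \ref{Integration_Calderon} for $M$.

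Second, we note that $\mathcal{F}(E_0(M),E_1(M))$ is itself an exact interpolation space for $(L_1(M),L_\infty(M))$. Indeed, by Corollary 7.2.4 each $E_j(M)$ is one, so every contraction of $(L_1(M),L_\infty(M))$ is a contraction of $(E_0(M),E_1(M))$, hence of the $\mathcal{F}$-space. By Theorem \ref{Integration_Calderon}, $\mathcal{F}(E_0(M),E_1(M))$ is therefore fully symmetric.

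The two inequalities then go as follows.

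\textbf{Inequality $\|x\|_{\mathcal{F}(E_0(M),E_1(M))}\pp\|\mu_x\|_{E_\theta}$.} Construct a contraction $S$ as above with $K_t(S\mu_x)=K_t(x)$. When $M$ is diffuse, take a trace-preserving normal $\ast$-homomorphism from functions on $[0,\tau(\mathrm{supp}\,x))$ into $M$ sending $\mu_x$ to $|x|$, followed by left multiplication by the partial isometry $u$ of the polar decomposition $x=u|x|$. Full symmetry then gives the bound.

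\textbf{Inequality $\|\mu_x\|_{E_\theta}\pp\|x\|_{\mathcal{F}(E_0(M),E_1(M))}$.} Take $T$ to be $y\mapsto u^*y$, followed by the trace-preserving conditional expectation onto the abelian von Neumann algebra generated by the spectral projections of $|x|$. Compose this with the identification of that algebra with functions on an interval, arranged so that $Tx=\mu_x$. Then $T$ is a contraction of the $L_1$ and $L_\infty$ norms, and the preliminary observation applies.

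The main obstacle is producing these exact-rearrangement contractions when $M$ has atoms. I would first reduce to the diffuse case by passing to $\tilde M=M\bar\otimes L_\infty[0,1]$. The embedding $x\mapsto x\otimes1$ and the conditional expectation $\mathrm{id}\otimes\int$ are contractions in both directions for $(L_1,L_\infty)$ and hence for $(E_0(\cdot),E_1(\cdot))$, and their composition is the identity. A retract argument then shows that $x\mapsto x\otimes1$ is isometric on the $\mathcal{F}$-spaces. Since $\mu_{x\otimes1}=\mu_x$, both sides of the claimed equality are unchanged by this passage, so it suffices to prove the equality for $\tilde M$, which is diffuse.
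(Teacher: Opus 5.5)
\textbf{The gap.} The paper gives no argument for this statement; it is quoted from Dodds--Sukochev. Most of your framework is sound:
\begin{itemize}
\item the transfer of compatible contractions through the full symmetry of $E_0,E_1$;
\item the observation that $G:=\mathcal{F}(E_0(M),E_1(M))$ is fully symmetric (your second step);
\item the retract reduction to the diffuse algebra $\tilde M=M\bar\otimes L_\infty[0,1]$.
\end{itemize}
The gap lies in the $x$-dependent rearrangement maps. The real obstruction there is not atoms but the case $c:=\lim_{t\to\infty}\mu_x(t)>0$.

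Take $M=B(L_2[0,1])$ with its usual trace and $x$ the multiplication by $t$; then $\mu_x\equiv 1$. Every nonzero projection commuting with $x$ has infinite trace. The same holds for $x\otimes 1$ in $\tilde M$: such projections are measurable fields of multiplication projections, so their trace is $0$ or $\infty$. Two consequences follow.
\begin{itemize}
\item The trace is not semifinite on any abelian subalgebra containing $|x|$. So the trace-preserving conditional expectation in your second inequality does not exist, before or after tensoring.
\item No trace-preserving normal $*$-homomorphism sends $\mu_x\equiv 1$ to $|x|$, since it maps $1$ to a projection.
\end{itemize}
This $x$ is within the scope of the theorem: it lies in $E_\theta(M)$ whenever $E_\theta$ contains the constants. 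In your first inequality the defect is harmless, because you only need $K_t(S\mu_x)=K_t(x)$. In the second you use $Tx=\mu_x$ exactly, and the construction breaks down.

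\textbf{The repair.} Make the maps independent of $x$, which your second step already permits.
\begin{itemize}
\item Since $\tilde M$ is diffuse, fix a trace-preserving normal $*$-embedding $\pi$ of $L_\infty(0,\tilde\tau(1))$ into $\tilde M$.
\item Let $\mathcal{E}$ be the trace-preserving conditional expectation onto the range of $\pi$. It exists because the trace restricted to that range is Lebesgue measure, hence semifinite.
\item Put $Sf:=\pi(f1_{[0,\tilde\tau(1))})$ and $T:=\pi^{-1}\circ\mathcal{E}$.
\end{itemize}
Both $S$ and $T$ are contractions of the $(L_1,L_\infty)$ couples. Moreover $TS\mu_x=\mu_x$, because $\mu_x$ vanishes beyond $\tilde\tau(1)$.

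By your preliminary observation, $\|S\mu_x\|_G\leq\|\mu_x\|_{E_\theta}=\|TS\mu_x\|_{E_\theta}\leq\|S\mu_x\|_G$, and $\mu_x\in E_\theta$ if and only if $S\mu_x\in G$. Since $\mu_{S\mu_x}=\mu_x$, we get $K_t(S\mu_x)=K_t(x)$ for all $t>0$. Full symmetry of $G$ then yields $x\in G$ if and only if $S\mu_x\in G$, with $\|x\|_G=\|S\mu_x\|_G$. This gives both inclusions and the equality of norms at once.

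With this change, your argument becomes a self-contained derivation of the cited theorem. It uses only Theorem \ref{Integration_Calderon} and the fact that each $E_j(M)$ is an exact interpolation space.
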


\begin{rem}
As a consequence of the above result together with the Aronszajn–Gagliardo theorem, if $E_0,E_1$ are exact interpolation spaces for $(L_1,L_\infty)$, then the exact interpolation spaces for $(E_0(M),E_1(M))$ are those of the form $E_\theta(M)$ where $E_\theta$ is an exact interpalation space for $(E_0,E_1)$.
\end{rem}

\subsection{Trace duality}

Let $M$ be a semifinite von Neumann algebra, with trace denoted $\tau$. Then the trace $\tau$ extends to a positive and contractive linear form on $L_1(M)$, again denoted $\tau$.

If $E(M)$ is an exact interpolation space for $(L_1(M),L_\infty(M))$, its \textit{K\"othe dual} 
\[E^{\times}(M):=\big\{y\in L_0(M)\ :\ \forall x\in E(M),\ xy\in L_1(M)\big\}\]
\[=\big\{y\in L_0(M)\ :\ \forall x\in E(M),\ yx\in L_1(M)\big\}\]
is a subspace of $L_0(M)$ and is equipped with the complete norm $\|\cdot\|_{E^{\times}(M)}$ given by the expression
\[\|y\|_{E^{\times}(M)}=\sup_{x\in E(M),\ \|x\|_{E(M)}\pp1}|\tau(xy)|=\sup_{x\in E(M),\ \|x\|_{E(M)}\pp1}|\tau(yx)|,\ \ \ \ \ \ \text{for}\ y\in E^{\times}(M).\]
 By definition, if $x\in E(M)$ and $y\in E^{\times}(M)$, then $xy,yx\in L_1(M)$ and 
\[\max\{\|xy\|_{L_1(M)},\|yx\|_{L_1(M)}\}\pp\|x\|_{E(M)}\|y\|_{E^{\times}(M)}\ \ \ \ \ \ \ \ \ \textit{(generalized H\"older's inequality)}\]
If $E$ is an exact interpolation space for $(L_1,L_\infty)$, and if $F:=E^{\times}$ denote its K\"othe dual, then by \cite{DoddsSukochevIntegration}[Theorem 6.1.6], the K\"othe dual of $E(M)$ is $F(M)$ (with equal norms). Hence, the notation $E^{\times}(M)$ is consistent with the notations introduced in the previous paragraph. The following results are consequence of \cite{DoddsSukochevIntegration}(Proposition 3.4.8, Theorem 3.4.24, Corollary 4.3.9, Corollary 6.2.1).

\begin{theo}
Let $1\pp p,q\pp\infty$ such that $p^{-1}+q^{-1}=1$. Then, we have $L_p(M)^{\times}=L_q(M)$ with equal norms. Moreover, we have $(L_1\cap L_\infty)(M)^{\times}=(L_1+L_\infty)(M)$ and $(L_1+L_\infty)(M)^{\times}=(L_1\cap L_\infty)(M)$ with equal norms.
\end{theo}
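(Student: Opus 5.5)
The plan is to prove each identity by establishing the two inclusions with the norm inequalities. The inclusion of the candidate space into the Köthe dual, with norm $\pp$, comes from Hölder's inequality. The reverse inclusion comes from testing against explicit elements built from the polar decomposition $y=u|y|$ and spectral projections of $|y|$.

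\emph{Step 1: $L_q(M)\subset L_p(M)^\times$ contractively.} This is the noncommutative Hölder inequality $\|xy\|_1\pp\|x\|_p\|y\|_q$, which I take as standard from \cite{DoddsSukochevIntegration}.

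\emph{Step 2: the reverse inclusion for $L_p$.} Let $y\in L_p(M)^\times$ with $y=u|y|$.
\begin{itemize}
\item Case $p=\infty$. Take $x=e u^*$, where $e$ is a spectral projection of $|y|$. Then $\|x\|_\infty\pp1$ and $\tau(xy)=\tau(e|y|)$. Letting $e\uparrow 1$ and using normality of $\tau$ gives $\tau(|y|)\pp\|y\|_{L_\infty^\times}$.
\item Case $p=1$. Suppose $e=1_{(s,\infty)}(|y|)\neq0$. By semifiniteness, choose $0\neq f\pp e$ with $\tau(f)<\infty$, and test with $x=f u^*/\tau(f)$. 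Since $f|y|f\pg s f$, we get $\tau(xy)=\tau(f|y|f)/\tau(f)\pg s$. Hence $\|y\|_\infty\pp\|y\|_{L_1^\times}$.
\item Case $1<p<\infty$. The same finite-trace subprojection argument shows first that $\lambda_y(s)<\infty$ for every $s>0$. Indeed, otherwise $x=fu^*$ with $\tau(f)\to\infty$ gives ratios $s\tau(f)^{1-1/p}\to\infty$. So $e_n=1_{[1/n,n]}(|y|)$ has finite trace. Test with $x=|y|^{q-1}e_nu^*$. Then $\tau(xy)=\tau(|y|^qe_n)=\|ye_n\|_q^q$, while $\|x\|_p=\|ye_n\|_q^{q-1}$. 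This yields $\|ye_n\|_q\pp\|y\|_{L_p^\times}$, and we let $n\to\infty$ using normality.
\end{itemize}

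\emph{Step 3: $(L_1+L_\infty)^\times=L_1\cap L_\infty$.} For $y\in L_1\cap L_\infty$ and $x=x_0+x_1$, Hölder gives
\[|\tau(xy)|\pp\|x_0\|_1\|y\|_\infty+\|x_1\|_\infty\|y\|_1\pp\max\{\|y\|_1,\|y\|_\infty\}\,(\|x_0\|_1+\|x_1\|_\infty).\]
Taking the infimum over decompositions gives the contractive inclusion. Conversely, the unit balls of $L_1$ and $L_\infty$ lie in the unit ball of $L_1+L_\infty$. So $y\in(L_1+L_\infty)^\times$ lies in $L_1^\times\cap L_\infty^\times=L_\infty\cap L_1$, and Step 2 gives $\max\{\|y\|_\infty,\|y\|_1\}\pp\|y\|_{(L_1+L_\infty)^\times}$.

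\emph{Step 4: $(L_1\cap L_\infty)^\times=L_1+L_\infty$.} The contractive inclusion $L_1+L_\infty\subset(L_1\cap L_\infty)^\times$ is again Hölder applied to a decomposition of $y$. For the converse, by Theorem \ref{integration_Holmstedt} it suffices to show $\int_0^1\mu_y\pp\|y\|_{(L_1\cap L_\infty)^\times}$. By scaling, the same argument then gives finiteness of $\int_0^t\mu_y$ for all $t$.

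Choose $s=\mu_y(1)$ and let $e=1_{(s,\infty)}(|y|)$, so that $\tau(e)\pp1$. Put $p_s=1_{\{s\}}(|y|)$ and define $\alpha\in[0,1]$ by $\alpha\,\tau(p_s)=1-\tau(e)$; if $\tau(p_s)=\infty$, approximate with finite-trace subprojections of $p_s$. Set $x=(e+\alpha p_s)u^*$. Then $\|x\|_\infty\pp1$ and $\|x\|_1\pp1$, and
\[\tau(xy)=\tau(e|y|)+\alpha s\,\tau(p_s)=\int_0^1\mu_y(r)\,dr.\]
Using the weight $\alpha$ avoids any need for the algebra to be diffuse.

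The main obstacle I anticipate is exactly this bookkeeping in Step 4 (and the finiteness of $\lambda_y$ in Step 2). There one must produce test elements that lie in both $L_1$ and $L_\infty$ and attain the $K$-functional. The spectral-weight trick above is the point where care is needed.
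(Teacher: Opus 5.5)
Steps 1--3 are correct. Your strategy is a legitimate self-contained route: H\"older's inequality for one inclusion, and for the other, testing against spectral projections of $|y|$ multiplied by $u^*$. The paper gives no argument here and simply quotes these dualities from \cite{DoddsSukochevIntegration}.

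The gap is in Step 4: the test element you write down need not exist. Solving $\alpha\,\tau(p_s)=1-\tau(e)$ with $\alpha\in[0,1]$ requires $\tau(e)+\tau(p_s)=\tau(1_{[s,\infty)}(|y|))\geq 1$. Your parenthetical only covers the subcase $\tau(p_s)=\infty$. From $s=\mu_y(1)$ you do get $\lambda_y(r)>1$ for $0<r<s$. However, the limit $\tau(1_{[s,\infty)}(|y|))=\lim_{r\uparrow s}\lambda_y(r)$ uses normality along a decreasing family of projections. That is only legitimate if $\lambda_y(r_0)<\infty$ for some $r_0<s$. When $\lambda_y=\infty$ on $(0,s)$, the mass can sit strictly below the level $s$.

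Concretely, in $L_\infty(\R_+)$ take $y(t)=\max\{0,1-t^{-1}\}$. Then $\lambda_y(r)=\infty$ for $r<1$ and $\lambda_y(r)=0$ for $r\geq 1$. So $\mu_y\equiv 1$, $s=1$ and $e=p_s=0$: your $x$ is $0$, while $\int_0^1\mu_y=1$. Moreover, since $0\leq y<1$ everywhere, no $x\neq 0$ in the unit ball of $L_1\cap L_\infty$ attains $|\tau(xy)|=1$. So the supremum is not attained, and an approximation argument is unavoidable.

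The repair is short.
\begin{enumerate}
\item If $s=0$, your choice $x=eu^*$ already works.
\item If $s>0$, first note that $\int_0^1\mu_y=\tau(e|y|)+s\,(1-\tau(e))$ always holds, because $\mu_y=s$ on $[\tau(e),1)$.
\item For $0<\varepsilon<s$ let $F_\varepsilon:=1_{(s-\varepsilon,s]}(|y|)$. Then $\tau(F_\varepsilon)=\lambda_y(s-\varepsilon)-\tau(e)>1-\tau(e)$.
\item By semifiniteness there is a projection $g\leq F_\varepsilon$ with $0<\tau(g)<\infty$ and $\tau(g)\geq 1-\tau(e)$. Put $\beta:=(1-\tau(e))/\tau(g)\in[0,1]$ and $x:=(e+\beta g)u^*$.
\item Then $\|x\|_\infty\leq 1$ and $\|x\|_1=\tau(e)+\beta\tau(g)=1$. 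Also $\tau(xy)=\tau(e|y|)+\beta\,\tau(g|y|g)\geq \tau(e|y|)+(s-\varepsilon)(1-\tau(e))$.
\item Letting $\varepsilon\to 0$ gives $\int_0^1\mu_y\leq \|y\|_{(L_1\cap L_\infty)^\times}$. No separate treatment of $p_s$ is needed.
\end{enumerate}
Separately, the appeal to ``scaling'' for finiteness of $\int_0^t\mu_y$ is unnecessary. Since $\mu_y$ is decreasing, $\int_0^t\mu_y\leq t\int_0^1\mu_y$ for $t\geq 1$.
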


\begin{theo}
Let $E(M)$ be an exact interpolation space for $(L_1(M),L_\infty(M))$. Then the K\"othe dual $E^{\times}(M)$ is an exact interpolation space for $(L_1(M),L_\infty(M))$. Moreover, the sesquilinear form
\[\fonctbis{E(M)\times E^{\times}(M)}{\C}{(x,y)}{\tau(xy^*)}\]
is non-degenerate, and thus, defines a canonical duality between $E(M)$ and $E^{\times}(M)$, called \textbf{trace duality}. Finally, $(L_1\cap L_\infty)(M)$ separates the points of both $E(M)$ and $E^{\times}(M)$ w.r.t. this duality.
\end{theo}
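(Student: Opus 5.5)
The statement has three parts: that $E^{\times}(M)$ is an intermediate space, that it is \emph{exact}, and the duality/separation claims. I take them in that order. The exactness step carries the real difficulty; the rest is bookkeeping with spectral projections.

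\textbf{Step 1: $E^{\times}(M)$ is intermediate.} Since $(L_1\cap L_\infty)(M)\subset E(M)\subset(L_1+L_\infty)(M)$ contractively, and the Köthe dual reverses inclusions and norm bounds, we get contractive inclusions
\[
(L_1\cap L_\infty)(M)=(L_1+L_\infty)(M)^{\times}\subset E^{\times}(M)\subset (L_1\cap L_\infty)(M)^{\times}=(L_1+L_\infty)(M).
\]
The two identifications are those of the preceding theorem.

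Completeness of $E^{\times}(M)$ is handled as follows. Take a Cauchy sequence $(y_n)$ in $E^{\times}(M)$. It converges in $(L_1+L_\infty)(M)$ to some $y$. For each $x\in E(M)$ the products $xy_n$ are Cauchy in $L_1(M)$, and their limit is $xy$ by continuity of multiplication in measure. The norm estimate then passes to the limit in the defining supremum.

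\textbf{Step 2: $E^{\times}(M)$ is exact.} By Theorem~\ref{Integration_Calderon} it suffices to show full symmetry. That is, if $y\in E^{\times}(M)$ and $\int_0^t\mu_x\pp\int_0^t\mu_y$ for all $t>0$, then $x\in E^{\times}(M)$ with $\|x\|_{E^\times(M)}\pp\|y\|_{E^\times(M)}$. The key identity I aim for is
\[
\|y\|_{E^{\times}(M)}=\sup\Big\{\int_0^\infty\mu_z(s)\mu_y(s)\,ds\ :\ z\in E(M),\ \|z\|_{E(M)}\pp1\Big\}.
\]

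The inequality "$\pp$" is the noncommutative Hardy inequality $\|zy\|_1\pp\int\mu_z\mu_y$. The inequality "$\pg$" requires, given $z$, a $z'$ with $\mu_{z'}=\mu_z$ (so $z'\in E(M)$ with the same norm, by full symmetry of $E(M)$) and $\tau(z'y)\approx\int\mu_z\mu_y$. I would build $z'$ as a function of $|y|$ composed with the polar part of $y^*$.

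This is the step I expect to be the main obstacle, because it needs $M$ to be diffuse enough. My first attempt is to pass to $M\bar\otimes L_\infty[0,1]$, where $y\mapsto y\otimes1$ preserves singular functions and $E$-norms (by the $E(M)=\{\mu_x\in E\}$ description). In that algebra the required $z'$ exists, and conditional expectation back to $M$ is contractive on $E$, which controls the Köthe norm.

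Granting the identity, the Hardy–Littlewood–Pólya lemma finishes the step: Abel summation against the decreasing function $\mu_z$ gives $\int\mu_z\mu_x\pp\int\mu_z\mu_y$. The Hardy inequality also shows $xz\in L_1(M)$ for every $z\in E(M)$, so $x\in E^{\times}(M)$.

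\textbf{Step 3: non-degeneracy and separation.} Let $x\in E(M)$ be nonzero, with polar decomposition $x=u|x|$. Choose $0<a<b$ such that $e:=1_{[a,b]}(|x|)\neq0$. By semifiniteness, pick a nonzero projection $q\pp e$ with $\tau(q)<\infty$. Then $y:=uq\in(L_1\cap L_\infty)(M)\subset E^{\times}(M)$. Since $q$ lies under a spectral projection of $|x|$ that commutes with $|x|$, traciality gives
\[
\tau(xy^*)=\tau(u|x|qu^*)=\tau(q|x|q)\pg a\tau(q)>0 .
\]
The symmetric argument, with the roles of $E(M)$ and $E^{\times}(M)$ exchanged (the latter now known to be an exact interpolation space), handles the other variable. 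Since the test elements lie in $(L_1\cap L_\infty)(M)$, this simultaneously proves that $(L_1\cap L_\infty)(M)$ separates points.
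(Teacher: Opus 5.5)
Your argument is correct in outline, but it takes a different route from the paper. The paper gives no proof of this statement and simply refers to \cite{DoddsSukochevIntegration} (Proposition 3.4.8, Theorem 3.4.24, Corollary 4.3.9, Corollary 6.2.1). Within the paper's framework, the natural derivation of the cited facts goes through function spaces:
\begin{itemize}
\item every exact interpolation space for $(L_1(M),L_\infty(M))$ has the form $E(M)$ with $E$ fully symmetric on $(0,\infty)$ (the remark after the quotation of Theorem 7.3.4);
\item its K\"othe dual is $E^{\times}(M)$ (Theorem 6.1.6, quoted just before the statement);
\item exactness then reduces to the classical fact that the K\"othe dual of a fully symmetric function space is fully symmetric.
\end{itemize}
You instead stay inside $M$. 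You check full symmetry of $E^{\times}(M)$ directly (Theorem \ref{Integration_Calderon}) from three ingredients: the formula $\|y\|_{E^{\times}(M)}=\sup\{\int_0^\infty\mu_z\mu_y:\|z\|_{E(M)}\le1\}$, the Fack--Kosaki inequality $\|zy\|_1\le\int_0^\infty\mu_z\mu_y$, and Hardy's lemma. The only structural obstruction, the atoms of $M$, is isolated in the passage to $\tilde M:=M\bar{\otimes}L_\infty[0,1]$ with trace $\tilde\tau:=\tau\otimes\int_0^1$. That formula is essentially the content of the identification $E(M)^{\times}=E^{\times}(M)$, so you are reproving that ingredient rather than quoting it. The gain is that you need neither the function-space representation of $E(M)$ nor commutative K\"othe duality. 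The price is that the rearrangement lemma in $\tilde M$ has to be proved.

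A few points need tightening.
\begin{enumerate}
\item In Step 1 the inclusions are continuous, not necessarily contractive (rescaling the norm of an exact interpolation space keeps it exact). Continuity is all you use.
\item In Step 2 you do not need an ``$E$-norm'' on $\tilde M$, which the hypothesis does not supply. Suppose $w\in L_0(\tilde M)$ satisfies $\mu_w\le\mu_z$. The trace-preserving conditional expectation $\mathcal{E}:\tilde M\to M$ is contractive on $L_1$ and on $L_\infty$, so Theorem \ref{integration_Holmstedt} in both algebras gives $\int_0^t\mu_{\mathcal{E}(w)}\le\int_0^t\mu_w\le\int_0^t\mu_z$ for all $t>0$. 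Full symmetry of $E(M)$ then gives $\|\mathcal{E}(w)\|_{E(M)}\le\|z\|_{E(M)}$, and hence $|\tilde\tau(w(y\otimes1))|=|\tau(\mathcal{E}(w)y)|\le\|z\|_{E(M)}\|y\|_{E^{\times}(M)}$.
\item The real lemma is then the existence of such a $w$ with $\tilde\tau(w(y\otimes1))\ge\int_0^\infty\mu_z\mu_y-\varepsilon$. Write $y=u|y|$ and take $w=D(u^*\otimes1)$ with $D=\sum_kc_kF_k$, where $c_k\ge0$ and $\sum_kc_k1_{[0,t_k)}\le\mu_z$. The $F_1\le F_2\le\cdots$ are finite-trace projections under $s(|y|)\otimes1$ with $\tilde\tau(F_k)\le t_k$ and $\tilde\tau(F_k(|y|\otimes1)F_k)$ close to $\int_0^{t_k}\mu_y$. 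Non-atomicity of $\tilde M$ is what lets you prescribe these traces.
\item Only approximate attainment is possible, even in $\tilde M$. Take $M=L_\infty(0,\infty)$, $y=2\cdot1_{(0,1)}+(1-e^{-s})1_{[1,\infty)}$ and $\mu_z=1_{[0,2)}$: then $\int\mu_z\mu_y=3$ is not attained. So your ``$\approx$'' is genuinely needed.
\item In Step 3, $q$ need not commute with $|x|$. What you actually use is $u^*uq=q$ (since $q\le e\le s(|x|)$) and $q|x|q=q(|x|e)q\ge aq$, which give $\tau(xy^*)=\tau(q|x|q)\ge a\tau(q)>0$ as claimed.
\end{enumerate}
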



Let $E(M)$ be an exact interpolation space for $(L_1(M),L_\infty(M))$, and let $E^*(M)$ denote the dual space of $E(M)$ as a normed space. Then trace duality yields a canonical isometric (antilinear) operator $E^{\times}(M)\to E^*(M)$, but in general it may not be surjective. By \cite{DoddsSukochevIntegration}[Theorem 5.2.9, Proposition 5.3.2], it is surjective if and only if the norm of $E(M)$ is \textit{order\-/continuous}, i.e. if for every decreasing net $(x_\alpha)_\alpha$ of $E(M)_+$ such that $\inf_\alpha x_\alpha=0$ then $\inf_\alpha\|x_\alpha\|_{E(M)}=0$. Thus, if $E(M)$ is an exact interpolation space for $(L_1(M),L_\infty(M))$ with order\-/continuous norm, then $E(M)$ is a predual of $E^{\times}(M)$ so that $E^{\times}(M)$ inherits a canonical w*-topology. For instance, if $1\pp p<\infty$, then $L_p(M)$ has an order\-/continuous norm.

The following result directly follows.

\begin{theo}
Let $E(M)$ be an exact interpolation space for $(L_1(M),L_\infty(M))$ with order-continuous norm. Then the subspace $(L_1\cap L_\infty)(M)$ is norm-dense in $E(M)$ and \text{\upshape w*}-dense in $E^{\times}(M)$.
\end{theo}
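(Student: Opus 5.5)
Two assertions need proof: the norm-density of $(L_1\cap L_\infty)(M)$ in $E(M)$, and its w*-density in $E^{\times}(M)$. The paper says this "directly follows" from the preceding facts, so the plan is to derive both from order-continuity and from the identification $E(M)^*=E^{\times}(M)$ via trace duality.

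\emph{Norm density in $E(M)$.} By the proposition on exact interpolation spaces, $x\in E(M)$ implies $x^*,|x|\in E(M)$, and $E(M)$ is stable under multiplication by elements of $M$. Writing $x=u|x|$, it therefore suffices to approximate $y:=|x|\ge0$ by elements of $(L_1\cap L_\infty)(M)$.

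\begin{enumerate}[nosep]
\item Truncate from above. Set $y_n:=y\,1_{[0,n]}(y)\in M$. Then $y-y_n=y\,1_{(n,\infty)}(y)$ is a decreasing sequence in $E(M)_+$ with infimum $0$. Order-continuity gives $\|y-y_n\|_{E(M)}\to0$.
\item Truncate from below. Set $y_{n,\varepsilon}:=y_n1_{(\varepsilon,\infty)}(y)$. Its support projection $1_{(\varepsilon,\infty)}(y)$ has finite trace, because $\lambda_y(\varepsilon)<\infty$ for $y\in(L_1+L_\infty)(M)$ by Theorem~\ref{integration_Holmstedt}. Hence $y_{n,\varepsilon}\in(L_1\cap L_\infty)(M)$.
\item Control the error. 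We have $y_n-y_{n,\varepsilon}=y_n1_{[0,\varepsilon]}(y)\le\varepsilon 1$. As $\varepsilon\downarrow0$ this is decreasing with infimum $0$, so order-continuity again gives convergence in $E(M)$.
\end{enumerate}

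Finally, approximate $x=u y$ by $u\,y_{n,\varepsilon}$, using $\|u\,z\|_{E(M)}\le\|z\|_{E(M)}$. This element still lies in $(L_1\cap L_\infty)(M)$.

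\emph{w*-density in $E^{\times}(M)$.} Under order-continuity, $E(M)$ is a predual of $E^{\times}(M)$ via the sesquilinear pairing $\tau(xy^*)$. By Hahn--Banach, the w*-closure of the subspace $(L_1\cap L_\infty)(M)$ is all of $E^{\times}(M)$ exactly when its pre-annihilator in $E(M)$ is trivial. So suppose $x\in E(M)$ satisfies $\tau(xy^*)=0$ for every $y\in(L_1\cap L_\infty)(M)$. The last clause of the trace-duality theorem says that $(L_1\cap L_\infty)(M)$ separates the points of $E(M)$, hence $x=0$. Alternatively, one can test directly with $y=x\,1_{(\varepsilon,n)}(|x|)$, which lies in $(L_1\cap L_\infty)(M)$. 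This gives $\tau\big(|x|^2 1_{(\varepsilon,n)}(|x|)\big)=0$ for all $\varepsilon,n$, and therefore $x=0$.

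\emph{Main point needing care.} The only delicate step is the lower truncation: one needs the spectral projections $1_{(\varepsilon,\infty)}(|x|)$ to have finite trace. This holds because elements of $(L_1+L_\infty)(M)$ are $\tau$-measurable with finite distribution function. Everything else is direct application of order-continuity and duality.
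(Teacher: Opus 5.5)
Your w*-density argument (predual identification plus the separation clause of the trace-duality theorem) is correct and is what the paper intends. The gap is in Step~2 of the norm-density part, at exactly the point you flag as delicate. The claim that $\lambda_y(\varepsilon)<\infty$ for every $\varepsilon>0$ and every $y\in(L_1+L_\infty)(M)$ is false. Theorem~\ref{integration_Holmstedt} only gives $\mu_y(t)<\infty$ for all $t>0$, i.e.\ $\lambda_y(s)\to0$ as $s\to\infty$. Likewise, $\tau$-measurability only requires $\lambda_y$ to take one finite value. For example, if $\tau(1)=\infty$, then $y=1\in L_\infty(M)$ has $\lambda_y(\varepsilon)=\infty$ for all $\varepsilon<1$, and your $y_{n,\varepsilon}=1$ is not in $L_1(M)$. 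Your alternative test element $x\,1_{(\varepsilon,n)}(|x|)$ in the w*-part relies on the same unjustified claim.

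The finiteness you need does hold for $y\in E(M)$, but only because of order continuity, and you have to argue it. Suppose $e:=1_{(\varepsilon,\infty)}(y)$ had $\tau(e)=\infty$. Then $e\in E(M)$, since $0\le\varepsilon e\le y$. By semifiniteness, write $e=\sum_i e_i$ with mutually orthogonal finite-trace projections $e_i$. The decreasing net $e-\sum_{i\in F}e_i$ ($F$ finite) has infimum $0$. Yet each term has infinite trace, so its singular function is $1_{(0,\infty)}$ and its norm equals $\|e\|_{E(M)}>0$. This contradicts order continuity.

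You can also bypass the issue. Let $e_\alpha\uparrow1$ be finite-trace projections and approximate $y_n$ by $y_n^{1/2}e_\alpha y_n^{1/2}$. These lie in $(L_1\cap L_\infty)(M)$, with trace norm at most $n\,\tau(e_\alpha)$. Moreover $y_n-y_n^{1/2}e_\alpha y_n^{1/2}=y_n^{1/2}(1-e_\alpha)y_n^{1/2}$ decreases to $0$ inside $E(M)_+$, so order continuity applies.

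Simplest of all is the paper's route, which needs no truncation. Order continuity gives $E(M)^*=E^{\times}(M)$ via trace duality. Since $(L_1\cap L_\infty)(M)$ separates the points of $E^{\times}(M)$, its annihilator in $E(M)^*$ is trivial. Hahn--Banach then gives norm density directly, by the same duality argument you used for the w*-part.
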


\section{Interpolation of abstract Hardy-type spaces}

Let $M$ be a semifinite von Neumann algebra. 

\vspace{5pt}

\textbf{Notations.} If $P$ is any (self-adjoint) projection on $L_2(M)$ and if $E(M)$ is any exact interpolation space for $(L_1(M),L_\infty(M))$, we define $H_E(P)$ to be the $\sigma(E(M),E^{\times}(M))$\-/weak closure of the subspace
\[E(M)\cap P(L_2(M))\]
in $E(M)$. Let us denote $H_p(P):=H_{L_p}(P)$ for $1\pp p\pp\infty$. By definition, $H_2(P)=P(L_2(M))$. They are the \textit{abstract Hardy spaces} associated with $P$.

\begin{rem}
If $P$ is a projection on $L_2(M)$ with complement projection $P^{\bot}$, and if $E(M)$ is an exact interpolation space for $(L_1(M),L_\infty(M))$, then from the definitions it is clear that $H_E(P)$ and $H_{E^{\times}}(P)$ are respectively included in the orthogonal of $H_{E^{\times}}(P^\bot)$ and $H_E(P^{\bot})$ w.r.t. trace duality between $E(M)$ and $E^{\times}(M)$.
\end{rem}

\begin{mdframed}[backgroundcolor=black!10,rightline=false,leftline=false,topline=false,bottomline=false,skipabove=10pt]
A projection $P$ on $L_2(M)$ is said to satisfy the \textit{technical assumptions} if
\vspace{3pt}
\begin{enumerate}[nosep]
    \item[(1)] If $E(M)$ is an exact interpolation space for $(L_1(M),L_\infty(M))$ with order\-/continuous norm, then the orthogonals of $H_E(P)$, $H_{E^{\times}}(P)$ w.r.t. trace duality coincide respectively with $H_{E^{\times}}(P^{\bot})$, $H_E(P^{\bot})$.
    \item[(2)] There is a subspace $H(P)$ of $(L_1+L_\infty)(M)$, which is weakly-closed in $(L_1+L_\infty)(M)$ (w.r.t. trace duality between $(L_1+L_\infty)(M)$ and $(L_1\cap L_\infty)(M)$), and such that, if $E(M)$ is an exact interpolation space for $(L_1(M),L_\infty(M))$ with order\-/continuous norm, then we have 
\[H_E(P)=H(P)\cap E(M),\ \ \ \ H_{E^{\times}}(P)=H(P)\cap E^{\times}(M).\]
    \item[(3)] If $E(M)$ is an exact interpolation space for $(L_1(M),L_\infty(M))$ with order\-/continuous norm, then the subspace
\[(L_1\cap L_\infty)(M)\cap P(L_2(M))\]
is norm-dense in $H_E(P)$ and w*-dense in $H_{E^{\times}}(P)$.
\end{enumerate}
\end{mdframed}
\vspace{5pt}

\begin{prop}
Let $P$ be a projection on $L_2(M)$ which is $L_1(M)$-bounded. Then $P$ satisfies the technical assumptions.
\end{prop}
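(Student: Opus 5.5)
Since $P$ is self-adjoint on $L_2(M)$ and $L_1(M)$-bounded, trace duality shows that $P$ is also bounded on $L_\infty(M)$: for $x,y\in(L_1\cap L_\infty)(M)$ we have $\tau(P(x)y^*)=\tau(x\,P(y)^*)$, hence $|\tau(P(x)y^*)|\pp\|P\|_{L_1\to L_1}\|x\|_\infty\|y\|_1$. So $P$ extends to a compatible bounded operator on $(L_1(M),L_\infty(M))$. The same holds for $P^\bot=I-P$. By interpolation, $P$ and $P^\bot$ are bounded on every exact interpolation space $E(M)$, with norm at most $\|P\|_{L_1\to L_1}$. Since $\tau(P(x)y^*)=\tau(xP(y)^*)$ on the intersection, the operator $P_E$ on $E(M)$ and the operator $P_{E^\times}$ on $E^\times(M)$ are adjoint to each other under trace duality. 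In particular $P_{E^\times}$ is w*-continuous when $E(M)$ has order-continuous norm.

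For (2), I would take $H(P):=\ker(P^\bot_{L_1+L_\infty})=\mathrm{ran}(P_{L_1+L_\infty})$. It is weakly closed because it is the intersection of the kernels of the functionals $u\mapsto\tau(P^\bot(u)y^*)=\tau(u\,P^\bot(y)^*)$, $y\in(L_1\cap L_\infty)(M)$, and these separate points. The key step is to identify $H_E(P)$ with $\{u\in E(M): P^\bot u=0\}=H(P)\cap E(M)$.

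\textbf{Inclusion $\subseteq$.} Elements of $E(M)\cap P(L_2(M))$ are fixed by $P$, because the extensions of $P$ are compatible. The set $H(P)\cap E(M)$ is $\sigma(E,E^\times)$-closed by the same kernel description. Hence the weak closure $H_E(P)$ is contained in it.

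\textbf{Inclusion $\supseteq$.} Let $u\in E(M)$ with $Pu=u$. By norm density of $(L_1\cap L_\infty)(M)$ in $E(M)$, pick $x_n\to u$ in $E(M)$. Then $P x_n\in(L_1\cap L_\infty)(M)\cap P(L_2(M))$, since $P$ preserves $L_1\cap L_\infty\subset L_2$ and agrees there with the $L_2$ projection. Moreover $Px_n\to Pu=u$ in norm. This gives the norm density in (3) and the equality in (2) for $E$.

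For $E^\times(M)$, take $y$ with $Py=y$ and choose a bounded net $y_\alpha\in L_1\cap L_\infty$ converging w*. The w*-density theorem gives such a net, and I would get boundedness by Krein–Smulian or directly via Kaplansky-type truncation. Then $Py_\alpha\to Py=y$ w*, because $P_{E^\times}$ is the adjoint of $P_E$. This proves (3) for $E^\times$ and the second equality in (2), since w*-closure is contained in the $\sigma(E^\times,E)$-closure.

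For (1), take $y\in E^\times(M)$ orthogonal to $H_E(P)$. Then for all $x\in E(M)$ we have $\tau(P(x)y^*)=0$, i.e. $\tau(x\,P(y)^*)=0$. Hence $Py=0$, so $y=P^\bot y\in H(P^\bot)\cap E^\times=H_{E^\times}(P^\bot)$ by the previous step applied to $P^\bot$. The reverse inclusion is the Remark preceding the assumptions. The statement for $H_{E^\times}(P)$ is symmetric: orthogonality against all $Py$ with $y\in E^\times$ gives $P x=0$.

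The main obstacle is making sure that the various extensions of $P$ are consistent across the different spaces. Two points need care here. First, $P$ must agree on $L_2\cap E$ with its interpolated extension. Second, the w*-topology on $E^\times(M)$ must match $\sigma(E^\times,E)$. I would handle both using density of $L_1\cap L_\infty$ and the adjointness identity above.
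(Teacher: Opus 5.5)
Your proposal is correct and takes the paper's route: extend $P$ by trace duality to a compatible operator on $(L_1(M),L_\infty(M))$, identify $H_E(P)$ and $H_{E^{\times}}(P)$ with the fixed points of the interpolated operator, and read off (1)--(3) from self-adjointness, with much more detail than the paper's sketch. The one step to tighten is the adjointness of $P_E$ and $P_{E^{\times}}$. Define the $L_\infty(M)$-extension as the adjoint of the $L_1(M)$-extension, since $(L_1\cap L_\infty)(M)$ is not norm-dense in $L_\infty(M)$. Then check $\tau(P(x)y^*)=\tau(xP(y)^*)$ for $x\in(L_1\cap L_\infty)(M)$ and $y\in(L_1+L_\infty)(M)$ by splitting $y$, and extend in $x$ by norm density; extending in $y$ by w*-density would be circular. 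With this in hand, the bounded-net/Krein--Smulian step is unnecessary.
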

\begin{proof}
As $P$ is self-adjoint on $L_2(M)$, we see that $P$ is also $L_\infty(M)$-bounded, so that it extends to a bounded compatible operator $P:(L_1(M),L_\infty(M))\to(L_1(M),L_\infty(M))$. Then, it is easy to show that, if $E(M)$ is an exact interpolation space for $(L_1(M),L_\infty(M))$ with order\-/continuous norm, we have
\[H_E(M)=\big\{x\in E(M)\ :\ Px=x\big\},\ \ H_{E^{\times}}(M)=\big\{x\in E(M)\ :\ Px=x\big\}\]
By using the fact that $P$ is self-adjoint on $L_2(M)$, it is then easy to check that $P$ satisfies the technical assumptions.
\end{proof}

\begin{prop}
Let $P$ be a projection on $L_2(M)$ which satisfies the technical assumptions. Then the complement projection $P^{\bot}$ on $L_2(M)$ also satisfies the technical assumptions.
\end{prop}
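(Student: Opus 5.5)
Given that $P$ satisfies the technical assumptions, we verify each of (1), (2), (3) for $P^{\bot}$ in turn. Throughout, $E(M)$ is an exact interpolation space for $(L_1(M),L_\infty(M))$ with order-continuous norm. The key symmetry is that $(P^{\bot})^{\bot}=P$. We use trace duality between $E(M)$ and $E^{\times}(M)$, together with the fact that $(L_1\cap L_\infty)(M)$ separates points and is norm-dense in $E(M)$ and w*-dense in $E^{\times}(M)$.

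\textbf{Step 1: assumption (1).} Assumption (1) for $P$ gives
\[H_E(P)^{\perp}=H_{E^{\times}}(P^{\bot}),\qquad H_{E^{\times}}(P)^{\perp}=H_E(P^{\bot}).\]
We need $H_E(P^{\bot})^{\perp}=H_{E^{\times}}(P)$ and $H_{E^{\times}}(P^{\bot})^{\perp}=H_E(P)$.

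For the first, take orthogonals in $H_{E^{\times}}(P)^{\perp}=H_E(P^{\bot})$. This gives $H_E(P^{\bot})^{\perp}=H_{E^{\times}}(P)^{\perp\perp}$. By the bipolar theorem for the dual pair $(E(M),E^{\times}(M))$, the latter is the $\sigma(E^{\times},E)$-closure of $H_{E^{\times}}(P)$. Since $H_{E^{\times}}(P)$ is by definition $\sigma(E^{\times},E^{\times\times})$-closed, we need it to be $\sigma(E^{\times},E)$-closed. Here we use order-continuity: $E(M)$ is a predual of $E^{\times}(M)$, and the w*-topology is $\sigma(E^{\times},E)$. Assumption (2) says $H_{E^{\times}}(P)=H(P)\cap E^{\times}(M)$, where $H(P)$ is closed for $\sigma(L_1+L_\infty,L_1\cap L_\infty)$. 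Because $(L_1\cap L_\infty)(M)\subset E(M)$, the intersection $H(P)\cap E^{\times}(M)$ is $\sigma(E^{\times},E)$-closed. So the first identity holds.

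For the second, take orthogonals in $H_E(P)^{\perp}=H_{E^{\times}}(P^{\bot})$. This gives $H_{E^{\times}}(P^{\bot})^{\perp}=H_E(P)^{\perp\perp}$, the weak closure of $H_E(P)$, which equals $H_E(P)$ by definition.

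\textbf{Step 2: assumption (2).} Set
\[H(P^{\bot}):=\{x\in(L_1+L_\infty)(M)\ :\ \tau(xy^*)=0\ \text{for all}\ y\in(L_1\cap L_\infty)(M)\cap P(L_2(M))\}.\]
This is an annihilator, so it is automatically weakly closed in $(L_1+L_\infty)(M)$.

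The inclusion $H(P^{\bot})\cap E(M)\subset H_E(P)^{\perp}$ holds because, by (3) for $P$, the set $(L_1\cap L_\infty)(M)\cap P(L_2)$ is w*-dense in $H_{E^{\times}}(P)$. Pairing with $x\in E(M)$ is w*-continuous on $E^{\times}(M)$, so $x$ annihilating the dense set annihilates all of $H_{E^{\times}}(P)$. Then Step 1 (the case $H_{E^{\times}}(P)^{\perp}$) gives $x\in H_E(P^{\bot})$.

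The reverse inclusion is immediate: an element of $H_E(P^{\bot})$ annihilates $H_{E^{\times}}(P)$, which contains the test set.

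For $E^{\times}(M)$ we argue the same way, using norm-density of the test set in $H_E(P)$ (again from (3)) and the identity $H_E(P)^{\perp}=H_{E^{\times}}(P^{\bot})$.

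\textbf{Step 3: assumption (3).} We show $D:=(L_1\cap L_\infty)(M)\cap P^{\bot}(L_2(M))$ is norm-dense in $H_E(P^{\bot})$. By Hahn–Banach with $E^*=E^{\times}$, it suffices to show that any $y\in E^{\times}(M)$ annihilating $D$ also annihilates $H_E(P^{\bot})$.

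Such a $y$ lies in $H(P)$. To see this, apply (2) for $P$ with $E=L_2$: this gives $H(P)\cap L_2(M)=P(L_2(M))$. Using the weak closedness of $H(P)$, elements annihilating $(L_1\cap L_\infty)\cap P^{\bot}(L_2)$ land in $H(P)$. Hence $y\in H(P)\cap E^{\times}(M)=H_{E^{\times}}(P)$, which by Step 1 is $H_E(P^{\bot})^{\perp}$. This proves norm-density.

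For w*-density in $H_{E^{\times}}(P^{\bot})$, the symmetric argument applies, using w*-closed annihilators in $E(M)$.

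\textbf{Main obstacle.} The delicate point is Step 3. It needs the identification $H(P)=\{(L_1\cap L_\infty)(M)\cap P^{\bot}(L_2)\}^{\perp}$ inside $(L_1+L_\infty)(M)$. This is a bipolar statement for the pair $\big((L_1+L_\infty)(M),(L_1\cap L_\infty)(M)\big)$. I would first prove the inclusion $\supset$ by approximation in $L_2$. I would then prove the inclusion $\subset$ from (1) for $P$ in the case $E=L_2$, where $H_2(P)^{\perp}=P^{\bot}(L_2)$.
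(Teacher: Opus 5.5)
Your proof is correct and follows essentially the same route as the paper: you get (1) from the bipolar theorem, (2) by taking $H(P^{\bot})$ to be the annihilator of $(L_1\cap L_\infty)(M)\cap P(L_2(M))$ and combining (3) and (1) for $P$, and (3) by duality, after showing from the weak closedness of $H(P)$ and $P(L_2(M))\subset H(P)$ that anything annihilating $(L_1\cap L_\infty)(M)\cap P^{\bot}(L_2(M))$ lies in $H(P)$. Your Step 1 is more careful than the paper's one-line appeal to the bipolar theorem, since you use (2) to check that $H_{E^{\times}}(P)$ is $\sigma(E^{\times},E)$-closed; just fix the slip in Step 2, where the inclusion should read $H(P^{\bot})\cap E(M)\subset H_{E^{\times}}(P)^{\perp}=H_E(P^{\bot})$, and note that Step 3 needs only the inclusion $\{(L_1\cap L_\infty)(M)\cap P^{\bot}(L_2(M))\}^{\perp}\subset H(P)$, not the equality claimed in your last paragraph.
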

\begin{proof}
As $P$ satisfies (1), by the bipolar theorem it is clear that $P^{\bot}$ satisfies (1). 

For (2), let $H(P^{\bot})$ denote the orthogonal of $(L_1\cap L_\infty)(M)\cap P(L_2(M))$ w.r.t. trace duality between $(L_1+L_\infty)(M)$ and $(L_1\cap L_\infty)(M)$. Then $H(P^{\bot})$ is indeed weakly-closed in $(L_1+L_\infty)(M)$. Let $E(M)$ be an exact interpolation space for $(L_1(M),L_\infty(M))$ with order\-/continuous norm. As $H_E(P^{\bot})$ is orthogonal to $H_{E^{\times}}(P)$, and because $H_{E^{\times}}(P)$ contains $(L_1\cap L_\infty)(M)\cap P(L_2(M))$, we deduce that $H_E(P^{\bot})$ is included in $H(P^{\bot})$, and thus in $E(M)\cap H(P^{\bot})$. For the converse inclusion, let $x\in E(M)\cap H(P^{\bot})$. As $P$ satisfies (3), we deduce that $x$ is orthogonal to $H_{E^{\times}}(P)$. As $P$ satisfies (1), we deduce that $x\in H_E(P^{\bot})$. Thus $H_E(P^{\bot})=E(M)\cap H(P^{\bot})$ as desired. In a similar way, we prove that $H_{E^{\times}}(P^{\bot})=E^{\times}(M)\cap H(P^{\bot})$. 

It remains to justify (3). Note that $(L_1\cap L_\infty)(M)\cap P^{\bot}(L_2(M))$ is weakly closed in $(L_1\cap L_\infty)(M)$, as a consequence of the fact that $P$ is weakly-continuous for the weak topology of $L_2(M)$. Thus, as $P$ satisfies (1), by the bipolar theorem it suffices to show that, if $x\in E^{\times}(M)$ resp. $x\in E(M)$ is orthogonal to $(L_1\cap L_\infty)(M)\cap P^{\bot}(L_2(M))$, then $x\in H_{E^\times}(P)$ (resp. $x\in H_E(P)$). Let $x\in E^{\times}(M)$ (resp. $x\in E(M)$) orthogonal to $(L_1\cap L_\infty)(M)\cap P^{\bot}(L_2(M))$. Then $x$ is in the biorthogonal of $H(P)$ w.r.t. trace duality between $(L_1+L_\infty)(M)$ and $(L_1\cap L_\infty)(M)$. By the bipolar theorem, and because $H(P)$ is, by assumption, weakly closed in $(L_1+L_\infty)(M)$, we deduce that $x\in H(P)$. Thus $x\in E^{\times}(M)\cap H(P)$ (resp. $x\in E^{\times}(M)\cap H(P)$). As $P$ satisfies (2), we finally deduce that $x\in H_{E^{\times}}(P)$ (resp. $x\in H_E(P)$), as desired.
\end{proof}

\begin{prop}\label{technical_assumptions_Kclosedness}
Let $P$ be a projection on $L_2(M)$ which satisfies the technical assumptions, let $E_0(M)$, $E_1(M)$ be two exact interpolation spaces for $(L_1(M),L_\infty(M))$ such that the subcouple $(H_{E_0}(P),H_{E_1}(P))$ is quasi-complemented in $(E_0(M),E_1(M))$ with constant $C$, and let $\Phi$ be a $K$-parameter space such that the exact interpolation space
\[E_\theta(M):=K_\Phi(E_0(M),E_1(M))\]
has order-continuous norm. Then
\[H_{E_\theta}(P)=K_\Phi(E_0(M),E_1(M))\]
with equivalent norms, with constants depending on $C$ only.
\end{prop}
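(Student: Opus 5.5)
We read the conclusion as $H_{E_\theta}(P)=K_\Phi(H_{E_0}(P),H_{E_1}(P))$ with equivalent norms, the right-hand side in the statement being a typo. Write $A_j:=H_{E_j}(P)$ for $j\in\{0,1\}$.

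\textbf{Step 1 (norm comparison).} Quasi-complementation with constant $C$ implies $K$-closedness with constant $C$, as observed after the definition. So for $u\in A_0+A_1$ and $t>0$,
\[K_t(u,E_0(M),E_1(M))\pp K_t(u,A_0,A_1)\pp CK_t(u,E_0(M),E_1(M)).\]
Since the norm of $\Phi$ is a lattice norm, this gives
\[\|u\|_{E_\theta(M)}\pp\|u\|_{K_\Phi(A_0,A_1)}\pp C\|u\|_{E_\theta(M)}\qquad\text{for }u\in A_0+A_1.\]
In other words, by Proposition \ref{preliminaries_Kclosedness}, $K_\Phi(A_0,A_1)$ coincides with $(A_0+A_1)\cap E_\theta(M)$, with equivalent norms and constant $C$. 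It therefore suffices to prove the set equality $(A_0+A_1)\cap E_\theta(M)=H_{E_\theta}(P)$.

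\textbf{Step 2 ($\subseteq$).} Let $H(P)$ be the weakly closed subspace given by technical assumption (2).
\begin{enumerate}[nosep]
\item Applying (2) to $E=L_2$, which has order-continuous norm, gives $P(L_2(M))=H_2(P)\subseteq H(P)$. Hence $E_j(M)\cap P(L_2(M))\subseteq H(P)$ for each $j$.
\item Since $(L_1\cap L_\infty)(M)\subseteq E_j^{\times}(M)$, the topology $\sigma(E_j(M),E_j^{\times}(M))$ is finer on $E_j(M)$ than the topology induced by the $(L_1\cap L_\infty)(M)$-duality on $(L_1+L_\infty)(M)$.
\item $H(P)$ is closed for the latter topology, so it contains the $\sigma(E_j,E_j^\times)$-closure of $E_j(M)\cap P(L_2(M))$, that is, $A_j\subseteq H(P)$.
\item Hence $(A_0+A_1)\cap E_\theta(M)\subseteq H(P)\cap E_\theta(M)=H_{E_\theta}(P)$, the last equality being (2) applied to $E_\theta(M)$, which has order-continuous norm.
\end{enumerate}
This argument never needs order-continuity of $E_0$ or $E_1$, which is not assumed.

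\textbf{Step 3 ($\supseteq$).} Let $x\in H_{E_\theta}(P)$.
\begin{enumerate}[nosep]
\item By technical assumption (3) applied to $E_\theta(M)$, there is a sequence $y_n\in(L_1\cap L_\infty)(M)\cap P(L_2(M))$ with $y_n\to x$ in $E_\theta(M)$.
\item Each $y_n$ lies in $E_0(M)\cap E_1(M)\cap P(L_2(M))\subseteq A_0\cap A_1\subseteq A_0+A_1$.
\item By Step 1, $(y_n)$ is Cauchy in the complete space $K_\Phi(A_0,A_1)$, so it converges there to some $z$.
\item Both norms dominate the norm of $E_0(M)+E_1(M)$, up to constants. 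So $y_n\to z$ and $y_n\to x$ in $E_0(M)+E_1(M)$, which forces $x=z\in K_\Phi(A_0,A_1)$.
\end{enumerate}
Combined with Step 1, this gives the norm equivalence with constants depending only on $C$.

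The main obstacle is Step 2. The definition of $A_j$ uses weak closures in spaces $E_j$ that need not be order-continuous, so assumption (2) cannot be applied to them directly. The plan handles this by comparing weak topologies and using that $H(P)$ is weakly closed in $(L_1+L_\infty)(M)$.
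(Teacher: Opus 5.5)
Your proof is correct and takes the same approach as the paper. Both reduce, via $K$-closedness and Proposition \ref{preliminaries_Kclosedness}, to showing that $(H_{E_0}(P)+H_{E_1}(P))\cap E_\theta(M)$ is a dense subspace of $H_{E_\theta}(P)$, with the inclusion coming from technical assumption (2) and density from (3). Your write-up also spells out two points the paper leaves implicit: the weak-topology comparison needed because $E_0(M),E_1(M)$ are not assumed order-continuous, and the completeness argument that upgrades density to equality.
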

\begin{proof}
By Proposition \ref{preliminaries_Kclosedness}, it suffices to check that
\[(H_{E_0}(P)+H_{E_1}(P))\cap E_\theta(M)\]
is a dense subspace of $H_{E_\theta}(P)$. It is a subspace because of point (2) of the technical assumptions, and it is dense because of point (3) of the technical assumptions.
\end{proof}

\begin{mdframed}[backgroundcolor=black!10,rightline=false,leftline=false,topline=false,bottomline=false,skipabove=10pt]
A projection $P$ on $L_2(M)$ is said to be of \textit{Jones-type} if
\vspace{3pt}
\begin{enumerate}[nosep]
    \item[(1)] $P$ satisfies the technical assumptions.
    \item[(2)] If $1<p<\infty$, then $P$ is $L_p(M)$-bounded with a constant depending on $p$ only.
    \item[(3)] The subcouples $(H_1(P),H_2(P))$ and $(H_1(P^{\bot}),H_2(P^{\bot}))$ are both quasi\-/complemented in the compatible couple $(L_1(M),L_2(M))$ with a universal constant.
\end{enumerate}
\end{mdframed}
\vspace{5pt}

\begin{rem}
If $P$ is a projection $L_2(M)$ which is of Jones-type, then the complement projection $P^{\bot}$ is also of Jones-type.
\end{rem}

A detailed proof of the following result is given in \cite{Moyart2026}[Theorem 2.6]. It essentially relies on the Wolff-type theorem for $K$-functionals established by Kislyakov and Xu in \cite{KisliakovHardy}[Theorem 2].

\begin{mdframed}[skipabove=10pt]
\begin{theo}
Let $P$ be a Jones-type projection on $L_2(M)$. If $1\pp p_0,p_0\pp\infty$, then the subcouple $(H_{p_0}(P),H_{p_1}(P))$ is quasi\-/complemented in the compatible couple $(L_{p_0}(M),L_{p_1}(M))$ with a constant depending only on $p_0,p_1$.
\end{theo}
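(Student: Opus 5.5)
We reduce everything to the three Jones-type hypotheses together with the Kislyakov--Xu Wolff-type theorem for $K$-functionals \cite{KisliakovHardy}[Theorem 2]. Throughout, $(H_{p_0}(P),H_{p_1}(P))$ is viewed as a subcouple of $(L_{p_0}(M),L_{p_1}(M))$. By the proposition converting $K$-closedness into quasi-complementation, it suffices to prove two things: first, $K$-closedness with a constant depending only on $p_0,p_1$; second, $H_{p_j}(P)=L_{p_j}(M)\cap(H_{p_0}(P)+H_{p_1}(P))$. The second fact follows from point (2) of the technical assumptions, since both sides equal $H(P)\cap L_{p_j}(M)$ (for $p_j=\infty$ one uses the dual form $H_{E^\times}(P)=H(P)\cap E^\times(M)$ with $E=L_1$). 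So the real work is the $K$-closedness.

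\emph{Step 1 (the couple $(H_1,H_2)$ and duality).} Hypothesis (3) gives that $(H_1(P),H_2(P))$ and $(H_1(P^\bot),H_2(P^\bot))$ are quasi-complemented in $(L_1,L_2)$ with a universal constant. Using Holmstedt-type formulas and Proposition~\ref{technical_assumptions_Kclosedness}, this yields $H_p(P)=(H_1(P),H_2(P))_{\theta,p,K}$ for $1<p<2$, and likewise for $P^\bot$. Next I pass to $(H_2(P),H_\infty(P))$ by duality. By point (1) of the technical assumptions, $H_q(P)$ is the annihilator of $H_p(P^\bot)$. Pisier's duality principle for $K$-closed subcouples (Kislyakov--Xu) states that if $(A_0,A_1)$ is $K$-closed in $(E_0,E_1)$ with $A_j^\perp$ identified appropriately, then $(A_0^\perp,A_1^\perp)$ is $K$-closed in $(E_0^*,E_1^*)$. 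Applying this to $P^\bot$ gives $K$-closedness of $(H_2(P),H_\infty(P))$ in $(L_2,L_\infty)$. The $w^*$-density in point (3) is what makes the annihilator identification legitimate at $p=\infty$.

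\emph{Step 2 (Wolff gluing).} We now know $(H_{p_0},H_{p_1})$ is $K$-closed whenever $1\le p_0<p_1\le 2$ or $2\le p_0<p_1\le\infty$, by reiteration (Holmstedt) inside each range. Here hypothesis (2), the $L_p$-boundedness of $P$ for $1<p<\infty$, gives complementation and hence $K$-closedness of $(H_{p_0},H_{p_1})$ for $1<p_0<p_1<\infty$ directly, with constants depending on $p_0,p_1$. The Kislyakov--Xu Wolff-type theorem states: if $X_1\subset X_2\subset X_3\subset X_4$ are interpolation-compatible, with $(A_1,A_3)$ $K$-closed in $(X_1,X_3)$ and $(A_2,A_4)$ $K$-closed in $(X_2,X_4)$, and $X_2=(X_1,X_3)_{\theta}$, $X_3=(X_2,X_4)_{\phi}$ (with the corresponding identities for the $A$'s), then $(A_1,A_4)$ is $K$-closed in $(X_1,X_4)$. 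I apply it with $X_1=L_1$, $X_2=L_{p}$, $X_3=L_2$, $X_4=L_\infty$ (and the analogous choices for general $p_0,p_1$), taking the pieces from Step~1. This gives $(H_1,H_\infty)$ and every $(H_{p_0},H_{p_1})$. The identities $H_{p}=(H_{p_0},H_{p_1})_{\theta,p}$ needed as Wolff hypotheses come from Proposition~\ref{technical_assumptions_Kclosedness}, which requires order continuity; this is why I work with $p<\infty$ interior spaces.

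\emph{Main obstacle.} The delicate point is the duality step at the endpoint $\infty$. $L_\infty$ is not order continuous, so $H_\infty(P)$ must be handled as a $w^*$-closure. I need to check that the $K$-functional duality (Pisier's lemma) applies with the Köthe duals $E^\times$ rather than the Banach duals, and that constants remain controlled. I would try first to prove $K$-closedness of $(H_2,H_\infty)$ by testing against $(L_1\cap L_\infty)(M)\cap P^\bot(L_2(M))$, which is dense by point (3), and using the formula $K_t(x;L_2,L_\infty)^{*}\simeq J$-functional in $(L_2,L_1)$.
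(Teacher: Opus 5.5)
Your overall route (dualize hypothesis (3) for $P^{\bot}$ to get $(H_2(P),H_\infty(P))$ $K$-closed in $(L_2,L_\infty)$, use hypothesis (2) for interior complementation, then glue with the Kislyakov--Xu Wolff-type theorem) matches what the paper indicates, and Step~1 is sound.

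The endpoint issue you flag as the main obstacle is not one. You only dualize the regular couple $(L_1,L_2)$, whose Banach duals are exactly $(L_\infty,L_2)$. The annihilators are identified by technical assumption (1) for $P^{\bot}$, with $E=L_1$ and $E=L_2$. The duality lemma itself is a short Hahn--Banach argument: apply $K$-closedness to $a_0-a_1$ when $x=a_0+y_0=a_1+y_1$.

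The gap is in Step~2: the Wolff application you write down does not have its hypotheses available. With $X_1=L_1$, $X_2=L_p$ ($1<p<2$), $X_3=L_2$, $X_4=L_\infty$, you need $(A_2,A_4)=(H_p(P),H_\infty(P))$ to be $K$-closed in $(L_p(M),L_\infty(M))$. That is not among your pieces. Among pairs with an endpoint exponent, Step~1 plus reiteration only covers $(L_1,L_b)$ with $b\le 2$ and $(L_a,L_\infty)$ with $a\ge 2$. Hypothesis (2) only covers pairs with both exponents in $(1,\infty)$. In fact no single application can yield $(H_1,H_\infty)$: it would need $(X_1,X_3)=(L_1,L_b)$ with $b\le 2$ and $(X_2,X_4)=(L_a,L_\infty)$ with $a\ge 2$ and $a<b$, which is impossible. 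The two endpoint pieces only touch at $L_2$, while Wolff's theorem needs overlapping ranges. The overlap has to come from hypothesis (2), and using it requires an extra gluing step.

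The repair is to apply the theorem twice.
\begin{itemize}
\item Glue $(H_1,H_2)$ in $(L_1,L_2)$ with the complemented pair $(H_{3/2},H_4)$ in $(L_{3/2},L_4)$, in the configuration $(L_1,L_{3/2},L_2,L_4)$. This gives $(H_1,H_4)$ $K$-closed in $(L_1,L_4)$.
\item Glue $(H_1,H_4)$ with $(H_2,H_\infty)$ in the configuration $(L_1,L_2,L_4,L_\infty)$. This gives $(H_1,H_\infty)$ $K$-closed in $(L_1,L_\infty)$.
\end{itemize}
The subspace identities needed as Wolff hypotheses follow from the $K$-closedness already obtained together with technical assumptions (2) and (3), as you say. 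These are $H_{3/2}=(H_1,H_2)_{2/3,3/2}$, $H_2=(H_{3/2},H_4)_{2/5,2}$, $H_2=(H_1,H_4)_{2/3,2}$ and $H_4=(H_2,H_\infty)_{1/2,4}$. The same assumptions give the density conditions on the middle spaces, since all middle exponents are finite and order continuity is available.

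Also drop the inclusions $X_1\subset X_2\subset X_3\subset X_4$ from your statement of the Wolff theorem. They fail for $L_p(M)$ when $\tau(1)=\infty$, and the theorem only needs a common ambient space.

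All remaining pairs $(p_0,p_1)$ then follow from $(H_1,H_\infty)$ by Holmstedt's reiteration formula. Quasi-complementation follows from your reduction via the proposition that turns $K$-closedness plus $A_j=E_j\cap(A_0+A_1)$ into quasi-complementation.
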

\end{mdframed}
\vspace{5pt}

By Proposition \ref{technical_assumptions_Kclosedness}, we deduce the following result.

\begin{theo}
Let $P$ be a Jones-type projection on $L_2(M)$. Let $1\pp p_0,p_1\pp\infty$ and let $\Phi$ be a $K$-parameter space such that the exact interpolation space
\[E(M):=K_\Phi(L_{p_0}(M),L_{p_1}(M))\]
has order-continuous norm. Then 
\[H_E(P)=K_\Phi(H_{p_0}(P),H_{p_1}(P))\]
with equivalent norms, with constants depending only on $p_0,p_1$. 
\end{theo}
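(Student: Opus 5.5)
The plan is to combine the preceding theorem (quasi-complementation of $(H_{p_0}(P),H_{p_1}(P))$ in $(L_{p_0}(M),L_{p_1}(M))$ for Jones-type $P$) with Proposition \ref{technical_assumptions_Kclosedness}. We take $E_0(M):=L_{p_0}(M)$ and $E_1(M):=L_{p_1}(M)$. These are exact interpolation spaces for $(L_1(M),L_\infty(M))$, for instance by the Calder\'on-couple characterization (Theorem \ref{Integration_Calderon}), since $L_p$-norms are fully symmetric. Because $P$ is of Jones-type, it satisfies the technical assumptions. The preceding theorem then shows that $(H_{E_0}(P),H_{E_1}(P))=(H_{p_0}(P),H_{p_1}(P))$ is quasi-complemented in $(E_0(M),E_1(M))$ with a constant $C=C(p_0,p_1)$. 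Here I use that $H_{L_p}(P)$ is by definition $H_p(P)$; for $p=\infty$ the $\sigma(L_\infty,L_1)$-closure is the w*-closure, which is consistent.

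Next, $E(M)=K_\Phi(L_{p_0}(M),L_{p_1}(M))$ is an exact interpolation space for $(E_0(M),E_1(M))$. It is therefore also one for $(L_1(M),L_\infty(M))$. To see this, note that any contractive compatible operator on $(L_1,L_\infty)$ is contractive on each $L_{p_j}$ by Riesz--Thorin (complex interpolation, Theorem on $L_p=[L_1,L_\infty]_{1/q}$). It is then contractive on $K_\Phi(L_{p_0},L_{p_1})$, because $K_\Phi$ is an exact interpolation functor. Also $E(M)$ is intermediate between $L_1\cap L_\infty$ and $L_1+L_\infty$. By hypothesis $E(M)$ has order-continuous norm. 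All hypotheses of Proposition \ref{technical_assumptions_Kclosedness} are now met, and it gives $H_E(P)=K_\Phi(H_{p_0}(P),H_{p_1}(P))$. Strictly, the proposition's conclusion as printed reads $K_\Phi(E_0(M),E_1(M))$ on the right. What its proof actually establishes, via Proposition \ref{preliminaries_Kclosedness}, is that $K_\Phi(H_{E_0}(P),H_{E_1}(P))$ embeds isomorphically into $E_\theta(M)$ with dense range in $H_{E_\theta}(P)$. I will read it in that intended form, with constants depending only on $C$, hence only on $p_0,p_1$.

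The only step needing care is this last identification, and I would spell it out briefly. Quasi-complementation gives $K$-closedness with constant $C$. Proposition \ref{preliminaries_Kclosedness} then makes the inclusion $K_\Phi(H_{p_0}(P),H_{p_1}(P))\to E(M)$ an embedding onto $(H_{p_0}(P)+H_{p_1}(P))\cap E(M)$. This range is closed, because it is the image of a Banach space under an embedding. It is contained in $H_E(P)$ by technical assumption (2): $H_{p_j}(P)\subset H(P)$, so the range lies in $H(P)\cap E(M)=H_E(P)$. It is dense in $H_E(P)$ by technical assumption (3), because it contains $(L_1\cap L_\infty)(M)\cap P(L_2(M))$, which is norm-dense in $H_E(P)$. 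Being closed and dense, it equals $H_E(P)$, and the norm equivalence has constants depending only on $p_0,p_1$.
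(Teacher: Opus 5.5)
Your proof is correct and follows the paper's route exactly: the paper deduces the statement in one line from the preceding quasi-complementation theorem for Jones-type projections together with Proposition~\ref{technical_assumptions_Kclosedness}. You correctly read that proposition's misprinted conclusion as $H_{E_\theta}(P)=K_\Phi(H_{E_0}(P),H_{E_1}(P))$, and your closed-range-plus-density argument via technical assumptions (2) and (3) is precisely what its proof intends.
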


\begin{coro}
Let $P$ be a Jones-type projection on $L_2(M)$, and let $1<p<\infty$. Then
\[H_p(P)=(H_1(P),H_{\infty}(P))_{\theta,p_\theta,K}\]
with equivalent norms, with constants depending on $p$ only, where $\theta:=1-1/p$.
\end{coro}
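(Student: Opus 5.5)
The corollary follows from the preceding theorem once the couple $(L_1(M),L_\infty(M))$ is identified correctly, so I will specialize that theorem rather than rerun Bourgain's argument. I take $p_0=1$, $p_1=\infty$ and $\Phi:=\Phi_{\theta,p}$ with $\theta=1-1/p$. With this choice $K_\Phi(H_1(P),H_\infty(P))=(H_1(P),H_\infty(P))_{\theta,p,K}$ by definition.

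Next I identify the ambient space. By the corollary of Holmstedt's formula (Theorem \ref{integration_Holmstedt}),
\[E(M):=K_{\Phi_{\theta,p}}(L_1(M),L_\infty(M))=(L_1(M),L_\infty(M))_{1/q,p,K}=L_p(M),\]
where $1/q=1-1/p=\theta$. The equivalence constants depend only on $p$. So $E(M)$ is $L_p(M)$ equipped with an equivalent norm, and $E(M)$ is an exact interpolation space because $K_\Phi$ is an exact functor.

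The theorem also requires $E(M)$ to have order-continuous norm. Order-continuity is not literally preserved under passing to an equivalent norm, but here it holds directly. If $x_\alpha\downarrow0$ in $E(M)_+$, then $\|x_\alpha\|_{L_p}\to0$ because $L_p(M)$ has order-continuous norm for $p<\infty$. Since $\|\cdot\|_{E(M)}\le c_p\|\cdot\|_{L_p(M)}$, it follows that $\|x_\alpha\|_{E(M)}\to 0$. The hypotheses of the previous theorem are therefore satisfied, and it gives
\[H_E(P)=(H_1(P),H_\infty(P))_{\theta,p,K}\]
with constants depending only on $p_0=1$ and $p_1=\infty$, hence universal.

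It remains to see that $H_E(P)=H_p(P)$ with equivalent norms. $H_E(P)$ is the $\sigma(E(M),E^{\times}(M))$-closure of $E(M)\cap P(L_2(M))$. As sets, $E(M)=L_p(M)$. The Köthe duals also coincide as sets, with $E^{\times}(M)=L_q(M)$, since the Köthe dual is determined by the set $E(M)$. The two dualities are given by the same trace pairing, so the weak topologies agree and $H_E(P)=H_p(P)$ as sets. The norms are equivalent with the constant $c_p$ from the Holmstedt identification. Altogether the constants depend only on $p$.

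The only delicate step is the bookkeeping in this last identification: the abstract spaces $H_E$ depend on $E(M)$ only through its underlying set and its trace duality, and not through the particular norm. Checking order-continuity is routine once it is observed that the norm is equivalent to $\|\cdot\|_{L_p}$. The Jones-type hypothesis enters only through the preceding theorem, which already supplies the quasi-complementation of $(H_1(P),H_\infty(P))$ in $(L_1(M),L_\infty(M))$.
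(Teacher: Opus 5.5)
Your proof is correct and is exactly the argument the paper intends (it states the corollary without proof): specialize the preceding theorem to $p_0=1$, $p_1=\infty$, $\Phi=\Phi_{\theta,p}$, and identify $K_{\Phi_{\theta,p}}(L_1(M),L_\infty(M))$ with $L_p(M)$ via the corollary of Holmstedt's formula. Your additional checks are the bookkeeping the paper leaves implicit: order-continuity of the equivalent norm, and $H_E(P)=H_p(P)$ because the Köthe dual and the weak topology depend only on the underlying set.
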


The next result is an extended version of Pisier's method for the computation of complex interpolation spaces of abstract Hardy spaces, first introduced in \cite{PisierHardy}. A detailed proof of the extended version is given in \cite{Moyart2026}[Theorem A].

\begin{mdframed}[backgroundcolor=black!10,rightline=false,leftline=false,topline=false,bottomline=false,skipabove=10pt]
A projection $P$ on $L_2(M)$ is said to satisfy \textit{Pisier's method assumptions} if, for every semifinite von Neumann algebra $N$, the amplified projection $Q:=P\otimes I$ on the Hilbertian tensor product $L_2(M)\otimes L_2(N)=L_2(M\bar{\otimes}N)$ is of Jones-type, and the algebraic tensor product $H_\infty(P^{\bot})\odot L_\infty(N)$ is a subspace of $H_\infty(Q^{\bot})$.
\end{mdframed}
\vspace{5pt}

\begin{mdframed}[skipabove=10pt]
\begin{maintheo}[Pisier's method]\label{theorem_PisierMethod}
Let $P$ be a projection on $L_2(M)$ that satisfies Pisier's method assumptions.  If $1\pp p_0,p_1<\infty$ and $0<\theta<1$ then
\[[H_{p_0}(P),H_{p_1}(P)]_\theta=H_{p_\theta}(P)\]
with equivalent norms, with constants depending on $p_0,p_1,\theta$ only, where $\frac{1}{p_\theta}=\frac{1-\theta}{p_0}+\frac{\theta}{p_1}$.
\end{maintheo}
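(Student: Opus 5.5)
We follow Pisier's original strategy from \cite{PisierHardy}: prove that $H_{p_\theta}(P)$ embeds into $[H_{p_0}(P),H_{p_1}(P)]_\theta$. The converse inclusion, $[H_{p_0}(P),H_{p_1}(P)]_\theta\subset [L_{p_0}(M),L_{p_1}(M)]_\theta\cap H(P)=L_{p_\theta}(M)\cap H(P)=H_{p_\theta}(P)$, is contractive. It follows from the classical identity $[L_{p_0},L_{p_1}]_\theta=L_{p_\theta}$, which we get by reiteration from $L_p(M)=[L_1(M),L_\infty(M)]_{1/q}$, together with point (2) of the technical assumptions.

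\textbf{Step 1: reduction to a single pair of exponents.} By Wolff's reiteration theorem for the complex method, applied to the scale $(H_p(P))$, it suffices to treat the case where $p_0,p_1$ are close to each other, or one fixed convenient configuration. We use the known real interpolation results to supply the reiteration hypotheses. Indeed, by the theorem preceding the statement, $K_\Phi(H_{p_0}(P),H_{p_1}(P))=H_E(P)$ for Jones-type $P$. In particular $(H_{p_0},H_{p_1})_{\theta,p_\theta,K}=H_{p_\theta}$, and the real and complex spaces are sandwiched between real spaces with nearby parameters.

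\textbf{Step 2: the tensor trick.} Take $N:=L_\infty(\TT)$ or, more generally, a suitable algebra carrying an analytic structure, and set $Q:=P\otimes I$. Given $x\in H_{p_\theta}(P)\cap L_1\cap L_\infty$, which is dense by point (3), we build an analytic family as follows. Factor $x$ in $L_{p_\theta}(M)$ as $x=u|x|$ and set $f(z)=u|x|^{p_\theta((1-z)/p_0+z/p_1)}$. This $f$ lies in $\mathcal{F}(L_{p_0},L_{p_1})$ with norm at most $\|x\|_{p_\theta}$ (after the usual normalization), but $f(z)$ need not lie in $H(P)$. To correct it, we project $f$ onto the Hardy part. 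We view the strip variable through the boundary values in $L_{p_j}(L_\infty(\R)\bar\otimes M)$, and we apply the projection $Q$ on $L_{p_j}$ for $1<p_j<\infty$, which is bounded by the Jones-type property. The inclusion $H_\infty(P^\perp)\odot L_\infty(N)\subset H_\infty(Q^\perp)$ guarantees that the correction term $f-Qf$ is annihilated at $z=\theta$, so that $Qf(\theta)=x$. This is the duality argument: pair against $H_{p_\theta'}(P^\perp)\otimes$ (analytic test functions).

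\textbf{Step 3: endpoints.} When $p_0=1$ the projection $Q$ is not bounded on $L_1$. In that case we replace the projection by quasi-complementation of $(H_1(Q),H_2(Q))$, or of $(H_{p_0}(Q),H_{p_1}(Q))$, inside $(L_{p_0},L_{p_1})$. This is available from the theorem on Jones-type projections applied to $Q$. We then transfer the resulting decomposition back via Step 1.

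We expect Step 2 to be the main obstacle: making the amplification precise, and checking that the Hardy-space correction preserves analyticity in $z$ while keeping the boundary norms controlled. We would handle this by working with Fourier series in the strip variable, using the Poisson or analytic projection on $\R$ tensored with $Q$.
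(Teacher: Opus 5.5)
\paragraph{There is a genuine gap: the endpoint case is never proved.} Your easy inclusion is fine, but your argument for the reverse inclusion only covers the trivial case. When $1<p_0,p_1<\infty$, the projection $P$ (which is $Q$ for $N=\mathbb{C}$, hence of Jones-type) is bounded on $L_{p_0}(M)$ and on $L_{p_1}(M)$. So the subcouple is complemented, and $z\mapsto P(f(z))$ already belongs to $\mathcal{F}(H_{p_0}(P),H_{p_1}(P))$ with $P(f(\theta))=Px=x$. None of the following are needed there: the amplification, the boundary values in $L_{p_j}(L_\infty(\mathbb{R})\bar{\otimes}M)$, or the inclusion $H_\infty(P^\perp)\odot L_\infty(N)\subset H_\infty(Q^\perp)$. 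Hence the entire content of the theorem is the case $\min(p_0,p_1)=1$, and there your Step 3 gives no argument. Quasi-complementation is an elementwise, nonlinear statement about elements that already lie in $H_{p_0}(Q)+H_{p_1}(Q)$. Your values $f(z)$ are not in $H(P)$ at all, and $P(f(it))$ need not be in $L_1$. Even if you could decompose each value separately, the pieces would not depend holomorphically on $z$. $K$-closedness only controls $K$-method spaces, and the complex method of a subcouple is not one of them in general.

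\paragraph{Step 1 cannot close the gap.} The real-method results only give
\[(H_{p_0}(P),H_{p_1}(P))_{\theta,1,K}\subset[H_{p_0}(P),H_{p_1}(P)]_\theta\subset H_{p_\theta}(P).\]
The left-hand side is a Lorentz-type space, in general strictly smaller than $H_{p_\theta}(P)$. Wolff's theorem propagates complex identities, but it needs as input at least one identity with $H_1(P)$ as an endpoint. That is exactly the missing case, so ``transfer back via Step 1'' is circular.

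\paragraph{What is missing.} The paper itself defers the proof to \cite{Moyart2026}. The two hypotheses you never make do any work are what make the endpoint accessible. First, the Jones-type property of $Q=P\otimes I$, uniformly in $N$, allows quasi-complementation to be applied once, to a single element of $L_1(M\bar{\otimes}N)+L_{p_1}(M\bar{\otimes}N)$ in which the auxiliary variable of the analytic family is a tensor variable. Second, by the technical assumptions, the inclusion $H_\infty(P^\perp)\odot L_\infty(N)\subset H_\infty(Q^\perp)$ says precisely that the slice maps $F\mapsto(\mathrm{id}\otimes\tau_N)(F(1\otimes g))$, for $g\in L_\infty(N)$, send $H_1(Q)$ into $H_1(P)$. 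Slicing is linear, so holomorphic dependence can be reintroduced through the kernel $g$ after the nonlinear decomposition has been performed once. Without a mechanism of this kind, your argument does not reach $p_0=1$.
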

\end{mdframed}
\vspace{5pt}

\clearpage

\part{Bourgain's method in the semicommutative setting}

\section{Operators of Calderón-Zygmund type} 

Let $M$ be a semifinite von Neumann algebra with trace denoted $\tau$.

\begin{defi}
A bounded operator $T$ on $L_2(M)$ is of \textit{Calderón-Zygmund type} with constant $C$ if for every $s>0$ and $y\in(L_1\cap L_2)(M)$ there is a decomposition 
\[y=a+b\]
with $a,b\in L_2(M)$ and a projection $p\in M$ such that 
\begin{itemize}[nosep]
    \item $\|a\|_2^2\pp C^2s\|y\|_1$,
    \item $\tau(1-p)\pp C^2s^{-1}\|y\|_1$,
    \item $\|pT(b)p\|_1\pp C\|y\|_1$.
\end{itemize}
\end{defi}

\begin{rem}
Every bounded operator $T$ on $L_2(M)$ which is also $L_1$-bounded is of Calderón-Zygmund type with constant $\|T\|_{L_1\to L_1}$ (in the above definition, take $a=0$, $p=1$, and $b=y$).
\end{rem}

\begin{mdframed}[skipabove=10pt]
\begin{theo}\label{CZ_type_theorem1}
Let $T:L_2(M)\to L_2(M)$ be a bounded operator which is of Calderón-Zygmund type with constant $C$. Then $T$ is of weak type $(1,1)$ with a constant depending on $C,\|T\|_{L_2\to L_2}$ only.
\end{theo}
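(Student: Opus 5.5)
Fix $y\in(L_1\cap L_2)(M)$ and $s>0$; the goal is $\lambda_{T(y)}(s)\lesssim s^{-1}\|y\|_1$. I will apply the Calderón-Zygmund type hypothesis at a rescaled level: take the decomposition $y=a+b$ and the projection $p$ given by the definition with parameter $s$. Then $T(y)=T(a)+T(b)$. I split $T(b)$ using $p$:
\[
T(b)=pT(b)p+(1-p)T(b)p+T(b)(1-p).
\]
I then use the standard subadditivity of the distribution function in $L_0(M)$:
\[
\lambda_{x_1+x_2+x_3}(s)\pp\sum_i\lambda_{x_i}(s/3).
\]

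\emph{The good part $T(a)$.} By Chebyshev in $L_2$,
\[
\lambda_{T(a)}(s/4)\pp 16s^{-2}\|T(a)\|_2^2\pp16\|T\|^2 s^{-2}\|a\|_2^2\pp 16\|T\|^2C^2s^{-1}\|y\|_1 .
\]
\emph{The main bad part.} By Chebyshev in $L_1$,
\[
\lambda_{pT(b)p}(s/4)\pp 4s^{-1}\|pT(b)p\|_1\pp 4Cs^{-1}\|y\|_1 .
\]
\emph{The off-diagonal parts.} For $(1-p)T(b)p$ and $T(b)(1-p)$, I use the fact that an operator of the form $x=(1-p)z$ or $x=z(1-p)$ has support projection (left, resp. right) dominated by $1-p$. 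Hence $\lambda_x(t)\pp\tau(1-p)$ for every $t>0$: the spectral projection $1_{(t,\infty)}(|x|)$ is below the right support of $x$, which is equivalent to its left support. Each of these two terms therefore contributes at most $\tau(1-p)\pp C^2s^{-1}\|y\|_1$.

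Summing the four contributions (with $s/4$ in place of $s/3$, since there are four pieces) gives
\[
s\lambda_{T(y)}(s)\pp(16\|T\|^2C^2+4C+2C^2)\|y\|_1,
\]
which is the weak type $(1,1)$ bound with a constant depending only on $C$ and $\|T\|_{L_2\to L_2}$.

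The step needing the most care is the off-diagonal estimate, namely justifying $\lambda_x(t)\pp\tau(1-p)$ when $x=z(1-p)$ or $x=(1-p)z$ for $\tau$-measurable $z$. This uses the fact that left and right supports of $x$ are Murray–von Neumann equivalent, so they have equal trace. I also need $T(b)=T(y)-T(a)\in L_2(M)$, which is immediate since $a,b\in L_2(M)$, so all products are $\tau$-measurable. The subadditivity $\lambda_{x+z}(s_1+s_2)\pp\lambda_x(s_1)+\lambda_z(s_2)$ is classical (e.g.\ from \cite{DoddsSukochevIntegration}). Everything else is Chebyshev.
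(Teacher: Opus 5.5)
Your proof is correct and follows essentially the same route as the paper. Both apply the Calder\'on--Zygmund decomposition at level $s$, split $T(b)$ into $pT(b)p$ plus two off-diagonal pieces each bounded by $\tau(1-p)$, and handle $T(a)$ and $pT(b)p$ by Chebyshev in $L_2$ and $L_1$. The paper splits the off-diagonal part as $(1-p)T(b)+pT(b)(1-p)$ and evaluates at $4s$ rather than $s/4$, which is only cosmetic. Your justification of $\lambda_x(t)\le\tau(1-p)$ via the equivalence of left and right supports fills in what the paper calls clear. Your bound $C^2s^{-1}\|y\|_1$ for the off-diagonal terms is the correct one; the paper writes $C$ there.
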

\end{mdframed}
\begin{proof}
Let $y\in(L_1\cap L_2)(M)$ and $s>0$. Then there is a decomposition
\[y=a+b\]
with $a,b\in L_2(N)$ and a projection $p\in N$ such that 
\begin{itemize}[nosep]
    \item $\|a\|_2^2\pp C^2s\|y\|_1$,
    \item $\tau(1-p)\pp C^2s^{-1}\|y\|_1$,
    \item $\|pT(b)p\|_1\pp C\|y\|_1$.
\end{itemize}
where $C$ is a universal constant. As 
\[T(y)=T(a)+T(b)=T(a)+(1-p)T(b)+pT(b)(1-p)+pT(b)p,\]
we have
\[\lambda_{T(y)}(4s)\pp\lambda_{T(a)}(s)+\lambda_{(1-p)T(b)}(s)+\lambda_{pT(b)(1-p)}(s)+\lambda_{pT(b)p}(s).\]
We clearly have
\[\lambda_{(1-p)T(b)}(s)\pp\sigma(1-p)\pp Cs^{-1}\|y\|_1,\ \ \ \lambda_{pT(b)(1-p)}(s)\pp\sigma(1-p)\pp Cs^{-1}\|y\|_1.\]
By the Markov inequality, we have
\[\lambda_{T(a)}(s)\pp s^{-2}\|T(a)\|_2^2\pp s^{-2}\|T\|_{L_2\to L_2}^2\|a\|_2^2\pp s^{-1}\|T\|_{L_2\to L_2}^2C^2\|y\|_1\]
and
\[\lambda_{pT(b)p}(s)\pp s^{-1}\|pT(b)p\|_1\pp Cs^{-1}\|y\|_1.\]
By combining the previous estimates, we get
\[\lambda_{T(y)}(4s)\pp C's^{-1}\|y\|_1\]
where $C':=3C+\|T\|_{L_2\to L_2}^2C^2$. Thus
\[\|Ty\|_{1,\infty}=\sup_{s>0}s\lambda_{T(y)}(s)\pp 4C'\|y\|_1\]
as desired.
\end{proof} 

\begin{mdframed}[skipabove=10pt]
\begin{theo}\label{CZ_type_theorem2}
Let $P:L_2(M)\to L_2(M)$ be a projection which satisfies the technical assumptions and which is of Calderón-Zygmund type with constant $C$. Then, the subcouple $(H_1(P),H_2(P))$ is quasi-complemented in $(L_1(M),L_2(M))$ with a constant depending on $C$ only.
\end{theo}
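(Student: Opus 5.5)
The plan is to prove $K$-closedness and then upgrade it to quasi-complementation using the last proposition of Section~1. Point (2) of the technical assumptions gives $H_1(P)=H(P)\cap L_1(M)$ and $H_2(P)=H(P)\cap L_2(M)$. Hence $H_j(P)=L_j(M)\cap(H_1(P)+H_2(P))$ for $j\in\{1,2\}$, so the hypothesis $A_j=E_j\cap(E_0+E_1)$ of that proposition holds. It therefore suffices to find a constant $C'=C'(C)$ such that, whenever $x\in H_1(P)+H_2(P)$ is written $x=u_0+u_1$ with $u_0\in L_1(M)$ and $u_1\in L_2(M)$, we have $x=a_0+a_1$ with $a_0\in H_1(P)$, $a_1\in H_2(P)$ and $\|a_0\|_1+t\|a_1\|_2\pp C'(\|u_0\|_1+t\|u_1\|_2)$. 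By point (3) of the technical assumptions and a density argument, I may assume $u_0,u_1\in(L_1\cap L_2)(M)$, so that $x\in L_2(M)$ and $Px=x$.

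The core is Bourgain's splitting. I apply the Calder\'on--Zygmund type decomposition to $y:=u_0$ at a level $s$ chosen so that $\sqrt{s\|u_0\|_1}\simeq \|u_0\|_1/t$, i.e.\ $s\simeq\|u_0\|_1/t^2$. This gives $u_0=a+b$ and a projection $p$ with
\[\|a\|_2\pp C\sqrt{s\|u_0\|_1}\simeq C\|u_0\|_1/t,\qquad \tau(1-p)\pp C^2 t^2,\qquad \|pP(b)p\|_1\pp C\|u_0\|_1 .\]
Since $x=Px=P(u_1+a)+P(b)$, the natural choice is to put $P(u_1+a)$ into $H_2(P)$. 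Its norm is at most $\|u_1\|_2+C\|u_0\|_1/t$, so $t\|P(u_1+a)\|_2\lesssim t\|u_1\|_2+\|u_0\|_1$, which is acceptable. The remaining piece $P(b)=x-P(u_1+a)$ belongs to $H(P)\cap L_2(M)=H_2(P)$. It would belong to $H_1(P)$ with the right norm if only its corner $pP(b)p$ mattered.

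The main obstacle is the off-diagonal part $P(b)-pP(b)p=(1-p)P(b)+pP(b)(1-p)$. It is supported on a projection of trace at most $C^2t^2$, but it is neither in $H(P)$ separately nor controlled in $L_2$. I would handle this by duality instead of constructing $a_0$ explicitly. By Hahn--Banach and point (1) of the technical assumptions, the $K_t$-norm on $H_1(P)+H_2(P)$ is dual to the norm $\max\{\|g\|_{L_\infty/H_\infty(P^\bot)},\,t^{-1}\|g\|_{L_2/H_2(P^\bot)}\}$. It then suffices to show, for every $g$ in the unit ball of that dual norm, that $|\tau(xg^*)|\lesssim\|u_0\|_1+t\|u_1\|_2$.

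Pairing $x=P(u_1+a)+P(b)$ with $g$ handles the first term as above. For the second term I write $\tau(P(b)g^*)=\tau(pP(b)p\,g^*)+\tau\bigl((P(b)-pP(b)p)g^*\bigr)$. The first piece is bounded by $C\|u_0\|_1\|g\|_\infty$ after replacing $g$ by a representative modulo $H_\infty(P^\bot)$ of almost minimal $L_\infty$-norm. The off-diagonal piece is bounded by Cauchy--Schwarz: the $L_2$-norm of $g(1-p)$ is at most $\tau(1-p)^{1/2}\|g\|_\infty\lesssim t\|g\|_\infty$. This requires $\|P(b)\|_2$ to be of order $\|u_0\|_1/t$, which follows from the identity $P(b)=x-P(u_1+a)$ once $\|x\|_2$ is suitably controlled. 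I expect this step to need an additional reduction, first treating the case $x\in H_2(P)$ and then using the weak-type $(1,1)$ bound of Theorem~\ref{CZ_type_theorem1}. Making this bookkeeping rigorous is where I expect the real difficulty to lie.
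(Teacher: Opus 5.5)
\textbf{There is a genuine gap.} Your set-up agrees with the paper's proof: the reduction to $K$-closedness via $H_j(P)=L_j(M)\cap(H_1(P)+H_2(P))$, the density reduction to $x\in L_1(M)\cap P(L_2(M))$, the Calder\'on--Zygmund decomposition $u_0=a+b$ at level $s\simeq\|u_0\|_1/t^2$, and the splitting $x=P(b)+P(u_1+a)$ with $P(u_1+a)$ placed in $H_2(P)$. But the proposal stops at the one step that carries the proof, and the route you sketch for that step fails.

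Your Cauchy--Schwarz estimate needs $\|P(b)\|_2\lesssim\|u_0\|_1/t$, and this bound is false in general. With $u_1=0$ one has $\|P(b)\|_2\ge\|x\|_2-\|a\|_2\ge\|x\|_2-C\|x\|_1/t$. For example, take an analytic Fej\'er-type polynomial on $\mathbb{T}$ with $\|x\|_1=1$, $\|x\|_2\sim\sqrt{N}$ and $t=1$. So $\|x\|_2$ can be arbitrarily large compared with $\|x\|_1/t$. No reduction to $x\in H_2(P)$ and no appeal to the weak type $(1,1)$ bound will give an $L_2$ estimate on the bad part.

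The duality set-up is also shaky as written. A functional on $(H_1(P)+H_2(P),K_t)$ is in general represented by different elements on $H_1(P)$ and on $H_2(P)$. So once you pass to a near-minimal $L_\infty$ representative, you lose the $L_2$ control used for the other terms. Moreover, discarding $H_\infty(P^\bot)$ in $\tau(P(b)g^*)$ already presupposes that $P(b)\in H_1(P)$, which is what you are trying to prove.

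The missing idea is elementary and makes duality unnecessary. You do not need the corners $(1-p)P(b)$ and $pP(b)(1-p)$ to lie in $H(P)$. You only need the $L_1$-norm of $P(b)$ itself. The off-diagonal corners are controlled by splitting $P(b)$, not through its $L_2$-norm, but via
\[P(b)=x-P(u_1+a)=u_0+w,\qquad w:=u_1-P(u_1+a),\qquad \|w\|_2\le 2\|u_1\|_2+C\|u_0\|_1/t.\]
Since $\|1-p\|_2=\tau(1-p)^{1/2}\le Ct$, this gives
\[\|(1-p)P(b)\|_1\le\|u_0\|_1+\|1-p\|_2\|w\|_2\le(1+C^2)\|u_0\|_1+2Ct\|u_1\|_2,\]
and the same bound holds for $\|P(b)(1-p)\|_1$. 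Combine this with $\|pP(b)p\|_1\le C\|u_0\|_1$ and $P(b)=pP(b)p+(1-p)P(b)+pP(b)(1-p)$. You get $\|P(b)\|_1\lesssim\|u_0\|_1+t\|u_1\|_2$ with a constant depending only on $C$. Hence $P(b)\in L_1(M)\cap P(L_2(M))\subset H_1(P)$ with the required norm.

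This is exactly the paper's argument, there with the normalization $\|y\|_1+t\|z\|_2\le 2$ and $s=t^{-2}$. Your planned density argument then finishes the proof.
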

\end{mdframed}
\begin{proof}
By the technical assumptions, we already have \[H_1(P)=L_1(M)\cap(H_1(P)+H_2(P)),\ \ \ \ H_2(P)=L_2(M)\cap(H_1(P)+H_2(P)).\]
Thus, it suffices to show that $(H_1(P),H_2(P))$ is $K$-closed in $(L_1(M),L_2(M))$ with a constant depending on $C$ only. Let $t>0$, and let $x\in L_1(M)\cap P(L_2(M))$ such that 
\[K_t(x,L_1(M),L_2(M))\pp1\]
There is $y\in L_1(M)$, $z\in L_2(M)$ such that $x=y+z$ and $\|y\|_1+t\|z\|_2\pp2$. Then $y\in(L_1\cap L_2)(M)$ and by applying the above definition with $y$ and the parameter $s=t^{-2}$, there is a decomposition 
\[y=a+b\]
with $a,b\in L_2(M)$ and a projection $p\in M$ such that 
\begin{itemize}[nosep]
    \item $\|a\|_2^2\pp C^2t^{-2}\|y\|_1\pp 2C^2t^{-2}$,
    \item $\tau(1-p)\pp Ct^2\|y\|_1\pp 2Ct^2$,
    \item $\|pP(b)p\|_1\pp C\|y\|_1\pp 2C$,
\end{itemize}
Then, we set
\[y':=P(b),\ \ \ \ \ z':=P(a+z).\]
As $x=P(x)=P(y)+P(z)$, it is clear that $x=y'+z'$. On the one hand, as $P$ is $L_2$-contractive, we have
\[\|z'\|_2\pp\|P(a)\|_2+\|P(z)\|_2\pp\|a\|_2+\|z\|_2\pp \sqrt{2}Ct^{-1}+2t^{-1}\pp 2(C+1)t^{-1}\]
On the other hand, we can write $y'=py'p+(1-p)y'+py'(1-p)=pP(b)p+(1-p)y'+py'(1-p)$, so that
\begin{align*}
\|y'\|_1&\pp\|pP(b)p\|_1+\|(1-p)y'\|_1+\|py'(1-p)\|_1\\
&\pp 2C+ \|(1-p)y'\|_1+\|y'(1-p)\|_1.\\
\end{align*}
As $y'=x-z'=y+(z-z')$ we have
\begin{align*}
\|(1-p)y'\|_1&\pp\|(1-p)y\|_1+\|(1-p)(z-z')\|_1\\
&\pp2+\|1-p\|_2\|z-z'\|_2\\
&\pp2+\tau(1-p)^{1/2}(\|z\|_2+\|z'\|_2)\\
&\pp2+\tau(1-p)^{1/2}(2t^{-1}+2(C+1)t^{-1})\\
&\pp2+\sqrt{2}C^{1/2}t(2t^{-1}+(C+1)t^{-1})\\
&=2+2\sqrt{2}C^{1/2}(C+3)=:C',\\
\end{align*}
and similarly, we have
\[\|y'(1-p)\|_1\pp C'.\]
Thus
\[\|y'\|_1\pp 2C+2C'.\]
As we have $y'\in L_1(M)\cap P(L_2(M))$ and $z'\in P(L_2(M))$, this shows that
\[K_t(x,H_1(P),H_2(P))\pp\|y'\|_1+t\|z'\|_2\pp(2C+2C')+2(C+1)=:C''.\] 
Hence, we have shown that for every $x\in L_1(M)\cap P(L_2(M))$, we have
\[K_t(x,H_1(P),H_2(P))\pp C''K_t(x,L_1(M),L_2(M)).\] 
By the technical assumptions, we know that $L_1(M)\cap P(L_2(M))$ is dense in both $H_1(P)$ and $H_2(P)$. Thus, it is dense in $H_1(P)+H_2(P)$. As the $K$-functional $K_t(-,H_1(P),H_2(P))$ and $K_t(-,L_1(M),L_2(M))$ are both continuous on $H_1(P)+H_2(P)$, the above estimate extends to every $x\in H_1(P)+H_2(P)$. Hence, we obtain that $(H_1(P),H_2(P))$ is $K$-closed in $(L_1(M),L_2(M))$ with constant $C''$, as desired.
\end{proof}

%


\section{Singular kernel operators}

Let $\X$ be a \textit{Ahlfors $d$-regular locally compact space}, i.e. a locally compact space equipped with a Radon measure and a metric that induces its topology, for which the open balls of finite radius are relatively compact, and we have the Ahlfors $d$-regularity condition
\[|B_r(x)|\simeq r^d,\ \ \ \ \ \text{for}\ \ \ x\in\X,\ 0<r<\diam(\X).\]
Here $|\cdot|$ refers to the volume w.r.t. the measure and $B$ refers to the open balls w.r.t. the metric, and the notation $\simeq$ means that the estimate holds, up to some constants depending only on $\X$. We refer the reader to the appendix for more details.

\begin{rem}
Any $d$-Riemanian manifold equipped with its volume measure and its geodesic distance is an Ahlfors $d$-regular locally compact space \cite{ChristLectures}.
\end{rem}

In this situation, we have the following generalised doubling condition
\[|B_{\lambda r}(x)|\lesssim\lambda^d|B_r(x)|,\ \ \ \text{for}\ x\in\X,\ r>0,\ \lambda\pg1\]
and the following reverse doubling condition for small balls
\[|B_{\lambda r}(x)|\gtrsim\lambda^{d}|B_r(x)|,\ \ \ \text{for}\ x\in\X,\ r>0,\ \lambda\pp1.\]
In particular $\X$ is a space of homogeneous type in the usual sense of Coifman and Weiss \cite{CoifmanWeiss}. For $x,y\in\X$, we set
\[V(x,y):=|B_{d(x,y)}(x)|.\]
If $x,y\in\X$, we have (cf. appendix)
\[V(x,y)\lesssim V(y,x).\]
A \textit{kernel} on $\X$ is a scalar-valued continuous function defined on 
\[\big\{(x,y)\in \X\times\X\ :\ x\neq y\big\}.\]
Let $k$ be a kernel on $\X$. Then $k$ is said to be \textit{singular} if it satisfies the \textit{size condition}
\[|k(x,y)|\lesssim V(x,y)^{-1}\]
for $x,y\in\X$ with $x\neq y$, and the \textit{$L_2$-H\"ormander regularity condition}
\[\sum_{m=0}^{\infty}|B_{2^{m+1}r}(y')\setminus B_{2^mr}(y')|^{1/2}\sup_{y\in B_{r/3}(y')}\Big[\int_{B_{2^{m+1}r}(y')\setminus B_{2^mr}(y')}\big|k(x,y)-k(x,y')|^2dx\Big]^{1/2}\lesssim 1\]
for every $y'\in\X$ and $r>0$. This last condition is taken from \cite{CadilhacCZ2}. It is stronger than the usual H\"ormander regularity condition, but weaker than the classical Lipschitz regularity condition, as shown in the next proposition.

\begin{lemm}
If $y\in\X$, $m\pg0$ and $r,\alpha>0$ then
\begin{equation}
\Big[\int_{B_{2^{m+1}r}(y)\setminus B_{2^mr}(y)}\frac{d(x,y)^{-2\alpha}}{V(x,y)^2}dx\Big]^{1/2}\lesssim[2^mr]^{-\alpha}|B_{2^{m}r}(y)|^{-1/2}.
\end{equation}
\end{lemm}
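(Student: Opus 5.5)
On the annulus $A_m:=B_{2^{m+1}r}(y)\setminus B_{2^mr}(y)$ the integrand is essentially constant, so the plan is to bound it pointwise by its size at scale $2^mr$ and then integrate over a set of volume $\lesssim|B_{2^mr}(y)|$. Throughout, $\lesssim$ hides constants depending only on $\X$ and $\alpha$.

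\emph{Pointwise bound.} For $x\in A_m$ we have $2^mr\pp d(x,y)<2^{m+1}r$, hence $d(x,y)^{-2\alpha}\pp[2^mr]^{-2\alpha}$. For the denominator we use the estimate $V(x,y)\lesssim V(y,x)$ recalled earlier in the form $V(y,x)\lesssim V(x,y)$ (the roles of $x,y$ being symmetric). Since $V(y,x)=|B_{d(x,y)}(y)|\pg|B_{2^mr}(y)|$ by monotonicity of balls, this gives
\[V(x,y)\gtrsim|B_{2^mr}(y)|.\]
Combining,
\[\frac{d(x,y)^{-2\alpha}}{V(x,y)^2}\lesssim[2^mr]^{-2\alpha}|B_{2^mr}(y)|^{-2}.\]

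\emph{Integration.} The annulus satisfies $|A_m|\pp|B_{2^{m+1}r}(y)|\lesssim|B_{2^mr}(y)|$ by the generalised doubling condition with $\lambda=2$. Integrating the pointwise bound over $A_m$ gives a bound $\lesssim[2^mr]^{-2\alpha}|B_{2^mr}(y)|^{-1}$, and taking square roots yields the claim.

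\emph{Degenerate cases.} If $2^mr\pg\diam(\X)$ the annulus is empty and there is nothing to prove. Doubling holds for all $r>0$, so no Ahlfors lower bound is needed in the argument. The only real point is obtaining the lower bound on $V(x,y)$, i.e.\ centering the ball at $y$ rather than at $x$; this is exactly what the quasi-symmetry $V(x,y)\simeq V(y,x)$ provides.
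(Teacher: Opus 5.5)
Your proof is correct and is essentially the paper's argument: bound $d(x,y)^{-2\alpha}$ by $[2^mr]^{-2\alpha}$ on the annulus, use $V(x,y)\gtrsim V(y,x)\pg|B_{2^mr}(y)|$, and control the volume of the annulus by doubling. The aside on degenerate cases is unnecessary, and slightly inaccurate since the annulus need not be empty when $2^mr=\diam(\X)$ exactly, but the main argument does not use it.
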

\begin{proof}
\begin{align*}
    \Big[\int_{B_{2^{m+1}r}(y)\setminus B_{2^mr}(y)}\frac{d(x,y)^{-2\alpha}}{V(x,y)^2}dx\Big]^{1/2}&\lesssim[2^mr]^{-\alpha}\Big[\int_{B_{2^{m+1}r}(y)\setminus B_{2^{m}r}(y)}\frac{dx}{V(y,x)^2}\Big]^{1/2}\\
    &\pp[2^mr]^{-\alpha}\frac{1}{|B_{2^mr}(y)|}\Big[\int_{B_{2^{m+1}r}(y)\setminus B_{2^{m}r}(y)}dx\Big]^{1/2}\\
    &\pp[2^mr]^{-\alpha}\frac{|B_{2^{m+1}r}(y)|^{1/2}}{|B_{2^mr}(y)|}\\
    &\lesssim[2^mr]^{-\alpha}|B_{2^{m}r}(y)|^{-1/2}.
\end{align*}
\end{proof}

\begin{prop}
Let $k$ be a kernel on $\X$. If $k$ satisfies the Lipschitz regularity condition
\[|k(x,y)-k(x,y')|\lesssim V(x,y')^{-1}\frac{d(y,y')^\alpha}{d(x,y')^{\alpha}}\]
for $x,y,y'\in\X$ with $2d(y,y')<d(x,y)$, where $\alpha\in(0,1]$ is a constant, then $k$ satisfies the $L_2$-H\"ormander regularity condition.
\end{prop}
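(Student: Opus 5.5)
Fix $y'\in\X$ and $r>0$. Write $A_m:=B_{2^{m+1}r}(y')\setminus B_{2^mr}(y')$. The plan is to bound each summand of the $L_2$-H\"ormander series by a term of a convergent geometric series in $m$, using the Lipschitz condition pointwise and then the preceding lemma to integrate over the annulus.

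\emph{Step 1 (applicability of the Lipschitz estimate).} Let $y\in B_{r/3}(y')$ and $x\in A_m$. Then $d(x,y')\pg 2^mr\pg r$, so
\[d(x,y)\pg d(x,y')-d(y,y')> r-r/3=2r/3> 2r/3\cdot 1>2d(y,y').\]
Hence the Lipschitz condition applies and gives
\[|k(x,y)-k(x,y')|\lesssim V(x,y')^{-1}\frac{d(y,y')^{\alpha}}{d(x,y')^{\alpha}}\pp r^{\alpha}\,\frac{d(x,y')^{-\alpha}}{V(x,y')}.\]
Since $V(x,y')\gtrsim V(y',x)$ by the symmetry estimate $V(x,y)\lesssim V(y,x)$ recalled above, this is $\lesssim r^\alpha d(x,y')^{-\alpha}V(y',x)^{-1}$. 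This is exactly the form needed for the preceding lemma. The lemma's proof itself passes from $V(x,y)$ to $V(y,x)$, so either orientation is acceptable up to constants.

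\emph{Step 2 (integration over the annulus).} Squaring, integrating over $A_m$, and applying the lemma (with $y'$ in place of $y$) gives, uniformly in $y\in B_{r/3}(y')$,
\[\Big[\int_{A_m}|k(x,y)-k(x,y')|^2dx\Big]^{1/2}\lesssim r^{\alpha}[2^mr]^{-\alpha}|B_{2^mr}(y')|^{-1/2}=2^{-m\alpha}|B_{2^mr}(y')|^{-1/2}.\]
So the supremum over $y$ satisfies the same bound.

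\emph{Step 3 (summation).} Multiply by $|A_m|^{1/2}\pp|B_{2^{m+1}r}(y')|^{1/2}\lesssim|B_{2^mr}(y')|^{1/2}$, using the doubling condition with $\lambda=2$. Each summand is then $\lesssim 2^{-m\alpha}$, and $\sum_{m\pg0}2^{-m\alpha}=(1-2^{-\alpha})^{-1}<\infty$ since $\alpha>0$. This yields the $L_2$-H\"ormander condition with a constant depending only on $\alpha$ and $\X$.

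There is no serious obstacle here. The only points needing care are the geometric check in Step 1 (the factor $1/3$ in the definition is precisely what guarantees $2d(y,y')<d(x,y)$ on every annulus, including $m=0$) and keeping track of the orientation of $V$, which the quasi-symmetry $V(x,y)\lesssim V(y,x)$ handles.
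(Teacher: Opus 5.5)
Your proof is correct and follows essentially the same route as the paper's proof. You check that $y\in B_{r/3}(y')$ and $x\notin B_r(y')$ force $2d(y,y')<d(x,y)$, bound $d(y,y')^{\alpha}\leq r^{\alpha}$ in the Lipschitz estimate, integrate over each annulus using the preceding lemma to get $2^{-\alpha m}|B_{2^mr}(y')|^{-1/2}$, and sum a geometric series using doubling. Two minor points: the chain in Step 1 should simply read $d(x,y)>2r/3>2d(y,y')$, and the detour through $V(y',x)$ is unnecessary, since the lemma (with center $y'$) is already stated for $V(x,y')^{-2}$.
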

\begin{proof}
Let $y'\in\X$, $r>0$. If $y\in B_{r/3}(y')$ and $x\in\X\setminus B_{r}(y')$, then 
\[d(x,y)\pg d(x,y')-d(y,y')>r-r/3=2r/3\pg 2d(y,y')\]
thus, by the previous lemma we get
\begin{align*}
    \Big[\int_{B_{2^{m+1}r}(y')\setminus B_{2^mr}(y')}\big|k(x,y)-k(x,y')|^2dx\Big]^{1/2}&\lesssim\Big[\int_{B_{2^{m+1}r}(y')\setminus B_{2^mr}(y')}V(x,y')^{-2}\frac{d(y,y')^{2\alpha}}{d(x,y')^{2\alpha}}dx\Big]^{1/2}\\
    &\lesssim r^\alpha\Big[\int_{B_{2^{m+1}r}(y')\setminus B_{2^mr}(y')}\frac{d(x,y')^{-2\alpha}}{V(x,y')^2}dx\Big]^{1/2}\\
    &\lesssim 2^{-\alpha m}|B_{2^{m}r}(y')|^{-1/2}.
\end{align*}
As we have
\[\sum_{m=0}^{\infty}|B_{2^{m+1}r}(y')\setminus B_{2^mr}(y')|^{1/2}\times 2^{-\alpha m}|B_{2^mr}(y')|^{-1/2}\lesssim\sum_{m=0}^{\infty}2^{-\alpha m}\lesssim1\]
we get the desired conclusion.
\end{proof}

\vspace{10pt}

Let $M$ be a semifinite von Neumann algebra and let $N:=L_\infty(\X)\bar{\otimes}M$ be the tensor product von Neumann algebra equipped with the tensor product trace denoted $\sigma$. A bounded operator $T$ on $L_2(N)$ is \textit{given by a kernel} $k$ on $\X$ if $T=S\otimes I$, where $S$ is a bounded operator on $L_2(\X)$ that admits the integral representation
\[(Sf)(x)=\int_{\X}k(x,y)f(y)dy\]
for every $f\in L_2(\X)$ with compact support and $x\in\X$ outside the support of $f$. 

\begin{rem}
Note that if $T$ is a bounded operator on $L_2(N)$ given by a kernel on $\X$, then for every scalar $\lambda\in\C$, the bounded operator $T-\lambda I$ is also given by the same kernel on $\X$.
\end{rem}

The following theorem relies on the semicommutative Calderón-Zygmund decomposition introduced by J. Parcet, J.M. Conde Alonse and L. Cadilhac in \cite{CadilhacCZ2}. This construction is detailed in Appendix A.

\begin{mdframed}[skipabove=10pt]
\begin{theo}[Calderón-Zygmund decomposition]\label{singular_kernel_operators_CZ}
If $s>0$ and $f\in(L_1\cap L_2)(N)$ there is a decomposition
\[f=a+b\]
with $a,b\in L_2(N)$, and a projection $p\in N$, such that
\begin{itemize}[nosep]
    \item $\|a\|_2^2\pp C^2s\|y\|_1$,
    \item $\sigma(1-p)\pp C^2s^{-1}\|y\|_1$,
    \item $\|pT(b)p\|_1\pp C_T\|y\|_1$ for every bounded operator $T$ on $L_2(N)$ given by a singular kernel on $\X$, where $C_T$ is positive constant depending on $T$ only,
\end{itemize}
where $C$ is a constant depending on $\X$ only. 
\end{theo}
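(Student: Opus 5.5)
This is the semicommutative Calderón--Zygmund decomposition of Cadilhac--Conde-Alonso--Parcet. I would build it on a system of dyadic cubes on $\X$ in the sense of Christ. Let $(\mathcal{D}_k)_k$ be such a system, $\mathsf{E}_k$ the associated conditional expectations on $N$ (averaging over cubes of generation $k$, tensored with the identity on $M$), and $f_k:=\mathsf{E}_k f$.

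By polar decomposition and splitting into real/imaginary and positive/negative parts, I would first reduce to $f\ge0$ in $(L_1\cap L_2)(N)$. The constants only change by a factor, and the bound on $\|pT(b)p\|_1$ is linear in $b$.

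The construction then proceeds in four steps.

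\textbf{Cuculescu projections.} Apply Cuculescu's construction to the martingale $(f_k)$ at level $s$. This gives decreasing projections $q_k\in\mathsf{E}_k(N)$ such that $q_kf_kq_k\le s\,q_k$ and $\sigma(1-q_\infty)\le s^{-1}\|f\|_1$, where $q_\infty=\bigwedge_k q_k$. Set $p_k:=q_{k-1}-q_k$.

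\textbf{Splitting of $f$.} Define the good part
\[
a:=q_\infty f q_\infty+\sum_{i,j}p_i f_{i\vee j}p_j,
\]
the usual Parcet-type good part. Put $b:=f-a$. Then $b$ splits into a diagonal part $b_d=\sum_k p_k(f-f_k)p_k$ and an off-diagonal part $b_{\mathrm{off}}=\sum_{i\ne j}p_i(f-f_{i\vee j})p_j$. The estimate $\|a\|_2^2\lesssim s\|f\|_1$ follows from the Cuculescu inequalities: the terms $p_if_{i\vee j}p_j$ are controlled using $q_{k-1}f_kq_{k-1}\lesssim s$, which uses doubling (the parent cube has comparable measure). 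This is now a standard but lengthy computation from \cite{CadilhacCZ2}, adapted to Christ cubes.

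\textbf{The projection $p$.} Define $p$ by removing a dilation of the support of $1-q_k$. Concretely, set
\[
1-p:=\bigvee_k \bigvee_{Q\in\mathcal{D}_k}\mathbf{1}_{\lambda Q}\otimes \xi_Q(1-q_k),
\]
with $\xi_Q$ the $M$-valued coefficient of $1-q_k$ on $Q$ and $\lambda$ a dilation constant. Doubling gives $|\lambda Q|\lesssim|Q|$. Summing over $k$ then gives $\sigma(1-p)\lesssim\sum_k\sigma(p_k)\lesssim s^{-1}\|f\|_1$.

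\textbf{Kernel estimate for $b$.} For $b_d$, each $p_k(f-f_k)p_k$ is supported in the cubes of generation $k$ and has mean zero on each cube. I would write $T(b)(x)$ as
\[
\int_Q\bigl(k(x,y)-k(x,y_Q)\bigr)\,b(y)\,dy
\]
for $x\notin\lambda Q$, which is where $p$ lives. Then integrate over annuli and apply the $L_2$-Hörmander condition, in the spirit of the usual $L_1$ argument. This is where the $C_T$ dependence enters.

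The off-diagonal part is the main obstacle. There the mean-zero property fails in the needed form, and the terms are only controlled in a square-function sense. Following \cite{CadilhacCZ2}, I would estimate $p\,T(b_{\mathrm{off}})\,p$ via Cauchy--Schwarz in $L_2$ on annuli. The factor $|B_{2^{m+1}r}\setminus B_{2^mr}|^{1/2}$ paired with an $L_2$ kernel difference is exactly the $L_2$-Hörmander condition. This is combined with the $L_2$ bound $\|p_i f_{i\vee j}p_j\|$ from the Cuculescu step and with the size condition for the nonlocal pieces.

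Since $T$ is assumed only to be given by a kernel in the sense of the paper, every application of the integral representation must be off-support. That is precisely why we compress by $p$. Ahlfors regularity is used to compare the measures of cubes and balls uniformly across scales.
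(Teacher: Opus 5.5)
There is a genuine gap in the off-diagonal part.

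\textbf{Why the cited argument does not apply.} With the good part $q_\infty fq_\infty+\sum_{i,j}p_if_{i\vee j}p_j$ you are using Parcet's original decomposition. Your $b_{\mathrm{off}}=\sum_{i\neq j}p_i(f-f_{i\vee j})p_j$ is exactly the term that, in Parcet's work, required pseudo-localization. The Cadilhac--Conde-Alonso--Parcet argument you invoke, which is what the paper carries out, does not transfer to it. That argument needs each piece to satisfy two conditions:
\begin{enumerate}
\item[(i)] it has mean zero on a single cube $Q$ of generation $n$;
\item[(ii)] it carries $e_Q$ on one side and the Cuculescu projection $q_Q$ on the other.
\end{enumerate}
Then the operator Cauchy--Schwarz inequality yields the factor $\|\int_Q q_Qf(y)q_Q\,dy\|_\infty^{1/2}\le (s|Q|)^{1/2}$. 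The $e_Q$ side is handled by $\|X\|_1\le\tau(e_Q)^{1/2}\|X\|_2$ together with the $L_2$-H\"ormander condition. A final Cauchy--Schwarz against $\sum_{n,Q}|Q|\tau(e_Q)\le s^{-1}\|f\|_1$ and $\sum_{n,Q}|Q|\tau(e_Qf_Qe_Q)\le\|f\|_1$ closes the estimate.

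\textbf{Why your terms fail.} Your terms do not have this structure. To get single-level cancellation you must regroup as $\sum_{i<j}p_i(f-f_j)p_j=\sum_j(1-q_{j-1})(f-f_j)p_j$. The second side is then $1-q_{j-1}$. The compression $(1-q_{j-1})f(1-q_{j-1})$ has no $L_\infty$ bound, and these projections are not orthogonal across $j$. So the same bookkeeping produces $\sum_j\sigma(1-q_{j-1})$, which diverges as soon as $q\neq 1$.

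Grouping by $i$ instead keeps orthogonal projections, but the averages $f_j$ change with $j$. You would then need summable decay in $j-i$. One might extract this for Lipschitz kernels, using that $p$ also vanishes near the coarser cube. The bare $L_2$-H\"ormander condition in the statement does not give it. Your sketch (``Cauchy--Schwarz on annuli combined with the $L_2$ bound on $p_if_{i\vee j}p_j$ and the size condition'') supplies neither mechanism. Moreover, an $L_2$ bound on good-part pieces gives no $L_1$ control of $pT(\cdot)p$, since $\sigma(p)$ is typically infinite.

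\textbf{The missing idea.} Choose the splitting so that all off-diagonal blocks of $f$ go into the bad part. Then use the Cuculescu identity $e_nE_n(f)q_n=0$:
\[
b_o=f-qfq-\sum_n e_nfe_n=\sum_n\bigl(e_nfq_n+q_nfe_n\bigr)=\sum_n\bigl(e_n(f-E_nf)q_n+q_n(f-E_nf)e_n\bigr).
\]
This has properties (i) and (ii) on every cube of generation $n$. The paper pairs it with the good part $qfq+\sum_nE_{n-1}(e_nfe_n)$.

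\textbf{A separate point.} Your $L_2$ bound uses $q_{k-1}f_kq_{k-1}\lesssim s$ at every generation. You therefore need to say what happens at the coarsest generation, where $q_{k-1}=1$ and this bound is unavailable. Such a generation exists on compact $\X$ like $\T^d$, which has no reverse doubling at large scales.
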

\end{mdframed}
\begin{proof}
Let $f\in(L_1\cap L_2)(N)$ and $s>0$. It is classicaly known that we have a decomposition
\[f=f_1-f_2+i(f_3-f_4)\]
with $f_j\in(L_1\cap L_2)(N)_+$ such that $\|f_j\|_1\pp\|f\|_1$ for $j\in\{1,\ldots,4\}$. For $j\in\{1,\ldots,4\}$, let
\[f_j=a_j+b_j\]
be the Calderón-Zygmund decompositions associated with $f_j$ and $s$, as defined in Appendix A, and let $p_j\in N$ be the associated Calderón-Zygmund projections. Finally, we set
\[a:=a_1-a_2+i(a_3-a_4),\ \ \ \ \ b:=b_1-b_2+i(b_3-b_4),\ \ \ p:=p_1\wedge p_2\wedge p_3\wedge p_4\]
The the decomposition $f=a+b$ clearly satisfies the required conditions with the projection $p\in N$.
\end{proof}

As an immediate consequence, we obtain the following corollary.

\begin{mdframed}[skipabove=10pt]
\begin{coro}\label{singular_kernel_operators_theorem1}
Let $T$ be a bounded operator on $L_2(N)$ given by a singular kernel on $\X$. Then $T$ is of Calderón-Zygmund type with a constant independent of $M$.
\end{coro}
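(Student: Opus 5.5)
The corollary should follow directly by comparing the conclusion of Theorem \ref{singular_kernel_operators_CZ} with the definition of an operator of Calderón-Zygmund type. Fix a bounded operator $T$ on $L_2(N)$ given by a singular kernel $k$ on $\X$, a parameter $s>0$ and an element $y\in(L_1\cap L_2)(N)$. I will apply Theorem \ref{singular_kernel_operators_CZ} to $f:=y$ and $s$. This produces a decomposition $y=a+b$ with $a,b\in L_2(N)$ and a projection $p\in N$ satisfying
\[\|a\|_2^2\pp C^2s\|y\|_1,\qquad \sigma(1-p)\pp C^2s^{-1}\|y\|_1,\qquad \|pT(b)p\|_1\pp C_T\|y\|_1.\]
Here $C$ depends only on $\X$, and $C_T$ depends only on $T$.

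Set $C':=\max\{C,C_T\}$. The three inequalities above then give exactly the three requirements in the definition of Calderón-Zygmund type, with $M$ replaced by $N$ and constant $C'$. The first two hold because $C\pp C'$, and the third holds because $C_T\pp C'$. It follows that $T$ is of Calderón-Zygmund type with constant $C'$.

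The only point needing care is the claim that $C'$ does not depend on $M$. The constant $C$ depends only on $\X$ by Theorem \ref{singular_kernel_operators_CZ}. For $C_T$, I would check in the construction of Appendix A that it is controlled by the size and $L_2$-Hörmander constants of $k$ and by $\|S\|_{L_2(\X)\to L_2(\X)}$, where $T=S\otimes I$. None of these involve $M$, since $\|T\|_{L_2(N)\to L_2(N)}=\|S\|_{L_2(\X)\to L_2(\X)}$. Reading off this dependence from the appendix is the step I expect to be the main obstacle. It matters because it is what lets the constant be uniform over all tensor factors $M$, which is needed later for Pisier's method assumptions.
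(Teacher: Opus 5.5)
Your proposal is correct and follows the paper's argument exactly: the paper presents the corollary as an immediate consequence of Theorem~\ref{singular_kernel_operators_CZ}, applied to $f=y$ with constant $\max\{C,C_T\}$. Your extra check is also right. In the appendix estimates, $C_T$ is controlled by the size and $L_2$-H\"ormander constants of $k$, the Ahlfors/doubling constants of $\X$, and $\|S\|_{L_2(\X)\to L_2(\X)}$, none of which involve $M$. This is the correct reading of ``depending on $T$ only'' in that theorem.
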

\end{mdframed}
\vspace{5pt}

As the Calderón-Zygmund decomposition does not depend on a choice of a singular kernel, we can extend the conclusion of the above theorem to a larger class of operators.

\begin{mdframed}[skipabove=10pt]
\begin{coro}\label{singular_kernel_operators_theorem2}
Let $d\pg1$, and for $i,j\in\{1,\ldots,d\}$, let $T_{ij}$ be a bounded operator on $L_2(N)$ given by a singular kernel operators on $\X$. Let $T$ be the bounded operator on $L_2(N^{\bar{\oplus d}})=L_2(N)^{\oplus d}$ given by the matrix $(T_{ij})_{ij}$. Then $T$ is of Calderón-Zygmund type with a constant independent of $M$.
\end{coro}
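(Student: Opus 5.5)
The plan is to apply the Calderón-Zygmund decomposition (Theorem \ref{singular_kernel_operators_CZ}) componentwise, using the same projection for every component. We work in the von Neumann algebra $N^{\bar{\oplus}d}=\ell_\infty^d\bar{\otimes}N$, equipped with the trace $\sigma_d(\oplus_i x_i)=\sum_i\sigma(x_i)$. A projection in this algebra is a tuple $q=(q_1,\ldots,q_d)$ of projections in $N$. For $y=(y_1,\ldots,y_d)$ we have $\|y\|_1=\sum_j\|y_j\|_1$ and $\|y\|_2^2=\sum_j\|y_j\|_2^2$. Moreover $qT(b)q=\big(q_i\sum_j T_{ij}(b_j)q_i\big)_i$.

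Fix $s>0$ and $y=(y_1,\ldots,y_d)\in(L_1\cap L_2)(N^{\bar\oplus d})$, so each $y_j\in(L_1\cap L_2)(N)$. For each $j$ we apply Theorem \ref{singular_kernel_operators_CZ} to $y_j$ with the parameter $s$. This gives $y_j=a_j+b_j$ and a projection $p_j\in N$ with
\[\|a_j\|_2^2\pp C^2s\|y_j\|_1,\qquad \sigma(1-p_j)\pp C^2s^{-1}\|y_j\|_1,\qquad \|p_jS(b_j)p_j\|_1\pp C_S\|y_j\|_1\]
for every operator $S$ given by a singular kernel. The crucial point, highlighted in the text before the statement, is that this decomposition does not depend on the kernel. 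We may therefore use it simultaneously for all $T_{ij}$, $i\in\{1,\ldots,d\}$.

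We set $a:=(a_j)_j$, $b:=(b_j)_j$, $p:=p_1\wedge\cdots\wedge p_d\in N$, and $q:=(p,\ldots,p)$. Then $y=a+b$, and $\|a\|_2^2=\sum_j\|a_j\|_2^2\pp C^2s\|y\|_1$. For the projection, we use $1-p\pp\bigvee_j(1-p_j)$ together with subadditivity of the trace on suprema of projections. This gives $\sigma(1-p)\pp\sum_j\sigma(1-p_j)\pp C^2s^{-1}\|y\|_1$, and hence $\sigma_d(1-q)=d\,\sigma(1-p)\pp dC^2s^{-1}\|y\|_1$. For the last condition, since $p\pp p_j$ we can write $pT_{ij}(b_j)p=p\,\big(p_jT_{ij}(b_j)p_j\big)\,p$, so
\[\|qT(b)q\|_1\pp\sum_{i,j}\|p\,p_jT_{ij}(b_j)p_j\,p\|_1\pp\sum_{i,j}C_{T_{ij}}\|y_j\|_1\pp d\max_{i,j}C_{T_{ij}}\|y\|_1.\]
It follows that $T$ is of Calderón-Zygmund type with constant $\max\{\sqrt d\,C,\ d\max_{ij}C_{T_{ij}}\}$. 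This constant depends only on $\X$, $d$ and the $T_{ij}$ (through their kernels and $S$-parts), not on $M$.

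The step requiring care is the third condition. The difficulty is that each decomposition must control $p_jT_{ij}(b_j)p_j$ for every $i$ with the same $p_j$, and that shrinking to the common projection $p$ must preserve the bound. The first issue is exactly the kernel-independence of the construction in Appendix A, and the second follows from $p=pp_j=p_jp$ together with the contractivity of $x\mapsto pxp$ on $L_1$. A secondary point is checking that $C_{T_{ij}}$ does not depend on $M$. This holds because $T_{ij}=S_{ij}\otimes I$ and the constant in Theorem \ref{singular_kernel_operators_CZ} comes from the kernel estimates only; this is the same independence already asserted in Corollary \ref{singular_kernel_operators_theorem1}.
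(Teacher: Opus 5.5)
Your proof is correct and is essentially the paper's argument. You apply the kernel-independent Calder\'on--Zygmund decomposition to each component, put the common projection $p=p_1\wedge\cdots\wedge p_d$ in every coordinate, and obtain the three estimates via $\sigma(1-p)\le\sum_j\sigma(1-p_j)$ and $\|pT_{ij}(b_j)p\|_1\le\|p_jT_{ij}(b_j)p_j\|_1$; your use of $p=pp_j=p_jp$ to justify that last inequality spells out a step the paper leaves implicit.
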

\end{mdframed}
\begin{proof}
Let $f\in(L_1\cap L_2)(N^{\bar{\oplus}d})$ and $s>0$. Then we can write $f=(f_i)_i$ with $f_i\in(L_1\cap L_2)(N)$ for $i\in\{1,\ldots,d\}$. For $i\in\{1,\ldots,d\}$, let
\[f_i=g_i+b_i\]
denote the decomposition associated with $f_i$ and $s$ by Theorem \ref{singular_kernel_operators_CZ}, and let $p_i\in N$ denote the associated projection. Finally, we set 
\[g:=(g_i)_i\in(L_1\cap L_2)(N^{\bar{\oplus}d}),\ \ \ \ \ \ b:=(b_i)_i\in (L_1\cap L_2)(N^{\bar{\oplus}d}),\ \ \ \ \ \ p:=(\wedge _jp_j)_i\in N^{\bar{\oplus} d}.\]
Then the decomposition 
\[f=g+b\]
satisfies the required conditions with the projection $p\in N^{\bar{\oplus} n}$. Indeed, we have
\begin{itemize}[nosep]
    \item $\|g\|_2^2=\sum_i\|g_i\|_2^2\pp C^2s\sum_{i}\|f_i\|_1=C^2s\|f\|_1$,
   \item $\sigma^{\oplus n}(p^{\bot})=d\sigma(\vee_jp_j^{\bot})\pp d\sum_j\sigma(p_j^{\bot})\pp dC^2s^{-1}\sum_j\|f_j\|_1=dC^2s^{-1}\|f\|_1$,
    \item $\|pT(b)p\|_1=\sum_i\big\|(\wedge _jp_j)\big(\sum_jT_{ij}(b_j)\big)(\wedge _jp_j)\big\|_1$
    
    $\pp\sum_{i,j}\|p_jT_{ij}(b_j)p_j\|_1\pp\sum_{i,j}C_{ij}\|f_j\|_1\pp dC\|f\|_1$
\end{itemize}
where $C$ is some constant independent of $M$.
\end{proof}

\section{Singular kernel projections}

Finally, we obtain the following result, which generalizes Bourgain's method to the semicommutative setting.

\begin{mdframed}[skipabove=10pt]
\begin{theo}\label{singular_kernel_projections_theorem1}
Let $P$ be a projection on $L_2(N)$ which satisfies the technical assumptions and which given by a singular kernel on $\X$. Then $P$ is of Jones-type.
\end{theo}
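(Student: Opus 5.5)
We have to verify the three conditions defining a Jones-type projection. Condition (1) is the hypothesis that $P$ satisfies the technical assumptions. The plan is to derive (2) and (3) from Corollary \ref{singular_kernel_operators_theorem1}, Theorem \ref{CZ_type_theorem1} and Theorem \ref{CZ_type_theorem2}, applied to both $P$ and its complement $P^{\bot}$.

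First observe that $P^{\bot}=I-P$. By the remark following the definition of operators given by a kernel, $P^{\bot}$ is given by the same kernel as $P$, up to sign, since $I-P=-(P-I)$. A sign change preserves the size and $L_2$-Hörmander conditions, so $P^{\bot}$ is also given by a singular kernel on $\X$. Moreover, $P^{\bot}$ satisfies the technical assumptions by the proposition on complements proved in Part 1. Hence everything we establish for $P$ holds verbatim for $P^{\bot}$.

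For condition (3), apply Corollary \ref{singular_kernel_operators_theorem1}. It shows that $P$ and $P^{\bot}$ are of Calderón-Zygmund type, with a constant independent of $M$ that depends only on $\X$ and the kernel. Theorem \ref{CZ_type_theorem2} then gives that $(H_1(P),H_2(P))$ and $(H_1(P^{\bot}),H_2(P^{\bot}))$ are quasi-complemented in $(L_1(N),L_2(N))$, with a constant depending only on that Calderón-Zygmund constant. Since $M$ is arbitrary and the constant does not depend on it, this is the "universal constant" required. Here "universal" is understood as depending only on $\X$ and $P$'s kernel; I would make this reading explicit.

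For condition (2), Theorem \ref{CZ_type_theorem1} shows that $P$ is of weak type $(1,1)$. Its constant depends only on the Calderón-Zygmund constant and on $\|P\|_{L_2\to L_2}\pp1$. Since $P$ is a self-adjoint projection, the Marcinkiewicz theorem from Part 1 gives $L_p(N)$-boundedness for every $1<p<\infty$, with constant depending only on $p$ and these data. Strictly, this yields $L_p$-boundedness for $1<p\pp 2$ directly. For $2<p<\infty$, if one prefers not to rely on the self-adjoint form of Marcinkiewicz, we can use duality: $P$ is self-adjoint for trace duality, so its $L_p$ norm equals its $L_{p'}$ norm. The same argument applies to $P^{\bot}$.

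The main obstacle is bookkeeping rather than analysis. One must check that the remark on $T-\lambda I$ really transfers singularity of the kernel to $P^{\bot}$; the sign issue is harmless, since the kernel conditions are invariant under multiplication by $-1$. One must also confirm that the weak-type estimate, stated only for $(L_1\cap L_2)(N)$, suffices for Marcinkiewicz. The latter is exactly the form in which Marcinkiewicz is stated in Part 1, so no density argument beyond the one already used there should be needed.
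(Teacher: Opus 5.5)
Your proposal is correct and follows the paper's proof. The paper likewise notes that $P$ and $P^{\bot}=I-P$ are both given by singular kernels, then uses Theorem \ref{CZ_type_theorem1} with Marcinkiewicz for condition (2) and Theorem \ref{CZ_type_theorem2} for condition (3). Your extra bookkeeping fills in details the paper leaves implicit: the sign of the kernel of $I-P$, the technical assumptions for $P^{\bot}$ via the complement proposition, and reading ``universal'' as independent of $M$.
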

\end{mdframed}
\begin{proof}
Note that $P$ and $P^{\bot}=I-P$ are both given by a singular kernel on $\X$. Thus the result follows from Theorem \ref{CZ_type_theorem1} together with the Marcinkiewciz interpolation theorem, and Theorem \ref{CZ_type_theorem2}. 
\end{proof}

More generally, the following statement holds.

\begin{mdframed}[skipabove=10pt]
\begin{theo}\label{singular_kernel_projections_theorem2}
Let $d\pg1$, and let $P$ be a projection on $L_2(N^{\bar{\oplus}d})$ which satisfies the technical assumptions. Assume that we can write $P=(T_{ij})_{ij}$ where for $i,j\in\{1,\ldots,d\}$, $T_{ij}$ is a bounded operator on $L_2(N)$ given by a singular kernel on $\X$. Then $P$ is of Jones-type.
\end{theo}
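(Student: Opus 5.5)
The proof follows the one-dimensional case (Theorem \ref{singular_kernel_projections_theorem1}) almost verbatim, with Corollary \ref{singular_kernel_operators_theorem2} replacing Corollary \ref{singular_kernel_operators_theorem1}. We view $L_2(N^{\bar{\oplus}d})=L_2(N)^{\oplus d}$ as $L_2$ of the semifinite von Neumann algebra $N^{\bar{\oplus}d}$ with trace $\sigma^{\oplus d}$. We must check the three conditions in the definition of a Jones-type projection. Condition (1), the technical assumptions, is part of the hypothesis.

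\textbf{Step 1.} Both $P$ and $P^{\bot}$ have the required matrix form. Write $P^{\bot}=I-P=(\delta_{ij}I-T_{ij})_{ij}$. By the remark following the definition of operators given by a kernel, each diagonal entry $I-T_{ii}$ is given by the same singular kernel as $T_{ii}$. Each off-diagonal entry $-T_{ij}$ is given by the singular kernel $-k_{ij}$, since the size and $L_2$-H\"ormander conditions are invariant under multiplication by scalars. Applying Corollary \ref{singular_kernel_operators_theorem2} to the matrices $(T_{ij})$ and $(\delta_{ij}I-T_{ij})$ shows that $P$ and $P^{\bot}$ are of Calder\'on--Zygmund type, with constants independent of $M$. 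These constants depend only on $\X$, $d$ and the kernels.

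\textbf{Step 2 (condition (2)).} By Theorem \ref{CZ_type_theorem1}, $P$ is of weak type $(1,1)$ with a constant depending only on its Calder\'on--Zygmund constant and on $\|P\|_{L_2\to L_2}\pp1$. Since $P$ is self-adjoint, the Marcinkiewicz interpolation theorem, applied on $N^{\bar{\oplus}d}$, gives $L_p$-boundedness for every $1<p<\infty$, with constants depending only on $p$ once $\X$, $d$ and the kernels are fixed.

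\textbf{Step 3 (condition (3)).} We need the quasi-complementation of $(H_1(P),H_2(P))$ and $(H_1(P^{\bot}),H_2(P^{\bot}))$ in $(L_1,L_2)$. For $P$ this is Theorem \ref{CZ_type_theorem2}, which applies because $P$ satisfies the technical assumptions and is of Calder\'on--Zygmund type. The complement $P^{\bot}$ satisfies the technical assumptions by the earlier proposition, so Theorem \ref{CZ_type_theorem2} applies to it as well.

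The only step that needs care is Step 1. There one must confirm that the shifted diagonal entries $\delta_{ij}I-T_{ij}$ still fall within the scope of Corollary \ref{singular_kernel_operators_theorem2}. That corollary requires every entry to be given by a singular kernel, with the constant $C_T$ in the Calder\'on--Zygmund decomposition depending only on the operator. Subtracting $\lambda I$ changes neither the kernel nor the off-diagonal integral representation, and boundedness on $L_2$ is preserved, so the corollary applies. Everything else consists of direct invocations of earlier results.
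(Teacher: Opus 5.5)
Your proposal is correct and is the argument the paper intends; it gives no separate proof, and yours is the proof of Theorem \ref{singular_kernel_projections_theorem1} with Corollary \ref{singular_kernel_operators_theorem2} in place of Corollary \ref{singular_kernel_operators_theorem1}, applied to both $P$ and $P^{\bot}=(\delta_{ij}I-T_{ij})_{ij}$, then Theorem \ref{CZ_type_theorem1} with Marcinkiewicz, and Theorem \ref{CZ_type_theorem2}. One small slip: $I-T_{ii}$ is given by the kernel $-k_{ii}$, not by $k_{ii}$ itself, but this is harmless since the size and $L_2$-H\"ormander conditions are invariant under scalar multiples, as you already note for the off-diagonal entries.
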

\end{mdframed}

\clearpage

\part{Applications to harmonic analysis on the torus}

\section{Preliminaries}


In this chapter, $d\pg1$ is a fixed positive integer. Let $\T^d$ denote the $d$-torus, identified as a subspace of $\C^d$, and let $p:\R^d\to\T^d$ denote the canonical projection, defined by $p(x)=(e^{ix_1},\ldots,e^{ix_d})$ for $x=(x_1,\ldots,x_d)\in\R^d$. The $d$-torus $\T^d$ is equipped with the translation invariant metric $d$ such that
\[d(p(x),p(y))=\inf_{n\in\Z^d}|x-y+2\pi n|,\ \ \ x,y\in\R^d.\]
Finally, the $d$-torus $\T^d$ is equipped with its Haar measure such that 
\[\int_{\T^d}f(z)dz=\frac{1}{(2\pi)^d}\int_{[0,2\pi)^d}f(p(x))dx,\ \ \ \ f\in C(\T^d).\]
Then, the $d$-torus $\T^d$ is a Ahlfors $d$-regular (locally) compact space. In this preliminary section, we provide criteria that allow us to justify that the examples of bounded Fourier multipliers on $L_2(\T^d)$ considered in the following sections are given by singular kernels on $\T^d$.

\begin{mdframed}[skipabove=10pt]
\begin{prop}\label{singular_kernel_torus}
Let $K\in C^1(\R^d\setminus 2\pi\Z^d)$ be a $2\pi\Z^d$-periodic $C^1$-function on $\R^d\setminus 2\pi\Z^d$ such that 
\[|K(y)|\underset{|y|\to0}{=}O(|y|^{-d}),\ \ \ |\nabla K(y)|\underset{|y|\to0}{=}O(|y|^{-(d+1)}).\]
Let $k$ be the kernel on $\T^d$ such that
\[k(p(x),p(y))=K(x-y),\ \ \ \ \ x,y\in\R^d,\ x-y\notin 2\pi\Z^d.\]
Then $k$ satisfies the size condition and the Lipschitz regularity condition (with $\alpha=1$). As a consequence, $k$ is a singular kernel on $\T^d$.
\end{prop}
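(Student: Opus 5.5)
The plan is to reduce both estimates to the behaviour of $K$ near the lattice $2\pi\Z^d$, using periodicity, and then compare $|x-y|$ with $d(p(x),p(y))$ and $V$. Given $z=p(x)$, $w=p(y)$ with $z\neq w$, choose the representative: replace $x$ by $x+2\pi n$ so that $|x-y|=d(z,w)\pp\pi\sqrt d$. By periodicity $k(z,w)=K(x-y)$ is unchanged. Since $K$ is continuous on the compact set $\{u\ :\ |u|\pp\pi\sqrt d,\ |u|\pg\delta\}$ for any fixed $\delta>0$, and the hypotheses give $|K(u)|\lesssim|u|^{-d}$ for $|u|<\delta$, we get $|K(u)|\lesssim|u|^{-d}$ and likewise $|\nabla K(u)|\lesssim|u|^{-(d+1)}$ on the whole punctured ball $0<|u|\pp\pi\sqrt d$. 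By Ahlfors $d$-regularity of $\T^d$ we have $V(z,w)\simeq d(z,w)^d$ for $d(z,w)$ below the diameter (and $d(z,w)\pp\diam(\T^d)$ always), so the size condition $|k(z,w)|\lesssim V(z,w)^{-1}$ follows.

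For the Lipschitz condition, take $z,w,w'\in\T^d$ with $2d(w,w')<d(z,w)$. Choose representatives $x,y'$ with $|x-y'|=d(z,w')$, and $y$ with $|y-y'|=d(w,w')$. Then $|x-y|\pg d(z,w)>2|y-y'|$, so the segment from $x-y'$ to $x-y$ stays at distance at least $|x-y|-|y-y'|>|y-y'|$ from $0$. To relate this to $|x-y'|$, note $d(z,w')\pg d(z,w)-d(w,w')>\tfrac12 d(z,w)$ and $d(z,w)\pg d(z,w')-d(w,w')>d(z,w')-\tfrac12 d(z,w)$, so $d(z,w)\simeq d(z,w')$. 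Every point $u$ on the segment then satisfies $|u|\pg|x-y'|-|y-y'|\gtrsim |x-y'|$.

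The main subtlety is that the segment must avoid the other lattice points $2\pi n$ with $n\neq0$ and remain in the region where the gradient bound holds. Because $|u-(x-y')|\pp|y-y'|\pp\tfrac12|x-y'|\pp\tfrac12\pi\sqrt d$, we have $|u|\pp\tfrac32\pi\sqrt d$. To control $|u-2\pi n|$ as well, I would extend the gradient bound to the form $|\nabla K(u)|\lesssim \operatorname{dist}(u,2\pi\Z^d)^{-(d+1)}$ on all of $\R^d\setminus2\pi\Z^d$ via periodicity. Moreover $\operatorname{dist}(u,2\pi\Z^d)\pg\operatorname{dist}(x-y',2\pi\Z^d)-|y-y'|=|x-y'|-|y-y'|\pg\tfrac12|x-y'|$, since $|x-y'|=d(z,w')$ is exactly that distance.

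The mean value theorem then gives
\[|k(z,w)-k(z,w')|\pp|y-y'|\sup_u|\nabla K(u)|\lesssim \frac{|y-y'|}{|x-y'|^{d+1}}\simeq V(z,w')^{-1}\frac{d(w,w')}{d(z,w')},\]
which is the Lipschitz regularity condition with $\alpha=1$. The preceding proposition then yields the $L_2$-Hörmander condition, so $k$ is singular. One residual case needs checking: $\diam(\T^d)$ is attained, and there Ahlfors regularity is only stated for $r<\diam$. I would handle it by the trivial bound $V\pp|\T^d|=1$ together with $d(z,w)\lesssim1$.
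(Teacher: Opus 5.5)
Your strategy is the same as the paper's: the mean value theorem along the segment from $x-y'$ to $x-y$, a gradient bound controlled by the distance to $2\pi\Z^d$, and $V(z,w')\lesssim d(z,w')^d$. However, one step fails as written. The lower bound on $\operatorname{dist}(u,2\pi\Z^d)$ along the segment does not follow from the hypothesis you fixed.

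You assume $2d(w,w')<d(z,w)$, which is the form in the paper's definition of the Lipschitz condition. You then use $|y-y'|\leq\tfrac12|x-y'|$, i.e.\ $2d(w,w')\leq d(z,w')$, in two places: in ``$|x-y'|-|y-y'|\gtrsim|x-y'|$'' and in $\operatorname{dist}(u,2\pi\Z^d)\geq|x-y'|-|y-y'|\geq\tfrac12|x-y'|$. From $2d(w,w')<d(z,w)$ you only get $d(w,w')<d(z,w')$, and the ratio can be arbitrarily close to $1$. For example, on $\T^1$ take $z=p(0)$, $w'=p(1)$, $w=p(2-\varepsilon)$, with representatives $x=0$, $y'=1$, $y=2-\varepsilon$. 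Then $d(z,w)=2-\varepsilon>2d(w,w')=2-2\varepsilon$, but $|x-y'|-|y-y'|=\varepsilon$. So your chain gives no uniform lower bound, and the gradient estimate on the segment is not yet justified.

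The repair is the argument you already ran for the distance to $0$ in your second paragraph, now applied to the whole lattice and from the other endpoint. Every $u$ on the segment lies within $|y-y'|=d(w,w')$ of $x-y$, and $\operatorname{dist}(x-y,2\pi\Z^d)=d(z,w)$ for any choice of representatives. Hence $\operatorname{dist}(u,2\pi\Z^d)\geq d(z,w)-d(w,w')>\tfrac12 d(z,w)>\tfrac13 d(z,w')$, and the rest of your proof goes through.

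Your third paragraph is correct verbatim under $2d(w,w')<d(z,w')$, which is the hypothesis the paper's own proof actually uses. Either form suffices for the proposition deriving the $L_2$-H\"ormander condition, since there $y\in B_{r/3}(y')$ and $x\notin B_r(y')$ give both. The paper's intermediate claim $\inf_t\inf_n|\gamma(t)-2\pi n|\geq\min\{d(z,w),d(z,w')\}$ is not true in general in dimension $\geq2$. Your triangle-inequality bound, measured from the correct endpoint, is the right justification.

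A smaller slip concerns your first paragraph. For $d\geq4$ the punctured ball $0<|u|\leq\pi\sqrt d$ contains $2\pi(1,0,\ldots,0)$, so $K$ is not continuous on $\{\delta\leq|u|\leq\pi\sqrt d\}$. Work instead on the cube $[-\pi,\pi]^d$, which is where $x-y$ lies when $|x-y|=d(z,w)$. Alternatively, use directly the periodic bounds $|K(u)|\lesssim\operatorname{dist}(u,2\pi\Z^d)^{-d}$ and $|\nabla K(u)|\lesssim\operatorname{dist}(u,2\pi\Z^d)^{-(d+1)}$ that you introduce later.
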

\end{mdframed}
\begin{proof}
The fact that $k$ satisfies the size condition is obvious. Let $z,w,w'\in\T^d$ such that $2d(w,w')<d(z,w')$. We choose $x,y,y'\in\R^d$ such that $z=p(x)$, $w=p(y)$, $w'=p(y')$, with $d(w,w')=|y-y'|$. By the mean value theorem, we have
\[|K(x-y)-K(x-y')|\pp\sup_{t\in[0,1]}|\nabla K(\gamma(t))|\cdot|y-y'|\]
where $\gamma$ is the smooth path such that
\[\gamma(t)=(1-t)(x-y)+t(x-y')=(x-y)+t(y-y'),\ \ \ \ \ \ \ t\in[0,1].\]
Note that 
\[\inf_{t\in[0,1]}\inf_{n\in\Z^d}|\gamma(t)-2\pi n|\pg\min\big\{d(z,w),d(z,w')\big\}\pg\frac{1}{2}d(z,w').\]
Thus, we have
\[\sup_{t\in[0,1]}|\nabla K(\gamma(t))|\lesssim d(z,w')^{-(d+1)}\]
and
\[|K(x-y)-K(x-y')|\lesssim d(z,w')^{-d}\frac{|y-y'|}{d(z,w')}.\]
Hence, we get
\[|k(z,w)-k(z,w')|\lesssim d(z,w')^{-d}\frac{d(w,w')}{d(z,w')}.\]
Finally, from the estimate
\[V(z,w)=|B_{d(z,w)}(z)|\lesssim d(z,w)^d\]
we obtain
\[|k(z,w)-k(z,w')|\lesssim V(z,w)^{-1}\frac{d(w,w')}{d(z,w')}\]
as desired.
\end{proof}

\begin{mdframed}[skipabove=10pt]
\begin{theo}\label{fourier_multiplier_torus}
Let $m\in L_\infty(\R^d)$ be a $0$-homogeneous bounded symbol on $\R^d$, continuous on $\R^d\setminus\{0\}$, whose inverse Fourier transform, as a distribution on $\R^d\setminus\{0\}$, is a smooth function $K\in C^\infty(\R^d\setminus\{0\})$ such that
\[\int_{\S_\infty^{d-1}}K(x)dx=0\]
where 
\[\S_\infty^{d-1}:=\big\{x\in\R^d\ :\ |x|_\infty:=\sup_{j\in\{1,\ldots,d\}}|x_i|=1\big\}\]
denotes the $(d-1)$-sphere of $\R^d$ w.r.t the norm $|\cdot|_\infty$. Let $\tilde{m}\in C_b(\Z^d)$ be a bounded symbol on $\Z^d$ such that $\tilde{m}(n)=m(n)$ for $n\in\Z^d\setminus\{0\}$, and let $T$ be the bounded Fourier multiplier on $L_2(\T^d)$ with symbol $\tilde{m}$. Then $T$ is given by a kernel on $\T^d$ that satisfies the size condition and the Lipschitz regularity condition (with $\alpha=1$). In particular $T$ is given by a singular kernel on $\T^d$.
\end{theo}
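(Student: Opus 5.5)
The plan is to reduce everything to Proposition \ref{singular_kernel_torus}: we will produce a $2\pi\Z^d$-periodic function $\tilde K\in C^1(\R^d\setminus2\pi\Z^d)$ with $|\tilde K(y)|=O(|y|^{-d})$ and $|\nabla\tilde K(y)|=O(|y|^{-(d+1)})$ near $0$, such that $T$ is given by the kernel $k(p(x),p(y))=\tilde K(x-y)$ off the diagonal. The natural candidate is the periodization
\[\tilde K(x)=\sum_{n\in\Z^d}K(x+2\pi n),\]
suitably interpreted, since the Poisson summation formula formally identifies the Fourier coefficients of $\tilde K$ with $m(n)$, $n\neq0$. Since $m$ is $0$-homogeneous, $K$ is $(-d)$-homogeneous on $\R^d\setminus\{0\}$, so $|K(x)|\lesssim|x|^{-d}$ and $|\nabla K(x)|\lesssim|x|^{-d-1}$. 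Hence the series of gradients converges absolutely and locally uniformly off $2\pi\Z^d$, but the series of $K$ itself is only conditionally convergent, like $\sum|n|^{-d}$.

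To handle this, we sum in $|\cdot|_\infty$-shells. Let $Q_N=\{n:|n|_\infty\pp N\}$ and $\tilde K_N=\sum_{n\in Q_N}K(\cdot+2\pi n)$. The cancellation $\int_{\S^{d-1}_\infty}K=0$, combined with homogeneity, gives $\int_{\{a<|x|_\infty<b\}}K=0$. A Riemann-sum comparison on the shell $Q_{N}\setminus Q_{N-1}$, with error controlled by $|\nabla K|\lesssim N^{-d-1}$ times $N^{d-1}$ points, shows that the shell contributions are $O(N^{-2})$. Therefore $\tilde K_N$ converges locally uniformly off $2\pi\Z^d$, and the $C^1$ convergence follows from the gradient series. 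Near $0$ we have $\tilde K=K+(\text{a }C^1\text{ function})$, which gives the required size and gradient bounds.

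Next we identify $T$ with convolution by $\tilde K$ away from the diagonal. Pick $\varphi\in C_c^\infty(\R^d)$ with $\varphi=1$ near $0$, and split $m=m\varphi+m(1-\varphi)$ in the smooth sense, i.e. split $K$ as the distribution $\check m=\check m\ast$-localized pieces. The cleaner route is to test against trigonometric polynomials $f,g$ with disjoint supports on $\T^d$. Write $\langle Tf,g\rangle=\sum_n\tilde m(n)\hat f(n)\overline{\hat g(n)}$ and use $\check m=K$ on $\R^d\setminus\{0\}$, i.e. $\langle m,\hat\psi\rangle=\int K\psi$ for $\psi$ supported away from $0$. Then apply Poisson summation to $\psi=$ (lift of $f\ast\tilde g$ to a fundamental domain, cut off by dilations $\chi(\cdot/N)$ with $\chi$ the indicator-smoothing of the cube $|x|_\infty\pp1$). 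The value $\tilde m(0)$ does not matter, since it contributes $\tilde m(0)\hat f(0)\overline{\hat g(0)}$, which is a constant multiple of the identity paired with $f,g$. That pairing equals $\int f\bar g=0$, because the supports are disjoint, and the correct statement is indeed only about the kernel off the diagonal.

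We expect the main obstacle to be this identification step: justifying Poisson summation for a homogeneous symbol that is merely continuous, and matching the $|\cdot|_\infty$-cube truncation used to define $\tilde K$ with the limit $N\to\infty$ on the Fourier side. On that side one needs $m(\cdot)\widehat{\chi_N\ast}$ to converge to $m$ pointwise on $\Z^d\setminus\{0\}$ with uniform bounds. Our first attempt would be to regularize $K_\varepsilon=K\cdot(1-\varphi(\cdot/\varepsilon))\chi(\cdot/R)$, which is integrable with $\widehat{K_\varepsilon}\to m$ boundedly and pointwise off $0$, using the mean-zero condition. Classical Poisson summation for $K_\varepsilon$ then applies, and we pass to the limit by dominated convergence on the $\ell_2$ side and by the uniform shell estimate on the kernel side. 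Once this is done, Proposition \ref{singular_kernel_torus} yields the size and Lipschitz conditions with $\alpha=1$, and hence that $T$ is given by a singular kernel on $\T^d$.
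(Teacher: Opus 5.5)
Your construction of the kernel is the paper's. You periodize $K$ by summing over $|\cdot|_\infty$-shells, use the cancellation on $\S^{d-1}_\infty$ to get shell contributions $O((1+|x|)N^{-2})$, obtain $C^1$ convergence from the absolutely convergent gradient series, and conclude with Proposition \ref{singular_kernel_torus}. Your shell bound is a self-contained substitute for the quadrature formula on $\S^{d-1}_\infty$ that the paper cites, and it is correct: the unit cubes centred at the points with $|n|_\infty=N$ tile $\{N-\frac12\pp|y|_\infty<N+\frac12\}$, on which $\int K=0$ by homogeneity, and the mean value theorem costs $O(N^{-d-1})$ per cube. Periodicity of the limit needs one line: a shift by $2\pi e_j$ changes $\tilde K_N$ by two opposite faces of $O(N^{d-1})$ terms of size $O(N^{-d})$.

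The identification of $T$ with $\tilde K$ is only asserted in the paper, and your sketch of it contains a false step. The $n=0$ term $\tilde m(0)\hat f(0)\overline{\hat g(0)}$ is not a multiple of $\int f\bar g$. The symbol $\mathbf{1}_{\{0\}}$ gives the projection $f\mapsto\hat f(0)$ onto constants, whose kernel is identically $1$. Moreover $\hat f(0)\overline{\hat g(0)}=\int f\cdot\overline{\int g}$ is nonzero, for instance, for two disjointly supported nonnegative bumps. So $\tilde m(0)$ does change the off-diagonal kernel, by an additive constant.

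The same issue affects your other limits:
\begin{itemize}
\item The truncations $\widehat{K_\varepsilon}$ converge boundedly off $0$ to $m-\bar m$, where $\bar m$ is the spherical mean of $m$, not to $m$. For $m\equiv1$ one has $K=0$; for the Leray symbols $\bar m\neq0$ when $i=j$.
\item Replacing the sharp cube truncation by $\chi(\cdot/N)$ produces an extra term $c_\chi\hat h(0)$, with $c_\chi=\int K(\chi-\mathbf{1}_{\{|x|_\infty\pp1\}})$, unless $\chi$ is a function of $|x|_\infty$.
\end{itemize}

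What your argument actually proves is that $T$ is given by $\tilde K+c$ for some constant $c$. That is enough: $\tilde K+c$ is still $2\pi\Z^d$-periodic and $C^1$ off $2\pi\Z^d$, with the same asymptotics at $0$, so Proposition \ref{singular_kernel_torus} applies to it verbatim. The paper's sentence identifying $T$ with $\tilde K$ regardless of $\tilde m(0)$ has the same imprecision.

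Finally, nonzero trigonometric polynomials never have disjoint supports. Test instead against $C^\infty$ functions with disjoint supports; their Fourier coefficients decay rapidly, which is all your dominated-convergence step needs.
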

\end{mdframed}
\begin{proof}
First, as $m$ is $0$-homogeneous, we know that $K$ is $(-d)$-homogeneous on $\R^d\setminus\{0\}$, so that we have the following estimates
\[|K(y)|\lesssim|y|^{-d},\ \ |\nabla K(y)|\lesssim|y|^{-(d+1)},\ \ \text{for}\ y\in\R^d\setminus\{0\}.\]
We assert that the expression
\[\tilde{K}(x):=K(x)+\sum_{R=1}^{\infty}\sum_{\protect\substack{m\in\Z^d\\|m|_\infty=R}}K(x+2\pi m),\ \ \ \ \ \text{for}\ x\in\R^d\setminus 2\pi\Z^d\]
defines a $2\pi\Z^d$-periodic continuous function $\tilde{K}\in C(\R^d\setminus 2\pi\Z^d)$. In order to justify this, we can use the mean value theorem to get the estimate
\begin{align*}
\Big|\sum_{|m|_\infty=R}K(x+2\pi m)\Big|&\pp\sum_{|m|_\infty=R}|K(x+2\pi m)-K(2\pi m)|+\Big|\sum_{|m|_\infty=R}K(2\pi m)\Big|\\
&\lesssim\sum_{|m|_\infty=R}\frac{|x|}{ |m|^{d+1}}+\Big|\sum_{|m|_\infty=R}K(2\pi m)\Big|\\
&\lesssim\frac{|x|}{R^{d+1}}\sum_{|m|_\infty=R}1+\Big|\sum_{|m|_\infty=R}K(2\pi m)\Big|\\
&\lesssim\frac{|x|}{R^{d+1}}R^{d-1}+\Big|\sum_{|m|_\infty=R}K(2\pi m)\Big|.
\end{align*}
where we used the obvious asymptotic
\[\sum_{|m|_\infty=R}1\underset{R\to\infty}{=}O(R^{d-1}).\]
As $K$ is $(-d)$-homogeneous, a standard quadrature formula (see for instance \cite{BerlineEulerMacLaurin}) gives the asymptotic
\[R^{-(d-1)}\sum_{|m|_\infty=R}K\big(\frac{m}{R}\big)\underset{R\to\infty}{=}\int_{\S_\infty^{d-1}}K(y)dy+O(R^{-1})\underset{R\to\infty}{=}O(R^{-1}).\]
(For the example to which we want to apply the result, the above sum is actually zero). Thus, again by $(-d)$-homogeneity, we deduce that
\begin{align*}
    \Big|\sum_{|m|_\infty=R}K(2\pi m)\Big|&\lesssim R^{-d}\Big|\sum_{|m|_\infty=R}K\big(\frac{m}{R}\big)\Big|\\
    &=R^{-1}\Big|R^{-(d-1)}\sum_{|m|_\infty=R}K\big(\frac{m}{R}\big)\Big|\underset{R\to\infty}{=}O(R^{-2}).
\end{align*}
Finally, we obtain the estimate
\[\Big|\sum_{|m|_\infty=R}K(x+2\pi m)\Big|\lesssim\frac{|x|+1}{R^2}.\]
Hence $\tilde{K}$ is a well-defined $2\pi\Z^d$-periodic continuous function on $\R^d\setminus 2\pi\Z^d$, and we have
\[|\tilde{K}(x)|\underset{|x|\to0}=O(|x|^{-d}).\]
In addition, $\tilde{K}$ is clearly $C^1$ on $\R^d\setminus2\pi\Z^d$, with
\[\nabla\tilde{K}(x)=\nabla K(x)+\sum_{n\in\Z^d\setminus\{0\}}\nabla K(x+2\pi n),\ \ \ \ \ \text{for}\ x\in\R^d\setminus 2\pi\Z^d\]
(where the sum is uniformly convergent on compacts subsets of $\R^d\setminus 2\pi\Z^d$), so that we also have
\[|\nabla\tilde{K}(x)|\underset{|x|\to0}=O(|x|^{-(d+1)}).\]
Now, the bounded Fourier multiplier on $L_2(\T^d)$ with symbol $\tilde{m}$ is given by the kernel $k$ on $\T^d$ such that 
\[k(p(x),p(y))=\tilde{K}(x-y),\ \ \ \ \ x,y\in\R^d,\ x-y\notin 2\pi\Z^d.\]
By the previous proposition, the kernel $k$ is singular. The proof is complete.
\end{proof}



\section{Spectral projections}

Let $(\chi_n)_{n\in\Z^d}$ denote the canonical Hilbert basis of $L_2(\T^d)$, defined by
\[\chi_n(z)=z^n:=z_1^{n_1}z_2^{n_2}\ldots z_d^{n_d},\ \ \ \ \text{for}\ n\in\Z^d, \ z\in\T^d.\]
Let $M$ be a semifinite von Neumann algebra with trace denoted $\tau$, and let $N:=L_\infty(\T^d)\bar{\otimes}M$ denote the tensor product von Neumann algebra equipped with the tensor product trace denoted $\sigma$. The \textit{Fourier transform} of any $f\in(L_1+L_\infty)(N)$ is the function $\widehat{f}$ on $\Z^d$ which takes values in $(L_1+L_\infty)(M)$ defined by the equations
\[\sigma(f(x\otimes\chi_n))=\tau(\widehat{f}(n)x),\ \ \ \ \text{for}\ n\in\Z^d, \ x\in(L_1\cap L_\infty)(M).\]
Let $I$ be a fixed subset of $\Z^d$, and let $P$ be the \textit{spectral projection} associated with the data $(\T^d,M,I)$, i.e. the projection on $L_2(N)$ such that, if $f\in L_2(N)$ then
\[P(f)=\sum_{n\in I}\chi_n\otimes\widehat{f}(n),\ \ \ \text{in}\ L_2(N).\]
In other words, $P$ is the bounded Fourier multiplier on $L_2(N)$ whose symbol is the characteristic function of $I$. Note that the complement projection $P^{\bot}$ coincides with the spectral projection associated with the data $(\T^d,N,I^{\bot})$ where $I^{\bot}$ is the complement subset of $I$ in $\Z^d$. In the sequel, we denote
\[H(P):=\big\{f\in(L_1+L_\infty)(N)\ :\ \forall n\notin I,\ \widehat{f}(n)=0\big\}\]
so that we clearly have
\[P(L_2(N))=L_2(N)\cap H(P).\]

\begin{lemm}
Let $(K_\lambda)_{\lambda>0}$ denote the Féjer kernel on $N$, i.e. the family of compatible bounded operators $(L_1(N),L_\infty(N))\to(L_1(N),L_\infty(N))$ such that 
\[K_\lambda(f)=\sum_{n\in\Z^d,\ \|n\|_1\pp\lambda}\Big(1-\frac{|n_1|}{\lambda}\Big)\ldots \Big(1-\frac{|n_d|}{\lambda}\Big)\chi_n\otimes\widehat{f}(n),\ \ \ \ \text{for}\ \lambda>0, \ f\in(L_1+L_\infty)(N).\]    
Let $E(N)$ be an exact interpolation space for $(L_1(N),L_\infty(N))$ with order\-/continuous norm. then for every $f\in E(N)$ (resp. $f\in E^{\times}(N)$), the net $(K_\lambda(f))_{\lambda>0}$ converges (in the limit $\lambda\to\infty$) to $f$ in $E(N)$ for the norm topology (resp. in $E^{\times}(N)$ for the \text{\upshape w*}-topology), and thus, for every $f\in E(N)$ and $g\in E^{\times}(N)$, we have
\begin{equation}\label{Féjer_eq}
    \sigma(fg)=\lim_{\lambda\to\infty}\sum_{n\in\Z^d,\ \|n\|_1\pp\lambda}\Big(1-\frac{|n_1|}{\lambda}\Big)^2\ldots \Big(1-\frac{|n_d|}{\lambda}\Big)^2\tau(\widehat{f}(n)\widehat{g}(n)).
\end{equation}
\end{lemm}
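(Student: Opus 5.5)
The plan is to realize $K_\lambda$ as convolution in the torus variable with the classical Féjer kernel, and then use the interpolation structure of $E(N)$. Write $F_\lambda(z)=\prod_{j=1}^d F^{(1)}_\lambda(z_j)$, where $F^{(1)}_\lambda(w)=\sum_{|k|\pp\lambda}(1-|k|/\lambda)w^k\pg0$ is the one-dimensional Féjer kernel. It has integral $1$ and concentrates at $1$ as $\lambda\to\infty$. One small discrepancy must be fixed first: the constraint $\|n\|_1\pp\lambda$ in the statement should be read as $|n_j|\pp\lambda$ for each $j$, so that the coefficients are exactly those of the product kernel. With the literal constraint one would instead absorb the truncation into a modified kernel; I will assume the product form is intended.

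Then $K_\lambda(f)=\int_{\T^d}F_\lambda(w)\,(\rho_w\otimes I)(f)\,dw$, where $\rho_w$ is translation by $w$ on $L_\infty(\T^d)$. Since $\rho_w\otimes I$ is a trace-preserving $*$-automorphism of $N$, it is an isometry of $L_1(N)$ and $L_\infty(N)$. Because $F_\lambda\pg0$ has mass $1$, $K_\lambda$ is a contraction on both spaces, and therefore on the exact interpolation space $E(N)$ and on its Köthe dual $E^\times(N)$, which is also exact.

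\emph{Norm convergence in $E(N)$.} First take $f$ in the dense subspace of $(L_1\cap L_\infty)(N)$ consisting of trigonometric polynomials $\sum_{n}\chi_n\otimes x_n$ with $x_n\in(L_1\cap L_\infty)(M)$ and finitely many $n$. For such $f$, $K_\lambda(f)-f=\sum_n(c_n(\lambda)-1)\chi_n\otimes x_n$ with $c_n(\lambda)\to1$. Since $E(N)$ contains $(L_1\cap L_\infty)(N)$ continuously, $\|K_\lambda f-f\|_{E}\pp\sum_n|c_n(\lambda)-1|\,\|\chi_n\otimes x_n\|_{L_1\cap L_\infty}\to0$.

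Next I need this set of polynomials to be dense in $E(N)$. By the stated density theorem, $(L_1\cap L_\infty)(N)$ is norm-dense in $E(N)$. The norm on $E(N)$ is dominated by the norm of $L_1\cap L_\infty$, so it suffices to approximate $g\in(L_1\cap L_\infty)(N)$ in $L_1\cap L_\infty$. Here I expect the real obstacle: for the $L_\infty$ part one cannot approximate in norm. I would avoid this by using that $L_1\cap L_2$ with the weaker norm $\max(\|\cdot\|_1,\|\cdot\|_2)$ already controls $E$ when $E$ is order continuous. Concretely, $(L_1\cap L_\infty)(N)$ is dense in $E(N)$, and elements of $(L_1\cap L_\infty)(N)$ can be approximated in $L_1$ while staying uniformly bounded in $L_\infty$ via Féjer means themselves. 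Norm convergence in $E(N)$ then follows from order continuity, using the $K$-functional description of Theorem \ref{integration_Holmstedt} together with the fact that a sequence bounded in $L_\infty$ and converging in $L_1$ has $K_t$ uniformly small at the relevant scales.

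In fact this gives a cleaner route to the whole statement. For $g\in(L_1\cap L_\infty)(N)$, $K_\lambda g\to g$ in $L_1(N)$ by the standard approximate-identity argument, using continuity of $w\mapsto(\rho_w\otimes I)g$ in $L_1(N)$, and $\|K_\lambda g-g\|_\infty\pp2\|g\|_\infty$. These two facts together with order continuity give $\|K_\lambda g-g\|_E\to0$. Combining this with uniform boundedness of the $K_\lambda$ and density of $(L_1\cap L_\infty)(N)$ via a $3\varepsilon$-argument yields norm convergence on all of $E(N)$.

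\emph{Weak-star convergence in $E^\times(N)$.} By duality, $\sigma(K_\lambda(g)f)=\sigma(gK_\lambda(f))$, since $F_\lambda(w)=F_\lambda(\bar w)$ and the translations are trace preserving. The net $(K_\lambda g)$ is bounded in $E^\times(N)$, and $\sigma(gK_\lambda f)\to\sigma(gf)$ for each $f\in E(N)$ by the norm convergence above. Because $E(N)$ is the predual, this is exactly weak-star convergence.

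\emph{Formula \eqref{Féjer_eq}.} Let $K_\lambda^{1/2}$ denote the multiplier with the square roots of the coefficients; it is not positive, so instead I write
\[
\sigma(K_\lambda(f)K_\lambda(g))=\sum_n c_n(\lambda)^2\,\tau(\widehat f(n)\widehat g(-n)).
\]
This is a Parseval identity valid for trigonometric polynomials; the index $-n$ versus $n$ depends on the pairing convention used in the statement. Now use $\sigma(K_\lambda f\,K_\lambda g)=\sigma(K_\lambda^2 f\,g)$. Since $K_\lambda^2 f\to f$ in norm, as a product of uniformly bounded operators each converging strongly, the left side tends to $\sigma(fg)$.
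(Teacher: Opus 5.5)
Your argument is correct, but it reaches the conclusion by a different route from the paper's.

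\textbf{Comparison with the paper.} The paper obtains $\sup_\lambda\|K_\lambda\|\leq 1$ on $L_1(N)$ by citing the classical bound, on $L_\infty(N)$ by duality, and on $E(N)$ and $E^{\times}(N)$ by interpolation. It then appeals to the norm density (resp.\ w*-density) of the span of the $\chi_n\otimes a$ in $E(N)$ (resp.\ $E^{\times}(N)$), on which convergence is immediate.

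You proceed differently at each stage:
\begin{itemize}
\item You write $K_\lambda$ as an average of the trace-preserving automorphisms $\rho_w\otimes I$ against the nonnegative product Fej\'er kernel. This gives contractivity without any citation.
\item You prove $K_\lambda g\to g$ in $E(N)$ directly for $g\in(L_1\cap L_\infty)(N)$: $L_1$-convergence of an approximate identity plus a uniform $L_\infty$ bound, upgraded by order continuity. So you only need the density of $(L_1\cap L_\infty)(N)$ in $E(N)$, which is in the preliminaries.
\item The w*-statement then follows from the symmetry $\sigma(K_\lambda(g)f)=\sigma(gK_\lambda(f))$ and norm convergence in the predual $E(N)$.
\end{itemize}
What this buys you: the density of trigonometric polynomials in $E(N)$, which the paper takes for granted, is normally proved by exactly this kind of Fej\'er-mean argument. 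Your version is therefore self-contained, while the paper's is shorter.

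Your two remarks on the statement are also right.
\begin{itemize}
\item The coefficients are those of the nonnegative product kernel (the one the cited $L_1$ bound concerns) only under $\max_j|n_j|\leq\lambda$. With the literal constraint $\|n\|_1\leq\lambda$ and $d\geq 2$, the symbol jumps across the faces of the simplex, and uniform $L_1$-boundedness is lost.
\item The bilinear pairing $\sigma(fg)$ produces $\widehat g(-n)$, not $\widehat g(n)$.
\end{itemize}

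\textbf{Things to tidy.}
\begin{itemize}
\item Delete the claim that $\max(\|\cdot\|_1,\|\cdot\|_2)$ controls the norm of an order-continuous $E$. It already fails for $E=L_4$, and your final argument never uses it.
\item Make the order-continuity step explicit. Set $h:=K_\lambda g-g$ and $c:=2\|g\|_\infty$. Then $K_t(h)\leq\min(\|h\|_1,ct)=K_t(ce)$ for any projection $e\in N$ with $\sigma(e)=\|h\|_1/c$; for $\|h\|_1$ small, take $e=1_A\otimes e_0$ with $0<\tau(e_0)<\infty$. Full symmetry (Theorem \ref{Integration_Calderon}) gives $\|h\|_{E(N)}\leq c\|e\|_{E(N)}$. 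Since $\|e\|_{E(N)}$ depends only on $\sigma(e)$ and is nondecreasing in it, order continuity applied to a family $1_{A_k}\otimes e_0\downarrow 0$ shows that $\|e\|_{E(N)}\to 0$ as $\sigma(e)\to 0$.
\item Verify the symmetry identity first for $f\in(L_1\cap L_\infty)(N)$, splitting $g$ in $L_1+L_\infty$ and using Fubini. Then extend it to $f\in E(N)$, since both sides are norm-continuous in $f$.
\end{itemize}
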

\begin{proof}
It is a well-known fact (see \cite{GrafakosClassical}[Remark 3.19]) that $\sup_{\lambda>0}\|K_{\lambda}\|_{L_1(N)\to L_1(N)}\pp 1$. By duality, we deduce that $\sup_{\lambda>0}\|K_{\lambda}\|_{L_\infty(N)\to L_\infty(N)}\pp 1$. By interpolation, we deduce that 
\[\sup_{\lambda>0}\|K_{\lambda}\|_{E(N)\to E(N)}\pp 1,\ \ \ \ \ \ \sup_{\lambda>0}\|K_{\lambda}\|_{E^{\times}(N)\to E^{\times}(N)}\pp 1.\]
As the linear span $\{\chi_n\otimes a\ :\ n\in\Z^d,\ a\in(L_1\cap L_\infty)(M)\}$ is dense in $E(N)$ for the norm topology and dense in $E^{\times}(N)$ for the w*-topology, the desired conclusion easily follows.
\end{proof}

\begin{mdframed}[skipabove=10pt]
\begin{theo}\label{spectral_projection_torus_theorem1}
Let $E(N)$ be an exact interpolation space for $(L_1(N),L_\infty(N))$ with order\-/continuous norm. Then
\[H_E(P)=E(N)\cap H(P),\]
\[H_{E^{\times}}(P)=E^{\times}(N)\cap H(P).\]
Moreover, the subspace
\[(L_1\cap L_\infty)(N)\cap H(P)\]
is norm-dense in $H_E(P)$ and \text{\upshape w*}-dense in $H_{E^{\times}}(P)$.
\end{theo}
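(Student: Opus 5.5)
The plan is to use the Fejér operators $K_\lambda$ of the preceding lemma as approximating operators that commute with $P$ and preserve $H(P)$. The key observation is that $K_\lambda$ is a Fourier multiplier with finitely supported symbol. Hence for every $f\in(L_1+L_\infty)(N)$, $K_\lambda(f)$ is a trigonometric polynomial $\sum_{\|n\|_1\pp\lambda}c_n\chi_n\otimes\widehat{f}(n)$ with coefficients in $(L_1+L_\infty)(M)$. Moreover $\widehat{K_\lambda f}(n)=c_n\widehat{f}(n)$, so $f\in H(P)$ implies $K_\lambda f\in H(P)$.

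First I would establish the inclusions $H_E(P)\subseteq E(N)\cap H(P)$ and $H_{E^\times}(P)\subseteq E^\times(N)\cap H(P)$. The set $E(N)\cap H(P)$ is weakly closed in $E(N)$ for $\sigma(E(N),E^\times(N))$. Indeed, $\widehat{f}(n)=0$ means $\sigma(f(x\otimes\chi_n))=0$ for all $x\in(L_1\cap L_\infty)(M)$, and $x\otimes\chi_n\in(L_1\cap L_\infty)(N)\subseteq E^\times(N)$, so it is an intersection of kernels of weakly continuous functionals. It clearly contains $E(N)\cap P(L_2(N))=E(N)\cap L_2(N)\cap H(P)$. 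The same argument works for $E^\times(N)$, using $(L_1\cap L_\infty)(N)\subseteq E(N)$.

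Second, for the reverse inclusions and the density statement at once, let $f\in E(N)\cap H(P)$ (resp. $E^\times(N)\cap H(P)$). I would approximate $f$ in two stages. The first stage replaces $f$ by $K_\lambda f$, which converges to $f$ in norm (resp. w*) by the lemma and stays in $H(P)$. The second stage approximates the finitely many coefficients $\widehat{f}(n)\in E(M)$ by elements of $(L_1\cap L_\infty)(M)$. The coefficients lie in $E(M)$ because $f\mapsto\chi_{-n}$-convolution, i.e. $f\mapsto \chi_n\otimes\widehat f(n)$, is a contraction on $L_1(N)$ and $L_\infty(N)$, hence on $E(N)$ by interpolation, and $\|\chi_n\otimes a\|_{E(N)}=\|a\|_{E(M)}$ since $\mu_{\chi_n\otimes a}=\mu_a$ (Haar measure is a probability). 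In the norm case, pick $a_n^k\in(L_1\cap L_\infty)(M)$ with $a_n^k\to\widehat{f}(n)$ in $E(M)$, using density of $L_1\cap L_\infty$ (order-continuity). Then $\sum c_n\chi_n\otimes a_n^k\in(L_1\cap L_\infty)(N)\cap H(P)\subseteq E(N)\cap P(L_2(N))$ converges to $K_\lambda f$. This gives norm density, and hence also $f\in H_E(P)$.

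The w*-case requires care, and I expect it to be the main obstacle: we need w*-density of $(L_1\cap L_\infty)(M)$ in $E^\times(M)$ inside the finite sum. The earlier theorem of the paper provides this, provided $E(M)$ also has order-continuous norm. I would deduce that from the fact that $E(M)$ embeds isometrically (via $a\mapsto 1\otimes a$) as a sublattice of $E(N)$ preserving decreasing nets to zero. The map $a\mapsto\chi_n\otimes a$ is w*-continuous $E^\times(M)\to E^\times(N)$, since its preadjoint is $g\mapsto\widehat{g}(-n)$-type coefficient extraction, bounded on $E(N)$. Hence the finite sums converge w* to $K_\lambda f$. To combine the two limits, I would use an iterated-limit argument, observing that the w*-closure of a subspace is closed under iterated w*-limits. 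This gives $f\in$ the w*-closure of $(L_1\cap L_\infty)(N)\cap H(P)$, which lies in $H_{E^\times}(P)$ since $H_{E^\times}(P)$ is weakly closed, and weak and w* closures agree here as $E(N)$ is the predual. This completes both equalities and the density claims.
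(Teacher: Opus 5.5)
Your proof is correct and follows the paper's route: closedness of $E(N)\cap H(P)$ (resp.\ w*-closedness of $E^{\times}(N)\cap H(P)$), then reduction to the Fej\'er means $K_\lambda f$, which stay in $H(P)$, then approximation of the resulting trigonometric polynomial by elements of $(L_1\cap L_\infty)(N)\cap H(P)$. As in the paper, the closure defining $H_{E^{\times}}(P)$ is read as the closure for trace duality with $E(N)$, i.e.\ the w*-closure; this is the only sense in which your closing remark that weak and w* closures agree is right, since it would fail for the Banach-space weak topology of $E^{\times}(N)$.

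The one difference is in that last approximation. The paper writes $f=K_\lambda(g)$ with $g\in E(N)\cap H(P)$ and approximates $g$ itself by $g_\alpha\in(L_1\cap L_\infty)(N)$, in norm in $E(N)$ (resp.\ w* in $E^{\times}(N)$). It then applies the finite-band multiplier $g\mapsto\sum_{n\in I,\,\|n\|_1\pp\lambda}c_n\,\chi_n\otimes\widehat{g}(n)$, which is bounded on $E(N)$ and w*-continuous on $E^{\times}(N)$. This route never passes to $E(M)$, so it needs neither the order continuity of $E(M)$ nor the coefficient-wise w*-density that you identified as the main obstacle.
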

\end{mdframed}
\begin{proof}
It is clear that
\[E(N)\cap H(P)=\big\{f\in E(N)\ :\ \forall n\notin I,\ \widehat{f}(n)=0\big\}\] 
is a norm-closed subspace of $E(N)$ that contains $E(N)\cap P(L_2(N))$, and moreover $E(N)\cap P(L_2(N))$ clearly contains $(L_1\cap L_\infty)(N)\cap H(P)$. Thus, it suffices to show that $(L_1\cap L_\infty)(N)\cap H(P)$ is norm-dense in $E(N)\cap H(P)$. Let $f\in E(N)\cap H(P)$. As the net $(K_\lambda(f))_{\lambda>0}$ converges to $f$ in $E(N)$ for the norm topology and also belongs to $E(N)\cap H(P)$, we can assume that there is $g\in E(N)\cap H(P)$ and $\lambda>0$ such that $f=K_\lambda(g)$. Then, we have
\[f=\sum_{n\in I,\ \|n\|_1\pp\lambda}\Big(1-\frac{|n_1|}{\lambda}\Big)\ldots \Big(1-\frac{|n_d|}{\lambda}\Big)\chi_n\otimes\widehat{g}(n).\]
As $(L_1\cap L_\infty)(N)$ is norm-dense in $E(N)$, there is a net $(g_\alpha)_\alpha$ of $(L_1\cap L_\infty)(N)$ that converges to $g$ in $E(N)$ for the norm topology. We set
\[f_\alpha:=\sum_{n\in I,\ \|n\|_1\pp\lambda}\Big(1-\frac{|n_1|}{\lambda}\Big)\ldots \Big(1-\frac{|n_d|}{\lambda}\Big)\chi_n\otimes\widehat{g_\alpha}(n).\]
Then $f_\alpha\in(L_1\cap L_\infty)(N)\cap H(P)$. As the net $(f_\alpha)_\alpha$ clearly converges to $f$ in $E(N)$ for the norm topology, the proof is complete. The dual statement is proved similarly.
\end{proof}

\begin{mdframed}[skipabove=10pt]
\begin{coro}\label{spectral_projection_torus_theorem2}
The spectral projection $P$ satisfies the technical assumptions.
\end{coro}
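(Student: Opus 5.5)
We verify the three technical assumptions in turn, taking $H(P)$ to be the space defined before the lemma on the Féjer kernel. Throughout, $E(N)$ is an exact interpolation space for $(L_1(N),L_\infty(N))$ with order-continuous norm. Assumptions (2) and (3) follow from Theorem \ref{spectral_projection_torus_theorem1}, together with one check: that $H(P)$ is weakly closed in $(L_1+L_\infty)(N)$ for trace duality with $(L_1\cap L_\infty)(N)$.

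For this check, note that for $n\in\Z^d$ and $x\in(L_1\cap L_\infty)(M)$ the element $x\otimes\chi_n$ lies in $(L_1\cap L_\infty)(N)$. Hence $f\mapsto\tau(\widehat f(n)x)=\sigma(f(x\otimes\chi_n))$ is weakly continuous on $(L_1+L_\infty)(N)$. Since $(L_1\cap L_\infty)(M)$ separates the points of $(L_1+L_\infty)(M)$, we can write
\[H(P)=\bigcap_{n\notin I}\ \bigcap_{x\in(L_1\cap L_\infty)(M)}\ker\big(f\mapsto\sigma(f(x\otimes\chi_n))\big).\]
This is an intersection of weakly closed hyperplanes, so $H(P)$ is weakly closed. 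Theorem \ref{spectral_projection_torus_theorem1} then gives $H_E(P)=E(N)\cap H(P)$ and $H_{E^\times}(P)=E^\times(N)\cap H(P)$, which is (2). It also gives that $(L_1\cap L_\infty)(N)\cap H(P)$ is norm-dense in $H_E(P)$ and w*-dense in $H_{E^\times}(P)$. This set equals $(L_1\cap L_\infty)(N)\cap P(L_2(N))$, because $P(L_2(N))=L_2(N)\cap H(P)$, so (3) holds.

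For (1), the main step is to show that the orthogonal of $H_E(P)$ under trace duality is exactly $H_{E^\times}(P^\bot)$; the statement with $E$ and $E^\times$ exchanged is proved in the same way. The inclusion $H_{E^\times}(P^\bot)\subset H_E(P)^\perp$ is the general remark preceding the technical assumptions. For the converse, take $g\in E^\times(N)$ orthogonal to $H_E(P)$. For $n\in I$ and $x\in(L_1\cap L_\infty)(M)$, the element $\chi_n\otimes x$ lies in $E(N)\cap H(P)=H_E(P)$. Pairing $g$ against these elements forces $\widehat{g}(m)=0$ for the appropriate indices $m$, which places $g$ in $E^\times(N)\cap H(P^\bot)$.

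The index bookkeeping needs care, since the duality is $\tau$-bilinear or sesquilinear with $\chi_n^*=\chi_{-n}$. The vanishing condition produced must match the convention in the definition of the Fourier transform, so that it really gives $g\in H(P^\bot)$ with $I^\bot$ as the support condition. If the conventions are set up as in the Féjer lemma, formula \eqref{Féjer_eq} expresses the pairing as a limit of sums over matching frequencies, and this confirms the conclusion. Applying Theorem \ref{spectral_projection_torus_theorem1} to $P^\bot$, which is the spectral projection for $I^\bot$, identifies $E^\times(N)\cap H(P^\bot)$ with $H_{E^\times}(P^\bot)$. This completes (1).
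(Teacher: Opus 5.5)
Your proof is correct and follows the paper's argument. Assumptions (2) and (3) come from Theorem~\ref{spectral_projection_torus_theorem1}, where you also check the weak closedness of $H(P)$ that the paper leaves implicit. Assumption (1) reduces, via that theorem applied to $P$ and $P^{\bot}$, to the mutual polarity of the two Fourier-support subspaces; you get this from the general orthogonality remark plus testing against monomials instead of the Fej\'er identity. Your index worry resolves cleanly: the sesquilinear pairing $\sigma(fg^*)$ couples the coefficient of $f$ at a given frequency with that of $g$ at the same frequency under either sign convention, so testing against the monomials that actually lie in $H(P)$ forces $\widehat{g}=0$ on $I$.
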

\end{mdframed}
\begin{proof}
Let $E(N)$ be an exact interpolation space for $(L_1(N),L_\infty(N))$ with order\-/continuous norm. It suffices to show that the polar of $H_E(P)$, $H_{E^{\times}}(P)$ w.r.t. trace duality coincides respectively with $H_{E^{\times}}(P^{\bot})$, $H_E(P^{\bot})$. By Theorem \ref{spectral_projection_torus_theorem1}, it suffices to check that the subspaces $\big\{f\in E(N)\ :\ \forall n\notin I,\ \widehat{f}(n)=0\big\}$ and $\big\{f\in E^{\times}(N)\ :\ \forall n\in I,\ \widehat{f}(n)=0\big\}$ are polar to each other w.r.t. trace duality. This is an easy consequence of \eqref{Féjer_eq}. 
\end{proof}

In the sequel, we assume $d=1$ and $I=\Z_+$. Then $P$ is the \textit{Riesz projection} associated with the data $(\T,M)$. By a classical computation (see \cite{GrafakosClassical}[Chapter 4]), the Riesz projection $P$ is given by the kernel $k$ on $\T$ such that
\[k(z,w)=\frac{w}{w-z},\ \ \ \ z,w\in\T,\ z\neq w.\]
It is obvious that the kernel $k$ satisfies the assumptions of Lemma \ref{singular_kernel_torus}. Indeed, we can write
\[k(p(x),p(y))=\frac{1}{2}+\frac{i}{2}\text{cotan}((x-y)/2),\ \ \ \ \ x,y\in\R,\ x-y\notin 2\pi\Z.\]
Thus $k$ is a singular kernel on $\T$. By Proposition \ref{singular_kernel_projections_theorem1} we obtain the following result.


\begin{mdframed}[skipabove=10pt]
\begin{theo}\label{spectral_projection_torus_theorem4}
The Riesz projection $P$ is of Jones-type.
\end{theo}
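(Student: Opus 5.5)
The plan is to apply Theorem \ref{singular_kernel_projections_theorem1} with $\X=\T$ (so $d=1$) and $N=L_\infty(\T)\bar{\otimes}M$. That theorem requires two hypotheses on $P$: that it satisfies the technical assumptions, and that it is given by a singular kernel on $\T$. Once both are checked, the conclusion that $P$ is of Jones-type follows directly.

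\emph{Technical assumptions.} The Riesz projection is the spectral projection attached to $(\T,M,\Z_+)$. Corollary \ref{spectral_projection_torus_theorem2} therefore applies verbatim, with $I=\Z_+$, and no further work is needed for this hypothesis.

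\emph{Singular kernel.} We first check that $P=S\otimes I$, where $S$ is the scalar Riesz projection on $L_2(\T)$. This holds because $P$ is the Fourier multiplier with symbol $1_{\Z_+}$, acting only on the $\T$-variable. Next, $S$ admits the off-diagonal representation with kernel $k(z,w)=w/(w-z)$. For $f\in L_2(\T)$ with compact support and $z$ outside that support, the identity $(Sf)(z)=\int k(z,w)f(w)\,dw$ is the classical computation cited from \cite{GrafakosClassical}. One way to see it: approximate by $(S f)(rz)$ as $r\to1^-$, using the Cauchy kernel $\sum_{n\ge0}(r z\bar w)^n=w/(w-rz)$. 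Since $z\notin\operatorname{supp} f$, the integrand converges uniformly, so the limit can be taken inside the integral.

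Writing $z=p(x)$ and $w=p(y)$, we get $k=K(x-y)$ with $K(u)=\tfrac12+\tfrac i2\cot(u/2)$. This $K$ is $2\pi$-periodic and $C^1$ away from $2\pi\Z$. It satisfies $|K(u)|=O(|u|^{-1})$ and $|K'(u)|=\tfrac14\csc^2(u/2)=O(|u|^{-2})$ as $u\to0$. Proposition \ref{singular_kernel_torus} with $d=1$ then shows that $k$ is singular.

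The only step with real content is the integral representation off the support, where the limit must be justified. Everything else is bookkeeping against results already proved.
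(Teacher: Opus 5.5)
Your proposal is correct and follows the paper's own route. The paper obtains the technical assumptions from Corollary \ref{spectral_projection_torus_theorem2}, writes the kernel as $w/(w-z)=\tfrac12+\tfrac i2\cot((x-y)/2)$, checks it against Proposition \ref{singular_kernel_torus}, and concludes with Theorem \ref{singular_kernel_projections_theorem1}. Your Abel-mean justification of the off-diagonal integral representation only fills in the step the paper cites from Grafakos.
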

\end{mdframed}
\vspace{5pt}

As a result, we recover the well-known interpolation result of Pisier and Xu concerning operator-valued Hardy spaces on the torus. The following lemma is obvious.

\begin{lemm}
Let $O$ be a semifinite von Neumann algebra. Then the amplified projection $Q:=P\otimes I$ on $L_2(N\bar{\otimes}O)=L_2(N)\otimes L_2(O)$ is the Riesz projection associated with the data $(\T,M\bar{\otimes}O)$.
\end{lemm}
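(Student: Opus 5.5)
We have to prove that the amplified projection $Q:=P\otimes I$ on $L_2(N\bar{\otimes}O)=L_2(N)\otimes L_2(O)$ is the Riesz projection associated with the data $(\T,M\bar{\otimes}O)$. The plan is to identify the von Neumann algebras, identify the Fourier transforms, and then compare the two bounded operators on a dense subspace.

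\textbf{Step 1: identify the algebras.} By associativity of von Neumann tensor products, $N\bar{\otimes}O=(L_\infty(\T)\bar{\otimes}M)\bar{\otimes}O=L_\infty(\T)\bar{\otimes}(M\bar{\otimes}O)$. The tensor product traces match: both equal the integral over $\T$ tensored with $\tau\otimes\tau_O$. Consequently the Hilbert space identification $L_2(N)\otimes L_2(O)=L_2(\T)\otimes L_2(M)\otimes L_2(O)$ is unitary and compatible with these traces.

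\textbf{Step 2: compare on elementary tensors.} Both $Q$ and the Riesz projection $R$ of $(\T,M\bar{\otimes}O)$ are bounded (indeed contractive) on $L_2$. It therefore suffices to check $Q=R$ on the total set of elementary tensors $\chi_n\otimes a\otimes b$ with $a\in(L_1\cap L_\infty)(M)$ and $b\in(L_1\cap L_\infty)(O)$. On the one hand, $Q(\chi_n\otimes a\otimes b)=P(\chi_n\otimes a)\otimes b$. By the definition of $P$ through the Fourier transform, $P(\chi_n\otimes a)=1_{\Z_+}(n)\,\chi_n\otimes a$, since the Fourier transform of $\chi_n\otimes a$ is $\delta_{n}\,a$. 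This follows from the defining identity $\sigma(f(x\otimes\chi_m))=\tau(\widehat f(m)x)$ together with $\int_\T\chi_n\chi_m\,dz=\delta_{n+m,0}$, provided the sign convention is read consistently. On the other hand, computing the Fourier coefficients of $\chi_n\otimes(a\otimes b)$ in $N\bar{\otimes}O$ with respect to $M\bar{\otimes}O$ gives $\delta_n\,(a\otimes b)$, so $R(\chi_n\otimes a\otimes b)=1_{\Z_+}(n)\,\chi_n\otimes a\otimes b$. The two sides agree.

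\textbf{Step 3: conclude by density.} The linear span of these elementary tensors is dense in $L_2(\T)\otimes L_2(M)\otimes L_2(O)$, because the $\chi_n$ form a basis of $L_2(\T)$ and $(L_1\cap L_\infty)$ is dense in $L_2$ for each factor. Since $Q$ and $R$ are bounded and agree on a dense subspace, $Q=R$.

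The only point needing care is that the Fourier transform on $N\bar{\otimes}O$ restricted to elementary tensors factors as expected. This amounts to unwinding the defining trace identity with test elements $x\otimes y$, where $x\in(L_1\cap L_\infty)(M)$ and $y\in(L_1\cap L_\infty)(O)$. Such test elements are enough to determine an element of $(L_1+L_\infty)(M\bar{\otimes}O)$ by trace duality, because their span is w*-dense and norm-dense in the appropriate duals.
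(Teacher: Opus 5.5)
Your proposal is correct and is essentially the intended argument: the paper gives no proof and calls the lemma obvious, and your check that $P\otimes I$ and the Riesz projection of $(\T,M\bar{\otimes}O)$ agree on the total set $\chi_n\otimes a\otimes b$, followed by boundedness and density, is the routine verification left to the reader. Your closing paragraph is unnecessary, since the Fourier coefficients of $\chi_n\otimes c$ can be read off directly by testing against arbitrary $x\in(L_1\cap L_\infty)(M\bar{\otimes}O)$. Your sign-convention caveat is apt: as literally written, the paper's defining identity puts the coefficient of $\chi_m\otimes a$ at $-m$. The comparison goes through under either reading, because the same convention defines both operators.
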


\begin{mdframed}[skipabove=10pt]
\begin{theo}\label{spectral_projection_torus_theorem5}
The Riesz projection $P$ satisfies Pisier's method assumptions.
\end{theo}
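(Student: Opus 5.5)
Pisier's method assumptions require two things for every semifinite von Neumann algebra $O$: first, that the amplified projection $Q:=P\otimes I$ on $L_2(N\bar{\otimes}O)$ is of Jones-type; second, that $H_\infty(P^{\bot})\odot L_\infty(O)\subset H_\infty(Q^{\bot})$. I will check each in turn, relying on the lemma just above, which identifies $Q$ with the Riesz projection associated with the data $(\T,M\bar{\otimes}O)$.

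For the first point, the algebra $M\bar{\otimes}O$ is again semifinite, with the tensor product trace. Theorem \ref{spectral_projection_torus_theorem4} holds for an arbitrary semifinite von Neumann algebra $M$, so applying it with $M$ replaced by $M\bar{\otimes}O$ shows that $Q$ is of Jones-type. It remains to confirm that the constants are universal, i.e.\ independent of $O$. They are: the kernel $k(z,w)=w/(w-z)$ does not depend on the coefficient algebra, and Corollary \ref{singular_kernel_operators_theorem1} gives Calderón-Zygmund type constants independent of the coefficient algebra. The technical assumptions for $Q$ come from Corollary \ref{spectral_projection_torus_theorem2}, applied with coefficient algebra $M\bar{\otimes}O$.

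For the second point, let $f\in H_\infty(P^{\bot})$ and $x\in L_\infty(O)$. By Theorem \ref{spectral_projection_torus_theorem1}, applied to $P^{\bot}$ (the spectral projection for $I^{\bot}$) with $E=L_1$, so that $E^{\times}=L_\infty$, we have
\[H_\infty(P^{\bot})=L_\infty(N)\cap H(P^{\bot}).\]
Hence $\widehat f(n)=0$ for every $n\geq0$. Next I compute the Fourier coefficients of $f\otimes x\in L_\infty(N\bar{\otimes}O)$: testing against $\chi_n\otimes a\otimes b$ with $a\in(L_1\cap L_\infty)(M)$ and $b\in(L_1\cap L_\infty)(O)$ gives
\[\widehat{f\otimes x}(n)=\widehat f(n)\otimes x.\]
Such elementary tensors $a\otimes b$ separate points of $(L_1+L_\infty)(M\bar{\otimes}O)$, so this identity holds. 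It follows that $\widehat{f\otimes x}(n)=0$ for all $n\geq0$, that is, $f\otimes x\in L_\infty(N\bar{\otimes}O)\cap H(Q^{\bot})$. Applying Theorem \ref{spectral_projection_torus_theorem1} to $Q^{\bot}$, this intersection equals $H_\infty(Q^{\bot})$. Linearity extends the inclusion to the whole algebraic tensor product.

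The main obstacle is a duality subtlety in Theorem \ref{spectral_projection_torus_theorem1}: the statement for $E^{\times}=L_\infty$ is the w*-dual one, and it must match the paper's definition of $H_\infty$ as the w*-closure. Choosing $E=L_1$, which has order-continuous norm, takes care of this. Beyond that, only the routine coefficient computation for elementary tensors remains.
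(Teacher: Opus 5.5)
Your proposal is correct and follows the paper's proof. You identify $Q$ with the Riesz projection for $(\T,M\bar{\otimes}O)$, so it is of Jones-type by Theorem \ref{spectral_projection_torus_theorem4}, and you obtain the inclusion $H_\infty(P^{\bot})\odot L_\infty(O)\subset H_\infty(Q^{\bot})$ from the Fourier-coefficient description in Theorem \ref{spectral_projection_torus_theorem1}; the only difference is that you write out the elementary-tensor identity $\widehat{f\otimes x}(n)=\widehat f(n)\otimes x$, which the paper calls a ``direct consequence.''
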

\end{mdframed}
\begin{proof}
Let $O$ be a semifinite von Neumann algebra. Let $Q:=P\otimes I$ denote the amplified projection on $L_2(N\bar{\otimes}O)=L_2(N)\otimes L_2(O)$. From the above lemma, we already know that $Q$ is of Jones-type. Thus, it suffices to show that $H_\infty(P^{\bot})\odot L_\infty(O)$ is a subspace of $H_\infty(Q^{\bot})$. This is a direct consequence of Theorem \ref{spectral_projection_torus_theorem1}.
\end{proof}

\section{The Leray projection}

Let $C^\infty(\T^d)$ denote the space of smooth functions on $\T^d$ equipped with its canonical locally convex topology. Let $M$ be a semifinite von Neumann algebra with trace denoted $\tau$, and let $N:=L_\infty(\T^d)\bar{\otimes}M$ denote the tensor product von Neumann algebra equipped with the tensor product trace denoted $\sigma$. Let
\[D(N):=C^\infty(\T^d)\odot(L_1\cap L_\infty)(M)\]
denote the algebraic tensor product, identified as a subspace of $(L_1\cap L_\infty)(N)$, and equipped with the projective topology (in the sense of tensor products of locally convex spaces), when $(L_1\cap L_\infty)(M)$ is equipped with its weak topology coming from trace duality with $(L_1+L_\infty)(M)$. By the definition of the projective tensor product, the inclusion $D(N)\to(L_1\cap L_\infty)(N)$ is continuous, where $(L_1\cap L_\infty)(N)$ is equipped with its weak topology coming from trace duality with $(L_1+L_\infty)(N)$. As a consequence, every $f\in(L_1+L_\infty)(N)$ can be considered as an element of the dual space $D'(N)$ by the equation
\[\bra f,g\otimes a\ket=\sigma(f(g\otimes a)),\ \ \ \ g\in C^\infty(\T^d),\ a\in(L_1\cap L_\infty)(M).\]
and the associated injective operator $(L_1+L_\infty)(N)\to D'(N)$ is continuous, when $(L_1+L_\infty)(N)$ is of course equipped with its weak topology coming from trace duality with $(L_1\cap L_\infty)(N)$. For $j\in\{1,\ldots,d\}$, let $\partial_j$ denote the usual partial derivative operator on $L_2(N)$ with respect to the $j$-th coordinate on $\T^d$ (it is the Fourier multiplier on $L_2(N)$ with symbol $\eta_j\in C(\Z^d)$ such that $\eta_j(n)=in_j$ for $n\in\Z^d$). Then $\partial_j$ clearly defines a continuous operator on $D(N)$, and thus by duality we can also consider $\partial_j$ as an operator on the dual space $D'(N)$. 

For $i,j\in\{1,\ldots,d\}$, we consider the bounded function $\rho_{ij}\in C_b(\Z^d)$ such that
\[\rho_{ij}(0)=\delta_{ij},\ \ \ \rho_{ij}(n)=\delta_{ij}-\frac{n_in_j}{|n|^2},\ \ \ \text{for}\ n\in\Z^d\setminus\{0\},\]
and let $R_{ij}$ denote the associated bounded Fourier multiplier on $L_2(N)$. From the obvious relations
\[\overline{\rho_{ij}}=\rho_{ji},\ \ \ \ \ \rho_{ij}=\sum_{k=1}^{d}\rho_{ik}\rho_{kj},\ \ \text{for}\ i,j\in\{1,\ldots,d\}\]
we deduce that the bounded operator $P$ on $L_2(N^{\bar{\oplus}d})=L_2(N)^{\oplus d}$ given by the matrix $(R_{ij})_{i,j}$ is a projection. It is the \textit{Leray projection} associated with the data $(\T^d,M)$. 
In the sequel, we denote
\[H(P):=\Big\{(f_j)_{j}\in(L_1+L_\infty)(N^{\bar{\oplus} d})\ :\ \sum_j\partial_jf_j=0\Big\}\]
and
\[H(P^{\bot}):=\Big\{(f_j)_{j}\in(L_1+L_\infty)(N^{\bar{\oplus}d})\ :\ \forall i,j\in\{1,\ldots,n\},\ \partial_if_j=\partial_jf_i,\ \widehat{f}_j(0)=0\Big\}\]
Then, an easy computation shows that
\[P(L_2(N^{\bar{\oplus}d}))=L_2(N^{\bar{\oplus}d})\cap H(P)\]
and
\[P^{\bot}(L_2(N^{\bar{\oplus}d}))=L_2(N^{\bar{\oplus}d})\cap H(P^{\bot}).\]
If $\lambda>0$ and $f\in(L_1+L_\infty)(N^{\bar{\oplus}d})$, the element $g\in(L_1+L_\infty)(N^{\bar{\oplus}d})$ such that $g_j=K_\lambda(f_j)$ for $j\in\{1,\ldots,n\}$, will be denoted $K_\lambda(f)$. As Fourier multipliers commute with each other, if $f\in H(P)$, then clearly $K_\lambda(f)\in H(P)$ for every $\lambda>0$.

\begin{mdframed}[skipabove=10pt]
\begin{theo}\label{Leray_projection_theorem1}
Let $E$ be an exact interpolation space for $(L_1,L_\infty)$ with order\-/continuous norm. Then
\[H_E(P)=E(N^{\bar{\oplus}d})\cap H(P),\]
\[H_{E^{\times}}(P)=E^{\times}(N^{\bar{\oplus}d})\cap H(P).\]
Moreover, the subspace
\[(L_1\cap L_\infty)(N^{\bar{\oplus}d})\cap H(P)\]
is norm-dense in $H_E(P)$ and \text{\upshape w*}-dense in $H_{E^{\times}}(P)$.
\end{theo}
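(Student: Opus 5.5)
The plan is to mimic the proof of Theorem \ref{spectral_projection_torus_theorem1}, working componentwise on $N^{\bar{\oplus}d}$ and using the Fejér means $K_\lambda$. These commute with the $\partial_j$ and preserve $H(P)$.

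\textbf{Step 1: the inclusion $H_E(P)\subset E(N^{\bar{\oplus}d})\cap H(P)$.} First I check that $E(N^{\bar{\oplus}d})\cap H(P)$ is norm-closed in $E(N^{\bar{\oplus}d})$, and is also $\sigma(E,E^{\times})$-weakly closed. The condition $\sum_j\partial_jf_j=0$ can be rewritten as
\[\sum_j\sigma\big(f_j\,\partial_j(g\otimes a)\big)=0\quad\text{for all } g\otimes a\in D(N).\]
Each $\partial_j(g\otimes a)$ lies in $(L_1\cap L_\infty)(N)\subset E^{\times}(N)$. So $H(P)\cap E$ is an intersection of kernels of weakly continuous functionals, hence weakly closed. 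The same argument works in $E^{\times}(N^{\bar{\oplus}d})$ for the w*-topology, since these functionals come from elements of the predual $E(N)$.

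Next, $E\cap P(L_2)=E\cap L_2\cap H(P)$ by the identity $P(L_2(N^{\bar{\oplus}d}))=L_2(N^{\bar{\oplus}d})\cap H(P)$ stated earlier. This is contained in $E\cap H(P)$, so $H_E(P)\subset E\cap H(P)$, and likewise for $E^{\times}$.

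\textbf{Step 2: density, which gives the reverse inclusions.} It suffices to show that $(L_1\cap L_\infty)(N^{\bar{\oplus}d})\cap H(P)$ is norm-dense in $E\cap H(P)$ and w*-dense in $E^{\times}\cap H(P)$. This set is contained in $E\cap P(L_2)$, so density yields both the equalities and the last assertion.

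Let $f\in E\cap H(P)$. By the Fejér lemma applied componentwise, $K_\lambda(f)\to f$ in norm (respectively w*), and $K_\lambda(f)\in H(P)$. So I may assume $f=K_\lambda(g)$ with $g\in E\cap H(P)$, that is, $f$ is a trigonometric polynomial
\[f_j=\sum_{\|n\|_1\le\lambda}c_n\chi_n\otimes\widehat{g_j}(n),\]
with coefficients satisfying $\sum_j n_j\widehat{g_j}(n)=0$ for every $n$.

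\textbf{Step 3: approximating the coefficients (main obstacle).} Here the approach differs from Theorem \ref{spectral_projection_torus_theorem1}. In that theorem, replacing $\widehat{g}(n)$ by $\widehat{g_\alpha}(n)$ preserves the constraint, which was simply vanishing outside $I$. Now the constraint couples the components. If $g_\alpha\to g$ in $E$ with $g_\alpha\in(L_1\cap L_\infty)$, I apply to $g_\alpha$ the projection $P$ restricted to the finite frequency set, i.e. the matrix $(\rho_{ij}(n))_{ij}$ at each $n$. This gives
\[f_{\alpha,i}:=\sum_{\|n\|_1\le\lambda}c_n\chi_n\otimes\sum_j\rho_{ij}(n)\widehat{g_{\alpha,j}}(n).\]
This is a trigonometric polynomial with coefficients in $(L_1\cap L_\infty)(M)$, so it lies in $(L_1\cap L_\infty)\cap H(P)$.

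Since $\widehat{g}$ already satisfies the constraint, $\sum_j\rho_{ij}(n)\widehat{g_j}(n)=\widehat{g_i}(n)$. The map $h\mapsto\widehat{h}(n)$ is continuous from $E(N)$ to $E(M)$, with norm at most $1$, by interpolation between the $L_1$ and $L_\infty$ bounds. It is also w*-continuous in the dual case. Hence $\widehat{g_{\alpha,j}}(n)\to\widehat{g_j}(n)$, and there are finitely many $n$. Therefore $f_\alpha\to f$ in norm (respectively w*).

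The care needed lies in these continuity properties of Fourier coefficients and in the use of the identification $E(N)\ni\chi_n\otimes a$ with $\|\chi_n\otimes a\|_{E(N)}=\|a\|_{E(M)}$. Both follow from the exact interpolation structure.
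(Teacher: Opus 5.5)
Your proposal is correct and follows the paper's proof essentially step by step. The three steps are the same: closedness of $E(N^{\bar{\oplus}d})\cap H(P)$, reduction via the Fejér means to $f=K_\lambda(g)$, and approximation by $f_\alpha$. Your $f_\alpha$ is exactly the paper's choice $f_\alpha=K_\lambda(P(g_\alpha))=P(K_\lambda(g_\alpha))$, with convergence coming from the finite frequency support. You are only more explicit than the paper about why the constraint set is $\sigma(E,E^{\times})$-closed and why the finitely many Fourier coefficients converge.
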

\end{mdframed}
\begin{proof}
As the identity operator $E(N^{\bar{\oplus}d})\to E(N)^{\oplus d}$ is an isomorphism of normed spaces, it is clear that
\[E(N^{\bar{\oplus}d})\cap H(P)=\Big\{(f_j)_{j}\in E(N^{\bar{\oplus}d})\ :\ \sum_j\partial_jf_j=0\Big\}\] 
is a weakly-closed and thus norm-closed subspace of $E(N^{\bar{\oplus}d})$ that contains $E(N^{\bar{\oplus}d})\cap P(L_2(N^{\bar{\oplus}d}))$, and it is clear that $E(N^{\bar{\oplus}d})\cap P(L_2(N^{\bar{\oplus}d}))$ contains $(L_1\cap L_\infty)(N^{\bar{\oplus}d})\cap H(P)$. As a result, it suffices to show that $(L_1\cap L_\infty)(N^{\bar{\oplus}d})\cap H(P)$ is norm-dense in $E(N^{\bar{\oplus}d})\cap H(P)$. Let $f\in E(N^{\bar{\oplus}d})\cap H(P)$. As the net $(K_\lambda(f))_{\lambda>0}$ converges to $f$ in $E(N^{\bar{\oplus}d})$ for the norm topology and also belongs to $E(N^{\bar{\oplus}d})\cap H(P)$, we can assume that there is $g\in E(N^{\bar{\oplus}d})\cap H(P)$ and $\lambda>0$ such that $f=K_\lambda(g)$. As $(L_1\cap L_\infty)(N^{\bar{\oplus}d})$ is norm-dense in $E(N^{\bar{\oplus}d})$, there is a net $(g_\alpha)_\alpha$ of $(L_1\cap L_\infty)(N^{\bar{\oplus}d})$ that converges to $g$ in $E(N^{\bar{\oplus}d})$ for the norm topology. We set
\[f_\alpha:=K_\lambda(P(g_\alpha))=P(K_\lambda(g_\alpha))\]
Then clearly $f_\alpha\in(L_1\cap L_\infty)(N^{\bar{\oplus}d})\cap H(P)$. Moreover, as the Fourier transform of each coordinate of $K_\lambda(g_\alpha)$ is supported on $\{n\in\Z^d\ :\ |n|_1\pp\lambda\}$, since $P$ is a Fourier multiplier it is clear that the net $(f_\alpha)_\alpha$ converges to $f$ in $E(N)$ for the norm topology, which concludes the proof. The dual statement is proved similarly.
\end{proof}

The following result is proved in a similar way.

\begin{mdframed}[skipabove=10pt]
\begin{theo}\label{Leray_projection_theorem2}
Let $E$ be an exact interpolation space for $(L_1,L_\infty)$ with order\-/continuous norm. Then
\[H_E(P^{\bot})=E(N^{\bar{\oplus}d})\cap H(P^{\bot}),\]
\[H_{E^{\times}}(P^{\bot})=E^{\times}(N^{\bar{\oplus}d})\cap H(P^{\bot}).\]
Moreover, the subspace
\[(L_1\cap L_\infty)(N^{\bar{\oplus}d})\cap H(P^{\bot})\]
is norm-dense in $H_E(P^{\bot})$ and \text{\upshape w*}-dense in $H_{E^{\times}}(P^{\bot})$.
\end{theo}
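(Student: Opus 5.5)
We mimic the proof of Theorem \ref{Leray_projection_theorem1}, replacing $H(P)$ by $H(P^{\bot})$ throughout. The first step is to check that $E(N^{\bar{\oplus}d})\cap H(P^{\bot})$ is a norm-closed subspace of $E(N^{\bar{\oplus}d})$ (and a w*-closed subspace of $E^{\times}(N^{\bar{\oplus}d})$).

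\begin{itemize}
\item The conditions $\partial_if_j=\partial_jf_i$ are equalities in $D'(N)$, tested against $D(N)$. Since $(L_1\cap L_\infty)(N)$ sits in $E^{\times}(N)$ (resp. $E(N)$), the map $f\mapsto\bra f,\partial_i(g\otimes a)\ket$ is continuous for the weak topology coming from trace duality.
\item The condition $\widehat{f_j}(0)=0$ means $\tau(\widehat{f_j}(0)x)=\sigma(f_j(x\otimes 1))=0$ for every $x\in(L_1\cap L_\infty)(M)$. This is again a weakly continuous condition, because $1\otimes x\in(L_1\cap L_\infty)(N)$ on the compact torus.
\end{itemize}

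Hence the subspace is weakly closed, and in particular norm-closed. It contains $E(N^{\bar{\oplus}d})\cap P^{\bot}(L_2(N^{\bar{\oplus}d}))$, since $P^{\bot}(L_2)=L_2\cap H(P^{\bot})$. In turn, that space contains $(L_1\cap L_\infty)(N^{\bar{\oplus}d})\cap H(P^{\bot})$, because $(L_1\cap L_\infty)\subset L_2$. So, exactly as before, everything reduces to showing that $(L_1\cap L_\infty)(N^{\bar{\oplus}d})\cap H(P^{\bot})$ is norm-dense in $E(N^{\bar{\oplus}d})\cap H(P^{\bot})$, and w*-dense in the dual case.

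For density, take $f\in E(N^{\bar{\oplus}d})\cap H(P^{\bot})$. The Féjer means $K_\lambda(f)$ converge to $f$ in norm by the Féjer lemma, applied coordinatewise. They stay in $H(P^{\bot})$, because $K_\lambda$ is a Fourier multiplier: it commutes with each $\partial_i$ and preserves the vanishing of $\widehat{f_j}(0)$. So we may assume $f=K_\lambda(g)$ with $g\in E(N^{\bar{\oplus}d})\cap H(P^{\bot})$. Choose $g_\alpha\in(L_1\cap L_\infty)(N^{\bar{\oplus}d})$ with $g_\alpha\to g$ in norm, and set
\[f_\alpha:=K_\lambda(P^{\bot}(g_\alpha))=P^{\bot}(K_\lambda(g_\alpha)).\]
Each $f_\alpha$ is a trigonometric polynomial with coefficients in $(L_1\cap L_\infty)(M)$. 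It lies in $P^{\bot}(L_2)$, hence in $H(P^{\bot})$, and it belongs to $(L_1\cap L_\infty)(N^{\bar{\oplus}d})$.

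For convergence, note that on the finite frequency set $\{|n|_1\pp\lambda\}$ the multiplier $P^{\bot}$ acts through finitely many bounded matrices $(\delta_{ij}-\rho_{ij}(n))$. So $f_\alpha$ depends continuously on the finitely many coefficients $\widehat{g_\alpha}(n)$, and these converge to $\widehat{g}(n)$ in $E(M)$. Therefore $f_\alpha\to P^{\bot}K_\lambda(g)$. Since $g\in H(P^{\bot})$, we have $P^{\bot}K_\lambda(g)=K_\lambda(g)=f$.

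The delicate point is this last identity, $P^{\bot}K_\lambda(g)=K_\lambda(g)$. It holds frequencywise because, for $n\neq0$, the curl-free conditions $n_i\widehat{g_j}(n)=n_j\widehat{g_i}(n)$ imply
\[\sum_j\rho_{ij}(n)\widehat{g_j}(n)=\widehat{g_i}(n)-\frac{n_i}{|n|^2}\sum_jn_j\widehat{g_j}(n)=\widehat{g_i}(n)-\frac{n_i}{|n|^2}\sum_jn_i\widehat{g_i}(n)=0.\]
For $n=0$, both $\rho_{ij}(0)\widehat{g_j}(0)$ and $\widehat{g_i}(0)$ vanish, using $\widehat{g_j}(0)=0$. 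The dual statement follows in the same way, with w*-convergence of $K_\lambda$ given by the Féjer lemma and w*-continuity of the finite-rank coefficient maps.
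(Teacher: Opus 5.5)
Your proposal is correct and follows the paper's route. The paper proves this theorem by transposing the proof of Theorem \ref{Leray_projection_theorem1} to $P^{\bot}$: weak closedness of $E(N^{\bar{\oplus}d})\cap H(P^{\bot})$, Fej\'er reduction to $f=K_\lambda(g)$, and approximation by $f_\alpha=K_\lambda(P^{\bot}(g_\alpha))$. You additionally make explicit the identity $P^{\bot}K_\lambda(g)=K_\lambda(g)$, which the paper treats as clear. One slip needs fixing in that display: the middle step should read $\frac{n_i}{|n|^2}\sum_j n_j\widehat{g_j}(n)=\frac{1}{|n|^2}\sum_j n_j\,(n_j\widehat{g_i}(n))=\widehat{g_i}(n)$, using $n_i\widehat{g_j}(n)=n_j\widehat{g_i}(n)$. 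As written, $\frac{n_i}{|n|^2}\sum_j n_i\widehat{g_i}(n)$ equals $d\,n_i^2\widehat{g_i}(n)/|n|^2$, which does not cancel $\widehat{g_i}(n)$ in general.
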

\end{mdframed}

\begin{mdframed}[skipabove=10pt]
\begin{coro}\label{Leray_projection_theorem3}
The Leray projection $P$ satisfies the technical assumptions.
\end{coro}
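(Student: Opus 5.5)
We check the three conditions in the definition of the technical assumptions, with $H(P)$ and $H(P^{\bot})$ as defined above. Fix an exact interpolation space $E(N^{\bar{\oplus}d})$ for $(L_1,L_\infty)$ with order-continuous norm. Conditions (2) and (3) then follow almost directly from Theorems \ref{Leray_projection_theorem1} and \ref{Leray_projection_theorem2}.

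For (2), we need that $H(P)$ is weakly closed in $(L_1+L_\infty)(N^{\bar{\oplus}d})$ for the trace duality with $(L_1\cap L_\infty)(N^{\bar{\oplus}d})$. The condition $\sum_j\partial_jf_j=0$ is an equality in $D'(N)$. For a test element $g\otimes a\in D(N)$ it reads
\[\sum_j\sigma(f_j(\partial_jg\otimes a))=0,\]
up to sign. Each $\partial_jg\otimes a$ lies in $(L_1\cap L_\infty)(N)$, so $f\mapsto\sum_j\sigma(f_j(\partial_jg\otimes a))$ is a weakly continuous functional. Hence $H(P)$ is an intersection of kernels of weakly continuous functionals, and so it is weakly closed. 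The equalities $H_E(P)=E\cap H(P)$ and $H_{E^\times}(P)=E^\times\cap H(P)$ are exactly Theorem \ref{Leray_projection_theorem1}.

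Condition (3) is the density part of Theorem \ref{Leray_projection_theorem1}: the space $(L_1\cap L_\infty)\cap H(P)$ is contained in $(L_1\cap L_\infty)\cap P(L_2)$, which is contained in $H_E(P)$. Note that $(L_1\cap L_\infty)\cap H(P)\subset L_2\cap H(P)=P(L_2)$.

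For (1), which is the main point, we must show that $E\cap H(P)$ and $E^\times\cap H(P^{\bot})$ are mutual annihilators under trace duality, and likewise with $E$ and $E^\times$ exchanged. We work coordinatewise on the Fourier side. By \eqref{Féjer_eq} summed over the $d$ coordinates,
\[\sigma^{\oplus d}(f g^*)=\lim_{\lambda\to\infty}\sum_{|n|_1\pp\lambda}w_\lambda(n)\sum_j\tau\big(\widehat{f_j}(n)\widehat{g_j}(n)^*\big),\]
where $w_\lambda(n)=\prod_i(1-|n_i|/\lambda)^2$. Membership $f\in H(P)$ means that for each $n$ the vector $(\widehat{f_j}(n))_j$ satisfies $\sum_j n_j\widehat{f_j}(n)=0$, i.e.\ it lies in the range of the matrix $\rho(n)$. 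Membership $g\in H(P^{\bot})$ means that for $n\neq0$ the vector $(\widehat{g_j}(n))_j$ is a multiple $n\,c(n)$ with $c(n)\in(L_1+L_\infty)(M)$, and that $\widehat{g}(0)=0$; so it lies in the range of $1-\rho(n)$. Indeed $\partial_ig_j=\partial_jg_i$ gives $n_i\widehat{g_j}(n)=n_j\widehat{g_i}(n)$, and choosing $i$ with $n_i\neq0$ yields $c=\widehat{g_i}(n)/n_i$.

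The inner sum then vanishes termwise, since $\sum_j\tau(\widehat{f_j}(n)\,n_jc(n)^*)=\tau\big((\sum_j n_j\widehat{f_j}(n))c(n)^*\big)=0$. This proves the orthogonality. For the reverse inclusion, suppose $g\in E^\times$ annihilates $E\cap H(P)$. We test against the elements $P(\chi_n\otimes a\,e_k)$, which are trigonometric polynomials in $H(P)\cap(L_1\cap L_\infty)$. This gives $\rho(n)\widehat{g}(n)=0$ for every $n$, i.e.\ $\widehat{g}(n)$ lies in the range of $1-\rho(n)$. Hence $g$ satisfies the curl-free relations and $\widehat{g}(0)=0$, so $g\in H(P^{\bot})$. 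Symmetrically, testing against $P^{\bot}(\chi_n\otimes a\,e_k)$ gives the other inclusion.

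The expected obstacle is purely bookkeeping: the Fourier coefficients lie in $(L_1+L_\infty)(M)$ rather than being scalars, and the pairing must be justified there. Each coefficient identity is, however, linear in $\tau$ against test elements $a\in(L_1\cap L_\infty)(M)$, so the argument goes through. Combined with Theorem \ref{Leray_projection_theorem2}, this identifies the annihilators as $H_{E^\times}(P^{\bot})$ and $H_E(P^{\bot})$ respectively, which is condition (1).
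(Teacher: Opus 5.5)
Your proposal is correct and follows the route the paper leaves to the reader: (2) and (3) come from Theorems \ref{Leray_projection_theorem1} and \ref{Leray_projection_theorem2}, and (1) reduces to the Fourier-side polar computation between $E\cap H(P)$ and $E^{\times}\cap H(P^{\bot})$, exactly as in Corollary \ref{spectral_projection_torus_theorem2}. One small precision concerns the second half of (1), the annihilator in $E$ of $H_{E^\times}(P)$: the direct test elements there are again $P(\chi_n\otimes a\,e_k)$, now viewed in $E^{\times}\cap H(P)$, whereas testing against $P^{\bot}(\chi_n\otimes a\,e_k)$ proves the dual statement and gives what is needed only after a bipolar step.
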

\end{mdframed}
\begin{proof}
It easily follows from the previous theorems. Details are left to the reader.
\end{proof}

\begin{lemm}
If $i,j\in\{1,\ldots,d\}$, the bounded operator $R_{ij}$ is given by a singular kernel on $\T^d$.
\end{lemm}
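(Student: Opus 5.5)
The plan is to reduce the lemma to Theorem \ref{fourier_multiplier_torus}, which is stated for exactly this kind of $0$-homogeneous symbol. I would split $R_{ij}$ as
\[R_{ij}=\delta_{ij}I-S_{ij},\]
where $S_{ij}$ is the Fourier multiplier with symbol $\tilde m_{ij}(n)=n_in_j/|n|^2$ for $n\neq0$ and $\tilde m_{ij}(0)=0$. Adding a multiple of the identity does not change the kernel (by the remark following the definition of operators given by a kernel), so it suffices to show that $S_{ij}$ is given by a singular kernel on $\T^d$.

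I will take $m_{ij}(\xi)=\xi_i\xi_j/|\xi|^2$ on $\R^d\setminus\{0\}$. This symbol is bounded, $0$-homogeneous, continuous away from $0$, and agrees with $\tilde m_{ij}$ on $\Z^d\setminus\{0\}$. The classical computation of its inverse Fourier transform away from the origin gives
\[m_{ij}=\frac{\delta_{ij}}{d}+\mathrm{p.v.}\,c_d\,\partial_i\partial_j|x|^{2-d}\quad(\text{or }c\,\partial_i\partial_j\log|x|\text{ when }d=2).\]
So on $\R^d\setminus\{0\}$ the distribution is the smooth $(-d)$-homogeneous function
\[K_{ij}(x)=c_d\Big(\frac{d\,x_ix_j}{|x|^{d+2}}-\frac{\delta_{ij}}{|x|^d}\Big).\]
The Dirac part at the origin is irrelevant, since it is invisible off the diagonal. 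For $d=1$ the symbol is constant, $S_{11}=I-$ (projection onto constants), and the kernel is constant, so that case is trivial.

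The step I expect to be the main obstacle is the cancellation condition $\int_{\S_\infty^{d-1}}K_{ij}=0$ over the $\ell^\infty$ sphere rather than the Euclidean sphere. For $i\neq j$ the reflection $x_i\mapsto -x_i$ preserves $\S_\infty^{d-1}$ and flips the sign of $K_{ij}$, so the integral vanishes. For $i=j$, permutation symmetry of coordinates shows $\int K_{ii}$ is independent of $i$, and $\sum_i K_{ii}=c_d(d-d)|x|^{-d}=0$ identically; hence each $\int K_{ii}=0$.

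With this cancellation in hand, Theorem \ref{fourier_multiplier_torus} applies directly to $S_{ij}$: it yields a kernel satisfying the size condition and the Lipschitz regularity condition with $\alpha=1$, hence a singular kernel. The paper remarks that for the intended example even the lattice sums $\sum_{|m|_\infty=R}K(m)$ vanish, and the same symmetry argument applied to the lattice points confirms this. It follows that $R_{ij}$ is given by a singular kernel.
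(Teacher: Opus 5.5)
Your proposal is correct and follows essentially the same route as the paper. You reduce to Theorem \ref{fourier_multiplier_torus}, compute the $(-d)$-homogeneous kernel off the origin, and check the cancellation on $\S^{d-1}_\infty$: for $i\neq j$ via the reflection $y_i\mapsto -y_i$, and for $i=j$ via permutation symmetry together with $\sum_k K_{kk}=0$. The only differences are cosmetic: you first remove $\delta_{ij}I$ using the remark on $T-\lambda I$, whereas the paper keeps $\delta_{ij}$ in the symbol, where it contributes only a Dirac mass at the origin; and you treat $d=1$ separately.
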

\begin{proof}
Let $m_{ij}\in L_\infty(\R^d)$ be the bounded symbol on $\R^d$ such that 
\[m_{ij}(\xi)=\delta_{ij}-\frac{\xi_i\xi_j}{|\xi|^2},\ \ \ \text{for}\ \xi\in\R^d\setminus\{0\}.\]
By a classical computation (see \cite{Mitrea}[Proposition 4.70]), the inverse Fourier transform $K_{ij}$ of $m_{ij}$, as a distribution on $\R^d\setminus\{0\}$, is a smooth function $K_{ij}\in C^\infty(\R^d\setminus\{0\})$ such that
\[K_{ij}(x)=-\frac{1}{\omega_{d-1}}\frac{\partial}{\partial x_i}\Big[\frac{x_j}{|x|^{d}}\Big]=\frac{\delta_{ij}}{\omega_{d-1}}\frac{1}{|x|^d}-\frac{d}{\omega_{d-1}}\frac{x_ix_j}{|x|^{d+1}},\ \ \ \ x\in\R^d\setminus\{0\},\]
where $\omega_{d-1}$ denotes the surface measure of the $(d-1)$-sphere. We have
\[\int_{\S^{d-1}_\infty}K_{ij}(y)dy=0.\]
Indeed, if $i\neq j$ then $K_{ij}((y_1,\ldots,-y_i,\ldots,y_d))=-K_{ij}(y)$ for $y\in\R^d\setminus\{0\}$, so that the above integral clearly vanishes. In the case $i=j$, as the sphere $\S_\infty^{d-1}$ is invariant under any permutations of the coordinates, we have
\begin{align*}
 \int_{\S_\infty^{d-1}}K_{ii}(y)dy&=\frac{1}{\omega_{d-1}}\Big[\int_{\S_\infty^{d-1}}|y|^{-d}dy-d\int_{\S_\infty^{d-1}}y_i^2|y|^{-(d+2)}dy\Big]\\
 &=\frac{1}{\omega_{d-1}}\Big[\int_{\S_\infty^{d-1}}|y|^{-d}dy-\sum_{k=1}^{d}\int_{\S_\infty^{d-1}}y_k^2|y|^{-(d+2)}dy\Big]\\
  &=\frac{1}{\omega_{d-1}}\Big[\int_{\S_\infty^{d-1}}|y|^{-d}dy-\int_{\S_\infty^{d-1}}|y|^2|y|^{-(d+2)}dy\Big]=0.
\end{align*}
Hence, by a direct application of Theorem \ref{fourier_multiplier_torus} we get the desired conclusion.
\end{proof}

From Theorem \ref{singular_kernel_projections_theorem2}, we obtain the following result.

\begin{mdframed}[skipabove=10pt]
\begin{theo}\label{Leray_projection_theorem4}
The Leray projection $P$ and its complement $P^{\bot}$ are of Jones-type.
\end{theo}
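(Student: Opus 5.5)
The statement should follow by applying Theorem \ref{singular_kernel_projections_theorem2} twice, once to $P$ and once to $P^{\bot}$. The space $\T^d$ is Ahlfors $d$-regular, so the general framework of Part 2 applies with $\X=\T^d$ and $N=L_\infty(\T^d)\bar{\otimes}M$. For $P$, the two hypotheses of Theorem \ref{singular_kernel_projections_theorem2} are already available. First, $P$ satisfies the technical assumptions by Corollary \ref{Leray_projection_theorem3}. Second, $P=(R_{ij})_{ij}$ is a matrix whose entries are all given by singular kernels on $\T^d$, by the preceding lemma. Hence $P$ is of Jones-type.

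For $P^{\bot}$, I would use the remark in Part 1 that the complement of a Jones-type projection is again of Jones-type; this is immediate because the definition of Jones-type is symmetric in $P$ and $P^{\bot}$. One can also argue directly. By the proposition in Part 1, $P^{\bot}$ satisfies the technical assumptions. Its matrix is $(\delta_{ij}I-R_{ij})_{ij}$. By the remark following the definition of operators given by a kernel, $\delta_{ij}I-R_{ij}$ is given by the kernel $-k_{ij}$, where $k_{ij}$ is the singular kernel of $R_{ij}$. Since $-k_{ij}$ is again singular, Theorem \ref{singular_kernel_projections_theorem2} applies to $P^{\bot}$ as well.

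The only real content is Theorem \ref{singular_kernel_projections_theorem2}, which the paper states without proof, so I would make its proof explicit. Corollary \ref{singular_kernel_operators_theorem2} shows that both $P=(T_{ij})$ and $P^{\bot}=(\delta_{ij}I-T_{ij})$ are of Calderón--Zygmund type on $L_2(N^{\bar{\oplus}d})$ with constants independent of $M$. Theorem \ref{CZ_type_theorem1} then gives weak type $(1,1)$. Since $P$ is self-adjoint, Marcinkiewicz interpolation yields $L_p$-boundedness for $1<p<\infty$, with constants depending only on $p$ (and $d$). Finally, Theorem \ref{CZ_type_theorem2}, applied to $P$ and to $P^{\bot}$, gives quasi-complementation of $(H_1,H_2)$ in $(L_1,L_2)$ for both projections.

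The main point to check is that everything in Part 2 transfers from $N$ to $N^{\bar{\oplus}d}=L_\infty(\T^d)\bar{\otimes}(M\otimes\ell_\infty^d)$. The general theorems (\ref{CZ_type_theorem1}, \ref{CZ_type_theorem2}, Marcinkiewicz) hold for an arbitrary semifinite algebra, so they apply there verbatim. The only ingredient specific to matrices is the Calderón--Zygmund type property, which Corollary \ref{singular_kernel_operators_theorem2} supplies.
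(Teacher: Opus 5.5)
Your proposal is correct and matches the paper's argument: the paper obtains the statement directly from Theorem \ref{singular_kernel_projections_theorem2}, using Corollary \ref{Leray_projection_theorem3} and the lemma on the kernels of the $R_{ij}$, with $P^{\bot}$ covered by the remark that complements of Jones-type projections are Jones-type. Your unpacking of Theorem \ref{singular_kernel_projections_theorem2} via Corollary \ref{singular_kernel_operators_theorem2}, Theorem \ref{CZ_type_theorem1}, Marcinkiewicz interpolation and Theorem \ref{CZ_type_theorem2} is the paper's proof of Theorem \ref{singular_kernel_projections_theorem1}, transposed to $N^{\bar{\oplus}d}$.
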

\end{mdframed}
\vspace{5pt}

The following lemma is obvious.

\begin{lemm}
Let $O$ be a semifinite von Neumann algebra. Then the amplified projection $Q:=P\otimes I$ on $L_2(N\bar{\otimes}O)=L_2(N)\otimes L_2(O)$ is the Leray projection associated with the data $(\T^n,M\bar{\otimes}O)$.
\end{lemm}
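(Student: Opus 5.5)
The statement is the final lemma: for a semifinite von Neumann algebra $O$, the amplified projection $Q:=P\otimes I$ on $L_2(N\bar{\otimes}O)=L_2(N)\otimes L_2(O)$ is the Leray projection associated with the data $(\T^d,M\bar{\otimes}O)$. The plan is to compare both operators on an algebraic spanning set and then extend by boundedness.

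First, identify the Hilbert spaces. Using the canonical isomorphism $N\bar{\otimes}O=L_\infty(\T^d)\bar{\otimes}(M\bar{\otimes}O)$, which preserves the tensor product traces, we get
\[L_2((N\bar{\otimes}O)^{\bar{\oplus}d})=L_2(N\bar{\otimes}O)^{\oplus d}=\big(L_2(N)\otimes L_2(O)\big)^{\oplus d}=L_2(N^{\bar{\oplus}d})\otimes L_2(O).\]
Under this identification, $Q$ acts coordinatewise as the matrix $(R_{ij}\otimes I_{L_2(O)})_{i,j}$.

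The key step is to show that $R_{ij}\otimes I$ is the Fourier multiplier on $L_2(L_\infty(\T^d)\bar{\otimes}(M\bar{\otimes}O))$ with symbol $\rho_{ij}$. Check this on elementary tensors $\chi_n\otimes a\otimes b$ with $a\in(L_1\cap L_\infty)(M)$ and $b\in(L_1\cap L_\infty)(O)$. Both operators send such a tensor to $\rho_{ij}(n)\,\chi_n\otimes a\otimes b$, because the Fourier coefficients of $\chi_n\otimes(a\otimes b)$ over $M\bar{\otimes}O$ are $\delta_{n,\cdot}\,a\otimes b$; this follows directly from the defining relation $\sigma(f(x\otimes\chi_m))=\tau(\widehat{f}(m)x)$ together with the product form of the traces.

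The linear span of these tensors is dense in $L_2(\T^d)\otimes L_2(M)\otimes L_2(O)$, since $(\chi_n)_n$ is a Hilbert basis and $(L_1\cap L_\infty)$ is dense in $L_2$. Both operators are bounded, with norm at most $\sup|\rho_{ij}|$, so they agree everywhere. Assembling the matrix then gives $Q=(R^{M\bar{\otimes}O}_{ij})_{i,j}$, which is the Leray projection for $(\T^d,M\bar{\otimes}O)$.

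No real obstacle is expected. The only point needing care is the bookkeeping in the Fourier transform identification on $M\bar{\otimes}O$, in particular that the traces match under the isomorphism. That amounts to unwinding definitions.
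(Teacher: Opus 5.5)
Your proof is correct. The paper gives no argument (it calls the lemma obvious), and your check on the total set $\chi_n\otimes a\otimes b$, followed by extension by boundedness, is exactly the routine verification it leaves to the reader; you also rightly read the paper's $L_2(N\bar{\otimes}O)$ and $\T^n$ as $L_2((N\bar{\otimes}O)^{\bar{\oplus}d})$ and $\T^d$. One harmless caveat: the paper's defining relation for $\widehat{f}$ literally pairs against $\chi_n$ rather than $\overline{\chi_n}$, so your claim that $\widehat{\chi_n\otimes(a\otimes b)}=\delta_{n,\cdot}\,a\otimes b$ ``follows directly'' from it holds only under the intended standard convention, which is the same on both sides of your comparison.
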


\begin{mdframed}[skipabove=10pt]
\begin{theo}
The Leray-projection $P$ and its complement $P^{\bot}$ satisfy Pisier's method assumptions.
\end{theo}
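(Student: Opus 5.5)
Unwinding Pisier's method assumptions, we have to prove two things for an arbitrary semifinite von Neumann algebra $O$ and the amplified projection $Q:=P\otimes I$ on $L_2(N\bar{\otimes}O)^{\oplus d}$. First, $Q$ and $Q^{\bot}$ are of Jones-type. Second, the algebraic tensor products satisfy
\[H_\infty(P^{\bot})\odot L_\infty(O)\subset H_\infty(Q^{\bot}),\qquad H_\infty(P)\odot L_\infty(O)\subset H_\infty(Q).\]
The second inclusion is needed for the statement concerning $P^{\bot}$, since $(P^{\bot})^{\bot}=P$. The argument mirrors the proof of Theorem \ref{spectral_projection_torus_theorem5} for the Riesz projection.

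\emph{Step 1 (Jones-type).} By the lemma just stated, $Q$ is exactly the Leray projection associated with the data $(\T^d,M\bar{\otimes}O)$. Since $M\bar{\otimes}O$ is again a semifinite von Neumann algebra, Theorem \ref{Leray_projection_theorem4}, applied with $M$ replaced by $M\bar{\otimes}O$, shows that $Q$ and $Q^{\bot}=P^{\bot}\otimes I$ are of Jones-type. It is worth noting why this works: the constants in the definition of Jones-type must be universal (or depend only on $p$), and they are independent of the auxiliary algebra. This independence comes from Corollary \ref{singular_kernel_operators_theorem2}, whose constants are independent of $M$, together with Theorem \ref{singular_kernel_projections_theorem2}. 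The technical assumptions for $Q$ follow from Corollary \ref{Leray_projection_theorem3} in the same way.

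\emph{Step 2 (tensor inclusions).} Take $f=(f_j)_j\in H_\infty(P^{\bot})$ and $a\in L_\infty(O)$. The order-continuity hypothesis in Theorem \ref{Leray_projection_theorem2} concerns $E$, not $E^\times$, so we take $E=L_1$ there. This gives $H_\infty(P^{\bot})=H_{L_1^\times}(P^{\bot})=L_\infty(N^{\bar{\oplus}d})\cap H(P^{\bot})$. Hence $\partial_if_j=\partial_jf_i$ as distributions and $\widehat{f_j}(0)=0$.

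Next we check that $f\otimes a=(f_j\otimes a)_j$ lies in $L_\infty((N\bar{\otimes}O)^{\bar{\oplus}d})$. Its partial derivatives are $\partial_i(f_j\otimes a)=(\partial_if_j)\otimes a$. To justify this identity, test against the generators $g\otimes b\otimes c$ of $D(N\bar{\otimes}O)$, with $g\in C^\infty(\T^d)$, $b\in(L_1\cap L_\infty)(M)$ and $c\in(L_1\cap L_\infty)(O)$. The trace factorizes as $\sigma\otimes\tau_O$, so the pairing splits into an $N$-part and an $O$-part. For the same reason, $\widehat{f_j\otimes a}(0)=\widehat{f_j}(0)\otimes a=0$. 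Therefore $f\otimes a\in H(Q^{\bot})$.

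Theorem \ref{Leray_projection_theorem2}, applied to $Q^{\bot}$ with $E=L_1$, then yields $f\otimes a\in H_\infty(Q^{\bot})$. The inclusion for $P$ is proved identically, using the divergence-free condition $\sum_j\partial_jf_j=0$ and Theorem \ref{Leray_projection_theorem1}.

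\emph{Main obstacle.} The only delicate point is the distributional identity $\partial_i(f\otimes a)=(\partial_if)\otimes a$ for $f\in L_\infty(N)$ and $a\in L_\infty(O)$. Elements of $D(N\bar{\otimes}O)$ are built on $(L_1\cap L_\infty)(M\bar{\otimes}O)$, which is not simply the algebraic tensor product $(L_1\cap L_\infty)(M)\odot(L_1\cap L_\infty)(O)$. To handle this, we first test on elementary tensors $g\otimes b\otimes c$. We then pass to general test elements using two facts: elementary tensors are weakly total in $(L_1\cap L_\infty)(M\bar{\otimes}O)$ for the duality with $(L_1+L_\infty)$, and both sides of the identity are weakly continuous. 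An alternative is to work directly with Fourier coefficients, using $\widehat{f\otimes a}(n)=\widehat f(n)\otimes a$. This reduces every membership condition to the coefficientwise relations $n_i\widehat{f_j}(n)=n_j\widehat{f_i}(n)$ (respectively $\sum_jn_j\widehat{f_j}(n)=0$), which are clearly preserved under tensoring with $a$.
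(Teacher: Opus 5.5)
Your proposal is correct and follows essentially the same route as the paper. You identify $Q=P\otimes I$ as the Leray projection for $(\T^d,M\bar{\otimes}O)$ to get that $Q$ and $Q^{\bot}$ are of Jones-type, and then obtain both tensor inclusions from Theorems \ref{Leray_projection_theorem1} and \ref{Leray_projection_theorem2} with $E=L_1$. The only difference is that you write out the distributional (or Fourier-coefficient) verification that the paper calls a direct consequence.
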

\end{mdframed}
\begin{proof}
Let $O$ be a semifinite von Neumann algebra. Let $Q:=P\otimes I$ denote the amplified projection on $L_2(N\bar{\otimes}O)=L_2(N)\otimes L_2(O)$. From the above lemma, we already know that $Q$ and $Q^{\bot}$ are of Jones-type. Thus, it suffices to show that $H_\infty(P)\odot L_\infty(O)$ and $H_\infty(P^{\bot})\odot L_\infty(O)$ are subspaces of $H_\infty(Q)$ and $H_\infty(Q^{\bot})$ respectively. This is a direct consequence of Theorem \ref{Leray_projection_theorem1} and Theorem \ref{Leray_projection_theorem2}.
\end{proof}

If $E$ is an exact interpolation space for $(L_1,L_\infty)$, we define the \textit{Sobolev space} $W_{1,E}(N)$ as the space of $g\in D'(N)$ such that $\partial_j g\in E(N)$ for every $j\in\{1,\ldots,d\}$, and equipped with the norm $\|\cdot\|_{W_{1,E}(N)}$ such that
\[\|g\|_{W_{1,E}(N)}=\|(\partial_jg)_j\|_{E(N^{\bar{\oplus}d})},\ \ \ \ \ \ \ \ \ \text{for}\ g\in W_{1,E}(N).\]
\[\|g\|_{W_{1,E}(N)}=\left\{\begin{array}{cl}
     \Big[\sum_{j=1}^{d}\|\partial_jg\|_{L_p(N)}^p\Big]^{1/p} & \text{if}\ p<\infty \\
     \sup_{j\in\{1,\ldots,d\}}\|\partial_jg\|_{L_\infty(N)} & \text{if}\ p=\infty
\end{array}\right.,\ \ \ \ \ \ \ \ \ \text{for}\ g\in W_{1,p}(N).\]
We also denote $W_{1,p}(N):=W_{1,L_p}(N)$ for $1\pp p\pp\infty$. By using the Féjer kernel, we easily get the following proposition.

\begin{prop}
Let $E$ be an exact interpolation space for $(L_1,L_\infty)$ with order\-/continuous norm. Then the elements of $D'(N)$ of the form $\chi_n\otimes a$ with $n\in\Z^d$ and $a\in(L_1\cap L_\infty)(M)$ span a norm-dense subspace of $W_{1,E}(N)$ and a \text{\upshape w*}-dense subspace of $W_{1,E^{\times}}(N)$.
\end{prop}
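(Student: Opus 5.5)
The plan is to use the Fejér kernel $K_\lambda$ twice: first to reduce to elements of bounded spectrum, then to approximate the resulting finite Fourier coefficients by elements of $(L_1\cap L_\infty)(M)$. We treat the norm-density in $W_{1,E}(N)$ first.

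Let $g\in W_{1,E}(N)$ and set $f:=(\partial_jg)_j\in E(N^{\bar{\oplus}d})$. The norm of $W_{1,E}(N)$ only sees $f$, so it vanishes on constants. Hence $g$ may be changed by a constant, that is, by its zero Fourier coefficient, without changing its class; we handle this by working modulo the kernel $\{h:\partial_jh=0\ \forall j\}$, or by simply ignoring $\widehat g(0)$.

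\textbf{Step 1 (Fejér approximation).} Extend $K_\lambda$ to $D'(N)$ by duality; this is legitimate because $K_\lambda$ is a Fourier multiplier with finitely supported symbol and so preserves $D(N)$. Since $K_\lambda$ commutes with each $\partial_j$, we get $\partial_jK_\lambda(g)=K_\lambda(\partial_jg)$. By the Fejér lemma above, $K_\lambda(\partial_jg)\to\partial_jg$ in norm in $E(N)$. Therefore $K_\lambda(g)\to g$ in $W_{1,E}(N)$.

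\textbf{Step 2 (reduction to finitely many coefficients).} For fixed $\lambda$, write
\[K_\lambda(g)=\sum_{\|n\|_1\pp\lambda}c_n(\lambda)\,\chi_n\otimes\widehat g(n).\]
For $n\neq0$ we have $\partial_jg\in E(N)$, so $in_j\widehat g(n)=\widehat{\partial_jg}(n)$. Choosing $j$ with $n_j\neq0$ shows $\widehat g(n)\in(L_1+L_\infty)(M)$. More precisely, $\widehat{\partial_jg}(n)$ lies in $E(M)$, via the contractive conditional expectation $h\mapsto\int h\,\overline{\chi_n}$ from $E(N)$ to $E(M)$, which is bounded by interpolation.

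\textbf{Step 3 (approximating each coefficient).} Since $E(M)$ has order-continuous norm, $(L_1\cap L_\infty)(M)$ is norm-dense in $E(M)$. Choose $a_n^\alpha\in(L_1\cap L_\infty)(M)$ with $a_n^\alpha\to\widehat g(n)$ in $E(M)$. Then
\[\|\chi_n\otimes(a_n^\alpha-\widehat g(n))\|_{W_{1,E}}\lesssim|n|\,\|a_n^\alpha-\widehat g(n)\|_{E(M)}.\]
This uses that $\|\chi_n\otimes x\|_{E(N)}=\|x\|_{E(M)}$, which holds because $\chi_n$ is unitary and $\mu_{1\otimes x}=\mu_x$ for the normalized Haar measure. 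As only finitely many $n$ occur, we obtain norm approximation by finite sums of elements $\chi_n\otimes a$. The term $n=0$ is dropped, since it is zero in $W_{1,E}(N)$.

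\textbf{The dual statement.} For $g\in W_{1,E^\times}(N)$ we proceed the same way, now using w*-convergence $K_\lambda(\partial_jg)\to\partial_jg$ in $E^\times(N)$, again from the Fejér lemma. For each fixed $n$ we use w*-density of $(L_1\cap L_\infty)(M)$ in $E^\times(M)$. The w*-topology on $W_{1,E^\times}(N)$ should be understood as the one transported through the embedding $g\mapsto(\partial_jg)_j$ into $E^\times(N^{\bar{\oplus}d})$. Under this identification, each $h\mapsto\chi_n\otimes h$ is w*-continuous from $E^\times(M)$ to $E^\times(N)$, since its preadjoint is the coefficient map on $E(N)$.

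\textbf{Main obstacle.} The delicate point is bookkeeping rather than analysis. We must make precise the w*-topology on $W_{1,E^\times}(N)$, namely as a dual of a quotient or subspace of $E(N^{\bar{\oplus}d})$. We must also justify that $W_{1,E}(N)$ is identified modulo constants. The remaining issue is to verify that $\widehat g(n)\in E(M)$ for $n\neq0$, given only $g\in D'(N)$; this is handled by the identity $\widehat{\partial_jg}(n)=in_j\widehat g(n)$ tested against $D(N)$.
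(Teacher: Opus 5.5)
Your proof is correct and follows essentially the route the paper has in mind; the paper only says the proposition follows ``by using the Fej\'er kernel''. You use that the Fej\'er means commute with the $\partial_j$ to reduce to finitely many modes, show via $in_j\widehat g(n)=\widehat{\partial_jg}(n)$ that the coefficients for $n\neq0$ lie in $E(M)$ (resp.\ $E^{\times}(M)$), approximate them in norm (resp.\ w*) by elements of $(L_1\cap L_\infty)(M)$, and drop the zero mode, which the seminorm does not see — the same mechanism as in the proof of Theorem~\ref{spectral_projection_torus_theorem1}.
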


An easy computation yields the following lemma.

\begin{lemm}
Let $f\in(L_1+L_\infty)(N^{\bar{\oplus}d})$. Then $f\in H(P^{\bot})$ if and only if there is $g\in D'(N)$ such that $f_j=\partial_jg$ for every $j\in\{1,\ldots,d\}$.
\end{lemm}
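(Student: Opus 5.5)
The plan is to argue on the Fourier side. Both conditions become algebraic identities on Fourier coefficients, which we then transport back to $D'(N)$ by duality against test functions $h\otimes a$ with $h\in C^\infty(\T^d)$ and $a\in(L_1\cap L_\infty)(M)$.

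\emph{Preliminary uniqueness fact.} An element $u\in D'(N)$ is determined by the values $\bra u,\chi_n\otimes a\ket$ for $n\in\Z^d$ and $a\in(L_1\cap L_\infty)(M)$. Trigonometric polynomials are dense in $C^\infty(\T^d)$ for its Fréchet topology; for example the Fejér means $K_\lambda(h)$ of a smooth $h$ converge to $h$ in $C^\infty$, since derivatives commute with $K_\lambda$. Consequently the span of the $\chi_n\otimes a$ is dense in the projective tensor product $D(N)$. By definition of $\partial_j$ on $D'(N)$ (the transpose of $\partial_j$ on $D(N)$, up to sign) we get $\bra \partial_j u,\chi_n\otimes a\ket=-in_j\bra u,\chi_n\otimes a\ket$. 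For $u=f\in(L_1+L_\infty)(N)$ we also have $\bra f,\chi_n\otimes a\ket=\tau(\widehat f(-n)a)$. So all identities between distributions reduce to coefficientwise identities.

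\emph{Necessity.} Suppose $f_j=\partial_jg$ for all $j$. Pairing with $\chi_{0}\otimes a$ gives $\tau(\widehat{f_j}(0)a)=0$ for all $a$, hence $\widehat{f_j}(0)=0$. Moreover $\partial_if_j=\partial_i\partial_jg=\partial_j\partial_ig=\partial_jf_i$, because the partial derivatives commute on $D(N)$ and therefore on $D'(N)$. Thus $f\in H(P^{\bot})$.

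\emph{Sufficiency.} Let $f\in H(P^{\bot})$. Let $\Delta^{-1}$ be the Fourier multiplier with symbol $-|n|^{-2}$ for $n\neq0$ and $0$ at $n=0$, and let $T_j:=\partial_j\Delta^{-1}$. Each $T_j$ is a continuous operator on $C^\infty(\T^d)$, since its symbol has polynomially bounded growth. We then define $g\in D'(N)$ by
\[\bra g,h\otimes a\ket:=-\sum_{j=1}^d\sigma\big(f_j\,(T_jh\otimes a)\big).\]
The map $(h,a)\mapsto\sigma(f_j(T_jh\otimes a))$ is bilinear. It is continuous because $h\mapsto T_jh\otimes a$ followed by the pairing with $f_j$ is continuous from $C^\infty(\T^d)\times(L_1\cap L_\infty)(M)$ (the latter with its weak topology) to $\C$, exactly as in the paper's construction of the embedding $(L_1+L_\infty)(N)\to D'(N)$. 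Hence it extends to the projective tensor product, and $g\in D'(N)$.

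\emph{Verification.} Testing $\partial_k g$ against $\chi_n\otimes a$ yields, for $n\neq0$, a sum of the form
\[\sum_j\frac{n_kn_j}{|n|^2}\,\tau(\widehat{f_j}(-n)a).\]
The curl-free condition $\partial_kf_j=\partial_jf_k$ gives $n_k\widehat{f_j}(-n)=n_j\widehat{f_k}(-n)$, so this sum collapses to $\sum_j\frac{n_j^2}{|n|^2}\tau(\widehat{f_k}(-n)a)=\tau(\widehat{f_k}(-n)a)=\bra f_k,\chi_n\otimes a\ket$. For $n=0$ both sides vanish, using $\widehat{f_k}(0)=0$. By the uniqueness fact, $\partial_kg=f_k$.

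\emph{Expected difficulty.} I do not expect a genuine obstacle. The delicate points are bookkeeping: getting the signs and conjugations right in the Fourier conventions, and justifying that the pairing defining $g$ is continuous for the projective topology.
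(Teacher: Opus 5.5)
Your proof is correct, and it is the Fourier-coefficient computation the paper means when it says the lemma follows from ``an easy computation'' (the paper gives no details): necessity by testing against $\chi_n\otimes a$, and sufficiency by taking $g=\sum_j\partial_j\Delta^{-1}f_j$ in distributional form, then using $n_k\widehat{f_j}(n)=n_j\widehat{f_k}(n)$ and $\widehat{f_k}(0)=0$. The continuity of $g$ is cleanest written as $g=-\sum_j f_j\circ(T_j\otimes\mathrm{id})$, with $T_j\otimes\mathrm{id}$ continuous on $D(N)$, so that step depends only on the embedding $(L_1+L_\infty)(N)\to D'(N)$ that the paper itself asserts.
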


As a consequence of the previous lemma, if $E$ is an exact interpolation space for $(L_1,L_\infty)$ with order\-/continuous norm, the operators $W_{1,E}(N)\to E(N^{\bar{\oplus} d})$, $g\mapsto(\partial_jg)_j$ and $W_{1,E^{\times}}(N)\to E^{\times}(N^{\bar{\oplus} d})$, $g\mapsto(\partial_jg)_j$ are isometric embeddings of normed spaces, with range $H_E(P^{\bot})$ and $H_{E^{\times}}(P^{\bot})$ respectively. As a consequence of the previous theorems, we directly deduce the following results.

\begin{mdframed}[skipabove=10pt]
\begin{theo}
Let $1\pp p_0,p_1\pp\infty$ and $f\in W_{1,p_0}(N)+W_{1,p_0}(N)$. Then, for every $t>0$, up to a constant depending only on $p_0,p_1,d$, we have
\[K_t(f,W_{1,p_0}(N),W_{1,p_1}(N))\ \simeq\ \sum_{j=1}^{d}K_t(\partial_jf,L_{p_0}(N),L_{p_1}(N)).\]
\end{theo}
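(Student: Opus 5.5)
The plan is to transport the problem through the gradient map
\[\nabla:W_{1,E}(N)\to E(N^{\bar{\oplus}d}),\ \ \ g\mapsto(\partial_jg)_j,\]
which by the preceding lemma is an isometric embedding with range $H_E(P^{\bot})$ for $E=L_{p_0},L_{p_1}$ (with the endpoint $p=\infty$ handled through the w*-version, $H_{E^\times}(P^\bot)$ with $E=L_1$). This gives an isometric isomorphism of compatible couples
\[(W_{1,p_0}(N),W_{1,p_1}(N))\cong(H_{p_0}(P^{\bot}),H_{p_1}(P^{\bot})).\]
Here $P$ is the Leray projection. Consequently
\[K_t(f,W_{1,p_0}(N),W_{1,p_1}(N))=K_t(\nabla f,H_{p_0}(P^{\bot}),H_{p_1}(P^{\bot})).\]
This isometry is exact once we use the norms on $L_p(N^{\bar{\oplus}d})$: the $\ell_p$-sum for $p<\infty$ and the sup for $p=\infty$. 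These agree with the Sobolev norms in the statement.

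Next I apply the theorem of Part 1 for Jones-type projections. By Theorem \ref{Leray_projection_theorem4}, $P^{\bot}$ is of Jones-type. Hence $(H_{p_0}(P^{\bot}),H_{p_1}(P^{\bot}))$ is quasi-complemented, and thus $K$-closed, in $(L_{p_0}(N^{\bar{\oplus}d}),L_{p_1}(N^{\bar{\oplus}d}))$, with a constant depending only on $p_0,p_1$. Together with the trivial reverse inequality for subcouples, this gives
\[K_t(f,W_{1,p_0},W_{1,p_1})\simeq K_t\big((\partial_jf)_j,L_{p_0}(N^{\bar{\oplus}d}),L_{p_1}(N^{\bar{\oplus}d})\big).\]
The constants might also depend on $d$, through the constants of Theorem \ref{singular_kernel_projections_theorem2}.

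Finally I compare the $K$-functional of the tuple with the sum of the coordinatewise $K$-functionals, up to constants depending on $d$.
\begin{itemize}[nosep]
\item For $\lesssim$: take near-optimal decompositions $\partial_jf=u_0^j+u_1^j$ for each $j$ and assemble them into the tuples $(u_0^j)_j$ and $(u_1^j)_j$. Since $\|(u^j)_j\|_{L_p(N^{\bar{\oplus}d})}\le\sum_j\|u^j\|_{L_p(N)}$, this bound follows.
\item For $\gtrsim$: project a decomposition of the tuple onto each coordinate. The coordinate maps $L_p(N^{\bar{\oplus}d})\to L_p(N)$ are contractive, so each coordinatewise $K$-functional is bounded by the $K$-functional of the tuple, and summing over $j$ costs a factor $d$.
\end{itemize}

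The main obstacle is making the identification with $H_p(P^{\bot})$ rigorous, including the endpoint $p=\infty$. One must check that $f\in W_{1,p_0}+W_{1,p_1}$ splits as $f=g_0+g_1$ modulo constants, where constants have zero gradient. This follows from the lemma characterising $H(P^{\bot})$ as the set of gradients, together with Theorem \ref{Leray_projection_theorem2}. I also need that the $W_{1,\infty}$ defined as a w*-closure of trigonometric polynomials coincides with the distributional definition. For that I would use the Fejér kernel density proposition, applied via w*-density in $E^\times$ with $E=L_1$.
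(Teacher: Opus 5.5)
Your proposal is correct and follows the paper's route. The gradient map identifies $(W_{1,p_0}(N),W_{1,p_1}(N))$ isometrically with $(H_{p_0}(P^{\bot}),H_{p_1}(P^{\bot}))$. Then $P^{\bot}$ being of Jones-type (Theorem \ref{Leray_projection_theorem4}), combined with the Part 1 quasi-complementation theorem, gives $K$-closedness in $(L_{p_0}(N^{\bar{\oplus}d}),L_{p_1}(N^{\bar{\oplus}d}))$, and comparing the tuple $K$-functional with the coordinatewise sum costs a factor $d$. The paper states the result as a direct consequence of exactly these facts; your remarks on constants (the kernel of the seminorm) and on the $p=\infty$ endpoint make explicit what it leaves implicit.
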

\end{mdframed}
\vspace{5pt}

\begin{mdframed}[skipabove=10pt]
\begin{theo}
Let $1\pp p_0,p_1\pp\infty$, and let $\Phi$ be a $K$-parameter space such that the exact interpolation space
\[E:=K_\Phi(L_{p_0},L_{p_1})\]
has order-continuous norm. Then 
\[W_{1,E}(N)=K_\Phi(W_{1,p_0}(N),W_{1,p_1}(N))\]
with equivalent norms, with constants depending only on $p_0,p_1,d$. 
\end{theo}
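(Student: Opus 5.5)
The plan is to reduce the statement to the abstract interpolation result for Jones-type projections, namely the theorem stating that $H_E(P)=K_\Phi(H_{p_0}(P),H_{p_1}(P))$ whenever $E(M)=K_\Phi(L_{p_0}(M),L_{p_1}(M))$ has order-continuous norm. We apply it to the complement $P^{\bot}$ of the Leray projection on $L_2(N^{\bar{\oplus}d})$, and transport the conclusion through the gradient map $g\mapsto(\partial_jg)_j$. By Theorem~\ref{Leray_projection_theorem4}, $P^{\bot}$ is of Jones-type. So with $E(N^{\bar{\oplus}d}):=K_\Phi(L_{p_0}(N^{\bar{\oplus}d}),L_{p_1}(N^{\bar{\oplus}d}))$ we obtain
\[H_E(P^{\bot})=K_\Phi\big(H_{p_0}(P^{\bot}),H_{p_1}(P^{\bot})\big)\]
with equivalent norms, with constants depending only on $p_0,p_1$.

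First, I will check that the hypotheses are compatible across the identifications. The space $E=K_\Phi(L_{p_0},L_{p_1})$ is an exact interpolation space for $(L_1,L_\infty)$ on the half-line, because $(L_{p_0},L_{p_1})$ are exact interpolation spaces and $K_\Phi$ is exact. By the Dodds--Sukochev result $E_\theta(\mathcal{M})=\mathcal{F}(E_0(\mathcal{M}),E_1(\mathcal{M}))$ with $\mathcal{M}=N^{\bar{\oplus}d}$, we have $E(N^{\bar{\oplus}d})=K_\Phi(L_{p_0}(N^{\bar{\oplus}d}),L_{p_1}(N^{\bar{\oplus}d}))$ with equal norms. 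Order continuity of the norm passes from $E$ to $E(\mathcal{M})$, since $\|x\|_{E(\mathcal{M})}=\|\mu_x\|_E$ and decreasing nets to $0$ have singular functions tending pointwise to $0$ (at least after the usual truncation argument). I expect this point to be the one needing some care, though it is standard.

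Second, I use the lemma characterising $H(P^{\bot})$ as gradients, together with Theorem~\ref{Leray_projection_theorem2}. For an order-continuous exact interpolation space $F$, the map $\nabla:W_{1,F}(N)\to F(N^{\bar{\oplus}d})$, $g\mapsto(\partial_jg)_j$, is an isometric embedding onto $H_F(P^{\bot})$. One caveat is that $W_{1,F}$ is really a space modulo constants, since the seminorm kills constants, and this matches $\widehat{f}_j(0)=0$. We apply this with $F=L_{p_0},L_{p_1},E$; for $p_i=\infty$ we use the dual (w$^*$) version, identifying $W_{1,\infty}(N)$ with $H_\infty(P^{\bot})$ via the $E^{\times}$ part of the same statements. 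Hence $\nabla$ is an isometric isomorphism of compatible couples
\[(W_{1,p_0}(N),W_{1,p_1}(N))\to(H_{p_0}(P^{\bot}),H_{p_1}(P^{\bot})),\]
with the norm on $L_p(N^{\bar{\oplus}d})$ matching the $\ell_p$-sum in the definition of $\|\cdot\|_{W_{1,p}}$ up to constants depending on $d$.

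Finally, since $K_\Phi$ is an exact interpolation functor, isomorphisms of couples induce isomorphisms of the interpolation spaces. This gives
\[\nabla\big(K_\Phi(W_{1,p_0}(N),W_{1,p_1}(N))\big)=K_\Phi\big(H_{p_0}(P^{\bot}),H_{p_1}(P^{\bot})\big)=H_E(P^{\bot})=\nabla\big(W_{1,E}(N)\big),\]
and injectivity of $\nabla$ (modulo constants) yields $W_{1,E}(N)=K_\Phi(W_{1,p_0}(N),W_{1,p_1}(N))$. The constants depend only on $p_0,p_1$ and on $d$, the latter through the equivalence of $\ell_p^d$-sum norms with $E(N^{\bar{\oplus}d})$-norms.

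The main obstacle is the endpoint $p_i=\infty$. There one must make sure that the identification of $W_{1,\infty}(N)$ with $H_\infty(P^{\bot})$ holds, using the w$^*$-closure definition together with the $E^{\times}$ statements of Theorem~\ref{Leray_projection_theorem2} for $E=L_1$. One must also make sure that the Jones-type quasi-complementation theorem covers the pair $(p_0,p_1)$ including $\infty$, which the statement "$1\le p_0,p_1\le\infty$" of the Jones-type theorem does.
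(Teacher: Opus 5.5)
Your proposal is correct and follows the argument the paper intends. The paper deduces this theorem directly from three ingredients: the Jones-type property of $P^{\bot}$ (Theorem~\ref{Leray_projection_theorem4}), the abstract $K_\Phi$ result for Jones-type projections, and the fact that $g\mapsto(\partial_jg)_j$ maps $W_{1,E}(N)$ and $W_{1,E^{\times}}(N)$ isometrically (modulo constants) onto $H_E(P^{\bot})$ and $H_{E^{\times}}(P^{\bot})$. Your extra checks, namely $E(N^{\bar{\oplus}d})=K_\Phi(L_{p_0}(N^{\bar{\oplus}d}),L_{p_1}(N^{\bar{\oplus}d}))$ via the Dodds--Sukochev theorem and the transfer of order continuity from $E$ to $E(N^{\bar{\oplus}d})$, are standard points that the paper leaves implicit.
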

\end{mdframed}
\vspace{5pt}

\begin{mdframed}[skipabove=10pt]
\begin{theo}
For every $1\pp p_0,p_1<\infty$ and $0<\theta<1$, we have
\[[W_{1,p_0}(N),W_{1,p_1}(N)]_{\theta}=W_{1,p_\theta}(N)\]
with equivalent norms, with constants depending only on $p_0,p_1,\theta,d$, where 
\[\frac{1}{p_\theta}=\frac{1-\theta}{p_0}+\frac{\theta}{p_1}.\]
\end{theo}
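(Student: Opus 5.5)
The plan is to transfer the complex interpolation problem from Sobolev spaces to abstract Hardy spaces of the complementary Leray projection $P^{\bot}$, and then apply Pisier's method (Theorem \ref{theorem_PisierMethod}).

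\textbf{Step 1: identification.} By the lemma characterizing $H(P^{\bot})$ as gradients, for every $1\pp p<\infty$ the gradient map
\[\nabla: W_{1,p}(N)\to L_p(N^{\bar{\oplus}d}),\qquad g\mapsto(\partial_jg)_j,\]
is an isometric embedding with range $H_p(P^{\bot})$. Here $W_{1,p}(N)$ carries the norm $\|(\partial_jg)_j\|_{L_p(N^{\bar{\oplus}d})}$; this coincides with the $\ell_p$-sum norm of the introduction, so no constant is lost. This uses the technical assumptions (Theorem \ref{Leray_projection_theorem2}) with $E=L_p$, which has order-continuous norm for $p<\infty$. Kernels are modded out: elements of $W_{1,p}$ are determined up to constants, and $\nabla$ is injective on $D'(N)$ modulo constants because $\widehat{f}_j(0)=0$ is built into $H(P^{\bot})$.

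\textbf{Step 2: isomorphism of couples.} Both $W_{1,p_0}(N)$ and $W_{1,p_1}(N)$ sit inside $D'(N)$, and $\nabla$ is a single linear map on $D'(N)$. Hence $\nabla$ is an isometric isomorphism of compatible couples
\[(W_{1,p_0}(N),W_{1,p_1}(N))\to(H_{p_0}(P^{\bot}),H_{p_1}(P^{\bot})).\]
Its inverse is well defined on the sum $H_{p_0}(P^{\bot})+H_{p_1}(P^{\bot})$, since every element of $H(P^{\bot})$ is a gradient of some $g\in D'(N)$. Because complex interpolation is an exact functor, $\nabla$ induces an isometric isomorphism
\[[W_{1,p_0}(N),W_{1,p_1}(N)]_\theta\to[H_{p_0}(P^{\bot}),H_{p_1}(P^{\bot})]_\theta.\]

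\textbf{Step 3: apply Pisier's method.} The preceding theorem states that $P^{\bot}$ satisfies Pisier's method assumptions. Theorem \ref{theorem_PisierMethod} then gives
\[[H_{p_0}(P^{\bot}),H_{p_1}(P^{\bot})]_\theta=H_{p_\theta}(P^{\bot})\]
with constants depending on $p_0,p_1,\theta$. These constants may also depend on $d$, through the Jones-type constants of the Leray projection. Pulling back by $\nabla$ via Step 1 with $p=p_\theta$ gives $W_{1,p_\theta}(N)$.

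\textbf{Main obstacle.} The delicate point is Step 2: checking that the compatible-couple structures match. One must verify that the ambient space for the Sobolev couple (distributions modulo constants in $D'(N)$) corresponds exactly to the sum $H_{p_0}(P^{\bot})+H_{p_1}(P^{\bot})$ inside $(L_1+L_\infty)(N^{\bar{\oplus}d})$, and that $\nabla^{-1}$ is consistent on this sum. I would handle this by working with $D'(N)$ modulo constants and using the gradient lemma, whose inverse is unique because of the zero-mean condition.
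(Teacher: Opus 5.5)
Your proposal is correct and follows the paper's own (very brief) argument: the gradient map identifies $W_{1,p}(N)$ isometrically with $H_p(P^{\bot})$ for $p<\infty$. Pisier's method (Theorem \ref{theorem_PisierMethod}) then applies because $P^{\bot}$ satisfies Pisier's method assumptions, with the dependence on $d$ entering through the Leray kernels, as you note. One small correction: injectivity of $\nabla$ modulo constants comes from $\widehat{\partial_jg}(n)=in_j\widehat{g}(n)$, not from the condition $\widehat{f}_j(0)=0$; that condition is instead what guarantees that every element of $H(P^{\bot})$ is a gradient.
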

\end{mdframed}
\vspace{5pt}

\clearpage

\part{Appendix : semicommutative Calderón-Zygmund decomposition}

In this paragraph, we define the noncommutative Calderón-Zygmund decomposition as introduced by J. Parcet, J.M. Conde Alonso and L. Cadhilac in \cite{CadilhacCZ1}.

Let $\X$ be a Ahlfors $d$-regular locally compact space of homogeneous type. We recall that this means that $\X$ is a locally compact space equipped with a Radon measure and a metric that induces its topology, for which the open balls of finite radius are relatively compact, and we have the Ahlfors $d$-regularity condition
\[|B_r(x)|\simeq r^d,\ \ \ \ \ \text{for}\ \ \ x\in\X,\ 0<r<\diam(\X).\]
Here $|\cdot|$ refers to the volume w.r.t. the measure and $B$ refers to the open balls w.r.t. the metric, and the notation $\simeq$ means that the estimate holds, up to some constants depending only on $\X$. In particular, the metric of $\X$ satisfies the following generalised doubling condition
\[|B_{\lambda r}(x)|\lesssim\lambda^d|B_r(x)|,\ \ \ \text{for}\ x\in\X,\ r>0,\ \lambda\pg1\]
and the following reverse doubling condition for small balls
\[|B_{\lambda r}(x)|\gtrsim\lambda^{d}|B_r(x)|,\ \ \ \text{for}\ x\in\X,\ r>0,\ \lambda\pp1.\]
In particular $\X$ is a space of homogeneous type in the usual sense of Coifman and Weiss \cite{CoifmanWeiss}. For $x,y\in\X$, we set
\[V(x,y):=|B_{d(x,y)}(x)|.\]
The following estimates are classical.

\begin{lemm}
If $x,y\in\X$, then
\[V(x,y)\lesssim V(y,x).\]
\end{lemm}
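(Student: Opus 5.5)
The plan is to compare both sides with the Ahlfors-regularity function $r\mapsto r^d$. Set $r:=d(x,y)$. By definition $V(x,y)=|B_r(x)|$ and $V(y,x)=|B_r(y)|$, so the claim is just $|B_r(x)|\lesssim|B_r(y)|$ whenever $d(x,y)=r$. If $x=y$, both sides vanish (or coincide) and there is nothing to prove, so I assume $r>0$.

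The proof splits into two cases. In the first case, $r<\diam(\X)$. Ahlfors $d$-regularity applies directly at both centres and gives
\[|B_r(x)|\lesssim r^d\lesssim |B_r(y)|,\]
with constants depending only on $\X$.

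In the second case, $r\pg\diam(\X)$. Since $r=d(x,y)\pp\diam(\X)$, this forces $r=\diam(\X)<\infty$. Here I would use the inclusion $B_r(x)\subset B_{2r}(y)$, which follows from the triangle inequality. Combined with the generalised doubling condition with $\lambda=2$, this gives
\[|B_r(x)|\pp|B_{2r}(y)|\lesssim 2^d|B_r(y)|.\]
This inclusion-plus-doubling argument in fact works in all cases and is the cleanest route. It shows that only doubling, which holds uniformly for all radii, is needed.

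I do not expect a real obstacle. The only point needing care is that the Ahlfors condition is stated only for $r<\diam(\X)$. The doubling-based argument sidesteps this restriction, so I would present that one argument as the proof.
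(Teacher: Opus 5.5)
Your final argument, the inclusion $B_r(x)\subset B_{2r}(y)$ for $r=d(x,y)$ followed by the doubling condition with $\lambda=2$, is correct and is exactly the paper's proof. The preliminary case split via Ahlfors regularity is also valid, but as you note yourself it is unnecessary.
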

\begin{proof}
If $r\pg d(x,y)$ then $B_r(x)$ is clearly included in $B_{2r}(y)$, thus by the doubling condition we get $|B_r(x)|\lesssim|B_\delta(y)|$. The choice $r=d(x,y)$ gives the desired result.
\end{proof}

\begin{lemm}
If $y\in\X$, and $\delta,\alpha>0$ then
\begin{equation}\label{lemma_doubling_estimate_0}
\int_{\X\setminus B_r(y)}\frac{d(x,y)^{-\alpha}}{V(x,y)}dx\lesssim\frac{r^{-\alpha}}{2^\alpha-1}.
\end{equation}
\end{lemm}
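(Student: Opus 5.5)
Fix $y\in\X$ and $r,\alpha>0$. I will decompose the complement of the ball into dyadic annuli centred at $y$:
\[\X\setminus B_r(y)=\bigcup_{m\pg0}A_m,\qquad A_m:=B_{2^{m+1}r}(y)\setminus B_{2^mr}(y).\]
On each annulus I will bound the integrand pointwise and then sum the resulting geometric series. I expect each step to rely only on the doubling condition and on the preceding lemma $V(x,y)\lesssim V(y,x)$.

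On $A_m$ we have $d(x,y)\pg 2^mr$, so $d(x,y)^{-\alpha}\pp(2^mr)^{-\alpha}$. For the volume factor I will use $V(y,x)=|B_{d(x,y)}(y)|\pg|B_{2^mr}(y)|$, together with the preceding lemma applied with the roles of $x$ and $y$ exchanged, which gives $V(y,x)\lesssim V(x,y)$. Hence $V(x,y)^{-1}\lesssim|B_{2^mr}(y)|^{-1}$ on $A_m$. This yields
\[\int_{A_m}\frac{d(x,y)^{-\alpha}}{V(x,y)}dx\lesssim(2^mr)^{-\alpha}\frac{|B_{2^{m+1}r}(y)|}{|B_{2^mr}(y)|}\lesssim(2^mr)^{-\alpha},\]
where the last step uses the doubling condition $|B_{2\rho}(y)|\lesssim|B_\rho(y)|$.

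Summing over $m\pg0$ gives $r^{-\alpha}\sum_{m\pg0}2^{-\alpha m}=r^{-\alpha}\frac{2^\alpha}{2^\alpha-1}$. This differs from the claimed bound only by the factor $2^\alpha$. If the implicit constant is allowed to depend on $\alpha$, the claim already follows. Otherwise I would remove the extra factor by a sharper comparison on the annuli. On $A_m$ we also have $d(x,y)\pg 2^{m}r$, but I would bound $d(x,y)^{-\alpha}$ in terms of the outer radius by writing $d(x,y)^{-\alpha}\pp (2^{m+1}r)^{-\alpha}\cdot 2^{\alpha}$. This does not help, so instead I would simply accept the constant $\frac{2^\alpha}{2^\alpha-1}$ and note that $2^\alpha\pp 2$ whenever $\alpha\pp1$. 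That is the regime in which the lemma is used, since $\alpha\in(0,1]$ in the Lipschitz condition.

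The main (mild) obstacle is therefore the bookkeeping of the $\alpha$-dependence of the constant. In particular, I must ensure that the ``$\lesssim$'' hides only constants depending on $\X$, which is fine for $\alpha\pp1$ or, more generally, after absorbing $2^\alpha$. The finite-diameter case needs no separate treatment: annuli beyond $\diam(\X)$ are empty and only make the sum smaller.
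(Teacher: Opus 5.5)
Your argument is correct and follows the paper's proof: decompose into the dyadic annuli $B_{2^{m+1}r}(y)\setminus B_{2^mr}(y)$, use $d(x,y)^{-\alpha}\le(2^mr)^{-\alpha}$ and $V(x,y)^{-1}\lesssim V(y,x)^{-1}\le|B_{2^mr}(y)|^{-1}$, apply doubling, and sum the geometric series. The paper's computation also ends at $r^{-\alpha}/(1-2^{-\alpha})=2^\alpha r^{-\alpha}/(2^\alpha-1)$, so the factor $2^\alpha$ you flagged is a slip in the statement, not a gap in your proof; it is harmless because the lemma is only applied with $\alpha=1$, in the estimate for $b_d^Q$.
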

\begin{proof}
\begin{align*}
    \int_{\X\setminus B_r(y)}\frac{d(x,y)^{-\alpha}}{V(x,y)}dx&=\sum_{m=0}^{\infty}\int_{B_{2^{m+1}r}(y)\setminus B_{2^mr}(y)}\frac{d(x,y)^{-\alpha}}{V(x,y)}dx\\
    &\lesssim\sum_{m=0}^{\infty}[2^mr]^{-\alpha}\int_{B_{2^{m+1}r}(y)\setminus B_{2^{m}r}(y)}\frac{dx}{V(y,x)}\\
    &\pp\sum_{m=0}^{\infty}[2^mr]^{-\alpha}\frac{1}{|B_{2^mr}(y)|}\int_{B_{2^{m+1}r}(y)\setminus B_{2^{m}r}(y)}dx\\
    &\pp\sum_{m=0}^{\infty}[2^mr]^{-\alpha}\frac{|B_{2^{m+1}r}(y)|}{|B_{2^mr}(y)|}\\
    &\lesssim\sum_{m=0}^{\infty}[2^mr]^{-\alpha}=\frac{r^{-\alpha}}{1-2^{-\alpha}}.
\end{align*}
\end{proof}

Let $M$ be a semifinite von Neumann algebra with trace denoted $\tau$. Let $N:=L_\infty(\X)\bar{\otimes}M$ denote the tensor product von Neumann algebra equipped with the tensor product trace denoted $\sigma$. Then $L_1(N)=L_1(\X,L_1(M))$ coincide with the Bochner space of $L_1(M)$-valued integrable functions on $\X$. The Bochner integral of $f\in L_1(N)$ will be denoted 
\[\int_\X f(x)dx\in L_1(M).\]
For $n\pg0$, let $\mathcal{Q}_n$ denote the collection of Christ cubes of positive order $n$ as introduced in \cite{Christ} (note that, in the original construction, cubes with negative orders are also considered, but for our purpose we will not have to deal with any of them). For every $n\pg1$, $Q\in\mathcal{Q}_n$, and $f\in L_1(N)$, let denote 
\[f_Q:=\frac{1}{|Q|}\int_Qf(x)dx=\frac{1}{|Q|}\int_{\X}[(1_Q\otimes 1)f(1_Q\otimes 1)](x)dx.\]
The following lemma is taken from \cite{Christ}[Theorem 11]. 

\begin{lemm}\label{lemma_ChristCubes}
The following assertions hold.
\begin{enumerate}[nosep]
    \item If $n\pg0$ then $\mathcal{Q}_n$ is a measurable partition of $\X$ (i.e. the elements of $\mathcal{Q}_n$ are disjoints and $X\setminus\cup_{Q\in\mathcal{Q}_n}Q$ is negligible).
    \item If $n\pg0$ and $Q\in\mathcal{Q}_n$, then $Q$ contains $B_{C_1\delta^n}(y_Q)$, for some $y_Q\in Q$, where $C_1$ is a positive constant.
    \item If $n\pg0$ and $Q\in\mathcal{Q}_n$, then $d_Q\leq C_2\delta^n$ where $d_Q$ denotes the diameter of $Q$, and where $C_2$ is a positive constant.
    \item If $n\pg1$, and $Q\in\mathcal{Q}_n$, then there is a unique $\hat{Q}\in\mathcal{Q}_{n-1}$ such that $Q\subset\hat{Q}$.
\end{enumerate}
\end{lemm}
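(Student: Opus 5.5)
The plan is to reproduce Christ's construction of dyadic cubes from nested nets, restricted to nonnegative orders, and to read off properties (1)--(4) from it. Throughout we use only the doubling condition available on $\X$.

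\emph{Nets and the tree.} Fix a small parameter $0<\delta<1$; a value around $\delta\le 1/100$ is enough. Build inductively sets $Z_0\subset Z_1\subset Z_2\subset\cdots$ such that each $Z_n$ is a maximal $\delta^n$-separated subset of $\X$. Given $Z_n$, extend it by Zorn's lemma to a maximal $\delta^{n+1}$-separated set; this is possible because $\delta^{n+1}<\delta^n$. Local compactness and doubling make each $Z_n$ countable and locally finite. By maximality, every $x\in\X$ satisfies $d(x,Z_n)<\delta^n$.

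Next define a parent map $Z_{n+1}\to Z_n$:
\begin{itemize}
\item if $z\in Z_n$, its parent is $z$ itself;
\item otherwise its parent is some $z'\in Z_n$ with $d(z,z')<\delta^n$, chosen by a fixed rule.
\end{itemize}
This gives a tree on $\bigsqcup_n Z_n$. Write $w\preceq z$ when $w$ is a descendant of $z$. A descendant $w\in Z_k$ of $z\in Z_n$ lies within $\sum_{j\ge n}\delta^j\le 2\delta^n$ of $z$.

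\emph{Cubes and properties (2)--(4).} For $z\in Z_n$ set
\[Q_z:=\bigcup_{k\ge n}\ \bigcup_{w\in Z_k,\ w\preceq z}B_{a_0\delta^k}(w),\]
with $a_0$ a small absolute constant. Let $\mathcal{Q}_n:=\{Q_z : z\in Z_n\}$. The four properties then come out as follows.
\begin{itemize}
\item \textbf{(3), diameter.} The geometric series bound gives $d_{Q_z}\le 2(2+a_0)\delta^n=:C_2\delta^n$.
\item \textbf{(2), inner ball.} Since $z\preceq z$, we have $B_{a_0\delta^n}(z)\subset Q_z$. So $C_1=a_0$ and $y_Q=z$.
\item \textbf{(4), unique parent cube.} If $z\in Z_n$ has parent $\hat z\in Z_{n-1}$, every descendant of $z$ is a descendant of $\hat z$, hence $Q_z\subset Q_{\hat z}$. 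Uniqueness follows from disjointness at level $n-1$, proved next.
\item \textbf{(1), disjointness.} Take distinct $z,z'\in Z_n$ and a descendant $w$ of $z$, $w'$ of $z'$. Trace both back to their ancestors at the first level where the lineages separate. That level is $\ge n$, and there the two ancestors are $\delta^{\text{level}}$-separated. Choosing $a_0$ and $\delta$ small, the triangle inequality shows that $B_{a_0\delta^k}(w)$ and $B_{a_0\delta^{k'}}(w')$ cannot meet.
\end{itemize}

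\emph{Main obstacle: covering up to a null set.} The remaining part of (1) is that $\X\setminus\bigcup_{Q\in\mathcal{Q}_n}Q$ is negligible. This is the step I expect to be delicate. Points near the boundary between lineages may escape every ball.

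I would follow Christ's ``small boundary'' argument. Let $E_n$ be the complement of the union of the level-$n$ cubes. For $x\in E_n$ and each $k\ge n$, pick $w_k\in Z_k$ with $d(x,w_k)<\delta^k$. Then $x\notin B_{a_0\delta^k}(w_k)$. The key claim is that at each scale $\delta^k$, the ball $B_{\delta^k}(x)$ contains a sub-ball of radius $\simeq\delta^{k+K}$, for a fixed $K$, which lies entirely inside the cubes. Doubling then gives
\[|E_n\cap B_{\delta^k}(x)|\le(1-\eta)\,|B_{\delta^k}(x)|\]
for a fixed $\eta>0$. Such a uniform density deficit at every scale, combined with the Lebesgue differentiation theorem (valid in spaces of homogeneous type), forces $|E_n|=0$.

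Verifying the uniform density deficit is where the choice of $\delta$ and $a_0$ relative to the doubling constant matters. If it becomes messy, I would instead simply invoke \cite{Christ}[Theorem 11] for this point, since the lemma is quoted from there.
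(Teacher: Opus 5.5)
\textbf{Gap in the disjointness step.} Your skeleton is Christ's construction, which is all the paper relies on, since it simply quotes \cite[Theorem 11]{Christ}: nested maximal nets, a parent tree, and cubes defined as unions of the balls $B_{a_0\delta^k}(w)$ over all descendants. However, the disjointness argument, on which (1) and the uniqueness in (4) rest, fails as written.

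Whatever level $m\pg n$ you use to compare the ancestors $u\neq u'$ of $w,w'$, you only know $d(u,u')\pg\delta^m$. A descendant, on the other hand, can drift up to $\sum_{j\pg m}\delta^j=\delta^m/(1-\delta)>\delta^m$ from its ancestor. So the triangle inequality yields no contradiction, however small $a_0$ and $\delta$ are.

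This is not only a presentation issue: with a parent rule that only requires $d(v,\mathrm{parent}(v))<\delta^k$, the cubes can genuinely overlap. Here is a concrete example in $\X=\R$ with $\delta<1/2$, which is compatible with nested maximal separated nets.
\begin{itemize}
\item Take $0,\delta^n\in Z_n$ and $\delta^{n+j}\in Z_{n+j}$ for $j\pg1$.
\item Give $\delta^{n+j}$ the parent $\delta^{n+j-1}$. This is allowed, since their distance is $\delta^{n+j-1}-\delta^{n+j}<\delta^{n+j-1}$, even though $0$ is the nearest point of $Z_{n+j-1}$.
\item All these points then descend from $\delta^n$. Yet $B_{a_0\delta^{n+j}}(\delta^{n+j})\subset Q_{\delta^n}$ meets $B_{a_0\delta^n}(0)\subset Q_0$ as soon as $\delta^j<a_0$.
\end{itemize}

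\textbf{The missing ingredient.} You need the extra condition Christ imposes on the tree: if $v\in Z_{k+1}$ and $d(v,u)<\frac12\delta^k$ for some $u\in Z_k$, then $u$ must be the parent of $v$. Such a $u$ is unique by separation, and choosing the parent as a nearest point of $Z_k$ achieves this.

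With this rule, argue at the level of the \emph{coarser} ball rather than at the level where the lineages separate. Suppose $B_{a_0\delta^j}(w)$ meets $B_{a_0\delta^{j'}}(w')$ with $n\pp j\pp j'$, where $w,w'$ have distinct ancestors in $Z_n$.
\begin{itemize}
\item The case $j=j'$ is excluded, because $d(w,w')\pg\delta^j>2a_0\delta^j$.
\item If $j<j'$, let $v$ be the ancestor of $w'$ in $Z_{j+1}$. Then $d(v,w)<a_0\delta^j(1+\delta)+\delta^{j+1}/(1-\delta)<\frac12\delta^j$ for $a_0,\delta$ small (e.g.\ $a_0=\delta=1/10$). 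So $v$ is a child of $w$, hence $w'\preceq w$, which contradicts the distinct level-$n$ ancestors.
\end{itemize}

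\textbf{The null-set step is easier than you expect.} The step you flagged as the main obstacle is immediate with your own definitions, and needs no small-boundary estimate.
\begin{itemize}
\item Every $w_k\in Z_k$ with $k\pg n$ has an ancestor in $Z_n$. So the ball $B_{a_0\delta^k}(w_k)$ you already picked lies in $\bigcup_{Q\in\mathcal{Q}_n}Q$, and also in $B_{2\delta^k}(x)$.
\item Doubling then gives $|E_n\cap B_{2\delta^k}(x)|\pp(1-\eta)|B_{2\delta^k}(x)|$ for every $x\in\X$ and every $k\pg n$, with $\eta$ depending only on $a_0$ and the doubling constant.
\item Lebesgue differentiation for doubling measures then forces $|E_n|=0$.
\end{itemize}
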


For every $n\pg1$, let $\X_n$ denote the measure space obtained by equipping $\X$ with its $\sigma$-algebra generated by $\mathcal{Q}_n$, and let denote $N_n:=L_\infty(\X_n)\bar{\otimes}M$. By the properties of Christ cubes, the sequence $(N_n)_{n\pg1}$ is a filtration on $N$. Let $(E_n)_{n\pg1}$ denote the associated conditional expectations. Then, for every $f\in(L_1\cap L_2)(N)$ and $n\pg1$, we have
\[E_n(f)=\sum_{Q\in\mathcal{Q}_n}1_Q\otimes f_Q\ \ \ \ \text{in}\ (L_1\cap L_2)(N).\]
Let $f\in(L_1\cap L_2)(N)_+$ and $s>0$ be fixed. Let $(q_n)_{n\pg1}$ denote the associated sequence of so-called Cuculescu's projections. It is defined by induction as follows,
\[q_n:=1_{[0, s]}(q_{n-1}E_n(f)q_{n-1}),\ \ \ n\pg1,\]
with the convention $q_0:=1$. We set
\[q:=\wedge_{n\pg1}q_n.\]
For $n\pg1$, we consider the projection
\[e_n:=q_{n-1}-q_n\]
with the convention $q_0=1$, and we set 
\[e:=\sum_{n\pg1}e_n=1-q.\]
Finally, we set
\[a:=qfq+\sum_{n\pg1}E_{n-1}(e_nfe_n),\]
\[b_d:=\sum_{n\pg1}\big[e_nfe_n-E_{n-1}(e_nfe_n)\big]\]
and 
\[b_o:=f-(a+b_d)=f-qfq-\sum_{n\pg1}e_nfe_n.\]
As $f\in L_1(N)_+$, it is clear that the series above are absolutely convergent in $L_1(N)$, so that $a,b_o,b_d$ are well-defined elements of $L_1(N)$.

\begin{mdframed}[backgroundcolor=black!10,rightline=false,leftline=false,topline=false,bottomline=false,skipabove=10pt]
The decomposition
\[f=a+b\]
where
\[b:=b_d+b_o\]
is the \textit{Calderón-Zygmund decomposition} associated with $f$ and $s$ as introduced in \cite{CadilhacCZ2}. 
\end{mdframed}

\begin{rem}
In \cite{CadilhacCZ2} two different Calderón-Zygmund decompositions. The one we use corresponds to the second one in the cited article, even if it is a bit more complicated than the first one. The reason is, for the first decomposition, the proof of one crucial estimate requires the assumption $\|E_n(f)\|_\infty\underset{n\to\infty}{\to}0$, which is not valid anymore in full generality when the measure does not satisfy the reverse doubling condition for balls with big radius. 
\end{rem}

The proof of the estimates that follow are essentially already contained in \cite{CadilhacCZ2}. For conveniance we give the details.

\begin{mdframed}[skipabove=10pt]
\begin{theo}
We have $a,b_d,b_o\in(L_1\cap L_2)(N)$ with the estimate
\[\|a\|_2^2\lesssim s\|f\|_1.\]
\end{theo}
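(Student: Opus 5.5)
The proof has two parts: first that $a$, $b_d$, $b_o$ lie in $L_1\cap L_2$, then the $L_2$ estimate for $a$. The second part is the real content.

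\emph{Membership.} Since $f\in(L_1\cap L_2)(N)$ and the series defining $a$ and $b_d$ converge in $L_1$, $L_1$ membership is already known. For $L_2$ we use orthogonality. The projections $q$ and $e_n$ ($n\geq1$) are pairwise orthogonal and sum to $1$, so
\[\|qfq\|_2^2+\sum_n\|e_nfe_n\|_2^2\leq\|f\|_2^2.\]
Hence $qfq+\sum_n e_nfe_n$ lies in $L_2$, and so does $b_o=f-qfq-\sum_ne_nfe_n$. Once we show $a\in L_2$, it follows that $b_d=f-a-b_o\in L_2$ as well.

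\emph{The estimate for $a$.} Write
\[a=qfq+\sum_{n\geq1}E_{n-1}(e_nfe_n),\]
with the convention that $E_0$ is the conditional expectation onto $N_0$, built from $\mathcal Q_0$ in the same way. Each summand is positive, and $\|a\|_2^2=\sigma(a^2)$. We bound this by $s\,\sigma(a)$ up to a constant, and $\sigma(a)=\sigma(f)=\|f\|_1$, since $\sigma(qfq)+\sum_n\sigma(e_nfe_n)=\sigma(f)$ by $\sigma$-normality.

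The key facts are the following.

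\begin{enumerate}
\item[(i)] For every $n$, $q_nE_n(f)q_n\leq s\,q_n$, by definition of $q_n$. Letting $n\to\infty$ gives $qfq\leq s$, using $E_n(f)\to f$ and the fact that $q$ lies below every $q_n$, and then lower semicontinuity.
\item[(ii)] $e_n\leq q_{n-1}$ and $e_n\in N_n$, so $e_nfe_n=e_nq_{n-1}fq_{n-1}e_n$ and
\[E_n(e_nfe_n)=e_nE_n(q_{n-1}fq_{n-1})e_n.\]
Here $q_{n-1}\in N_{n-1}\subset N_n$, so $E_n(q_{n-1}fq_{n-1})=q_{n-1}E_n(f)q_{n-1}$.
\item[(iii)] The parent/child doubling: $E_n(f)\lesssim E_{n-1}$-type averages in both directions. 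For $Q\in\mathcal Q_n$ with parent $\hat Q$, Ahlfors regularity together with Lemma \ref{lemma_ChristCubes} gives $|\hat Q|\leq C|Q|$. Hence, for positive $g$, $E_n(g)\leq C\,E_{n-1}(g)$ pointwise on cubes, and similarly $E_{n-1}(g)\lesssim$ a controlled quantity. What we actually need is
\[q_{n-1}E_n(f)q_{n-1}\leq C\,q_{n-1}E_{n-1}(f)q_{n-1}\leq Cs.\]
The second inequality holds by the definition of $q_{n-1}$. The first uses $E_n(g)\leq C E_{n-1}(g)$, which follows from $1_Q g_Q\leq (|\hat Q|/|Q|)\,1_Q g_{\hat Q}$ for $g\geq0$. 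Thus $E_n(e_nfe_n)\leq Cs\,e_n$.
\end{enumerate}

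Put $a_n:=E_{n-1}(e_nfe_n)$. Then $E_{n-1}(e_nfe_n)=E_{n-1}(E_n(e_nfe_n))\leq Cs\,E_{n-1}(e_n)$. This is the difficulty: the supports of the $E_{n-1}(e_n)$ overlap, so the terms are not orthogonal.

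To handle this, use $\sigma(a^2)=\sum_{m,n}\sigma(a_ma_n)+2\sum_n\sigma(qfq\,a_n)+\sigma((qfq)^2)$ and argue as follows.

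\begin{itemize}
\item $\sigma((qfq)^2)\leq s\,\sigma(qfq)$ by (i).
\item For the terms with $m\geq n$, use the conditional expectation property $\sigma(a_m a_n)=\sigma(e_mfe_m\,a_n)$. Since $a_n\in N_{n-1}\subset N_{m-1}$ and $e_m\leq q_{m-1}\leq q_{n-1}$, we get $\sigma(e_mfe_m\,a_n)=\sigma(e_mfe_m\,q_{n-1}a_nq_{n-1})$.
\item One also needs $q_{n-1}a_nq_{n-1}\lesssim s$. This holds because $q_{n-1}E_{n-1}(e_nfe_n)q_{n-1}\leq q_{n-1}E_{n-1}(q_{n-1}fq_{n-1})q_{n-1}$, using $e_nfe_n\leq$ … —
\end{itemize}

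This is the main obstacle. In Cadilhac–Conde-Alonso–Parcet it is resolved by the estimate $a_n\leq Cs\,E_{n-1}(e_n)$ together with
\[\sigma\big(e_mfe_m\,E_{n-1}(e_n)\big)\leq \sigma(e_mfe_m)\]
(since $E_{n-1}(e_n)\leq1$), which makes the double sum bounded by $Cs\sum_m\sigma(e_mfe_m)$, provided only $O(1)$ values of $n$ contribute for each $m$. I would try to reproduce that argument. Failing this, I would fall back on the bound
\[\sum_{n\leq m}E_{n-1}(e_n)\text{ restricted to }q_{m-1}\lesssim1,\]
which should follow from the $L_\infty$-bound $q_{m-1}E_{n-1}(1-q_{n})q_{m-1}\lesssim$ constant via the doubling constant.

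Summing all the pieces yields $\|a\|_2^2\lesssim s\big(\sigma(qfq)+\sum_n\sigma(e_nfe_n)\big)=s\|f\|_1$.
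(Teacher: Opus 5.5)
\textbf{There is a genuine gap in the cross-term estimate, and step (iii) fails at $n=1$.}

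\textbf{The cross terms.} The gap is where your write-up breaks off: the off-diagonal terms $\sigma(a_ma_n)$ are never bounded, and both proposed ways out are false.

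For fixed $m$, arbitrarily many $n\le m$ can contribute. Moreover, $\sum_{n\le m}E_{n-1}(e_n)$ is not bounded on $q_{m-1}$, even for $M=\mathbb{C}$. Take $\mathbb{R}$ with dyadic cubes, $s=1$, and $f=(1+2^{-K})1_{[2^{-K},1/2)}$. Each $[2^{-n},2^{-n+1})$ with $2\le n\le K$ is stopped at level $n$, while $[0,2^{-K})$ is never stopped. On $[0,2^{-K})$ one gets $\sum_{n\le K}E_{n-1}(e_n)=(K-1)/2$.

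Your bound $a_n\le Cs\,E_{n-1}(e_n)\le Cs$ for $n\ge2$ is correct. But it only gives $\sigma(a_ma_n)\le Cs\,\sigma(e_mfe_m)$ pair by pair, with nothing to make the sum over $n$ converge.

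The missing idea is to put the $L_\infty$ bound on the \emph{finer} factor, not the coarser one. For $m\ge n$, you have $a_n\in N_{n-1}\subset N_m$ and $e_m\in N_m$. Using your own (iii),
\[\sigma(a_na_m)=\sigma(a_n\,e_mfe_m)=\sigma\big(a_n\,e_mE_m(f)e_m\big)\le Cs\,\sigma(a_ne_m)\qquad(m\ge2).\]
Since $\sum_{m\ge n}e_m\le 1$, this gives $\sum_{m\ge n}\sigma(a_na_m)\le Cs\,\sigma(a_n)$. Hence $\|\sum_{n\ge2}a_n\|_2^2\le 2Cs\sum_n\sigma(e_nfe_n)\le 2Cs\|f\|_1$. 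The cross terms with $qfq$ are handled the same way, using $qfq\le s\,q$.

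\textbf{The case $n=1$.} Step (iii) fails here: it needs $q_0E_0(f)q_0\le s$, but $q_0=1$ and nothing controls $E_0(f)$. This is not only a defect of your argument; the $n=1$ term genuinely violates the estimate.

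Take $M=\mathbb{C}$ and $f=A1_Q$ with $Q\in\mathcal{Q}_1$ and $A>s$. Then $e_1=1_Q$, $e_n=0$ for $n\ge2$, and $qfq=0$. So $a=E_0(f)$ and $\|a\|_2^2=A^2|Q|^2/|\hat Q|$, whereas $s\|f\|_1=sA|Q|$. The ratio is at least a constant times $A/s$, which is unbounded.

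So the top generation needs an extra hypothesis or a separate treatment. Neither your proposal nor the paper's one-line appeal to \cite[Lemma 1.2]{CadilhacCZ2} supplies one.

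\textbf{What is fine.} The remaining steps are correct:
\begin{itemize}
\item membership in $L_1\cap L_2$ via the pinching inequality;
\item $\sigma(a)=\sigma(f)$;
\item $qfq\le s\,q$;
\item the parent--child bound $E_n(g)\le C\,E_{n-1}(g)$ for $g\ge0$.
\end{itemize}
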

\end{mdframed}
\begin{proof}
It is essentially proved in \cite{CadilhacCZ2}[Lemma 1.2].
\end{proof}

For every $n\pg1$ and $Q\in\mathcal{Q}_n$, we set
\[e_Q:=\frac{1}{|Q|}\int_Qe_n(x)dx,\ \ \ \ \ \ \ \ \ \ \ \ q_Q:=\frac{1}{|Q|}\int_Qq_n(x)dx.\]
Then, if $n\pg1$ we have
\[e_n=\sum_{Q\in\mathcal{Q}_n}1_Q\otimes e_Q,\ \ \ \ \ \ \ \ \ \ q_n=\sum_{Q\in\mathcal{Q}_n}1_Q\otimes q_Q.\]

\begin{mdframed}[backgroundcolor=black!10,rightline=false,leftline=false,topline=false,bottomline=false,skipabove=10pt]
The projection
\[p:=\Big(\bigvee_{n\pg1}\bigvee_{Q\in\mathcal{Q}_n}1_{B_{3d_Q}(y_Q)}\otimes e_Q\Big)^{\bot}\]
is the \textit{Calderón-Zygmund projection} associated with $f$ and $s$. (we recall that $d_Q$ are $y_Q$ are defined in Lemma \ref{lemma_ChristCubes}).
\end{mdframed}

\begin{mdframed}[skipabove=10pt]
\begin{theo}\label{Cadhilac_estimate_p}
We have the estimate $\sigma(p^{\bot})\lesssim s^{-1}\|y\|_1$.
\end{theo}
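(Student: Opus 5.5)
Here $\|y\|_1$ means $\|f\|_1$. The plan is to bound $p^{\bot}$ by the subadditivity of $\sigma$ over suprema of projections, $\sigma(\vee_i r_i)\pp\sum_i\sigma(r_i)$. Applied to the supremum defining $p^{\bot}$, this gives
\[\sigma(p^{\bot})\pp\sum_{n\pg1}\sum_{Q\in\mathcal{Q}_n}|B_{3d_Q}(y_Q)|\,\tau(e_Q).\]
We then compare each ball with its cube. By Lemma \ref{lemma_ChristCubes}, $Q\supset B_{C_1\delta^n}(y_Q)$ and $d_Q\pp C_2\delta^n$. The generalised doubling condition then gives
\[|B_{3d_Q}(y_Q)|\lesssim |B_{C_1\delta^n}(y_Q)|\pp|Q|,\]
with a constant depending only on $\X$ (through $C_1,C_2,d$). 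Since $e_n=\sum_Q 1_Q\otimes e_Q$, we have $|Q|\tau(e_Q)=\sigma((1_Q\otimes1)e_n)$. Summing over $Q\in\mathcal{Q}_n$ gives $\sigma(e_n)$, and summing over $n$ gives
\[\sigma(p^{\bot})\lesssim\sum_{n\pg1}\sigma(e_n)=\sigma(1-q).\]

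It remains to prove the Cuculescu weak-type estimate $\sigma(1-q)\pp s^{-1}\|f\|_1$; this is the only substantive step. I would prove it directly, using that $q_n$ commutes with $q_{n-1}E_n(f)q_{n-1}$ and that $e_n\pp q_{n-1}$ belongs to $N_n$. The inequality
\[e_n\,q_{n-1}E_n(f)q_{n-1}\,e_n\pg s\,e_n\]
holds because $e_n$ is the spectral projection of $q_{n-1}E_n(f)q_{n-1}$ on $(s,\infty)$ inside $q_{n-1}$. Hence
\[s\,\sigma(e_n)\pp\sigma(e_nE_n(f)e_n)=\sigma(E_n(e_nfe_n))=\sigma(e_nfe_n).\]
The $e_n$ are mutually orthogonal and $f\pg0$, so $\sum_n\sigma(e_nfe_n)=\sigma\big((\sum_ne_n)f(\sum_ne_n)\big)\pp\sigma(f)=\|f\|_1$. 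Summing over $n$ yields $s\,\sigma(1-q)\pp\|f\|_1$.

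The main point to check is the trace-preservation step $\sigma(e_nE_n(f)e_n)=\sigma(e_nfe_n)$. It uses the module property of $E_n$ ($e_n\in N_n$) together with $\sigma\circ E_n=\sigma$ on $L_1(N)_+$; both are standard for the filtration $(N_n)_{n\pg1}$ built from Christ cubes.
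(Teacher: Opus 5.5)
Your proof is correct and follows the paper's argument step for step: you use subadditivity of $\sigma$ over the supremum defining $p^{\bot}$, the comparison $|B_{3d_Q}(y_Q)|\lesssim|Q|$ from the Christ cube lemma and doubling, and summation to $\sigma(1-q)$. The only difference is that you prove the Cuculescu bound $s\,\sigma(1-q)\pp\|f\|_1$ directly, whereas the paper simply cites it as a well-known property of Cuculescu's projections. Your direct proof is also correct, reading $q_n$ as the $[0,s]$-spectral projection taken inside $q_{n-1}$, which is what the construction intends.
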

\end{mdframed}

\vspace{5pt}

\begin{lemm}
If $n\pg1$ and $Q\in\mathcal{Q}_n$, then we have $|B_{3d_Q}(y_Q)|\lesssim|Q|$.
\end{lemm}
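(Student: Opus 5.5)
The estimate follows from combining the two metric properties of Christ cubes in Lemma \ref{lemma_ChristCubes} (points (2) and (3)) with the doubling condition. Fix $n\pg1$ and $Q\in\mathcal{Q}_n$. By point (3), $d_Q\pp C_2\delta^n$, so $B_{3d_Q}(y_Q)\subset B_{3C_2\delta^n}(y_Q)$. By point (2), $Q\supset B_{C_1\delta^n}(y_Q)$, hence $|Q|\pg|B_{C_1\delta^n}(y_Q)|$. The proof therefore reduces to comparing the volumes of the two concentric balls $B_{3C_2\delta^n}(y_Q)$ and $B_{C_1\delta^n}(y_Q)$, whose radii differ by the fixed ratio $\lambda:=3C_2/C_1$.

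If $\lambda\pg1$, apply the generalised doubling condition $|B_{\lambda r}(x)|\lesssim\lambda^d|B_r(x)|$ with $x=y_Q$ and $r=C_1\delta^n$. This gives
\[|B_{3d_Q}(y_Q)|\pp|B_{3C_2\delta^n}(y_Q)|\lesssim\lambda^d|B_{C_1\delta^n}(y_Q)|\pp\lambda^d|Q|.\]
Since $\lambda$ depends only on the constants $C_1,C_2$ attached to $\X$, this is the claimed bound. If $\lambda<1$, the inclusion $B_{3C_2\delta^n}(y_Q)\subset B_{C_1\delta^n}(y_Q)$ already gives the bound trivially.

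The only point to watch is that the doubling estimate is needed for all radii $r>0$, including large ones beyond $\diam(\X)$, where Ahlfors regularity alone gives no lower bound. The generalised doubling condition stated earlier is formulated for all $r>0$ and $\lambda\pg1$, so it can be invoked directly; this avoids any case split according to the size of $\delta^n$ relative to $\diam(\X)$. Apart from this, no obstacle is expected.
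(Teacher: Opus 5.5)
Your proposal is correct and follows the paper's own argument. The paper's proof is the one-line remark that the bound follows from points (2) and (3) of Lemma \ref{lemma_ChristCubes} together with the doubling condition; you simply write out explicitly the comparison of the concentric balls $B_{3C_2\delta^n}(y_Q)$ and $B_{C_1\delta^n}(y_Q)$, with $\lambda=3C_2/C_1$.
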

\begin{proof}
This is a direct consequence of Lemma \ref{lemma_ChristCubes} together with the doubling condition.
\end{proof}

\begin{proof}[Proof of Theorem \ref{Cadhilac_estimate_p}]
By using the previous lemma, we have
\begin{align*}
\sigma(p^{\bot})&=\sigma\Big(\bigvee_{n\pg1}\bigvee_{Q\in\mathcal{Q}_n}1_{B_{3d_Q}(y_Q)}\otimes e_Q\Big)\pp\sum_{n\pg1}\sum_{Q\in\mathcal{Q}_n}\sigma(1_{B_{3d_Q}(y_Q)}\otimes e_Q)\\
&=\sum_{n\pg1}\sum_{Q\in\mathcal{Q}_n}|B_{3d_Q}(y_Q)|\tau(e_Q)\lesssim\sum_{n\pg1}\sum_{Q\in\mathcal{Q}_n}|Q|\tau(e_Q)\\
&=\sum_{n\pg1}\sum_{Q\in\mathcal{Q}}\sigma(1_Q\otimes e_Q)=\sum_{n\pg1}\sigma(e_n)=\sigma(e)=\sigma(1-q)\pp s^{-1}\|y\|_1
\end{align*}
where the last estimate comes from the well-known properties of Cuculescu's projections.
\end{proof}

Let $k$ be a \textit{singular kernel} on $\X$, that is a scalar-valued continuous function 
\[\fonctbis{\{(x,y)\in \X\times\X\ :\ x\neq y\}}{\C}{(x,y)}{k(x,y)}\]
that satisfies the \textit{size condition}
\[|k(x,y)|\lesssim\frac{1}{V(x,y)}\]
for every $x,y\in\X$, $x\neq y$, and the \textit{$L_2$-H\"ormander regularity condition}
\[\sum_{m=0}^{\infty}|B_{2^{m+1}r}(y')\setminus B_{2^mr}(y')|^{1/2}\sup_{y\in B_{r/3}(y')}\Big[\int_{B_{2^{m+1}r}(y')\setminus B_{2^mr}(y')}\big|k(x,y)-k(x,y')|^2dx\Big]^{1/2}\lesssim1\]
for every $y'\in\X$ and $r>0$. This condition is taken from \cite{CadilhacCZ2}. As the following proposition shows, it is stronger than the usual H\"ormander regularity condition.

\begin{prop}\label{annexe_hormander_estimate}
For every $y\in\X$ and $r>0$, we have
\begin{equation*}\label{hormander_estimate}
    \sup_{y\in B_{r/3}(y')}\int_{\X\setminus B_{r}(y)}|k(x,y)-k(x,y')|dx\lesssim1\ \ \ \ \ \ \ \ \ \textit{(H\"ormander's regularity condition)}
\end{equation*}
\end{prop}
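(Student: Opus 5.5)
The plan is to deduce the $L_1$ Hörmander condition from the $L_2$-Hörmander regularity condition by decomposing the complement of the ball into dyadic annuli and applying Cauchy--Schwarz on each annulus. (The statement should be read with $y'$ as the fixed centre and $y\in B_{r/3}(y')$ as the variable point.)

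The first step is to compare the two relevant domains. For $y\in B_{r/3}(y')$ and $x\notin B_r(y)$ we have
\[d(x,y')\pg d(x,y)-d(y,y')>r-r/3=2r/3.\]
So $\X\setminus B_r(y)$ is contained in $\X\setminus B_{2r/3}(y')$. To avoid losing the exact radius, I would run the $L_2$-Hörmander condition with radius $r'$ replaced suitably; this is harmless, since that condition is assumed for every radius. Concretely, take $r':=2r/3$. Then $y\in B_{r/3}(y')$ is not contained in $B_{r'/3}(y')=B_{2r/9}(y')$, which is a problem. A cleaner route is to first use the triangle inequality in $k$ through intermediate points. Simpler still, I would invoke the $L_2$ condition at radius $r$ itself and handle the leftover region separately.

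The leftover region is $B_r(y')\setminus B_r(y)$, which is contained in $B_r(y')\setminus B_{2r/3}(y')$ by the inequality above. On this region both $d(x,y)\gtrsim r$ and $d(x,y')\gtrsim r$ hold, so the size condition gives
\[|k(x,y)|+|k(x,y')|\lesssim V(x,y)^{-1}+V(x,y')^{-1}\lesssim r^{-d}.\]
Here I use Ahlfors regularity (or doubling together with $V(x,y)\lesssim V(y,x)$). Since the region has measure at most $|B_r(y')|\lesssim r^d$, its contribution is $\lesssim1$.

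The remaining region is $\X\setminus B_r(y')$, which I split into the annuli $A_m:=B_{2^{m+1}r}(y')\setminus B_{2^mr}(y')$ for $m\pg0$. On each annulus, Cauchy--Schwarz gives
\[\int_{A_m}|k(x,y)-k(x,y')|dx\pp|A_m|^{1/2}\Big[\int_{A_m}|k(x,y)-k(x,y')|^2dx\Big]^{1/2}.\]
Taking the supremum over $y\in B_{r/3}(y')$ and summing over $m$, the total is bounded by exactly the quantity in the $L_2$-Hörmander condition, hence is $\lesssim1$.

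Adding the two contributions gives the claim. The only delicate point is this domain mismatch between $\X\setminus B_r(y)$ and $\X\setminus B_r(y')$, and the size condition on the thin shell resolves it. If $\diam(\X)<r$, the Ahlfors estimate may fail, but I would replace it with the doubling bound $|B_r(y')|\lesssim V(x,y')$ for $x$ in the shell. That bound follows from $B_r(y')\subset B_{3d(x,y')}(x)$ together with doubling, and the same works with $y$ in place of $y'$.
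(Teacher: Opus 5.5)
Your proof is correct. On $\X\setminus B_r(y')$ it is the paper's argument verbatim: cover that set by the annuli $B_{2^{m+1}r}(y')\setminus B_{2^mr}(y')$, apply Cauchy--Schwarz on each, and move the supremum over $y\in B_{r/3}(y')$ inside the sum to reach the $L_2$-H\"ormander quantity.

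What you add is the shell $B_r(y')\setminus B_r(y)\subset B_r(y')\setminus B_{2r/3}(y')$, and this step is not redundant. The paper's first inequality bounds the integral over $\X\setminus B_r(y)$ by a sum over annuli that cover only $\X\setminus B_r(y')$. That tacitly assumes $B_r(y')\subset B_r(y)$, which fails for $y\neq y'$. So the paper's proof really establishes the variant with $\X\setminus B_r(y')$. That variant is the form applied later in the appendix, where the ball is centred at the fixed point $y_{\hat Q}$. Your size-condition estimate on the shell is exactly what is needed for the statement as written.

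In the write-up, drop the false start with $r'=2r/3$ and use only the doubling bound on the shell. For $x$ in the shell you have $B_r(y')\subset B_{3d(x,y)}(x)\cap B_{3d(x,y')}(x)$, since $d(x,y)\ge r$ and $d(x,y')>2r/3$. Doubling then gives $|B_r(y')|\lesssim\min\{V(x,y),V(x,y')\}$, so the shell contributes $\lesssim1$ with no case distinction on $\diam(\X)$. If $r>\diam(\X)$, the domain $\X\setminus B_r(y)$ is empty anyway.
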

\begin{proof}
By H\"older's inequality, we have
\begin{align*}
&\sup_{y\in B_{r/3}(y')}\int_{\X\setminus B_{r}(y)}|k(x,y)-k(x,y')|dx\\
&\pp\sup_{y\in B_{r/3}(y')}\sum_{m=0}^{\infty}\int_{B_{2^{m+1}r}(y')\setminus B_{2^mr}(y')}|k(x,y)-k(x,y')|dx\\
&\pp\sup_{y\in B_{r/3}(y')}\sum_{m=0}^{\infty}|B_{2^{m+1}r}(y')\setminus B_{2^mr}(y')|^{1/2}\Big[\int_{B_{2^{m+1}r}(y')\setminus B_{2^mr}(y')}|k(x,y)-k(x,y')|^2dx\Big]^{1/2}\\
&\pp\sum_{m=0}^{\infty}|B_{2^{m+1}r}(y')\setminus B_{2^mr}(y')|^{1/2}\sup_{y\in B_{r/3}(y')}\Big[\int_{B_{2^{m+1}r}(y')\setminus B_{2^mr}(y')}|k(x,y)-k(x,y')|^2dx\Big]^{1/2}\\
&\lesssim1.
\end{align*}
\end{proof}


Let $T$ be a bounded operator on $L_2(\X)$ that admits the following integral represention
\[(Tf)(x)=\int_{\X}k(x,y)f(y)dy\]
for every $f\in L_2(\X)$ with compact support and almost every $x\in\X$ outside the support of $f$. Then, it is easy to see that the amplified bounded operator $T\otimes I$ on $L_2(N)=L_2(\X)\otimes L_2(M)$ admits the following integral represention
\[(T\otimes I)(f)(x)=\int_{\X}k(x,y)f(y)dy\]
for every $f\in L_2(N)$ with compact support and almost every $x\in\X$ outside the support of $f$. 

\begin{mdframed}[skipabove=10pt]
\begin{theo}\label{Cadhilac_estimate_diag}
We have the estimate
\[\|p(T\otimes I)(b_d)p\|_1\lesssim\|f\|_1.\]
\end{theo}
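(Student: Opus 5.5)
We treat the diagonal part $b_d=\sum_{n\pg1}b_n$, with $b_n:=e_nfe_n-E_{n-1}(e_nfe_n)$, term by term and cube by cube. Since $e_n=\sum_{Q\in\mathcal{Q}_n}1_Q\otimes e_Q$ and $E_{n-1}$ averages over the parent cubes, each $b_n$ splits as a sum over cubes $R\in\mathcal{Q}_{n-1}$ of pieces $b_{n,R}$. Each piece has three properties: it is supported in $R$ (as a function on $\X$), it has vanishing mean $\int_R b_{n,R}(x)dx=0$, and it satisfies $\sum_R\|b_{n,R}\|_1\pp 2\|e_nfe_n\|_1=2\tau\otimes\int(e_nfe_n)$. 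Here we use $f\pg0$, so $\|e_nfe_n\|_1=\sigma(e_nfe_n)$. Summing over $n$ gives $\sum_n\sigma(e_nfe_n)\pp\sigma(f)=\|f\|_1$, because the $e_n$ are mutually orthogonal projections. It therefore suffices to show
\[\|p(T\otimes I)(b_{n,R})p\|_1\lesssim\|b_{n,R}\|_1\]
uniformly in $n$ and $R$, and then sum.

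\textbf{Step 1 (local part).} Let $R\in\mathcal{Q}_{n-1}$ with children $Q$. The left and right supports of $b_{n,R}$ in the $M$-variable are dominated by $e_Q$ and $E_{n-1}$-averages of $e_ne_n$ terms. To handle this we use that $e_n\pp q_{n-1}$ and $q_{n-1}$ is $N_{n-1}$-measurable, so $q_{n-1}E_{n-1}(e_nfe_n)q_{n-1}=E_{n-1}(e_nfe_n)$. The relevant $M$-projection on $R$ is then $\pi_R:=\bigvee_{Q\subset R}e_Q$, or more precisely $q_R-q_{R'}$-type projections. We want $1_{B_{3d_R}(y_R)}\otimes(\text{support projection})\pp p^{\bot}$ after adjusting the dilation constant, exactly as in the construction of $p$. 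Then $p$ kills $(T\otimes I)(b_{n,R})$ on the enlarged ball $\widetilde{R}=B_{3d_{\hat Q}}(y_{\hat Q})$ on both sides where the support projection acts. The most delicate point is that the support of $E_{n-1}(e_nfe_n)$ is not contained in $e_Q$. Following \cite{CadilhacCZ2}, we write $b_{n,R}=e_nfe_n-E_{n-1}(e_nfe_n)$ and observe that $e_nfe_n$ is sandwiched by $e_n$, which $p$ annihilates on $B_{3d_Q}(y_Q)\supset$ a neighbourhood of $Q$. The average term is handled the same way, using $E_{n-1}(e_nfe_n)=E_{n-1}(e_n)\cdots$ with left and right supports dominated by $\bigvee_{Q\subset R}e_Q$.

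\textbf{Step 2 (off-diagonal part via Hörmander).} Off $\widetilde{R}$, we use the kernel representation and the cancellation:
\[(T\otimes I)(b_{n,R})(x)=\int_R\big(k(x,y)-k(x,y_R)\big)b_{n,R}(y)dy,\qquad x\notin\widetilde R.\]
Taking $L_1(M)$-norms and integrating in $x$ by Minkowski's inequality, Proposition \ref{annexe_hormander_estimate} with $r\simeq d_R$ gives $\int_{\X\setminus\widetilde R}\|(T\otimes I)(b_{n,R})(x)\|_{L_1(M)}dx\lesssim\|b_{n,R}\|_1$. Here the ball condition $y\in B_{r/3}(y_R)$ holds since $R\subset B_{d_R}(y_R)$ and $\widetilde R\supset B_{3d_R}(y_R)$. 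Multiplying by the contraction $p$ preserves this bound, since the $L_1$ norm of the compression is bounded by the $L_1$ norm.

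\textbf{Main obstacle.} The hard part is Step 1: showing that $p(T\otimes I)(b_{n,R})p$ vanishes on $\widetilde R$. This requires the right and left $M$-supports of both $e_nfe_n$ and its conditional expectation to lie under projections that $p^\bot$ dominates on $\widetilde R$. The difficulty is that $p$ was defined only with $e_Q$ for $Q\in\mathcal{Q}_n$, not with the parent cube. We will first try to use the nesting $B_{3d_Q}(y_Q)$ versus the parent's ball, together with the identity $E_{n-1}(e_nfe_n)=\sum_R 1_R\otimes\frac{1}{|R|}\sum_{Q\subset R}|Q|e_Qf_Qe_Q$. This shows the support lies under $\bigvee_{Q\subset R}e_Q$, and we will choose the off-diagonal region as $\X\setminus\bigcup_{Q\subset R}B_{3d_Q}(y_Q)$ intersected appropriately. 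If this geometric mismatch fails, we will instead split $T(b_{n,R})$ by the children $Q$, accepting a non-vanishing mean, and use the $L_2$-Hörmander condition together with $\|E_{n-1}(e_nfe_n)\|_\infty\lesssim s$ to control the remainder. This is where the stronger $L_2$ form of the regularity condition is genuinely needed.
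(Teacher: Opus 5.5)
Your Step 2 (cancellation plus H\"ormander's condition outside the parent ball) is correct and matches the far-field part of the argument. The gap is Step 1: its central claim is false, and the fallback does not repair it.

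The projection $p$ is already fixed. For a child $Q\in\mathcal{Q}_n$ it is only guaranteed to be orthogonal to $e_Q$ on the child-scale ball $B_{3d_Q}(y_Q)$. The term of $p^{\bot}$ attached to the parent $R=\hat Q\in\mathcal{Q}_{n-1}$ is $1_{B_{3d_R}(y_R)}\otimes e_R$, and it is useless here. Indeed $e_R$ is orthogonal to $q_R$, while $e_Q\pp q_R$ for every child $Q\subset R$, because $e_n\pp q_{n-1}$ and $q_{n-1}$ is constant on $R$. Nothing else in the definition of $p$ forces $p(x)e_Q=0$ outside $B_{3d_Q}(y_Q)$. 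So in general $p(x)e_Q\neq0$ on $B_{3d_R}(y_R)\setminus B_{3d_Q}(y_Q)$, and $p(T\otimes I)(b_{n,R})p$ does not vanish on $\widetilde R$. Grouping by the parent makes this worse, since the two-sided $M$-support of $b_{n,R}$ is only under $\bigvee_{Q\subset R}e_Q$.

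This leaves an intermediate annulus, between child and parent scale, where neither $p$ nor the cancellation at scale $d_R$ helps, and your proposal has no estimate there. The fallback cannot supply one. A child piece $(1_Q\otimes e_Q)f(1_Q\otimes e_Q)$ taken with a non-vanishing mean has a $T$-image decaying only like $V(x,y)^{-1}$. Its integral over $\X\setminus B_{3d_Q}(y_Q)$ carries a logarithmic loss that is not uniform in the scale.

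\emph{The missing idea} is to index the pieces by the child $Q\in\mathcal{Q}_n$ and give each child its own share of the conditional expectation:
\[b_d^Q:=(1_Q\otimes e_Q)f(1_Q\otimes e_Q)-|\hat Q|^{-1}1_{\hat Q}\otimes|Q|e_Qf_Qe_Q .\]
Then $b_d=\sum_n\sum_{Q\in\mathcal{Q}_n}b_d^Q$ and $\sum\|b_d^Q\|_1\pp2\|f\|_1$. Each $b_d^Q$ is supported in $\hat Q$, has mean zero (no mean needs to be given up), and satisfies $b_d^Q=(1\otimes e_Q)b_d^Q(1\otimes e_Q)$. Since $T\otimes I$ acts only in the $\X$-variable, $p$ kills $(T\otimes I)(b_d^Q)$ on $B_{3d_Q}(y_Q)$. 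Outside $B_{3d_{\hat Q}}(y_{\hat Q})$ your Step 2 applies verbatim.

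On the annulus $B_{3d_{\hat Q}}(y_{\hat Q})\setminus B_{3d_Q}(y_Q)$ you need two further estimates:
\begin{itemize}
\item For the local term, use the size condition, with $d(x,y)\pg2d_Q$ for $y\in Q$ and the bounded ratio $d_{\hat Q}/d_Q$ (from the Christ cube properties and Ahlfors regularity).
\item For the averaged term, use the $L_2$-boundedness of $T$ with Cauchy--Schwarz, together with $|B_{3d_{\hat Q}}(y_{\hat Q})|\lesssim|\hat Q|$.
\end{itemize}
Both give $\lesssim\sigma((1_Q\otimes e_Q)f(1_Q\otimes e_Q))$ rather than $\|b_d^Q\|_1$. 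So the right target is a bound by $\|b_d^Q\|_1+\sigma((1_Q\otimes e_Q)f(1_Q\otimes e_Q))$, not by $\|b_{n,R}\|_1$ alone. Summing over all $Q$ gives $\lesssim\sigma(efe)+2\|f\|_1\pp3\|f\|_1$. Only the ordinary H\"ormander condition is needed for $b_d$; the $L_2$ form is used for $b_o$.
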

\end{mdframed}

\vspace{5pt}

For the proof of the above result, we need a couple of lemmas. First, for every $n\pg1$ and $Q\in\mathcal{Q}_n$, we set 
\[b_d^Q:=(1_Q\otimes e_Q)f(1_Q\otimes e_Q)-|\hat{Q}|^{-1}1_{\hat{Q}}\otimes |Q|e_Qf_Qe_Q\]
where $\hat{Q}\in\mathcal{Q}_{n-1}$ is defined in Lemma \ref{lemma_ChristCubes}.

\begin{lemm}
We have the estimate
\[b_d=\sum_{n\pg1}\sum_{Q\in\mathcal{Q}_n}b_d^Q\ \ \ \ \ \text{in}\ L_2(N)\]
and
\[\sum_{n\pg1}\sum_{Q\in\mathcal{Q}_n}\|b_d^Q\|_1\pp2\|f\|_1.\]
\end{lemm}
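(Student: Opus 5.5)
The plan is to verify both assertions level by level: first compute the $n$-th term of $b_d$ cube by cube, then sum the trace estimates over cubes and over levels.

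\textbf{Step 1: the identity at a fixed level $n\pg1$.} Since $e_n=q_{n-1}-q_n\in N_n$, we have $e_n=\sum_{Q\in\mathcal{Q}_n}1_Q\otimes e_Q$. The cubes in $\mathcal{Q}_n$ are disjoint, so the cross terms $(1_Q\otimes e_Q)f(1_{Q'}\otimes e_{Q'})$ with $Q\neq Q'$ vanish. This gives
\[e_nfe_n=\sum_{Q\in\mathcal{Q}_n}(1_Q\otimes e_Q)f(1_Q\otimes e_Q).\]
The sum converges in $L_1(N)$, since its terms are positive with summable traces. It also converges in $L_2(N)$, since its terms are pairwise orthogonal there (disjoint supports) and $\|e_nfe_n\|_2\pp\|f\|_2$.

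Next we compute the conditional expectation of each piece. On $Q$, $e_Q$ is constant, and $Q\subset\hat Q\in\mathcal{Q}_{n-1}$ by Lemma \ref{lemma_ChristCubes}. The formula $E_{n-1}(g)=\sum_{R\in\mathcal{Q}_{n-1}}1_R\otimes g_R$ then yields
\[E_{n-1}\big((1_Q\otimes e_Q)f(1_Q\otimes e_Q)\big)=|\hat Q|^{-1}1_{\hat Q}\otimes\int_Qe_Qf(x)e_Q\,dx=|\hat Q|^{-1}1_{\hat Q}\otimes|Q|e_Qf_Qe_Q.\]
By $L_1$- and $L_2$-continuity of $E_{n-1}$, we conclude that
\[e_nfe_n-E_{n-1}(e_nfe_n)=\sum_{Q\in\mathcal{Q}_n}b_d^Q,\]
with the sum converging in $L_1(N)$ and in $L_2(N)$.

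\textbf{Step 2: the $L_1$ estimate.} Both pieces of $b_d^Q$ are positive, and $E_{n-1}$ preserves the trace on positive elements. Hence
\[\|b_d^Q\|_1\pp2\,\sigma\big((1_Q\otimes e_Q)f(1_Q\otimes e_Q)\big).\]
Summing over $Q\in\mathcal{Q}_n$ gives at most $2\sigma(e_nfe_n)=2\sigma(f^{1/2}e_nf^{1/2})$. Summing over $n$ gives at most $2\sigma(f^{1/2}ef^{1/2})\pp2\sigma(f)=2\|f\|_1$, because $e\pp1$ and $f\pg0$.

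\textbf{Step 3: convergence of the full series in $L_2(N)$.} Absolute convergence in $L_1$ is now immediate from Step 2, and the $L_1$-limit is $b_d$ by Step 1. For $L_2$, I expect this to be the only delicate point; I would split the full series into its two halves.
\begin{itemize}
\item The series $\sum_ne_nfe_n$ is a pinching of $f\in L_2$ by the orthogonal projections $e_n$. It therefore converges in $L_2(N)$, with $\sum_n\|e_nfe_n\|_2^2\pp\|f\|_2^2$.
\item The series $\sum_nE_{n-1}(e_nfe_n)=a-qfq$ converges in $L_2(N)$. This is exactly the content of the $L_2$ bound on $a$ from \cite{CadilhacCZ2}[Lemma 1.2], already invoked in the preceding theorem.
\end{itemize}
Consequently the series over $n$ of the level-$n$ blocks from Step 1 converges in $L_2(N)$, with inner sums over $Q$ converging in $L_2$ as shown there. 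Since $L_1$- and $L_2$-limits agree in $L_0(N)$, its $L_2$-limit equals $b_d$, which gives $b_d=\sum_{n\pg1}\sum_{Q\in\mathcal{Q}_n}b_d^Q$ in $L_2(N)$.
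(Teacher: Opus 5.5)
Your proof is correct and follows the paper's argument. Like the paper, you decompose each level $e_nfe_n-E_{n-1}(e_nfe_n)$ cube by cube into the $b_d^Q$, bound each level by $2\|e_nfe_n\|_1$, and sum over $n$ using $\sum_n\sigma(e_nfe_n)=\sigma(ef)\pp\|f\|_1$; your Step 3 additionally justifies the $L_2$-convergence that the paper takes for granted, and the one step left implicit there (deducing $L_2$-convergence of $\sum_nE_{n-1}(e_nfe_n)$ from $a-qfq\in L_2(N)$) holds because the terms are positive, so the partial sums increase to an $L_2$ element and order continuity of the $L_2$-norm applies.
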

\begin{proof}
We have
\[b_d=\sum_{n\pg1}\big[e_nfe_n-E_{n-1}(e_nfe_n)\big]\ \ \ \ \ \text{in}\ \ L_2(N).\]
An easy computation shows that
\[e_nfe_n-E_{n-1}(e_nfe_n)=\sum_{Q\in\mathcal{Q}}b_d^Q\\ \ \ \ \ \text{in}\ \ (L_1\cap L_2)(N)\]
and
\[\sum_{Q\in\mathcal{Q}_n}\|b_d^Q\|_1\pp 2\|e_nfe_n\|_1.\]
The lemma directly follows.
\end{proof}

\begin{lemm}
Let $n\pg1$ and $Q\in\mathcal{Q}_n$. Then 
\[\int_{\X\setminus B_{3d_{\hat{Q}}}(y_{\hat{Q}})}\|(T\otimes I)(b_d^Q)(x)\|_1dx\lesssim\|b_d^Q\|_1.\]
\end{lemm}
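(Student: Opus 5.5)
We use the cancellation of $b_d^Q$ together with Hörmander's regularity condition (Proposition \ref{annexe_hormander_estimate}).

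\textbf{Cancellation.} By construction
\[\int_\X b_d^Q(y)\,dy=\int_Q e_Qf(y)e_Q\,dy-|Q|e_Qf_Qe_Q=|Q|e_Qf_Qe_Q-|Q|e_Qf_Qe_Q=0\]
in $L_1(M)$. Moreover, $b_d^Q$ is supported in $\hat{Q}$.

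\textbf{Setting up the integral representation.} Write $\hat Q\subset B_{d_{\hat Q}}(y_{\hat Q})$, and set $y':=y_{\hat Q}$ and $r:=3d_{\hat Q}$. For $x\notin B_{r}(y')$, the point $x$ lies outside the support of $b_d^Q$, so the amplified integral representation gives
\[(T\otimes I)(b_d^Q)(x)=\int_{\hat Q}k(x,y)\,b_d^Q(y)\,dy=\int_{\hat Q}\big(k(x,y)-k(x,y')\big)\,b_d^Q(y)\,dy,\]
where the second equality uses the vanishing mean. The Bochner integral is taken in $L_1(M)$, and we use that $b_d^Q\in L_1(\X,L_1(M))$, which follows from the $L_1\cap L_2$ membership.

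\textbf{Estimating the norm.} Taking $L_1(M)$-norms and applying Minkowski/Fubini gives
\[\int_{\X\setminus B_r(y')}\|(T\otimes I)(b_d^Q)(x)\|_1\,dx\pp\int_{\hat Q}\|b_d^Q(y)\|_1\Big(\int_{\X\setminus B_r(y')}|k(x,y)-k(x,y')|\,dx\Big)\,dy.\]

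\textbf{Main obstacle: matching the geometry to Hörmander's condition.} To conclude we need the inner integral to be $\lesssim1$ uniformly in $y\in\hat Q$, via Proposition \ref{annexe_hormander_estimate}. Every $y\in\hat Q$ satisfies $d(y,y')\pp d_{\hat Q}=r/3$. The proposition, however, requires $y\in B_{r/3}(y')$, which is an open ball, and its integration region is stated as $\X\setminus B_r(y)$ rather than $\X\setminus B_r(y')$. In its proof the sum actually runs over annuli centered at $y'$, so the bound genuinely holds over $\X\setminus B_r(y')$.

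For the boundary issue, I would work with the slightly larger radius $r'=3d_{\hat Q}+\epsilon$, or equivalently note that the proof of the proposition only uses the annuli around $y'$ together with the $L_2$-Hörmander condition at radius $r$. Points at distance exactly $r/3$ are then handled by a limiting argument or by a harmless doubling adjustment, since the constant does not depend on $r$.

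\textbf{Conclusion.} With the inner integral bounded by a constant, the outer integral reduces to $\int_{\hat Q}\|b_d^Q(y)\|_1\,dy=\|b_d^Q\|_1$, which gives the claimed estimate.
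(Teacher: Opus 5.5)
Your proof is correct and follows the paper's argument: $b_d^Q$ is supported in $\hat Q$ and has vanishing integral, which lets you subtract the kernel at a fixed base point, and then Fubini and H\"ormander's condition finish the estimate. Your base point $y_{\hat Q}$ fits the H\"ormander geometry better than the paper's $y_Q$. The boundary issue you flag is settled cleanly by applying H\"ormander's condition with radius $6d_{\hat Q}$, so that $\hat Q\subset B_{2d_{\hat Q}}(y_{\hat Q})$, and bounding the annulus $B_{6d_{\hat Q}}(y_{\hat Q})\setminus B_{3d_{\hat Q}}(y_{\hat Q})$ by the size condition and doubling. This is more robust than a limiting argument, which misses the sphere $d(x,y_{\hat Q})=3d_{\hat Q}$, and spheres need not be null.
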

\begin{proof}
As the support of $b_d^Q$ is included in $\hat{Q}$, and since $\int_\X b_d^Q(y)dy=0$, for $x\in\X\setminus B_{3d_{\hat{Q}}}(y_{\hat{Q}})$ we can use the kernel representation to write
\begin{align*}
(T\otimes I)(b_d^Q)(x)&=\int_\X k(x,y)b_d^Q(y)dy\\
&=\int_\X\big[k(x,y)-k(x,y_Q)\big]b_d^Q(y)dy\\
\end{align*}
so that
\[\|(T\otimes I)(b_d^Q)(x)\|_1\pp\int_{B_{d_{\hat{Q}}(y_{\hat{Q}})}}|k(x,y)-k(x,y_Q)|\|b_d^Q(y)\|_1dy.\]
Thus, we obtain
\begin{align*}
\int_{\X\setminus B_{3d_{\hat{Q}}}(y_{\hat{Q}})}\|(T\otimes I)(b_d^Q)(x)\|_1dx&\pp\int_{\X\setminus B_{3d_{\hat{Q}}}(y_{\hat{Q}})}\Big[\int_{B_{d_{\hat{Q}}(y_{\hat{Q}})}}|k(x,y)-k(x,y_Q)|\|b_d^Q(y)\|_1dy\Big]dx\\
&=\int_{B_{d_{\hat{Q}}(y_{\hat{Q}})}}\Big[\int_{\X\setminus B_{3d_{\hat{Q}}}(y_{\hat{Q}})}|k(x,y)-k(x,y_Q)|dx\Big]\|b_d^Q(y)\|_1dy\\
&\lesssim\int_{\hat{Q}}\|b_d^Q(y)\|_1dy=\|b_d^Q\|_1
\end{align*}
where we used H\"ormander's regularity condition \eqref{hormander_estimate} in the last estimate.
\end{proof}

\begin{lemm}
Let $Q\in\mathcal{Q}$. Then 
\[\int_{B_{3d_{\hat{Q}}}(y_{\hat{Q}})\setminus B_{3d_Q}(y_Q)}\|(T\otimes I)(b_d^Q)(x)\|_1dx\lesssim\int_Q\|e_Qf(x)e_Q\|_1dx.\]
\end{lemm}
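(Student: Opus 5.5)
We split $b_d^Q$ into its two natural pieces and estimate each separately; no cancellation of $b_d^Q$ is needed, because the region of integration is at bounded scale relative to $Q$. Write
\[b_d^Q=g_1-g_2,\qquad g_1:=(1_Q\otimes e_Q)f(1_Q\otimes e_Q),\qquad g_2:=|\hat{Q}|^{-1}1_{\hat{Q}}\otimes c,\qquad c:=|Q|e_Qf_Qe_Q=\int_Q e_Qf(y)e_Q\,dy .\]
Since $f\in(L_1\cap L_2)(N)$, both pieces lie in $(L_1\cap L_2)(N)$. Moreover $\|c\|_1\pp\int_Q\|e_Qf(y)e_Q\|_1dy$ and $\|g_1\|_1=\int_Q\|e_Qf(y)e_Q\|_1dy$. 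So it suffices to bound the integral of $\|(T\otimes I)(g_j)(x)\|_1$ over $A:=B_{3d_{\hat{Q}}}(y_{\hat{Q}})\setminus B_{3d_Q}(y_Q)$ by these quantities.

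The key geometric input is that $|A|\pp|B_{3d_{\hat{Q}}}(y_{\hat{Q}})|\lesssim|\hat{Q}|\lesssim|Q|$. The first inequality $|B_{3d_{\hat Q}}(y_{\hat Q})|\lesssim|\hat Q|$ is the lemma preceding Theorem \ref{Cadhilac_estimate_p}. The second, $|\hat{Q}|\lesssim|Q|$, follows from Lemma \ref{lemma_ChristCubes}: we have $|\hat{Q}|\lesssim(C_2\delta^{n-1})^d$ and $|Q|\gtrsim|B_{C_1\delta^n}(y_Q)|\gtrsim(C_1\delta^n)^d$ by Ahlfors regularity, whose ratio depends only on $\X$. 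If radii exceed $\diam(\X)$, we instead use that a ball of radius at least $\diam(\X)$ is the whole space, whose measure dominates everything.

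\textbf{Term $g_1$.} Since $Q$ has diameter $d_Q$ and contains $y_Q$, for $x\in A$ and $y\in Q$ we have $d(x,y)\pg 3d_Q-d_Q=2d_Q$. In particular $x$ lies outside the support of $g_1$, so the kernel representation of $T\otimes I$ applies:
\[\|(T\otimes I)(g_1)(x)\|_1\pp\int_Q|k(x,y)|\,\|g_1(y)\|_1dy\lesssim\int_Q V(x,y)^{-1}\|g_1(y)\|_1dy .\]
By Ahlfors regularity, $V(x,y)=|B_{d(x,y)}(x)|\gtrsim\min(2d_Q,\diam\X)^d\gtrsim|Q|$. Integrating over $x\in A$ then gives
\[\int_A\|(T\otimes I)(g_1)(x)\|_1dx\lesssim\frac{|A|}{|Q|}\|g_1\|_1\lesssim\|g_1\|_1 .\]

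\textbf{Term $g_2$.} Here the support $\hat{Q}$ meets $A$, so the kernel representation cannot be used. Instead we use the tensor structure, $(T\otimes I)(1_{\hat{Q}}\otimes c)=T(1_{\hat{Q}})\otimes c$, which holds for $c\in(L_1\cap L_2)(M)$. This gives
\[\int_A\|(T\otimes I)(g_2)(x)\|_1dx=\frac{\|c\|_1}{|\hat{Q}|}\int_A|T(1_{\hat{Q}})(x)|\,dx\pp\frac{\|c\|_1}{|\hat{Q}|}|A|^{1/2}\|T\|_{L_2\to L_2}|\hat{Q}|^{1/2}\lesssim\|c\|_1 ,\]
by Cauchy--Schwarz and $|A|\lesssim|\hat{Q}|$.

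Combining the two terms yields the lemma. The only delicate point is the geometric bookkeeping: checking $|B_{3d_{\hat{Q}}}(y_{\hat{Q}})|\lesssim|Q|$ and $V(x,y)\gtrsim|Q|$ uniformly, including when radii exceed $\diam(\X)$, where Ahlfors regularity is only assumed for $r<\diam(\X)$. We handle this case with the whole-space observation above.
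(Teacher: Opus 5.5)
Your proof is correct and follows the paper's argument. You use the same splitting of $b_d^Q$ into the localized piece $(1_Q\otimes e_Q)f(1_Q\otimes e_Q)$, handled through the kernel representation and the size condition, and the averaged piece $c_d^Q$, handled through $(T\otimes I)(1_{\hat Q}\otimes c)=T(1_{\hat Q})\otimes c$, Cauchy--Schwarz and the $L_2$-boundedness of $T$.

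The only deviation is in the first term. You use the pointwise bound $V(x,y)\gtrsim|Q|$ together with $|A|\lesssim|Q|$, whereas the paper uses a Fubini argument with the annular decay estimate and $d_{\hat Q}\lesssim d_Q$; your version is a harmless, slightly more elementary variant. In the bounded case, what your upper estimates actually need is the doubling bound $|\X|\lesssim(\diam\X)^d$, not merely that the whole space has large measure.
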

\begin{proof}
We set 
\[c_d^Q:=|\hat{Q}|^{-1}1_{\hat{Q}}\otimes |Q|e_Qf_Qe_Q=|\hat{Q}|^{-1}1_{\hat{Q}}\otimes\int_Qe_Qf(x)e_Qdx\]
so that 
\[b_d^Q=(1_Q\otimes e_Q)f(1_Q\otimes e_Q)-c_d^Q.\]
As the support of $(1_Q\otimes e_Q)f(1_Q\otimes e_Q)$ is clearly included in $Q$, for $x\in B_{3d_{\hat{Q}}}(y_{\hat{Q}})\setminus B_{3d_Q}(y_Q)$ we can use the integral representation to write
\[(T\otimes I)(b_d^Q)(x)=\int_{Q}k(x,y)e_Qf(y)e_Qdy-(T\otimes I)(c_d^Q)(x)\]
and by the size condition on the kernel, we obtain the estimate
\[\|(T\otimes I)(b_d^Q)(x)\|_1\lesssim\int_{Q}\frac{1}{V(x,y)}\|e_Qf(y)e_Q\|_1dy+\|(T\otimes I)(c_d^Q)(x)\|_1.\]
One the one hand, by \eqref{lemma_doubling_estimate_0} we have
\begin{align*}
&\int_{B_{3d_{\hat{Q}}}(y_{\hat{Q}})\setminus B_{3d_Q}(y_Q)}\Big[\int_{Q}\frac{1}{V(x,y)}\|e_Qf(y)e_Q\|_1dy\Big]dx\\
&=\int_{Q}\Big[\int_{B_{3d_{\hat{Q}}}(y_{\hat{Q}})\setminus B_{3d_Q}(y_Q)}\frac{dx}{V(x,y)}\Big]\|e_Qf(y)e_Q\|_1dy\\
&\pp\int_{Q}\Big[\int_{B_{4d_{\hat{Q}}}(y)\setminus B_{2d_Q}(y)}\frac{dx}{V(x,y)}\Big]\|e_Qf(y)e_Q\|_1dy\\
&\pp 4d_{\hat{Q}}\int_{Q}\Big[\int_{\X\setminus B_{2d_Q}(y)}\frac{d(x,y)^{-1}}{V(x,y)}dx\Big]\|e_Qf(y)e_Q\|_1dy\\
&\lesssim 3d_{\hat{Q}}[2d_{Q}]^{-1}\int_{Q}\|e_Qf(y)e_Q\|_1dy\\
&\lesssim\int_{Q}\|e_Qf(y)e_Q\|_1dy
\end{align*}
where the last estimate comes from Lemma \ref{lemma_ChristCubes} together with Ahlfors $d$-regularity. One the other hand, by the very definition of $c_d^Q$, we have
\[(T\otimes I)(c_d^Q)(x)=T\big[|\hat{Q}|^{-1}1_{\hat{Q}}\big](x)\int_Qe_Qf(x)e_Qdx\]
so that
\begin{align*}
&\int_{B_{3d_{\hat{Q}}}(y_{\hat{Q}})}\|(T\otimes I)(c_d^Q)(x)\|_1dx=\int_{B_{3d_{\hat{Q}}}(y_{\hat{Q}})}\big|T\big[|\hat{Q}|^{-1}1_{\hat{Q}}\big](x)\big|dx\Big\|\int_Qe_Qf(x)e_Qdx\Big\|_1\\
&\pp|B_{3d_{\hat{Q}}}(y_{\hat{Q}})|^{1/2}\Big[\int_{B_{3d_{\hat{Q}}}(y_{\hat{Q}})}\big|T\big[|\hat{Q}|^{-1}1_{\hat{Q}}\big](x)\big|^2dx\Big]^{1/2}\int_{Q}\|e_Qf(x)e_Q\|_1dx\\
&\pp|B_{3d_{\hat{Q}}}(y_{\hat{Q}})|^{1/2}\|T\|_{L_2(\X)\to L_2(\X)}\Big[\int_{B_{3d_{\hat{Q}}}(y_{\hat{Q}})}\big|\big[|\hat{Q}|^{-1}1_{\hat{Q}}\big](x)\big|^2dx\Big]^{1/2}\int_{Q}\|e_Qf(x)e_Q\|_1dx\\
&=|B_{3d_{\hat{Q}}}(y_{\hat{Q}})|^{1/2}\|T\|_{L_2(\X)\to L_2(\X)}|\hat{Q}|^{-1/2}\int_{Q}\|e_Qf(x)e_Q\|_1dx\\
&\lesssim\int_{Q}\|e_Qf(x)e_Q\|_1dx
\end{align*}
where the last estimate is again a consequence of Lemma \ref{lemma_ChristCubes} together with Ahlfors $d$-regularity. By combining the previous estimates, we get the desired conclusion.
\end{proof}

\begin{lemm}\label{Cadhilac_lemma_support}
Let $n\pg1$ and $Q\in\mathcal{Q}_n$. Then the support of $p(T\otimes I)(b_d^Q)p$ is included in $\X\setminus B_{3d_Q}(y_Q)$. 
\end{lemm}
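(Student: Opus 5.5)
The plan is to use two facts. First, $b_d^Q$ is sandwiched between copies of the constant-in-$x$ projection $1\otimes e_Q$, and $T\otimes I$ respects this sandwiching. Second, $p$ is orthogonal to $1_{B_{3d_Q}(y_Q)}\otimes e_Q$ by the very definition of the Calderón-Zygmund projection. Write $B:=B_{3d_Q}(y_Q)$. I will interpret ``support included in $\X\setminus B$'' in the form needed for the subsequent $L_1$-estimate, namely
\[p(T\otimes I)(b_d^Q)p=p(1_{\X\setminus B}\otimes 1)(T\otimes I)(b_d^Q)(1_{\X\setminus B}\otimes 1)p.\]
This identity gives $\|p(T\otimes I)(b_d^Q)p\|_1\pp\int_{\X\setminus B}\|(T\otimes I)(b_d^Q)(x)\|_1dx$.

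\textbf{Step 1.} Check that $b_d^Q=(1\otimes e_Q)b_d^Q(1\otimes e_Q)$. The first term $(1_Q\otimes e_Q)f(1_Q\otimes e_Q)$ clearly has this form. The second term $|\hat{Q}|^{-1}1_{\hat{Q}}\otimes|Q|e_Qf_Qe_Q$ also does, since $e_Q$ is a projection in $M$.

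\textbf{Step 2.} Show that $T\otimes I$ commutes with left and right multiplication by $1\otimes a$ for $a\in M$. On elementary tensors $g\otimes m$ this reads $(T\otimes I)(g\otimes am)=Tg\otimes am$, and similarly on the right. The identity then extends by linearity and $L_2$-continuity. It follows that
\[(T\otimes I)(b_d^Q)=(1\otimes e_Q)(T\otimes I)(b_d^Q)(1\otimes e_Q).\]

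\textbf{Step 3.} By definition $p\pp(1_B\otimes e_Q)^{\bot}$, so $p(1_B\otimes e_Q)=0$ and, taking adjoints, $(1_B\otimes e_Q)p=0$. Since $1_B\otimes 1$ and $1\otimes e_Q$ are commuting projections, we get
\[p(1\otimes e_Q)=p(1\otimes e_Q)(1_{\X\setminus B}\otimes1)+p(1_B\otimes e_Q)=p(1\otimes e_Q)(1_{\X\setminus B}\otimes 1),\]
and symmetrically $(1\otimes e_Q)p=(1_{\X\setminus B}\otimes1)(1\otimes e_Q)p$.

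Inserting these identities into the expression from Step 2, and then absorbing the factors $1\otimes e_Q$ back via Step 2, yields the claimed identity. The trace-norm bound follows from Hölder's inequality, since $\|p\|_\infty\pp1$.

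The only delicate point is Step 2. Here I have to justify that the unbounded-support element $(T\otimes I)(b_d^Q)\in L_2(N)$ can legitimately be multiplied by $1\otimes e_Q$. This holds because $L_2(N)$ is an $N$-bimodule, and the commutation is verified on the dense algebraic tensor product $L_2(\X)\odot L_2(M)$.
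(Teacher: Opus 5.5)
Your proof is correct and follows the paper's argument. Like the paper, you use $b_d^Q=(1\otimes e_Q)b_d^Q(1\otimes e_Q)$, transfer this to $(T\otimes I)(b_d^Q)$, and then use that $p$ annihilates $1_{B_{3d_Q}(y_Q)}\otimes e_Q$. The paper phrases the last step pointwise ($p(x)\perp e_Q$ for $x\in B_{3d_Q}(y_Q)$); you phrase it as the global operator identity $p(1_B\otimes e_Q)=0$, and you also justify explicitly why $T\otimes I$ commutes with multiplication by $1\otimes e_Q$, which the paper takes for granted.
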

\begin{proof}
By the very definition of $b_d^Q$, we have $b_d^Q=(1\otimes e_Q)b_d^Q(1\otimes e_Q)$, so $(T\otimes I)(b_d^Q)=(1\otimes e_Q)(T\otimes I)(b_d^Q)(1\otimes e_Q)$, i.e. $(T\otimes I)(b_d^Q)(x)=e_Q(T\otimes I)(b_d^Q)(x)e_Q$ for $x\in\X$. Now, if $x\in B_{3d_Q}(y_Q)$, then by definition we know that $p(x)$ and $e_Q$ are orthogonal, and thus we get $p(x)(T\otimes I)(b_d^Q)(x)p(x)=0$ as desired.
\end{proof}

\begin{proof}[Proof of Theorem \ref{Cadhilac_estimate_diag}]
Let $n\pg1$ and $Q\in\mathcal{Q}_n$. By the previous lemma, we know that the support of $p(T\otimes I)(b_d^Q)p$ is included in $\X\setminus B_{3d_Q}(y_Q)$, so that we can write
\begin{align*}
\|p(T\otimes I)(b_d^Q)p\|_1&=\int_{\X\setminus B_{3d_Q}(y_Q)}\|p(x)[(T\otimes I)(b_d^Q)](x)p(x)\|_1dx\\
&\pp\int_{B_{3d_{\hat{Q}}}(y_{\hat{Q}})\setminus B_{3d_Q}(y_Q)}\|(T\otimes I)(b_d^Q)(x)\|_1dx+\int_{\X\setminus B_{3d_{\hat{Q}}}(y_{\hat{Q}})}\|(T\otimes I)(b_d^Q)(x)\|_1dx\\
&\lesssim\int_Q\|e_Qf(x)e_Q\|_1dx+\|b_d^Q\|_1\\
&=\sigma((1_Q\otimes e_Q)f(1_Q\otimes e_Q))+\|b_d^Q\|_1.
\end{align*}
Thus
\begin{align*}
    \sum_{n\pg1}\sum_{Q\in\mathcal{Q}_n}\|p(T\otimes I)(b_d^Q)p\|_1&\lesssim\sum_{n\pg1}\sum_{Q\in\mathcal{Q}_n}\sigma((1_Q\otimes e_Q)f(1_Q\otimes e_Q))+\sum_{n\pg1}\sum_{Q\in\mathcal{Q}_n}\|b_d^Q\|_1\\
    &=\sigma(efe)+\sum_{n\pg1}\sum_{Q\in\mathcal{Q}_n}\|b_d^Q\|_1\pp3\|f\|_1.
\end{align*}
As we know that $b_d=\sum_{n\pg1}\sum_{Q\in\mathcal{Q}_n}b_d^Q$ in $L_2(N)$, we deduce that we also have $p(T\otimes I)(b_d)p=\sum_{n\pg1}\sum_{Q\in\mathcal{Q}_n}p(T\otimes I)(b_d^Q)p$ in $L_2(N)$. By the previous estimate it follows that we actually have $p(T\otimes I)(b_d)p=\sum_{n\pg1}\sum_{Q\in\mathcal{Q}_n}p(T\otimes I)(b_d^Q)p$ in $(L_1\cap L_2)(N)$ with
\[\|p(T\otimes I)(b_d)p\|_1\lesssim3\|f\|_1.\]
The proof is complete.
\end{proof}

\begin{mdframed}[skipabove=10pt]
\begin{theo}\label{Cadhilac_estimate_off}
We have the estimate
\[\|pT(b_o)p\|_1\lesssim\|f\|_1.\]
\end{theo}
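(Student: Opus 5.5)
Writing $e=1-q$, we have $b_o = f - qfq - \sum_n e_n f e_n$, which collects the off-diagonal pieces. We will follow the strategy of \cite{CadilhacCZ2} for the off-diagonal term. Since $q+\sum_n e_n=1$ and the $e_n$ are mutually orthogonal, we expand
\[b_o=\sum_{n\pg1}\big(e_nfq_n+q_nfe_n\big),\]
using $q_{n-1}=e_n+q_n$ and telescoping. Each $q_n,e_n$ lies in $N_n$, i.e.\ is constant on each $Q\in\mathcal{Q}_n$: $e_n=\sum_{Q}1_Q\otimes e_Q$ and $q_n=\sum_Q 1_Q\otimes q_Q$. Hence
\[b_o=\sum_{n\pg1}\sum_{Q\in\mathcal{Q}_n}b_o^Q,\qquad b_o^Q:=(1_Q\otimes e_Q)f(1_Q\otimes q_Q)+(1_Q\otimes q_Q)f(1_Q\otimes e_Q).\]
We treat the two halves symmetrically (one is the adjoint of the other when $T$ is replaced by its adjoint kernel), so it suffices to handle $\beta^Q:=(1_Q\otimes e_Q)f(1_Q\otimes q_Q)$.

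The first step is localisation, exactly as in Lemma~\ref{Cadhilac_lemma_support}. We have $T(\beta^Q)=(1\otimes e_Q)T(\beta^Q)(1\otimes q_Q)$. Since $p(x)\perp e_Q$ for $x\in B_{3d_Q}(y_Q)$, the term $pT(\beta^Q)p$ is supported in $\X\setminus B_{3d_Q}(y_Q)$. For $x$ in that region, $x$ lies outside the support $Q$ of $\beta^Q$, so the kernel representation applies:
\[pT(\beta^Q)p(x)=\int_Q k(x,y)\,p(x)e_Qf(y)q_Qp(x)\,dy.\]

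The second step is to obtain cancellation without a mean-zero property, and this is the main obstacle. The term $\beta^Q$ does not have vanishing integral, and the size condition alone gives only a divergent $\int_{\X\setminus B}V(x,y)^{-1}dx$. Following \cite{CadilhacCZ2}, I would insert and subtract $k(x,y_Q)$:
\[\int_Q[k(x,y)-k(x,y_Q)]e_Qf(y)q_Q\,dy\;+\;k(x,y_Q)\,e_Q\Big(\int_Q f\Big)q_Q .\]
The first piece is controlled by Hörmander regularity (Proposition~\ref{annexe_hormander_estimate}) by $\int_Q\|e_Qf(y)q_Q\|_1dy$.

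Two sums remain to be controlled. The first is $\sum_Q\int_Q\|e_Qf(y)q_Q\|_1\,dy$. The second comes from the mean term, and here we use $|Q|\,e_Qf_Qq_Q$. By the Cuculescu construction we have $q_nE_n(f)q_n\pp s\,q_n$, and $e_n\pp q_{n-1}$ with $q_{n-1}E_{n-1}(f)q_{n-1}\pp s$. For the mean term I would instead use the $L_2$-Hörmander condition: write $k(x,y_Q)$ integrated over the dyadic annuli $B_{2^{m+1}r}\setminus B_{2^mr}$ around $y_Q$, and pair it against $e_Qf_Qq_Q$ with a Cauchy--Schwarz/Hölder estimate $\|e_Qf_Qq_Q\|_1\pp\|e_Qf_Q^{1/2}\|_2\|f_Q^{1/2}q_Q\|_2$. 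For the first sum, Cauchy--Schwarz gives
\[\sum\|e_Qf^{1/2}\|_2\|f^{1/2}q_Q\|_2\pp\big(\sigma(efe)\big)^{1/2}\Big(\sum_Q\sigma\big((1_Q\otimes q_Q)f(1_Q\otimes q_Q)\big)\Big)^{1/2}.\]
This is where the real difficulty sits. The factor $\sum_n\sigma(q_nfq_n\,1_{e_n\text{-support}})$ must be bounded using $q_nE_n(f)q_n\pp s$ and $\sigma(e)\pp s^{-1}\|f\|_1$, which gives $\lesssim s\,\sigma(e)\lesssim\|f\|_1$. I expect that one must use the $L_2$-Hörmander condition precisely here, following \cite{CadilhacCZ2}[Section 2]. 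This is done by splitting $\|pT(\beta^Q)p\|_1$ as $\|T(\beta^Q)\|_{L_1(\text{annulus})}\pp|\text{annulus}|^{1/2}\|\cdot\|_{L_2(\text{annulus})}$ and estimating in $L_2$, where the orthogonality of the $e_Q$ across different cubes lets the square sums collapse.

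Summing over $Q$ and $n$ then gives $\|pT(b_o)p\|_1\lesssim\|f\|_1$. The convergence in $L_2$ of the series for $b_o$ is justified as in the proof of Theorem~\ref{Cadhilac_estimate_diag}.
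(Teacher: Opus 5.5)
\textbf{Verdict: there is a genuine gap, in fact two linked ones.}

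\textbf{First gap: you miss the cancellation.} You claim that $\beta^Q=(1_Q\otimes e_Q)f(1_Q\otimes q_Q)$ has no cancellation. This is false, and it is the key idea you are missing. Recall that $q_n\le q_{n-1}$ and $e_n=q_{n-1}-q_n$, and both commute with $q_{n-1}E_n(f)q_{n-1}$. Hence
\[
e_nE_n(f)q_n=e_n\,(q_{n-1}E_n(f)q_{n-1})\,q_n=(q_{n-1}E_n(f)q_{n-1})\,e_nq_n=0,
\]
that is, $e_Qf_Qq_Q=0$ for every $Q\in\mathcal{Q}_n$. So $\int_Q e_Qf(y)q_Q\,dy=|Q|\,e_Qf_Qq_Q=0$. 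This is exactly why the paper writes $b_o^Q$ with $f-1\otimes f_Q$ (which changes nothing) and can subtract $k(x,y_Q)$ for free. Your ``mean term'' $k(x,y_Q)e_Q(\int_Qf)q_Q$ is therefore zero. Were it nonzero, nothing in the hypotheses could control it. The size condition only gives $|k(x,y_Q)|\lesssim V(x,y_Q)^{-1}$, which is not integrable over $\X\setminus B_{3d_Q}(y_Q)$ with a bound uniform in $Q$. Both H\"ormander conditions only concern differences $k(x,y)-k(x,y_Q)$.

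\textbf{Second gap: the $L_1$-H\"ormander route cannot close.} It reduces everything to $\sum_Q\int_Q\|e_Qf(y)q_Q\|_1\,dy\lesssim\|f\|_1$, and your symmetric Cauchy--Schwarz does not give this. Using $q_Qf_Qq_Q\le s\,q_Q$, the factor $\sum_{Q:\,e_Q\neq0}|Q|\,\tau(q_Qf_Qq_Q)$ is bounded only by $s\sum|Q|\,\tau(q_Q)$, not by $s\,\sigma(e)$. But $\tau(q_Q)$ is not comparable to $\tau(e_Q)$, and it may even be infinite. The alternative bound $\tau(q_Qf_Qq_Q)\le\tau(f_Q)$ loses a factor equal to the number of generations. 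An $L_1$ bound on off-diagonal pieces is exactly what is not available in the noncommutative setting.

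\textbf{What the paper does instead.} Its argument is asymmetric. Fix $x\notin B_{3d_Q}(y_Q)$ and set $g=k(x,\cdot)-k(x,y_Q)$. The operator Cauchy--Schwarz (Kadison--Schwarz) inequality gives
\[\Big\|\int_Q g(y)\,e_Qf(y)q_Q\,dy\Big\|_1\le\Big\|\Big(\int_Q|g(y)|^2e_Qf(y)e_Q\,dy\Big)^{1/2}\Big\|_1\,\Big\|\int_Qq_Qf(y)q_Q\,dy\Big\|_\infty^{1/2}.\]
\begin{itemize}
\item The $q$-side is kept in operator norm, where it is at most $(s|Q|)^{1/2}$ by the Cuculescu construction.
\item The $e$-side is bounded through $\|e_QXe_Q\|_1\le\tau(e_Q)^{1/2}\|X\|_2$.
\item Because $|g|^2$ now sits inside the integral, integrating in $x$ over dyadic annuli with H\"older is exactly where the $L_2$-H\"ormander condition is needed. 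This yields $\|pT(b_o^Q)p\|_1\lesssim s^{1/2}\sigma(1_Q\otimes e_Q)^{1/2}\sigma(1_Q\otimes e_Qf_Qe_Q)^{1/2}$.
\item A final Cauchy--Schwarz over $Q$ gives $s^{1/2}\sigma(e)^{1/2}\sigma(efe)^{1/2}\le\|f\|_1$.
\end{itemize}
Your last paragraph gestures at $L_2$ estimates on annuli, but it never supplies this asymmetric step, which is the heart of the proof.

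\textbf{Minor point.} After taking adjoints, the second half of $b_o^Q$ corresponds to the conjugate kernel $\overline{k(x,y)}$, not the adjoint kernel. Alternatively, run the same argument with the roles of $e_Q$ and $q_Q$ swapped.
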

\end{mdframed}

\vspace{5pt}

For every $n\pg1$ and $Q\in\mathcal{Q}_n$, we set 
\[b_o^Q:=(1_Q\otimes q_Q)(f-1\otimes f_Q)(1_Q\otimes e_Q)+(1_Q\otimes e_Q)(f-1\otimes f_Q)(1_Q\otimes q_Q).\]

\begin{lemm}
We have
\[b_o=\sum_{n\pg1}\sum_{Q\in\mathcal{Q}_n}b_o^Q\ \ \ \ \ \ \ \ \text{in}\ L_2(N).\]
\end{lemm}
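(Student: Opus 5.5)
The plan is to work level by level in $n$ and prove an exact algebraic identity for finite truncations, then pass to the limit in $L_2(N)$. There are three ingredients.

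\emph{Step 1 (fixed $n$).} Since $q_n$ and $e_n$ belong to $N_n$, we have $q_n=\sum_{Q\in\mathcal{Q}_n}1_Q\otimes q_Q$ and $e_n=\sum_{Q\in\mathcal{Q}_n}1_Q\otimes e_Q$, and the cubes in $\mathcal{Q}_n$ are disjoint. The pieces $b_o^Q$, $Q\in\mathcal{Q}_n$, therefore have disjoint supports, so they are mutually orthogonal in $L_2(N)$ and the sum over $Q\in\mathcal{Q}_n$ converges unconditionally. Since $E_n(f)=\sum_{Q}1_Q\otimes f_Q$, this gives
\[\sum_{Q\in\mathcal{Q}_n}b_o^Q=q_nfe_n+e_nfq_n-q_nE_n(f)e_n-e_nE_n(f)q_n .\]
The correction terms vanish. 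Set $X:=q_{n-1}E_n(f)q_{n-1}$. Then $q_n=1_{[0,s]}(X)$ commutes with $X$, and $q_n,e_n\pp q_{n-1}$. Hence
\[q_nE_n(f)e_n=q_nXe_n=Xq_ne_n=0,\]
because $q_n\perp e_n$; the term $e_nE_n(f)q_n$ is its adjoint. So the contribution of level $n$ is exactly $q_nfe_n+e_nfq_n$.

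\emph{Step 2 (telescoping).} Fix $K\pg1$. Since $1=q_K+\sum_{n\pp K}e_n$ and $q_n=q_K+\sum_{n<m\pp K}e_m$ for $n\pp K$, expanding $f=(q_K+\sum_{n\pp K}e_n)f(q_K+\sum_{m\pp K}e_m)$ gives the exact identity
\[\sum_{n\pp K}\big(q_nfe_n+e_nfq_n\big)=f-q_Kfq_K-\sum_{n\pp K}e_nfe_n .\]
Indeed, the cross terms $e_nfe_m$ with $n\neq m$, and the terms $q_Kfe_n$, $e_nfq_K$, are each counted exactly once on both sides.

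\emph{Step 3 (limit, the only delicate point).} We must show that the right-hand side converges in $L_2(N)$ to $f-qfq-\sum_{n}e_nfe_n=b_o$.
\begin{itemize}
\item Since $q_K\downarrow q$ strongly and $f\in L_2(N)$, we get $q_Kfq_K\to qfq$ in $L_2(N)$. This holds because left and right multiplication by a strongly convergent bounded net of projections converge strongly on $L_2(N)$; one can check it on $(L_1\cap L_\infty)(N)$ and extend by uniform boundedness.
\item The terms $e_nfe_n$ are mutually orthogonal in $L_2(N)$, with $\sum_n\|e_nfe_n\|_2^2\pp\|f\|_2^2$, so $\sum_n e_nfe_n$ converges in $L_2(N)$.
\end{itemize}
Combining the three steps yields $b_o=\sum_{n\pg1}\sum_{Q\in\mathcal{Q}_n}b_o^Q$ in $L_2(N)$, where the outer sum is understood as the limit of the partial sums over $n\pp K$. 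I expect Step 3 to be the only non-algebraic part, and it is routine.
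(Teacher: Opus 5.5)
Your proof is correct and follows the paper's argument: the paper also writes $b_o=\sum_{n\pg1}(e_nfq_n+q_nfe_n)$ in $L_2(N)$, which is your Steps 2--3 and which it calls ``an easy computation''. It then removes $q_nE_n(f)e_n$ and $e_nE_n(f)q_n$ using the commutation property of Cuculescu's projections, and splits each level over the cubes of $\mathcal{Q}_n$ exactly as in your Step 1. One small adjustment, inherited from the paper's notation: $q_n\pp q_{n-1}$ holds for the intended definition $q_n=q_{n-1}1_{[0,s]}(X)$, i.e. the spectral projection taken in $q_{n-1}Nq_{n-1}$, and not for $1_{[0,s]}(X)$ read literally, which dominates $1-q_{n-1}$. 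Your commutation argument is unchanged under the intended reading.
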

\begin{proof}
An easy computation shows that 
\[b_o=\sum_{n\pg1}\big[e_nfq_n+q_nfe_n\big]\ \ \ \ \ \ \ \ \text{in}\ L_2(N).\]
Moreover, by the commutation properties of Cuculescu's projections, we have
\[e_nE_n(f)q_n=q_nE_n(f)e_n=0\]
so that 
\[e_nfq_n+q_nfe_n=e_n(f-E_n(f))q_n+q_n(f-E_n(f))e_n.\]
The lemma directly follows.
\end{proof}

\begin{lemm}
Let $n\pg1$ and $Q\in\mathcal{Q}_n$. Then 
\[\int_Qq_Qf(y)q_Qdy\pp s|Q|.\]
\end{lemm}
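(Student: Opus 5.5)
The estimate will come from two facts: the defining property of the Cuculescu projection $q_n$, and the fact that $q_n$ commutes with $E_n(f)$. I will write $q_Q$ as the operator-valued coefficient of $q_n$ on the cube $Q\in\mathcal{Q}_n$, so that $q_n=\sum_{Q\in\mathcal{Q}_n}1_Q\otimes q_Q$. Note that $q_Q$ is itself a projection in $M$ (the average of a function constant on $Q$ is its value there).

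First I compute the integral using the conditional expectation:
\[\int_Q q_Qf(y)q_Q\,dy=q_Q\Big(\int_Q f(y)\,dy\Big)q_Q=|Q|\,q_Qf_Qq_Q .\]
So it suffices to show $q_Qf_Qq_Q\pp s\,q_Q\pp s$ in $M$. Since $f\pg0$, the left side is positive.

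Next I use the recursion $q_n=1_{[0,s]}(q_{n-1}E_n(f)q_{n-1})$. By functional calculus, $q_n$ commutes with $x_n:=q_{n-1}E_n(f)q_{n-1}$ and satisfies $q_nx_nq_n\pp s\,q_n$. It also satisfies $q_n\pp q_{n-1}$, since $q_n$ is a spectral projection of an operator supported under $q_{n-1}$; this is the standard check that the zero eigenspace issue disappears because $x_n$ vanishes on $1-q_{n-1}$. Strictly, $1_{[0,s]}$ picks up $1-q_{n-1}$ as well, so I will either use the standard convention $q_n=q_{n-1}1_{[0,s]}(x_n)$ or simply note that the argument below only needs $q_n\pp q_{n-1}$, which is how Cuculescu's projections are defined in \cite{CadilhacCZ2}. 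From $q_n\pp q_{n-1}$ we get $q_nq_{n-1}=q_n$. Hence
\[q_nE_n(f)q_n=q_nq_{n-1}E_n(f)q_{n-1}q_n=q_nx_nq_n\pp s\,q_n .\]

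Finally I localize to $Q$. Both $q_n$ and $E_n(f)$ lie in $N_n$, so they are constant on $Q$, with values $q_Q$ and $f_Q$ respectively. Restricting the operator inequality to $Q$ gives $q_Qf_Qq_Q\pp s\,q_Q\pp s$ (as $\mathbf{1}_Q\otimes$ operators, and hence pointwise on $Q$). Multiplying by $|Q|$ yields the lemma.

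The main obstacle is the subtlety in the previous step: making sure $q_n\pp q_{n-1}$ holds under the paper's convention for the recursion. Everything else is routine functional calculus together with the piecewise constant structure of $N_n$.
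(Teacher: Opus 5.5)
Your argument is correct and is essentially the paper's proof: the paper likewise invokes the Cuculescu property $q_nE_n(f)q_n\leq s$. It localizes this on $Q$ via $(1_Q\otimes 1)q_nE_n(f)q_n(1_Q\otimes 1)=(1_Q\otimes q_Q)E_n(f)(1_Q\otimes q_Q)$ and then uses that the Bochner integral is invariant under $E_n$, which is exactly your identity $\int_Q q_Qf(y)q_Q\,dy=|Q|\,q_Qf_Qq_Q$. Your extra observation that the recursion must be read as $q_n=q_{n-1}1_{[0,s]}(q_{n-1}E_n(f)q_{n-1})$ is a legitimate clarification of a fact the paper only cites: it gives $q_n\leq q_{n-1}$ and hence $q_nE_n(f)q_n=q_nx_nq_n\leq s\,q_n$, and it is genuinely needed, since the literal formula would also include $1-q_{n-1}$.
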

\begin{proof}
By the properties of Cuculescu's projections, we have $q_nE_n(f)q_n\pp s$, and thus
\begin{align*}
    E_n((1_Q\otimes q_Q)f(1_Q\otimes q_Q))&=(1_Q\otimes q_Q)E_n(f)(1_Q\otimes q_Q)\\
    &=(1_Q\otimes 1)q_nE_n(f)q_n(1_Q\otimes 1)\pp s(1_Q\otimes 1).
\end{align*}
As the Bochner integral is invariant under $E_n$, we deduce that
\[\int_Qq_Qf(y)q_Qdy\pp s\int_Qdy=s|Q|.\]
\end{proof}

\begin{lemm}
Let $n\pg1$ and $Q\in\mathcal{Q}_n$. Then, for $x\in\X\setminus Q$ we have
\[\|(T\otimes I)(b_o^Q)(x)\|_1\lesssim s^{1/2}|Q|^{1/2}\Big\|\Big(\int_{Q}\big|k(x,y)-k(x,y_Q)|^2e_Qf(y)e_Qdy\Big)^{1/2}\Big\|_1.\]
\end{lemm}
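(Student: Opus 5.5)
The plan is to combine the cancellation of $b_o^Q$ with the Cuculescu orthogonality relations, and then to use an operator Cauchy--Schwarz inequality that puts the $L_\infty$ norm on the $q_Q$ side and the $L_1$ norm on the $e_Q$ side.

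\emph{Step 1: reduction to a kernel difference.} I will first note that $b_o^Q$ is supported in $Q$ and has vanishing integral, since $\int_Q(f(y)-f_Q)\,dy=0$. Hence, for $x\in\X\setminus Q$, the kernel representation of $T\otimes I$ gives
\[(T\otimes I)(b_o^Q)(x)=\int_Q h_x(y)\,b_o^Q(y)\,dy,\qquad h_x(y):=k(x,y)-k(x,y_Q).\]

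\emph{Step 2: removing the mean.} Next I will use the relation $q_nE_n(f)e_n=e_nE_n(f)q_n=0$, which was already used in the previous lemma. Restricted to $Q$, it reads $q_Qf_Qe_Q=e_Qf_Qq_Q=0$. Consequently the terms involving $1\otimes f_Q$ drop out, and
\[(T\otimes I)(b_o^Q)(x)=\int_Q h_x(y)\,q_Qf(y)e_Q\,dy+\int_Q h_x(y)\,e_Qf(y)q_Q\,dy.\]
This step is what keeps the final bound free of any averaged kernel term. Such a term, of the form $|Q|^{-1}\int h_x$ multiplied by $\int e_Qfe_Q$, would not be dominated operator-wise by $\int|h_x|^2e_Qfe_Q$.

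\emph{Step 3: operator Cauchy--Schwarz.} Since $f\geq0$, I will set $u(y):=f(y)^{1/2}q_Q$ and $v(y):=h_x(y)f(y)^{1/2}e_Q$, so that the first integral equals $\int_Q u(y)^*v(y)\,dy$. Viewing $u$ and $v$ as column operators in $L_\infty(Q)\bar\otimes M$ (or its $L_0$ version), the standard factorization gives
\[\int u^*v=\Big(\int u^*u\Big)^{1/2}W\Big(\int v^*v\Big)^{1/2}\]
for some contraction $W$. Hölder's inequality then yields
\[\Big\|\int u^*v\Big\|_1\leq\Big\|\int_Q q_Qf(y)q_Q\,dy\Big\|_\infty^{1/2}\,\Big\|\Big(\int_Q|h_x(y)|^2e_Qf(y)e_Q\,dy\Big)^{1/2}\Big\|_1.\]
By the previous lemma, $\int_Qq_Qfq_Q\leq s|Q|$, so the first factor is at most $s^{1/2}|Q|^{1/2}$. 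The second integral is, up to scalar conjugation of $h_x$, the adjoint of the first. It therefore has the same $L_1$ norm and obeys the same bound, and summing the two contributions gives the claimed estimate with constant $2$.

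\emph{Main obstacle.} The delicate point is justifying the factorization $\int u^*v=(\int u^*u)^{1/2}W(\int v^*v)^{1/2}$ when $f(y)$ is only $\tau$-measurable. I would handle it by first truncating $f$, for instance replacing it by $f\,1_{[0,\lambda]}(f)$ or approximating it in $L_1\cap L_2$ by bounded elements, then applying the column-space factorization of $2\times2$-matrix type ($\int u^*v$ is the off-diagonal entry of a positive block matrix), and finally passing to the limit in $L_1$.
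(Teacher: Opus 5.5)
Your proposal is correct and follows essentially the same route as the paper. You use the cancellation of $b_o^Q$ to replace $k(x,y)$ by $k(x,y)-k(x,y_Q)$, then the Kadison--Schwarz (column Cauchy--Schwarz) inequality with the $L_\infty$ norm on the $q_Q$ side, controlled by $\int_Q q_Qf(y)q_Q\,dy\leq s|Q|$ from the preceding lemma. The one difference works in your favor: the paper splits off the $f_Q$-term and says it ``can be estimated in a similar fashion'', whereas you note that it vanishes identically because $e_Qf_Qq_Q=q_Qf_Qe_Q=0$. That is what actually gives the lemma exactly as stated, since a direct estimate of that term would produce $\int_Q|k(x,y)-k(x,y_Q)|^2dy\; e_Qf_Qe_Q$ rather than the stated right-hand side.
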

\begin{proof}
For simplicity, we assume
\[b_o^Q=(1_Q\otimes e_Q)(f-1\otimes f_Q)(1_Q\otimes q_Q).\]
As the support of $b_o^Q$ is included in $Q$, and since $\int_\X b_d^Q(y)dy=0$, for $x\in\X\setminus Q$ we can use the integral representation to write
\begin{align*}
(T\otimes I)(b_o^Q)(x)&=\int_\X k(x,y)b_o^Q(y)dy\\
&=\int_{\X}\big[k(x,y)-k(x,y_Q)\big]b_o^Q(y)dy\\
&=\int_{Q}\big[k(x,y)-k(x,y_Q)\big]e_Q(f(y)-f_Q)q_Qdy\\
&=\int_{Q}\big[k(x,y)-k(x,y_Q)\big]e_Qf(y)q_Qdy-\int_{Q}\big[k(x,y)-k(x,y_Q)\big]e_Qf_Qq_Qdy.\\
\end{align*}
These two integrals can be estimated in a similar fashion. Thus, we can assume that
\[(T\otimes I)(b_o^Q)(x)=\int_{Q}\big[k(x,y)-k(x,y_Q)\big]e_Qf(y)q_Qdy\]
Then, by the Kadison-Schwarz inequality we obtain
\begin{align*}
\|(T\otimes I)(b_o^Q)(x)\|_1&\pp \Big\|\Big(\int_{Q}\big|k(x,y)-k(x,y_Q)\big|^2e_Qf(y)e_Qdy\Big)^{1/2}\Big\|_1\Big\|\int_{Q}q_Qf(y)q_Qdy\Big\|_\infty^{1/2}\\
&\pp s^{1/2}|Q|^{1/2}\Big\|\Big(\int_{Q}\big|k(x,y)-k(x,y_Q)|^2e_Qf(y)e_Qdy\Big)^{1/2}\Big\|_1
\end{align*}
where the last estimate comes from the previous lemma.
\end{proof}

\begin{lemm}
Let $n\pg1$ and $\mathcal{Q}\in\mathcal{Q}_n$. Then the support of $p(T\otimes I)(b_o^Q)p$ is included in $\X\setminus B_{3d_Q}(y_Q)$. 
\end{lemm}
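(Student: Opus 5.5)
Following the proof of Lemma \ref{Cadhilac_lemma_support}, the idea is to show that $p$ kills $(T\otimes I)(b_o^Q)$ pointwise on $B_{3d_Q}(y_Q)$. The new feature compared with $b_d^Q$ is that $b_o^Q$ carries $e_Q$ on only one side, with $q_Q$ on the other. So I would show that on this ball $p(x)$ annihilates $e_Q$ from one side and $q_Q$ from the other, or at least enough of both to make the sandwiched operator vanish.

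First step: record the one-sided structure. From the definition,
\[b_o^Q=(1\otimes q_Q)\,b_o^Q\,(1\otimes e_Q)+(1\otimes e_Q)\,b_o^Q\,(1\otimes q_Q),\]
where each summand keeps its own term of $b_o^Q$. Since $T\otimes I$ acts only on the $\X$ variable, it commutes with left and right multiplication by $1\otimes c$ for $c\in M$. Hence
\[(T\otimes I)(b_o^Q)(x)=q_Q\,g_1(x)\,e_Q+e_Q\,g_2(x)\,q_Q\]
for every $x\in\X$, with $g_1,g_2$ the images under $T\otimes I$ of the two pieces.

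Second step: show $p(x)e_Q=0=e_Qp(x)$ for $x\in B_{3d_Q}(y_Q)$. By definition, $1-p$ dominates $1_{B_{3d_Q}(y_Q)}\otimes e_Q$. Therefore $p\,(1_{B_{3d_Q}(y_Q)}\otimes e_Q)=0$, which read pointwise gives $p(x)e_Q=0$ for a.e. $x$ in the ball; taking adjoints gives $e_Qp(x)=0$. Then
\[p(x)\big[q_Qg_1(x)e_Q+e_Qg_2(x)q_Q\big]p(x)=p(x)q_Qg_1(x)\,(e_Qp(x))+(p(x)e_Q)\,g_2(x)q_Qp(x)=0.\]
So $p(T\otimes I)(b_o^Q)p$ vanishes on $B_{3d_Q}(y_Q)$, which is the claim.

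The only points needing care are technical. First, the identification of $p\in L_\infty(\X)\bar\otimes M$ with an essentially bounded weak*-measurable function $x\mapsto p(x)$ of projections. Second, the justification that $(T\otimes I)\big((1\otimes a)g(1\otimes b)\big)=(1\otimes a)(T\otimes I)(g)(1\otimes b)$ for $g\in L_2(N)$ and $a,b\in M$. The second follows by checking it on elementary tensors $h\otimes m$ and using continuity on $L_2(N)$. Neither is a real obstacle; the argument is the same as in Lemma \ref{Cadhilac_lemma_support}, once one uses that annihilating $e_Q$ on one side suffices.
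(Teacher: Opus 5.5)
Your proposal is correct and takes the same approach as the paper, which proves this lemma by mimicking Lemma \ref{Cadhilac_lemma_support}: you factor $e_Q$ out on one side of each of the two pieces of $b_o^Q$, pass $T\otimes I$ through left and right multiplication by $1\otimes c$, and use $p(x)e_Q=0=e_Qp(x)$ on $B_{3d_Q}(y_Q)$. One small remark: your displayed identity for $b_o^Q$ relies on $e_Qq_Q=0$, which holds because $e_nq_n=0$ in Cuculescu's construction; you could avoid this entirely by naming the two summands of $b_o^Q$ directly as your two pieces.
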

\begin{proof}
It suffices to mimic the proof of Lemma \ref{Cadhilac_lemma_support}.
\end{proof}

\begin{lemm}
Let $n\pg1$ and $Q\in\mathcal{Q}_n$. Then 
\[\|p(T\otimes I)(b_o^Q)p\|_1\lesssim s^{1/2}\sigma(1_Q\otimes e_Q)^{1/2}\sigma(1_Q\otimes e_Qf_Qe_Q)^{1/2}.\]
\end{lemm}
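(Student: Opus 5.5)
The plan combines the three preceding lemmas: the support lemma, the pointwise bound from the Kadison--Schwarz argument, and the $L_2$-H\"ormander condition. Write $A:=B_{3d_Q}(y_Q)$. By the support lemma, $p(T\otimes I)(b_o^Q)p$ vanishes on $A$, so
\[\|p(T\otimes I)(b_o^Q)p\|_1\pp\int_{\X\setminus A}\|(T\otimes I)(b_o^Q)(x)\|_1dx.\]
Points outside $A$ lie outside $Q$, so the pointwise lemma applies on $\X\setminus A$. It bounds the integrand by
\[s^{1/2}|Q|^{1/2}\,\|(G(x))^{1/2}\|_1,\qquad G(x):=\int_Q|k(x,y)-k(x,y_Q)|^2e_Qf(y)e_Q\,dy .\]
This must be done for both pieces of $b_o^Q$ (the $e_Q f q_Q$ and $e_Q f_Q q_Q$ terms, and their adjoints); the $f_Q$ piece is handled identically with $f(y)$ replaced by $f_Q$.

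Next, split $\X\setminus A$ into dyadic annuli $R_m:=B_{2^{m+1}r}(y_Q)\setminus B_{2^m r}(y_Q)$, with $r:=3d_Q$, for $m\pg0$. On each $R_m$, use Cauchy--Schwarz in $x$, in the operator form
\[\int_{R_m}\|G(x)^{1/2}\|_1dx\pp |R_m|^{1/2}\Big\|\Big(\int_{R_m}G(x)\,dx\Big)^{1/2}\Big\|_1 .\]
Its cleanest justification is the concavity/operator-convexity of $t\mapsto t^{1/2}$ (Jensen for the trace), or one can see the $x$-integral as a column-space norm. Since $G$ is a positive operator-valued integral, Fubini gives
\[\int_{R_m}G(x)dx=\int_Q\Big(\int_{R_m}|k(x,y)-k(x,y_Q)|^2dx\Big)e_Qf(y)e_Q\,dy\pp \omega_m^2\int_Q e_Qf(y)e_Qdy .\]
Here $\omega_m:=\sup_{y\in Q}\big(\int_{R_m}|k(x,y)-k(x,y_Q)|^2dx\big)^{1/2}$. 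Operator monotonicity of the square root then yields
\[\int_{R_m}\|G^{1/2}\|_1\pp |R_m|^{1/2}\omega_m\,\big\|(|Q|e_Qf_Qe_Q)^{1/2}\big\|_1 .\]

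The geometric step checks the hypothesis of the $L_2$-H\"ormander condition. Since $Q\subset B_{d_Q}(y_Q)$ but we need $y\in B_{r/3}(y_Q)$, we have $r/3=d_Q$, which is exactly right. The H\"ormander condition then says $\sum_m|R_m|^{1/2}\omega_m\lesssim1$, hence
\[\|p(T\otimes I)(b_o^Q)p\|_1\lesssim s^{1/2}|Q|^{1/2}\,\tau\big((|Q|e_Qf_Qe_Q)^{1/2}\big).\]
To finish, apply Cauchy--Schwarz for the trace, $\tau(e_Q x^{1/2}e_Q)\pp\tau(e_Q)^{1/2}\tau(x)^{1/2}$, which uses that $e_Qf_Qe_Q=e_Q(\cdot)e_Q$ is supported under $e_Q$. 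This gives
\[|Q|^{1/2}\tau\big((|Q|e_Qf_Qe_Q)^{1/2}\big)\pp (|Q|\tau(e_Q))^{1/2}(|Q|\tau(e_Qf_Qe_Q))^{1/2}=\sigma(1_Q\otimes e_Q)^{1/2}\sigma(1_Q\otimes e_Qf_Qe_Q)^{1/2},\]
which is the claim.

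The main obstacle I expect is the operator-valued Cauchy--Schwarz step, that is, passing from $\int\|G(x)^{1/2}\|_1dx$ to $\|(\int G)^{1/2}\|_1$ with the factor $|R_m|^{1/2}$. I would try the inequality $\tau(\int_R h(x)\,dx)\pp|R|^{1/2}\tau((\int_R h^2)^{1/2})$ for positive $h=G^{1/2}$; equivalently, I would view $x\mapsto G(x)^{1/2}$ as a column in $L_1(M;L_2^c(R))$ and use $\|\cdot\|_{L_1(L_1)}\pp|R|^{1/2}\|\cdot\|_{L_1(L_2^c)}$.

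The only other delicate point is the $f_Q$ term of $b_o^Q$. There the Kadison--Schwarz step needs $\|q_Qf_Qq_Q\|_\infty\pp s$, which follows from $q_nE_n(f)q_n\pp s$.
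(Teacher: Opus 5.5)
Your proof is correct. It uses the same ingredients as the paper: the support lemma, the pointwise Kadison--Schwarz bound, dyadic annuli around $y_Q$ with $r=3d_Q$, Fubini, and the $L_2$-H\"ormander condition. The difference is the order of the two Cauchy--Schwarz steps.

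\textbf{The paper's route.} It scalarizes immediately, pointwise in $x$. Since $G(x)=e_QG(x)e_Q$, it writes $\|G(x)^{1/2}\|_1=\|e_QG(x)^{1/2}e_Q\|_1\pp\tau(e_Q)^{1/2}\|G(x)^{1/2}\|_2$, with $\|G(x)^{1/2}\|_2^2=\int_Q|k(x,y)-k(x,y_Q)|^2\,\tau(e_Qf(y)e_Q)\,dy$. After this, the annulus Cauchy--Schwarz, Fubini and the supremum over $y$ are purely scalar. So the operator-valued step you single out as the main obstacle never arises there.

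\textbf{Your route.} You keep the operator-valued quantity through the annuli, which needs $\int_R\|G(x)^{1/2}\|_1\,dx\pp|R|^{1/2}\|(\int_RG)^{1/2}\|_1$. This inequality is valid. One justification is the column H\"older inequality $\|\int_Ra^*b\|_1\pp\|(\int_Ra^*a)^{1/2}\|_\infty\|(\int_Rb^*b)^{1/2}\|_1$ with $a=1_R$ and $b=G^{1/2}$; positivity identifies the left side with $\int_R\|G(x)^{1/2}\|_1dx$. Another is operator \emph{concavity} of $t\mapsto t^{1/2}$ (you wrote convexity, which is the wrong property). Monotonicity of $A\mapsto\tau(A^{1/2})$ then handles the Fubini bound.

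\textbf{What each buys.} Your route gives a marginally sharper intermediate estimate, $s^{1/2}|Q|^{1/2}\tau\big((|Q|e_Qf_Qe_Q)^{1/2}\big)$, with the $e_Q$-Cauchy--Schwarz applied only at the end. But when summing over cubes you must pass through that same Cauchy--Schwarz anyway to use $\sigma(e)\pp s^{-1}\|f\|_1$. So the paper's early scalarization is simply the more economical route.

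Your explicit check that the $f_Q$-part of $b_o^Q$ leads to the same final quantity, using $\|q_Qf_Qq_Q\|_\infty\pp s$, fills in a detail the paper only waves at in the preceding lemma.
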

\begin{proof}
One the one hand, from the two previous lemmas, we can write
\begin{align*}
&\|p(T\otimes I)(b_o^Q)p\|_1\pp\int_{\X\setminus B_{3d_Q}(y_Q)}\|(T\otimes I)(b_o^Q)(x)\|_1dx\\
&\pp s^{1/2}|Q|^{1/2}\int_{\X\setminus B_{3d_Q}(y_Q)}\Big\|\Big(\int_{Q}\big|k(x,y)-k(x,y_Q)|^2e_Qf(y)e_Qdy\Big)^{1/2}\Big\|_1dx\\
&= s^{1/2}|Q|^{1/2}\int_{\X\setminus B_{3d_Q}(y_Q)}\Big\|e_Q\Big(\int_{Q}\big|k(x,y)-k(x,y_Q)|^2e_Qf(y)e_Qdy\Big)^{1/2}e_Q\Big\|_1dx\\
&\pp s^{1/2}|Q|^{1/2}\tau(e_Q)^{1/2}\int_{\X\setminus B_{3d_Q}(y_Q)}\Big\|\Big(\int_{Q}\big|k(x,y)-k(x,y_Q)|^2e_Qf(y)e_Qdy\Big)^{1/2}\Big\|_2dx\\
&=s^{1/2}|Q|^{1/2}\tau(e_Q)^{1/2}\int_{\X\setminus B_{3d_Q}(y_Q)}\Big[\int_{Q}\big|k(x,y)-k(x,y_Q)|^2\tau(e_Qf(y)e_Q)dy\Big]^{1/2}dx\\
&\pp s^{1/2}|Q|^{1/2}\tau(e_Q)^{1/2}\sum_{m=0}^{\infty}\int_{B_{Q,m}}\Big[\int_{Q}\big|k(x,y)-k(x,y_Q)|^2\tau(e_Qf(y)e_Q)dy\Big]^{1/2}dx\\
\end{align*}
where $B_{Q,m}:=B_{2^{m+1}.3d_Q}(y_Q)\setminus B_{2^m.3d_Q}(y_Q)$. One the other hand, by H\"older's inequality, for $m\pg0$ we can write
\begin{align*}
    &\int_{B_{Q,m}}\Big[\int_{Q}\big|k(x,y)-k(x,y_Q)|^2\tau(e_Qf(y)e_Q)dy\Big]^{1/2}dx\\
&\pp|B_{Q,m}|^{1/2}\Big[\int_{B_{Q,m}}\Big[\int_{Q}\big|k(x,y)-k(x,y_Q)|^2\tau(e_Qf(y)e_Q)dy\Big]dx\Big]^{1/2}\\
&=|B_{Q,m}|^{1/2}\Big[\int_{Q}\Big[\int_{B_{Q,m}}\big|k(x,y)-k(x,y_Q)|^2dx\Big]\tau(e_Qf(y)e_Q)dy\Big]^{1/2}\\
&\pp|B_{Q,m}|^{1/2}\sup_{y\in Q}\Big[\int_{B_{Q,m}}\big|k(x,y)-k(x,y_Q)|^2dx\Big]^{1/2}\Big[\int_{Q}\tau(e_Qf(y)e_Q)dy\Big]^{1/2}\\
&\pp|B_{Q,m}|^{1/2}\sup_{y\in B_{d_Q}(y_Q)}\Big[\int_{B_{Q,m}}\big|k(x,y)-k(x,y_Q)|^2dx\Big]^{1/2}\Big[\int_{Q}\tau(e_Qf(y)e_Q)dy\Big]^{1/2}.
\end{align*}
Thus, by the $L_2$-H\"ormander regularity condition, we obtain the estimate
\[\|p(T\otimes I)(b_o^Q)p\|_1\lesssim s^{1/2}|Q|^{1/2}\tau(e_Q)^{1/2}\Big[\tau\int_{Q}e_Qf(y)e_Qdy\Big]^{1/2}.\]
Finally, as 
\[\int_Qe_Qf(y)e_Qdy=e_Q\big[\int_Q f(y)dy\Big]e_Q=|Q|e_Qf_Qe_Q\]
we get the desired result.
\end{proof}

\begin{proof}[Proof of Theorem \ref{Cadhilac_estimate_off}]
By the previous lemma, we have
\begin{align*}
\sum_{n\pg1}\sum_{Q\in\mathcal{Q}_n}\|p(T\otimes I)(b_o^Q)p\|_1&\lesssim s^{1/2}\sum_{n\pg1}\sum_{Q\in\mathcal{Q}_n}\sigma(1_Q\otimes e_Q)^{1/2}\sigma(1_Q\otimes e_Qy_Qe_Q)^{1/2}\\
&\pp s^{1/2}\Big[\sum_{n\pg1}\sum_{Q\in\mathcal{Q}_n}\sigma(1_Q\otimes e_Q)\Big]^{1/2}\Big[\sum_{n\pg1}\sum_{Q\in\mathcal{Q}_n}\sigma(1_Q\otimes e_Qy_Qe_Q)\Big]^{1/2}.
\end{align*}
But
\begin{align*}
   \sum_{n\pg1}\sum_{Q\in\mathcal{Q}_n}\sigma(1_Q\otimes e_Qy_Qe_Q)&=\sum_{n\pg1}\sum_{Q\in\mathcal{Q}_n}\sigma((1_Q\otimes e_Q)E_n(f)(1_Q\otimes e_Q))\\
   &=\sum_{n\pg1}\sum_{Q\in\mathcal{Q}_n}\sigma((1_Q\otimes e_Q)f(1_Q\otimes e_Q))\\
   &=\sum_{n\pg1}\sum_{Q\in\mathcal{Q}_n}\sigma((1_Q\otimes e_Q)f(1_Q\otimes e_Q))\\
   &\pp\sigma(efe)\pp\|f\|_1
\end{align*}
and 
\[\sum_{n\pg1}\sum_{Q\in\mathcal{Q}_n}\sigma(1_Q\otimes e_Q)=\sigma\Big(\sum_{Q\in\mathcal{Q}}1_Q\otimes e_Q\Big)=\sigma(e)=\sigma(1-q)\pp s^{-1}\|f\|_1\]
where we used the well-known properties of Cuculescu's construction. Thus, we get
\[\sum_{n\pg1}\sum_{Q\in\mathcal{Q}_n}\|p(T\otimes I)(b_o^Q)p\|_1\lesssim\|f\|_1.\]
As we have $b_o=\sum_{n\pg1}\sum_{Q\in\mathcal{Q}_n}b_o^Q$ in $L_2(N)$, we have $p(T\otimes I)(b_o)p=\sum_{n\pg1}\sum_{Q\in\mathcal{Q}_n}p(T\otimes I)(b_o^Q)p$ in $L_2(N)$. By the previous estimate we deduce that we actually have $p(T\otimes I)(b_o)p=\sum_{n\pg1}\sum_{Q\in\mathcal{Q}_n}p(T\otimes I)(b_o^Q)p$ in $(L_1\cap L_2)(N)$ with
\[\|p(T\otimes I)(b_o)p\|_1\lesssim3\|f\|_1.\]
\end{proof}

\section*{\phantom{text}}

\section*{Acknowledgements}

\vspace{4pt}

I am very grateful to my advisor Éric Ricard for many valuable discussions and his guidance throughout the writing of this article.

\bibliographystyle{plain}
\bibliography{BIBLIOGRAPHY} 

\end{document}

%% file: PREAMBULE.tex
\usepackage{amsmath,amssymb,amsthm,graphicx,url}

\usepackage[bookmarksnumbered, colorlinks, plainpages]{hyperref}
\hypersetup{colorlinks=true,linkcolor=blue, anchorcolor=green, citecolor=violet, urlcolor=cyan, filecolor=magenta, pdftoolbar=true}

\usepackage[english]{babel}
\usepackage[T1]{fontenc}
\usepackage{color}
\usepackage{standalone}
\usepackage{enumitem}
\usepackage{tikz-cd}
\usepackage{fbox}
\usepackage{mdframed}
\usepackage[shortcuts]{extdash}

\usepackage[a4paper, left=2.8cm, right=2.8cm, top=3.5cm, bottom=3.5cm]{geometry}

\newtheorem{maintheo}{Theorem}

\newtheorem{lemm}[theo]{Lemma}

\newtheorem{prop}[theo]{Proposition}
\newtheorem{coro}[theo]{Corollary}

\theoremstyle{definition}
\newtheorem{defi}[theo]{Definition}

\theoremstyle{remark}
\newtheorem*{rem}{Remark} 

\numberwithin{equation}{section}

\def\C{{\mathbb C}}

\def\R{{\mathbb R}}
\def\S{{\mathbb S}}
\def\T{{\mathbb T}}

\def\X{{\mathbb X}}
\def\Z{{\mathbb Z}}

\def\TT{{\mathbb T}}

\def\pp{\leq}
\def\pg{\geq}
\def\bra{\langle}
\def\ket{\rangle}

\newcommand{\fonctbis}[4]{\left\{\begin{array}{ccc} #1&\rightarrow&#2 \\ #3&\mapsto&#4 \end{array}\right.}

\DeclareMathOperator{\ima}{ran}

\DeclareMathOperator{\re}{Re}

\DeclareMathOperator{\diam}{diam}
\DeclareMathAlphabet\cat{OMS}{cmsy}{b}{n}

\usepackage{newtxtext,newtxmath}

\makeatletter

\providecommand{\proofname}{Preuve}
\makeatother